\documentclass[11pt,a4paper]{amsart}
\usepackage{amssymb,amsmath,amsthm}

\usepackage[utf8]{inputenc}
\usepackage[T1]{fontenc}

\parskip 2pt

\usepackage[pdftex]{graphicx}

\usepackage[colorlinks=true, pdfstartview=FitV, linkcolor=blue, citecolor=blue, urlcolor=blue,pagebackref=false]{hyperref}

\usepackage{esint}
\usepackage{mathrsfs}

\setlength{\topmargin}{-0.50cm}
\setlength{\oddsidemargin}{1.05cm}
\setlength{\evensidemargin}{1.05cm}
\textwidth=140mm
\textheight=230mm

\usepackage{times,latexsym,color}

\newcommand{\BOX}{\ensuremath\Box}

\newtheorem{theorem}{Theorem}
\newtheorem*{theorem*}{Theorem}
\newtheorem{proposition}{Proposition}
\newtheorem{lemma}[proposition]{Lemma}
\newtheorem{corollary}[proposition]{Corollary}

\theoremstyle{remark}
\newtheorem{remark}[proposition]{Remark}

\theoremstyle{definition}

\DeclareMathOperator{\supp}{supp}

\newcommand{\R}{\mathbb{R}}

\newcommand{\ep}{\varepsilon}

\newcommand{\Mp}{M^\flat}

\newcommand{\intbar}{{- \hspace{- 1.05 em}} \int}

\definecolor{darkgreen}{rgb}{0,0.5,0}
\definecolor{darkblue}{rgb}{0,0,0.7}
\definecolor{darkred}{rgb}{0.9,0.1,0.1}
\definecolor{lightblue}{rgb}{0,0.51,1}

\begin{document}

\title[Quantitative regularity via spatial concentration]{Quantitative regularity for the Navier-Stokes equations via spatial concentration}

\author[T. Barker]{Tobias Barker}
\address[T. Barker]{DMA, \'{E}cole Normale Sup\'erieure, CNRS, PSL Research University, 75\, 005 Paris}
\email{tobiasbarker5@gmail.com}

\author[C. Prange]{Christophe Prange}
\address[C. Prange]{Universit\'e de Bordeaux, CNRS, UMR [5251], IMB, Bordeaux, France}
\email{christophe.prange@math.u-bordeaux.fr}

\keywords{}
\subjclass[2010]{}
\date{\today}

\maketitle

\noindent {\bf Abstract} 
This paper is concerned with quantitative estimates for the Navier-Stokes equations. 

First we investigate the relation of 
quantitative bounds to the behaviour of critical norms near a potential singularity with Type I bound 
$\|u\|_{L^{\infty}_{t}L^{3,\infty}_{x}}\leq M$. Namely, we show that if $T^*$ is a first blow-up time and $(0,T^*)$ is a singular point then 
$$\|u(\cdot,t)\|_{L^{3}(B_{0}(R))}\geq C(M)\log\Big(\frac{1}{T^*-t}\Big),\,\,\,\,\,\,R=O((T^*-t)^{\frac{1}{2}-}).$$
We demonstrate that this potential blow-up rate is optimal for a certain class of potential non-zero backward discretely self-similar solutions.

Second, we quantify the result of Seregin (2012), which says that if $u$ is a smooth finite-energy solution to the Navier-Stokes equations on $\mathbb{R}^3\times (0,1)$ with
$$\sup_{n}\|u(\cdot,t_{(n)})\|_{L^{3}(\mathbb{R}^3)}<\infty\,\,\,\textrm{and}\,\,\,t_{(n)}\uparrow 1,$$
then $u$ does not blow-up at $t=1$. 

To prove our results we develop a new strategy for proving quantitative bounds for the Navier-Stokes equations. This hinges on local-in-space smoothing results (near the initial time) established by  Jia and \v{S}ver\'{a}k (2014), together with quantitative arguments using Carleman inequalities given by Tao (2019).

Moreover, the technology developed here enables us in particular to give a quantitative bound for the number of singular points in a Type I blow-up scenario.
\vspace{0.3cm}

\noindent {\bf Keywords}\,Navier-Stokes equations, quantitative estimates, critical norms, Type I blow-up, concentration, Carleman inequalities. 

\vspace{0.3cm}

\noindent {\bf Mathematics Subject Classification (2010)}\, 35A99, 35B44, 35B65, 35Q30, 76D05

\section{Introduction}

In this paper, we consider the three-dimensional incompressible Navier-Stokes equations
\begin{equation}\label{e.nse}
\partial_tu-\Delta u+u\cdot\nabla u+\nabla p=0,\quad \nabla\cdot u=0,\qquad u(\cdot,0)=u_{0}(x)\quad\mbox{in}\quad \R^3\times (0,T),
\end{equation}
where $T\in (0,\infty]$. It is well known that this system of equations is invariant with respect to the following rescaling 
\begin{equation}\label{NSErescale}
(u_{\lambda}(x,t), p_{\lambda}(x,t), u_{0\lambda}(x)):=(\lambda u(\lambda x,\lambda^2 t), \lambda^2 p(\lambda x,\lambda^2 t), \lambda u_{0}(\lambda x)),\,\,\,\,\lambda>0.
\end{equation} 
The question as to whether or not finite-energy solutions\footnote{Throughout this paper, we say $u$ is a finite-energy solution to the Navier-Stokes equations on $(0,T)$ if $u\in C_{w}([0,T]; L^{2}_{\sigma}(\mathbb{R}^3))\cap L^{2}(0,T; \dot{H}^{1}(\mathbb{R}^3))$ and $\|u(\cdot,t)\|_{L^{2}(\mathbb{R}^3)}^2+2\int\limits_{0}^{t}\int\limits_{\mathbb{R}^3}|\nabla u|^2dxdt'\leq \|u(\cdot,0)\|_{L^{2}(\mathbb{R}^3)}^2.$ }, with divergence-free Schwartz class initial data, remain smooth for all times is a Millennium Prize problem \cite{fefferman2006existence}. The first necessary conditions for such a solution to lose smoothness or to `blow-up' at time $T^{*}>0$\footnote{We say that a solution $u$ to the Navier-Stokes equations first blows-up at $T^*>0$ if $u\in L^{\infty}_{loc}(0,T^{*}, L^{\infty}(\mathbb{R}^3))$ but $u\notin L^{\infty}_{loc}(0,T^{*}]; L^{\infty}(\mathbb{R}^3))$} were given in the seminal paper of Leray \cite{Leray}. In particular, in \cite{Leray} it is shown that if $T^{*}$ is a first blow-up time of $u$ then we necessarily have
\begin{equation}\label{Leray}
\|u(\cdot,t)\|_{L^{p}(\mathbb{R}^3)}\geq \frac{C(p)}{(T^{*}-t)^{\frac{1}{2}(1-\frac{3}{p})}},\qquad\mbox{for}\quad p\in(3,\infty].
\end{equation}
The $L^{3}(\mathbb{R}^3)$ norm is scale-invariant or `critical'\footnote{We say $({X}, \|\cdot\|_{X})\subset\mathcal{S}'(\mathbb{R}^3)$ is critical if $u_{0}\in X\Rightarrow u_{0\lambda}(x):= \lambda u(\lambda x)\in X$ with $X$ norm equal to that of $u_{0}$.} with respect to the Navier-Stokes rescaling.
Its role in the regularity theory of the Navier-Stokes equations is much more subtle than that of the subcritical $L^{p}(\mathbb{R}^3)$ norms with $3<p\leq\infty.$
In particular, it is demonstrated by a elementary scaling argument in \cite{barker2017uniqueness}\footnote{The argument in \cite{barker2017uniqueness} is in turn taken from  the talk given by G .Seregin. `A certain necessary condition of possible blow up for the Navier-Stokes equations'. APDE seminar, University of Sussex, 03 March 2014.}
 that there \textit{cannot exist} a universal function $f:(0,\infty) \rightarrow (0,\infty)$ such that the following analogue of \eqref{Leray} holds true: 
\begin{itemize}
\item [] \begin{equation}\label{finfiniteatzero}
\lim_{s\rightarrow 0^{+}} f(s)=\infty,
\end{equation}
\item [] and if $u$ is a finite-energy solution to the Navier-Stokes equations (with Schwartz class initial data) that first blows-up at $T^{*}>0$ then  $u$ necessarily satisfies
\begin{equation}\label{boundedbelowuniversalfunction}
\|u(\cdot,t)\|_{L^{3}(\mathbb{R}^3)}\geq f(T^*-t)
\end{equation}
for all $t\in [0,T^*)$.
\end{itemize}
In the celebrated paper \cite{ESS2003} of Escauriaza, Seregin and \v{S}ver\'{a}k, it was shown that if a finite-energy solution $u$ first blows-u at $T^{*}>0$ then necessarily
\begin{equation}\label{ESS2003}
\limsup_{t\uparrow T^{*}} \|u(\cdot,t)\|_{L^{3}(\mathbb{R}^3)}=\infty.
\end{equation}
The proof in \cite{ESS2003} is by contradiction. A rescaling procedure or `zoom-in' is performed\footnote{It is worth to note that \cite{ESS2003} appears to be the first instance where arguments involving `zooming in' and passage to a limit have been applied to the Navier-Stokes equations. } using \eqref{NSErescale} and a compactness argument is applied. This gives a non-zero limit solution to the Navier-Stokes equations that vanishes at the final moment in time. The contradiction is achieved by showing that the limit function must be zero by applying a Liouville type theorem based on backward uniqueness for parabolic operators satisfying certain differential inequalities.
By now there are many generalizations of \eqref{ESS2003} to cases of other critical norms. See, for example, \cite{CWY19}, \cite{GKP}, \cite{phuc2015navier} and \cite{wang2017blow}. 

Let us mention the arguments in \cite{ESS2003} and the aforementioned works are by contradiction and hence are \textit{qualitative}. It is worth noting that the result in \cite{ESS2003}, together with a proof by contradiction based on the `persistence of singularities' lemma in  \cite{rusin2011minimal} (specifically Lemma 2.2 in \cite{rusin2011minimal}), gives the following. Namely, that there exists an $F:(0,\infty)\rightarrow (0,\infty)$ such that if $u$ is a finite-energy solution to the Navier-Stokes equations then
\begin{equation}\label{L3weaklquant}
\|u\|_{L^{\infty}(0,1; L^{3}(\mathbb{R}^3))}<\infty\Rightarrow\|u\|_{L^{\infty}(\mathbb{R}^3\times (\frac{1}{2},1))}\leq F(\|u\|_{L^{\infty}(0,1; L^{3}(\mathbb{R}^3))}).
\end{equation}
Such an argument is obtained by a compactness method and gives no explicit\footnote{\label{foot.eff}Throughout this paper we will sometimes use the terminology `\textit{effective}' bounds to describe an explicit quantitative bound. An abstract quantitative bound will sometimes be referred to as `\textit{non-effective}'.} information about $F$. In a remarkable recent development \cite{Tao19}, Tao used a new approach to provide the first explicit \textit{quantitative} estimates for solutions of the Navier-Stokes equations belonging to the critical space $L^{\infty}(0,T; L^{3}(\mathbb{R}^3))$. As a consequence of these quantitative estimates, Tao showed in \cite{Tao19} that if a finite-energy solution $u$ first blows-up at $T^{*}>0$ then for some absolute constant $c>0$
\begin{equation}\label{L3Tao}
\limsup_{t\uparrow T^*}\frac{\|u(\cdot,t)\|_{L^{3}(\mathbb{R}^3)}}{\big(\log\log\log\frac{1}{T^*-t}\big)^c}=\infty.
\end{equation}
Since there cannot exist $f$ such that \eqref{finfiniteatzero}-\eqref{boundedbelowuniversalfunction} holds true, at first sight \eqref{L3Tao} may seem somewhat surprising, though it is not conflicting with such a fact. Notice that
$$\frac{\|u(\cdot,t)\|_{L^{3}(\mathbb{R}^3)}}{\big(\log\log\log\frac{1}{T^*-t}\big)^c}$$
is not invariant with respect to the Navier-Stokes scaling \eqref{NSErescale} but is \textit{slightly supercritical}\footnote{We say a quantity $F(u,p)$ is supercritical if, for the rescaling \eqref{NSErescale}, we have $F(u_{\lambda}, p_{\lambda})= \lambda^{-\beta}F(u_{\lambda}, p_{\lambda})$ for some $\beta>0$.}  due to the presence of the logarithmic denominator. Let us also mention that prior to Tao's paper \cite{Tao19}, in the presence of axial symmetry, a different slightly supercritical regularity criteria was obtained in \cite{pan2016regularity}. 

The contribution of our present paper is to develop a new strategy for proving quantitative estimates (see Propositions \ref{prop.main} and \ref{prop.maints}) for the Navier-Stokes equations, which then enables us to build upon Tao's work \cite{Tao19} to quantify critical norms. Our first Theorem involves applying the backward propagation of concentration stated in Proposition \ref{prop.main} below to give a new necessary condition for solutions to the Navier-Stokes equations to possess a Type I blow-up. Note that if $u$ is a finite-energy solution that first blows-up at $T^{*}>0$ we say that $T^{*}$ is a Type I blow-up if
\begin{equation}\label{TypeIdef}
\|u\|_{L^{\infty}(0,T^{*}; L^{3,\infty}(\mathbb{R}^3))}\leq M.
\end{equation}
In the case of a Type I blow-up at $T^{*}$ the nonlinearity in \eqref{NSErescale} is heuristically balanced with the diffusion.
Despite this, it remains a long standing open problem whether or not Type I blow-ups can be ruled out when $M$ is large. Let us now state our first theorem.

\begin{theorem}[rate of blow-up, Type I]\label{theologL3}
There exists a universal constant $M_0\in [1,\infty)$ 
such that for all $M\geq M_0$ and $\delta\in (0,1)$ the following holds true.

Assume that $u$ is a mild solution to the Navier-Stokes equations on $\mathbb{R}^3\times [0,T^*)$ with $u\in L^{\infty}_{loc}([0,T^{*}); L^{\infty}(\mathbb{R}^3))$.\footnote{\label{footdefsol}Under these assumptions, $u$ is smooth on the epoch $(0,T)$ for any $T<T^*$ and belongs to $L^{\infty}((0,T); L^{4}(\mathbb{R}^3))\cap  L^{\infty}((0,T); L^{5}(\mathbb{R}^3))$ 
 by interpolation, which enables us to satisfy the hypothesis needed in Sections \ref{sec.quantannulus}-\ref{sec.6}. 
 Furthermore using Lemma 2.4 in \cite{JS13}, it gives that $u$ coincides with all local energy solutions (we refer to footnote \ref{footles} for a definition), with initial data $u(\cdot,s)$, on $\mathbb{R}^3\times (s,T^*)$ for any $0<s<T^*$. We call such a solution a \textbf{`smooth solution with sufficient decay'} on the interval $[0,T]$, for $T<T^*$. A mild solution with Schwartz class initial data and maximal time of existence $T^*$ will be such a solution, and so Theorem \ref{theologL3} applies to that setting. Notice that smoothness is needed here in order to get estimate \eqref{L3localisedlog} for all $t$ in the ad hoc interval.
 The framework of `smooth solutions with enough decay' is needed to apply Theorem \ref{theologL3} to the setting of Corollary \ref{optimalrateDSS}, where the solution is not of finite energy.} 
Assume that
\begin{enumerate}
\item $\|u\|_{L^{\infty}_{t}L^{3,\infty}_{x}(\mathbb{R}^3\times (0,T^*))}\leq M$
\item $u$ has a singular point at $(x,t)=(0,T^*)$. In particular $u\notin L^{\infty}_{x,t}(Q_{(0,T^*)}(r))$ for all sufficiently small $r>0$.
\end{enumerate}
Then the above assumptions imply that there exists  $c(\delta,M,T^*)\in(0,\infty)$, which we will specify in the proof, such that for any $t\in (\max(\frac{T^*}{2},T^*-c(\delta,M,T^*)), T^*)$ we have
\begin{equation}\label{L3localisedlog}
\int\limits_{B_{0}\big((T^*)^{\frac{1}{2}}(T^*-t)^{\frac{1-\delta}{2}}\big)} |u(x,t)|^3 dx\geq
\frac{\log\Big(\frac{1}{(T^*-t)^{\frac{\delta}{2}}}\Big)}{\exp(\exp(M^{1025}))}.
\end{equation}
\end{theorem}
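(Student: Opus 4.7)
The plan is to argue via the contrapositive: suppose that for some $t$ sufficiently close to $T^{*}$ the localised $L^{3}$ lower bound \eqref{L3localisedlog} fails. Combining the failure of \eqref{L3localisedlog} on $B_{0}(R(t))$ with the Type I bound (which controls the tails in weak-$L^{3}$) and the local-in-space smoothing of Jia--\v{S}ver\'{a}k, one should propagate regularity into a full parabolic neighbourhood of $(0,T^{*})$, contradicting assumption (2). The backward propagation of spatial concentration (Proposition \ref{prop.main}) and Tao's Carleman inequalities supply the quantitative mechanism that makes this possible, and the accumulated loss in these steps is what forces the doubly exponential constant $\exp(\exp(M^{1025}))$.

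First I would apply Proposition \ref{prop.main} to turn the parabolic singularity at $(0,T^{*})$ into a chain of concentration events along a sequence of times $s_{k}\in(t,T^{*})$ with $T^{*}-s_{k}=2^{-k}(T^{*}-t)$. For each such $s_{k}$ the proposition yields a point $x_{k}\in\R^{3}$ with
\[
\int_{B_{x_{k}}(r(M)\sqrt{T^{*}-s_{k}})}|u(y,s_{k})|^{3}\,dy\ \geq\ c(M),
\]
and the Type I bound together with the Jia--\v{S}ver\'{a}k criterion forces $|x_{k}|\leq C(M)\sqrt{T^{*}-s_{k}}$; otherwise the ball of concentration sits at parabolic distance $\gg\sqrt{T^{*}-s_{k}}$ from $(0,T^{*})$, and the smoothing would regularise a neighbourhood of $(0,T^{*})$. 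Choosing the quantity $c(\delta,M,T^{*})$ small enough in terms of $\delta$, $M$ and $T^{*}$ guarantees that, for every $t\in(T^{*}-c(\delta,M,T^{*}),T^{*})$, all of these concentration balls lie inside $B_{0}(R(t))$.

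Second, I would transport each concentration event from time $s_{k}$ back to the fixed time $t$ by means of Tao's quantitative Carleman estimates. Each application yields a ball $B_{y_{k}}(\rho_{k})\subset B_{0}(R(t))$ on which $\int|u(y,t)|^{3}\,dy\geq \tilde c(M)$, with $\tilde c(M)$ of order $\exp(-\exp(M^{1025}))$ reflecting the accumulated Carleman weight loss. A decisive point is to choose the $s_{k}$ (and correspondingly the $\rho_{k}$) so that the resulting balls $B_{y_{k}}(\rho_{k})$ are pairwise (essentially) disjoint; this is possible because distinct dyadic scales $\sqrt{T^{*}-s_{k}}$ at which concentration is produced survive as distinct parabolic scales when propagated back to time $t$. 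The number of events that fit into the ad hoc time interval is $K\sim\log((T^{*}-t)^{-\delta/2})$, and summing the lower bounds over $k=1,\ldots,K$ produces precisely \eqref{L3localisedlog}.

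The main obstacle is the second step: carefully tracking the quantitative dependence of the Carleman propagation from each $(x_{k},s_{k})$ back to time $t$, so that the resulting concentration balls at time $t$ remain (essentially) disjoint and each carries a universal (though $M$-dependent) lower bound of size $\exp(-\exp(M^{1025}))$. The Carleman weights, cutoff radii, and the dyadic sequence $s_{k}$ have to be dovetailed so that the doubly exponential loss appears only once in the final constant, rather than being raised to the power $K$. Once that book-keeping is done, the statement follows by a direct summation, with $c(\delta,M,T^{*})$ chosen so that the full chain of applications remains in the regime where both Proposition \ref{prop.main} and Tao's Carleman estimates are valid.
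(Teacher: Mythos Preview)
Your proposal misidentifies both the role of Proposition~\ref{prop.main} and the direction of the Carleman propagation, and it misses the two auxiliary ingredients that actually close the argument.

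Proposition~\ref{prop.main} is not a device for generating a chain of concentration events at times $s_{k}\in(t,T^{*})$; it is the full quantitative machine (backward propagation of vorticity concentration, Carleman inequalities, and the summing of disjoint annuli at the \emph{final} time) packaged into a single statement. In the paper one applies it \emph{once}, on the interval $[0,t]$ with $t_{0}=t$, $T=t$ and $\lambda=(T^{*}-t)^{(1-\delta)/2}$. The logarithm $\log\big((T^{*}-t)^{-\delta/2}\big)$ is produced \emph{inside} the proposition, in Step~5 (summing scales under the extra hypothesis \eqref{L3lowerboundassumption}); it does not come from summing over an external sequence $s_{k}$. Your plan to transport concentration from times $s_{k}>t$ back to time $t$ runs against the grain of the Carleman estimates used here: they transfer vorticity concentration at \emph{earlier} times to an $L^{3}$ lower bound at the \emph{final} time $t_{0}$, not the other way around. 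Moreover, backward propagation via Lemma~\ref{lem.backconc} from each $s_{k}$ to $t$ lands on the \emph{same} ball $B_{0}\big(4\sqrt{S^{\sharp}}^{-1}(T^{*}-t)^{1/2}\big)$ regardless of $k$, so your disjointness claim fails.

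The actual proof runs as follows. First one invokes the concentration result of \cite{BP18} (Theorem~2 there) to secure, for every $t$ close to $T^{*}$, the lower bound $\int_{B_{0}(t^{1/2}(T^{*}-t)^{(1-\delta)/2})}|u(x,t)|^{3}\,dx\geq\gamma_{univ}^{3}$; this verifies hypothesis \eqref{L3lowerboundassumption} and unlocks the localized conclusion \eqref{Linfinitybounds1} of Proposition~\ref{prop.main}. If \eqref{L3localisedlog} fails at some $t$, the contrapositive of Proposition~\ref{prop.main} yields an $L^{\infty}$ bound for $u(\cdot,t)$ on a ball $B_{0}\big(C_{2}t^{1/2}M^{50}(-s_{1})^{1/2}\big)$ of size comparable to $(T^{*}-t)^{\frac12-\frac\delta4}$. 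The contradiction then comes from Corollary~\ref{corL_inftyconc}, which (via local-in-space smoothing) forces $\|u(\cdot,t)\|_{L^{\infty}(B_{0}(C(M)(T^{*}-t)^{1/2}))}>C_{univ}M^{-49}(T^{*}-t)^{-1/2}$ near the singular point. For $t$ sufficiently close to $T^{*}$ (this is what fixes $c(\delta,M,T^{*})$) the two balls nest and the two $L^{\infty}$ bounds are incompatible. Your proposal omits both the \cite{BP18} input and the $L^{\infty}$ concentration Corollary~\ref{corL_inftyconc}, which are precisely what tie the machinery to the assumed singularity at $(0,T^{*})$.
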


This theorem is proved in Subsection \ref{sec.proofmainrests} below. Notice that in Theorem \ref{theologL3}, not only is the rate new but also the fact that the $L^{3}$ norm blows up on a ball of radius $O((T^*-t)^{\frac{1}{2}-})$ around \textit{any} Type I singularity. Previously in \cite{LOW18} (specifically Theorem 1.3 in \cite{LOW18}), it was shown that if a solution blows up (without Type I bound) then the $L^{3}$ norm blows up on certain \textit{non-explicit} concentrating sets.

The Navier-Stokes scaling symmetry \eqref{NSErescale} plays a role in considering blow-up ansatzes having certain symmetry properties. In \cite{Leray}, Leray suggested the blow-up ansatz of \textit{backward self-similar solutions}\footnote{We say $u:\mathbb{R}^3\times (-\infty,0)\rightarrow\mathbb{R}^3$ is a backward self similar solution if $u(x,t)=\frac{1}{\sqrt{-t}}u\Big(\frac{x}{\sqrt{-t}}, 1\Big)$ for all $(x,t)\in\mathbb{R}^3\times (-\infty,0)$.}, which are invariant with respect to the Navier-Stokes rescaling.
Although  the existence of non-zero  backward self-similar solutions to the Navier-Stokes equations has been ruled out under general circumstances in \cite{nevcas1996leray} and \cite{tsai1998leray}, the existence of non-zero backward discretely self-similar solutions remains open.  Here we say that $u$ is a backward discretely self-similar solution ($\lambda$-DSS) if there exists $\lambda\in (1,\infty)$ such that $u(x,t)=\lambda u(\lambda x,\lambda^2 t)$ for all $(x,t)\in \mathbb{R}^3\times (-\infty,0)$.
As a corollary to Theorem \ref{theologL3}, we show that if there exists a non-zero $\lambda$-DSS (having certain decay properties which we will specify), then the localized blow-up rate \eqref{L3localisedlog} in Theorem \ref{theologL3} is optimal.
\begin{corollary}\label{optimalrateDSS} 
Suppose $u:\mathbb{R}^3\times (-\infty,0)\rightarrow 0$ is a non-zero $\lambda$-DSS to the Navier-Stokes equations such that
\begin{equation}\label{DSShypothesis}
u\in C^{\infty}(\mathbb{R}^3\times (-\infty,0))\cap C((-\infty,0); L^{p}(\mathbb{R}^3)),
\end{equation}
for some $p\in[3,\infty)$.
There exists $M>1$ such that for every $\delta\in(0,1)$ there is a $C(\delta,M)\in (0,\infty)$ with the holding true. Namely, for all $t\in [\max(-\tfrac{1}{2}, -C(\delta,M)),0)$ we have
\begin{equation}\label{L3localisedlogDSS}
\frac{\log\Big(\frac{1}{(-t)^{\frac{\delta}{2}}}\Big)}
{\exp(\exp(M^{1025}))}\leq \int\limits_{B_{0}(1)} |u(x,t)|^3 \leq M^3\log\Big(\frac{2}{\sqrt{-t}}\Big).
\end{equation}
\end{corollary}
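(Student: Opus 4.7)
The plan is to reduce \eqref{L3localisedlogDSS} to an application of Theorem~\ref{theologL3} after verifying that a non-zero $\lambda$-DSS solution fulfilling \eqref{DSShypothesis} fits the ``smooth solution with sufficient decay'' framework of footnote~\ref{footdefsol} and is singular at $(0,0)$. The first step is to extract a constant $M\geq 1$ such that $\|u\|_{L^\infty_tL^{3,\infty}_x(\mathbb{R}^3\times(-\infty,0))}\leq M$. By the scale invariance of $L^{3,\infty}$ and the DSS relation $u(x,t)=\lambda u(\lambda x,\lambda^2 t)$, the quantity $\|u(\cdot,t)\|_{L^{3,\infty}}$ is $2\log\lambda$-periodic in $\log(-t)$, so a uniform bound on one period suffices. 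For $p=3$ this follows at once from $L^3\hookrightarrow L^{3,\infty}$ and continuity; for $p>3$ one combines smoothness of the DSS profile $u(\cdot,-1)$ on compact sets with the iterated DSS identity $u(x,-1)=\lambda^{-n}u(x/\lambda^n,-\lambda^{-2n})$ to show $|u(x,-1)|\lesssim |x|^{-1}$ at infinity, hence $u(\cdot,-1)\in L^{3,\infty}$, and periodicity delivers $M$. The same scaling yields $\|u(\cdot,t)\|_{L^\infty}\leq K(u)(-t)^{-1/2}$, so by interpolation with an $L^p$ bound one obtains $u\in L^\infty_{loc}([-1,0);L^4\cap L^5)$, as required by footnote~\ref{footdefsol}.

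Next, I would show that $(0,0)$ is a singular point of $u$. If not, $u$ is bounded on some parabolic cylinder $Q_{(0,0)}(r)$ by a constant $K_0$; but then for any $(x,t)\in\mathbb{R}^3\times(-\infty,0)$, the iterated DSS identity $u(x,t)=\lambda^{-m}u(\lambda^{-m}x,\lambda^{-2m}t)$ applied with $m$ large enough that $(\lambda^{-m}x,\lambda^{-2m}t)\in Q_{(0,0)}(r)$ yields $|u(x,t)|\leq \lambda^{-m}K_0\to 0$, forcing $u\equiv 0$ and contradicting non-triviality. Hence the time-translated solution $v(x,t):=u(x,t-1)$ satisfies the hypotheses of Theorem~\ref{theologL3} on $\mathbb{R}^3\times(0,1)$ with blow-up time $T^*=1$.

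For the lower bound in \eqref{L3localisedlogDSS}, I set $C(\delta,M):=c(\delta,M,1)$ and apply Theorem~\ref{theologL3}: for $t\in[\max(-\tfrac{1}{2},-C(\delta,M)),0)$, the theorem's radius $(T^*)^{1/2}(T^*-(t+1))^{(1-\delta)/2}=(-t)^{(1-\delta)/2}\leq 1$ places its ball inside $B_0(1)$, and the positivity of $|u|^3$ transfers the bound \eqref{L3localisedlog} to $B_0(1)$. For the upper bound, a layer-cake decomposition together with the two estimates from Step~1 gives
$$\int_{B_0(1)}|u(x,t)|^3\,dx=3\int_0^\infty\lambda^2|\{|u(\cdot,t)|>\lambda\}\cap B_0(1)|\,d\lambda\leq M^3|B_0(1)|+3M^3\log\frac{K}{M\sqrt{-t}},$$
obtained by splitting at threshold $\lambda=M$ and using $|B_0(1)|$ on $[0,M]$, the weak-$L^3$ bound $M^3/\lambda^3$ on $[M,K/\sqrt{-t}]$, and the $L^\infty$ cut-off on $[K/\sqrt{-t},\infty)$. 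After possibly enlarging $M$ to absorb the additive constants for $-t$ small enough, this is bounded by $M^3\log(2/\sqrt{-t})$.

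The main obstacle is Step~1 in the case $p>3$: producing both the uniform $L^{3,\infty}$ bound and the pointwise $L^\infty$ bound at the correct scaling rate from the mere $L^p$-continuity hypothesis, which must be wrung out of the interplay between smoothness on $\mathbb{R}^3\times(-\infty,0)$, the DSS identity, and the structure of the singular set at $t=0$. Once those bounds are secured, the remainder reduces to a routine translation in time, a direct invocation of Theorem~\ref{theologL3} for the lower bound, and the elementary layer-cake computation above for the upper bound.
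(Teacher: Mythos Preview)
Your overall architecture is exactly the paper's: establish a Type I bound, show $(0,0)$ is singular via the DSS scaling contradiction, then invoke Theorem~\ref{theologL3} on $\mathbb{R}^3\times(-1,0)$ for the lower bound and estimate the upper bound directly. Your singularity argument and your lower-bound step are essentially identical to the paper's.

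The genuine gap is in Step~1 for $p>3$. Your argument for $|u(x,-1)|\lesssim |x|^{-1}$ via the identity $u(x,-1)=\lambda^{-n}u(x/\lambda^n,-\lambda^{-2n})$ is circular. Choosing $n$ so that $|x|/\lambda^n\in[1,\lambda]$ forces the time coordinate $-\lambda^{-2n}$ to approach $0$ as $|x|\to\infty$; to conclude, you would need a uniform bound on $|u|$ over the set $\{1\le |y|\le\lambda\}\times(-\varepsilon,0)$. But smoothness is only assumed on the \emph{open} interval $(-\infty,0)$, and nothing in \eqref{DSShypothesis} rules out $|u(y,s)|\to\infty$ as $s\uparrow 0$ even for $|y|\ge 1$. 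The same circularity afflicts your claimed bound $\|u(\cdot,t)\|_{L^\infty}\le K(u)(-t)^{-1/2}$: periodicity reduces it to $\|u(\cdot,-1)\|_{L^\infty}<\infty$, which does not follow from smoothness plus $L^p$ membership alone.

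The paper does not attempt to close this from scratch. It invokes the removable-singularity result of Chae and Wolf \cite[Theorem~1.1]{chae2017removing}, which under \eqref{DSShypothesis} yields the pointwise bound
\[
|u(x,t)|\le \frac{M}{|x|+\sqrt{-t}}\quad\text{on }(\mathbb{R}^3\times(-\infty,0])\setminus\{(0,0)\}.
\]
This single estimate delivers the Type I bound \eqref{e.t1A}, the $L^\infty$ bound, and the upper inequality in \eqref{L3localisedlogDSS} by direct integration over $B_0(1)$, avoiding your layer-cake step. Once you have Chae--Wolf, your remaining steps go through; without it, Step~1 as written does not.
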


This corollary is proved in Subsection \ref{sec.proofmainrests} below.

In \cite{ESS2003}, it is shown that if $u$ is a finite-energy  solution to the Navier-Stokes equations in 
$C^{\infty}(\mathbb{R}^3\times (0,1))$, with Schwartz initial data, then
\begin{equation}\label{LinfinityL3}
\|u\|_{L^{\infty}((0,1); L^{3}(\mathbb{R}^3))}<\infty
\end{equation}
implies that $u$ does not blow-up at time $1$ (namely $u\in L^{\infty}_{t, loc}((0,1]; L^{\infty}(\mathbb{R}^3))$).
In \cite{seregin2012}, Seregin refined the assumption \eqref{LinfinityL3} to
\begin{equation}\label{Seregin2012}
\sup_{n}\|u(\cdot,t_{(n)})\|_{L^{3}(\mathbb{R}^3)}<\infty\,\,\,\textrm{with}\,\,\,t_{(n)}\uparrow 1.
\end{equation}
Seregin's result implies that if $u$ is a finite-energy solution that first loses smoothness at $T^{*}>0$ then
\begin{equation}\label{wholelimitL3}
\lim_{t\uparrow T^{*}}\|u(\cdot,t)\|_{L^{3}(\mathbb{R}^3)}=\infty.
\end{equation}
This result has been further refined to other wider critical spaces and to domains other than $\mathbb{R}^3$. See, for example, \cite{Albritton18}, \cite{AlbrittonBarkerBesov2018}-\cite{AlbrittonBarker2018local}, \cite{BS17} and \cite{MMP17b}.
All these arguments are qualitative and achieved by contradiction and compactness arguments. It is interesting to note that in contrast to \eqref{L3weaklquant} it is not known\footnote{Assume for contradiction that \eqref{L3tsweaklquant} does not hold. Then we have a sequence of solutions $(u^{k})_{k\in\mathbb{N}}$ such that $\|u^{(k)}(\cdot,1)\|_{L^{\infty}(\mathbb{R}^3)}\uparrow \infty$ and, for each $k$, a sequence of associated time slices $t_{(n)}^{k}\uparrow 1$ such that $\sup_{n,k}\|u^{(k)}(\cdot,t_{(n)}^k)\|_{L^{3}(\mathbb{R}^3)}=M<\infty$. The main block for the contradiction argument to go through is that 
the sequence $(t_{(n)}^{k})_{n\in\mathbb{N}}$ may be different for distinct indices $k$.}, even abstractly, 
if there exists a $G:(0,\infty)\rightarrow(0,\infty)$ such that if $u$ is a finite-energy solution of the Navier-Stokes equations belonging to $C^{\infty}(\mathbb{R}^3\times (0,1])$  then 
\begin{equation}\label{L3tsweaklquant}
\sup_{n}\|u(\cdot,t_{(n)})\|_{L^{3}(\mathbb{R}^3)}<\infty\,\,\,\textrm{with}\,\,\,t_{(n)}\uparrow 1\,\Rightarrow\|u(\cdot,1)\|_{L^{\infty}(\mathbb{R}^3)}\leq G\big(\sup_{n}\|u(\cdot,t_{(n)})\|_{L^{3}(\mathbb{R}^3)}\big).
\end{equation}

In our second main theorem, we apply Proposition \ref{prop.maints} to fully quantify Seregin's result in \cite{seregin2012}, which generalizes Theorem 1.2 in \cite{Tao19}. Now let us state our second theorem. 
\begin{theorem}[main quantitative estimate, time slices; quantification of Seregin's result]\label{theo.mainbis} 
There exists a universal constant $M_{1}\in[1,\infty)$. Let $M\in[M_{1},\infty)$.
We define 
 $\Mp$ by\footnote{\label{footenergy}In particular, $\Mp$ is chosen such that the following is true. If $(u,p)$ is a  suitable finite-energy solution (defined in Section \ref{subsec.not} `Notations') to Navier-Stokes on $\mathbb{R}^3\times [0,T)$ with $L^{3}$ initial data $\|u_{0}\|_{L^{3}(\mathbb{R}^3)}\leq M$, then $w:= u-e^{t\Delta}u_{0}$ satisfies $\|w(\cdot,t)\|_{L^{2}(\mathbb{R}^3)}^2+\int\limits_{0}^{t}\int\limits_{\mathbb{R}^3} |\nabla w|^2 dxdt'\leq (\Mp)^4 t^{\frac{1}{2}}$ for $t\in (0,T)$ and $M$ larger than a universal constant. See Lemma \ref{lem.backconctimeslice}.}
\begin{equation}\label{e.defM'theo}
\Mp:= \exp\Big(\frac{L_*M^{5}}{2}\Big),
\end{equation}
for an appropriate constant $L_*\in(0,\infty)$. Let $(u,p)$ be a finite-energy $C^{\infty}(\mathbb{R}^3\times (-1,0))$ solution to the Navier-Stokes equations \eqref{e.nse} on $\R^3\times[-1,0]$.\footnote{Notice that smoothness is needed here to have the energy inequality starting from every time $t_k$, and not for almost every $t'\in(-1,0)$, see \eqref{energyinequalityturbulent} as would be the case if $(u,p)$ was just a suitable finite-energy solution.} 
 Assume  that there exists $t_{(k)}\in [-1,0)$ such that
\begin{equation}\label{e.tsAtheo}
t_{(k)}\uparrow 0\,\,\,\,\textrm{with}\,\,\,\, \sup_{k} \|u(\cdot, t_{(k)})\|_{L^{3}(\mathbb{R}^3)}\leq M.
\end{equation}
Select any ``well-separated'' subsequence (still denoted $t_{(k)}$) such that
\begin{equation}\label{wellsepproptstheo}
\sup_{k}\frac{-t_{(k+1)}}{-t_{(k)}}<\exp(-2(\Mp)^{1223}).
\end{equation}
Then for 
\begin{equation}\label{jdeftheo}
j:=\lceil\exp(\exp(({\Mp})^{1224}))\rceil+1,
\end{equation}
we have the bound
\begin{equation}\label{Linfinityboundtstheo}
\|u\|_{L^{\infty}\big(\mathbb{R}^3\times \big(\tfrac{t_{(j+1)}}{4}, 0\big)\big)}\leq \frac{C_{1}M^{-23}}{(-t_{(j+1)})^{\frac{1}{2}}},
\end{equation}
for  a universal constant $C_1\in(0,\infty)$.  
\end{theorem}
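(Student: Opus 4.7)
The plan is to invoke the time-slice version of backward concentration propagation, Proposition \ref{prop.maints}, and iterate it $j$ times along the well-separated subsequence $\{t_{(k)}\}$, in the spirit of Tao's $L^\infty_tL^3_x$ strategy but adapted to discrete $L^3$-control. The target inequality \eqref{Linfinityboundtstheo} is established by contradiction: if some $(x_\ast, t_\ast)\in \mathbb{R}^3\times(t_{(j+1)}/4, 0)$ violated it, then a concentration of $L^3$ mass should be forced at the nearest earlier well-separated slice $t_{(k_1)}$ on a parabolic ball of size $\sim (-t_{(k_1)})^{1/2}$, with quantitative lower bound
\begin{equation*}
\int_{B_{x_1}(r_1)} |u(x, t_{(k_1)})|^3 \,dx \geq \eta(M^\flat),
\end{equation*}
for some explicit $\eta>0$ depending only on $M^\flat$.

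From there, I would iterate. Rescaling the solution by the Navier-Stokes symmetry \eqref{NSErescale} centered on this concentration event and using the well-separation condition \eqref{wellsepproptstheo}, the next time slice $t_{(k_1+1)}$ still lies past parabolic time $1$ in the new variables, so Proposition \ref{prop.maints} applies anew and produces a further nested concentration at a strictly earlier slice $t_{(k_2)}$ with $k_2\ge k_1+1$, on a much smaller ball $B_2\subset B_{x_1}(r_1)$. The exponent $\exp(-2(M^\flat)^{1223})$ in \eqref{wellsepproptstheo} should be calibrated precisely to make this induction step self-contained with the universal constants of Proposition \ref{prop.maints}. After $j$ such iterations, we collect $j$ nested concentration events, each sitting at a distinct slice no earlier than $t_{(j+1)}$.

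Finally, I would derive the contradiction by bookkeeping the $L^3$ budget. Using the decomposition $u = e^{t\Delta}u_0 + w$ from footnote \ref{footenergy}, one has $\|w(\cdot,t)\|_{L^3(\mathbb{R}^3)}$ bounded by a power of $M^\flat$ via Sobolev interpolation between the $L^2$-bound and the $\dot H^1$-bound on $w$, while $\|e^{t\Delta}u_0\|_{L^3}\leq M$. The $j$ nested concentration events should accumulate into a total $L^3$-contribution (at one of the slices, after suitable rescaling back) that grows at least linearly in $j$ times $\eta(M^\flat)$. Since $j\sim \exp(\exp((M^\flat)^{1224}))$ massively exceeds any polynomial-in-$M^\flat$ budget, we reach a contradiction, and the desired bound \eqref{Linfinityboundtstheo} with the prefactor $C_1 M^{-23}$ falls out from the quantitative constants appearing in Proposition \ref{prop.maints}.

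The main obstacle, I anticipate, is the bookkeeping of the iteration: at each step the rescaling introduces new spatial and temporal scales, and one must verify (a) that the next time slice is admissible for Proposition \ref{prop.maints} in the rescaled variables, (b) that the universal constants do not degrade across iterations, and (c) that the $j$ concentration events genuinely accumulate (rather than possibly cancelling or overlapping in a way that wastes $L^3$-budget) to beat the polynomial energy-type bound. The precise choices of $\exp(-2(M^\flat)^{1223})$ in \eqref{wellsepproptstheo} and $\exp(\exp((M^\flat)^{1224}))$ in \eqref{jdeftheo} must be balanced against the constants hidden in Proposition \ref{prop.maints}; threading this balance is the heart of the argument.
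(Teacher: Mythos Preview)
Your proposal misreads the role of Proposition \ref{prop.maints} and, as written, would not close. Proposition \ref{prop.maints} is not an iterative building block that one applies repeatedly along the subsequence; it is already the full quantitative estimate. Its content is precisely: \emph{if} the vorticity concentrates at $t_{(j+1)}$ in the sense of \eqref{e.concts}, \emph{then} $j\le \exp(\exp((\Mp)^{1224}))$. The iteration over scales, the Carleman inequalities, and the summing of disjoint (not nested) annuli to beat the $L^3$ budget at time $0$ are all carried out \emph{inside} the proof of Proposition \ref{prop.maints}. Your plan to ``iterate it $j$ times'' and collect $j$ nested $L^3$-concentration events at successive slices has no analogue in the proposition's statement and would in any case require a version of the proposition that outputs a localized $L^3$ lower bound at a new slice, which it does not.

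The paper's proof is essentially a two-line contrapositive. With $j$ chosen as in \eqref{jdeftheo}, Proposition \ref{prop.maints} forces the \emph{negation} of \eqref{e.concts}:
\[
\int_{B_0(4\sqrt{S^\flat}^{-1}(-t_{(j+1)})^{1/2})}|\omega(x,t_{(j+1)})|^2\,dx\le M^2(-t_{(j+1)})^{-1/2}\sqrt{S^\flat}.
\]
This vorticity smallness at the slice $t_{(j+1)}$ is exactly the input to the local-in-space short-time smoothing machinery (Theorem \ref{theo.locshortime}, via the argument of Lemma \ref{lem.backconc} with the bound \eqref{e.conclbddu}), which upgrades it to
\[
\|u\|_{L^\infty\big(B_0(C_2M^{50}(-t_{(j+1)})^{1/2})\times(\tfrac{t_{(j+1)}}{4},0)\big)}\le \frac{C_1M^{-23}}{(-t_{(j+1)})^{1/2}}.
\]
Since the concentration hypothesis \eqref{e.concts} and all constants are translation-invariant in space, one may recenter at any $x_0\in\R^3$ and repeat, yielding the global bound \eqref{Linfinityboundtstheo}. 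No contradiction argument, no rescaling iteration, and no $L^3$-budget bookkeeping at multiple slices is needed at this level; those ideas live inside the proof of Proposition \ref{prop.maints}, not in the deduction of Theorem \ref{theo.mainbis} from it.
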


This theorem is proved in Subsection \ref{sec.proofmainrests} below.

\subsubsection*{Further applications}
Section \ref{sec.further} contains three further applications of the technology developed in the present paper: (i) Proposition \ref{prop.type1limsup}, a regularity criteria based on an effective\footnote{See footnote \ref{foot.eff} for the definition of `effective' bounds.} relative smallness condition in the Type I setting, (ii) Corollary \ref{cor.number}, an effective bound for the number of singular points in a Type I blow-up scenario, (iii) Proposition \ref{prop.relative}, a regularity criteria based on an effective relative smallness condition on the $L^3$ norm at initial and final time. {Non-effective} quantitative bounds of the above results were previously obtained by compactness methods: for (ii) see \cite[Theorem 2]{CWY19}, for (iii) see \cite[Theorem 4.1 (i)]{AlbrittonBarkerBesov2018}.

\subsection{Comparison to previous literature and novelty of our results}
Theorems \ref{theologL3} and \ref{theo.mainbis} in this paper follow from new quantitative estimates for the Navier-Stokes equations (Propositions \ref{prop.main} and \ref{prop.maints}), which build upon recent breakthrough work by Tao in \cite{Tao19}. In particular, Tao shows that for \textit{classical}\footnote{In \cite{Tao19}, these are solutions that are smooth in $\mathbb{R}^{3}\times (0,1)$ and such that all derivatives of $u$ and $p$ lie in $L^{\infty}_{t}L^{2}_{x}(\mathbb{R}^2\times (0,1))$.}
solutions to the Navier-Stokes equations 
\begin{equation}\label{taomainest}
\|u\|_{L^{\infty}_{t}L^{3}_{x}(\mathbb{R}^3\times (0,1))}\leq A\Rightarrow \|u(\cdot,t)\|_{L^{\infty}(\mathbb{R}^3)}\leq \exp(\exp(\exp(A^{O(1)})))t^{-\frac{1}{2}}\,\,\,\,\textrm{for}\,\,\,0<t\leq 1.
\end{equation}
Before describing our contribution, we first find it instructive to outline Tao's approach in \cite{Tao19}.

Fundamental to Tao's approach for showing \eqref{taomainest} is the following fact\footnote{Let $\varphi\in C^{\infty}_{0}(B_{0}(1))$ with $\varphi\equiv 1$ on $B_{0}(\frac{1}{2})$. The Littlewood-Paley projection $P_{N}$ is defined for any $N>0$ by $\widehat{P_{N}}f(\xi):=\Big(\varphi(\frac{\xi}{N})-\varphi(\frac{2\xi}{N})\Big)\widehat{f}(\xi).$} (see Section 6 in \cite{Tao19}). There exists a universal constant $\varepsilon_{0}$ such that if $u$ is a classical solution to the Navier-Stokes equations with
\begin{align}\label{scaleinvarsmallhighfreqbis}
&\|u\|_{L^{\infty}_{t}L^{3}_{x}(\mathbb{R}^3\times (0,1))}\leq A\\
\mbox{and}\quad &N^{-1}\|P_{N}u\|_{L^{\infty}_{x,t}(\mathbb{R}^3\times (\frac{1}{2},1))}<\varepsilon_{0}\,\,\,\textrm{for}\,\,\textrm{all}\,\,\,N\geq N_{*},\label{scaleinvarsmallhighfreq}
\end{align}
then $\|u\|_{L^{\infty}_{x,t}(\mathbb{R}^3\times (\frac{7}{8},1))}$ can be estimated explicitly in terms of $A$ and $N_{*}$.
Related observations were made previously in \cite{cheskidov2010regularity}, but in a slightly different context and without the bounds explicitly stated.

In this perspective, Tao's aim is the following:
\begin{itemize}
\item []\textbf{Tao's goal}: Under the scale-invariant  assumption \eqref{scaleinvarsmallhighfreqbis}, if \eqref{scaleinvarsmallhighfreq} fails for $\varepsilon_{0}= A^{-O(1)}$ and $N=N_{0}$, what is an upper bound for $N_{0}$? 
\end{itemize}
In \cite{Tao19} (Theorem 5.1 in \cite{Tao19}), it is shown that $N_{0}\lesssim\exp\exp\exp(A^{O(1)})$, which implies \eqref{taomainest} by means of the quantitative regularity mechanism \eqref{scaleinvarsmallhighfreq} with $N_{*}=2N_{0}$.
We emphasize that since the regularity mechanism \eqref{scaleinvarsmallhighfreq} is \textit{global}: all quantitative estimates obtained in this way are in terms of globally defined quantities.

The strategy in \cite{Tao19} for showing Tao's goal with $N_{0}\lesssim\exp(\exp(\exp(A^{O(1)})))$ can be summarized in four steps. We refer the reader to the Introduction in \cite{Tao19} for more details.
\begin{itemize}
\item[] \textbf{1) Frequency bubbles of concentration (Proposition 3.2 in \cite{Tao19})}.\newline
Suppose $\|u\|_{L^{\infty}_{t}L^{3}_{x}(\mathbb{R}^3\times (t_{0}-T,t_{0}))}\leq A$  is such that
\begin{equation}\label{frequencyconc}
N_{0}^{-1}|P_{N_{0}}u(x_{0},t_{0})|> A^{-O(1)}.
\end{equation}
Then for all $n\in\mathbb{N}$ there exists $N_{n}>0$, $(x_{n}, t_{n})\in\mathbb{R}^3\times (t_{0}-T,t_{n-1})$ such that
\begin{equation}\label{frequencyconciterate}
N_{n}^{-1}|P_{N_{n}}u(x_{n},t_{n})|> A^{-O(1)}
\end{equation}
with 
\begin{equation}\label{parabolicdependence}
x_{n}=x_{0}+O((t_{0}-t_{n})^{\frac 1 2}), N_{n}\sim |t_{0}-t_{n}|^{-\frac{1}{2}}.
\end{equation}
\item[] \textbf{2) Localized lower bounds on vorticity (p.37 in \cite{Tao19}).} 
For certain scales $S>0$ and an `epoch of regularity' $I_S\subset [t_{0}-S, t_{0}-A^{-\alpha}S]$, where the solution enjoys `good' quantitative estimates on $\mathbb{R}^3\times I_S$ (in terms of $A$ and $S$), Tao shows the following: the previous step and $\|u\|_{L^{\infty}_{t}L^{3}_{x}(\mathbb{R}^3\times [t_{0}-T,t_{0}])}\leq A$ imply
\begin{equation}\label{vortlowerboundtao}
\int\limits_{B_{x_{0}}(A^{\beta} S^{\frac 12 })}|\omega(x,t)|^2 dx\geq A^{-\gamma}S^{-\frac 12}\,\,\,\textrm{for}\,\,\,\textrm{all}\,\,\,t\in I_S.
\end{equation}
Here, $\alpha$, $\beta$ and $\gamma$ are positive universal constants.
\item[] \textbf{3) Lower bound on the $L^{3}$ norm at the final moment in time $t_{0}$ (p.37-40 in \cite{Tao19}).} Using quantitative versions of the Carleman inequalities in \cite{ESS2003} (Propositions 4.2-4.3 in \cite{Tao19}), Tao shows that the lower bounds in step 2 can be transferred to a lower bound on the $L^{3}$ norm of $u$ at the final moment of time $t_{0}$. The applicability of the Carleman inequalities to the vorticity equation requires the `epochs of regularity' in the previous step and the existence of `good spatial annuli' where the solution enjoys good quantitative estimates.  Specifically, Tao shows that step 2 on $I_S$ implies
\begin{equation}\label{L3lowerTao}
\int\limits_{R_S\leq|x-x_{0}|\leq R_S'} |u(x,t_{0})|^3 dx\geq \exp(-\exp(A^{O(1)})).
\end{equation} 
\item[]\textbf{4) Conclusion: summing scales to bound $TN_{0}^2$.}
Letting $S$ vary for certain permissible $S$, 
the annuli in \eqref{L3lowerTao} become disjoint. The sum of \eqref{L3lowerTao} over such disjoint permissible annuli is bounded above by $\|u(\cdot, t_{0})\|_{L^{3}(\mathbb{R}^3)}$ and the lower bound due to the summing of scales is $\exp(-\exp(A^{O(1)}))\log(TN_{0}^2)$.
This gives the desired bound on $N_{0}$, namely
$$TN_{0}^2\lesssim \exp(\exp(\exp( A^{O(1)}))). $$
\end{itemize}
Let us emphasize once more that the approach in \cite{Tao19} produces quantitative estimates involving \textit{globally defined quantities}, since the quantitative regularity mechanism \eqref{scaleinvarsmallhighfreq} is inherently global. We would also like to emphasize that the fact that $\|u\|_{L^{\infty}_{t}L^{3}_{x}}<A$ is crucial for showing steps 1-2 in the above strategy.

The goal of the present paper is to develop a new robust strategy for obtaining new quantitative estimates of the Navier-Stokes equations, which are then applied to obtain Theorems \ref{theologL3} and \ref{theo.mainbis}. The main novelty (which we explain in more detail below) is that our strategy allows us to obtain \textit{local} quantitative estimates and even applies to certain situations where we are outside the regime of scale-invariant controls. 
 For simplicity, we will outline the strategy for the case when $\|u\|_{L^{\infty}_{t}L^{3,\infty}_{x}(\mathbb{R}^3\times (t_{0}-T, t_{0}))}\leq M$, before remarking on this strategy for cases without such a scale-invariant control (Theorem \ref{theo.mainbis}).

Fundamental to our strategy is the use of local-in-space smoothing near the initial time for the Navier-Stokes equations pioneered by Jia and \v{S}ver\'{a}k in \cite{JS14} (see also \cite{BP18} for extensions to critical cases). In particular, the result of \cite{JS14}, together with rescaling arguments from \cite{BP18}, implies the following. If $u:\mathbb{R}^3\times [t_{0}-T, t_{0}]\rightarrow\mathbb{R}^3$ is a smooth solution with sufficient decay\footnote{In this paper, `smooth solution with sufficient decay' always denotes the notion described in footnote \ref{footdefsol}.} of the Navier-Stokes equations and $\|u\|_{L^{\infty}_{t}L^{3,\infty}_{x}(\mathbb{R}^3\times (t_{0}-T, t_{0}))}\leq M$, then
\begin{equation}\label{locinspaceintro}
\int\limits_{B_{x_{0}}(4\sqrt{{S}^{\sharp}(M)}^{-1}(t_{0}-t^*_{0})^{\frac 12})}|\omega(x,t^*_{0})|^2 dx\leq \frac{M^2 \sqrt{S^{\sharp}(M)}}{(t_{0}-t^*_{0})^{\frac{1}{2}}}
\end{equation}
for $t^*_{0}\in (t_{0}-T, t_{0})$ implies that  
\begin{equation}\label{Linfinitylocainspaceintro}
\|u\|_{L^{\infty}_{x,t}\big(B_{x_{0}}(\tfrac{1}{2}\sqrt{{S}^{\sharp}(M)}^{-1}(t_{0}-t^*_{0})^{\frac 12})\times (\tfrac{3}{4}(t_{0}-t^*_{0})+t_{0}, t_{0})\big)}
\end{equation}
can be estimated explicitly in terms of $M$ and $t_{0}-t^*_{0}$. Here, ${S}^{\sharp}(M)=O(1)M^{-100}$ is as in Theorem \ref{theo.locshortime}.

In this perspective, the aim of our strategy is the following
\begin{itemize}
\item[] \textbf{Our goal:} If \eqref{locinspaceintro} fails for $t^*_0=t'_{0}$, what is a lower bound for $t_{0}-t'_{0}$?
\end{itemize} 
This is the main aim of Proposition \ref{prop.main}. Taking 
$s_{0}$ such that $t_{0}-t'_{0}\geq 2Ts_{0}$, we can then apply \eqref{locinspaceintro}-\eqref{Linfinitylocainspaceintro} with $t^*_{0}= t_{0}-Ts_{0}$. One might think of the main goal of our strategy as a physical space analogy to Tao's goal with 
$$N_{0}\sim |t_{0}-t'_{0}|^{-\frac{1}{2}}. $$ In contrast to \eqref{scaleinvarsmallhighfreq}, the regularity mechanism \eqref{locinspaceintro}-\eqref{Linfinitylocainspaceintro} produces quantitative estimates that are in terms of locally defined quantities, which is crucial for obtaining the \textit{localized} results as in Theorem \ref{theologL3}.
Our strategy for obtaining  a lower bound of $t_{0}-t_{0}'$  (see Proposition \ref{prop.main}) can be summarized in three steps; see also Figure \ref{fig.summary}.
\begin{itemize} 
\item[] \textbf{1) Backward propagation of vorticity concentration (Lemma \ref{lem.backconc}).}\\
Let $\|u\|_{L^{\infty}_{t}L^{3,\infty}_{x}(\mathbb{R}^3\times (t_{0}-T,t_{0}))}\leq M$. Suppose $t'_{0}\in (t_{0}-T,t_{0})$ is not too close to $t_{0}-T$ and is such that 
\begin{equation}\label{vortconcintro}
\int\limits_{B_{x_{0}}(4\sqrt{{S}^{\sharp}(M)}^{-1}(t_{0}-t'_{0})^{\frac 12})}|\omega(x,t'_{0})|^2 dx> \frac{M^2 \sqrt{S^{\sharp}(M)}}{(t_{0}-t'_{0})^{\frac{1}{2}}}.
\end{equation}
We show that for all $t''_{0}\in (t_{0}-T,t'_{0})$, such that
$t_{0}-t''_{0}$ is sufficiently large compared to $t_{0}-t'_{0}$ (in other words $t_0''$ is well-separated from $t_0'$), we have
\begin{equation}\label{backwardvortconcintro}
\int\limits_{B_{x_{0}}(4\sqrt{{S}^{\sharp}(M)}^{-1}(t_{0}-t''_{0})^{\frac 12})}|\omega(x,t''_{0})|^2 dx> \frac{M^2 \sqrt{S^{\sharp}(M)}}{(t_{0}-t''_{0})^{\frac{1}{2}}}.
\end{equation}
We refer the reader to Lemma \ref{lem.backconc} for precise statements for the rescaled/translated situation $\mathbb{R}^3\times (t_{0}-T,t_{0})= \mathbb{R}^3\times (-1,0).$
\item[] \textbf{2) Lower bound on localized $L^{3}$ norm at the final moment in time $t_{0}$.} Using the previous step, together with the same arguments as \cite{Tao19} involving quantitative Carleman inequalities, we show that for certain permissible annuli that
\begin{equation}\label{L3lowerintro}
\int\limits_{R\leq|x-x_{0}|\leq R'} |u(x,t_{0})|^3 dx\geq \exp(-\exp(M^{O(1)})).
\end{equation}
We wish to mention that the role of the Type I bound is to show the solution $u$ obeys good quantitative estimates in certain space-time regions, which is needed to apply the Carleman inequalities to the vorticity equation.
\item[] \textbf{3) Conclusion: summing scales to bound $t_{0}-t'_{0}$ from below.}
Summing \eqref{L3lowerintro} over all permissible disjoint annuli finally gives us the desired lower bound for $t_{0}-t'_{0}$ in Proposition \ref{prop.main}. We note that the localized $L^{3}$ norm of $u$ at time $t_{0}$ plays a distinct role to that of Type I condition described in the previous step. Its sole purpose is to bound the number of permissible disjoint annuli that can be summed, which in turn gives the lower bound of $t_{0}-t'_{0}$. Together with the assumed global Type I assumption, this is essentially why the lower bound in Theorem \ref{theologL3} on the localized $L^{3}$ norm near a Type I singularity is a single logarithm and holds at pointwise times.
\end{itemize}
Although the above relates to the case of Proposition \ref{prop.main} and Theorem \ref{theologL3} where $$\|u\|_{L^{\infty}_{t}L^{3,\infty}_{x}(\mathbb{R}^3\times (t_{0}-T,t_{0}))}\leq M,$$ we stress that the above strategy (with certain adjustments) is robust enough to apply to certain settings without a Type I control (Theorem \ref{theo.mainbis}). 

\begin{figure}[t]\label{fig.summary}
\begin{center}
\includegraphics[scale=0.75]{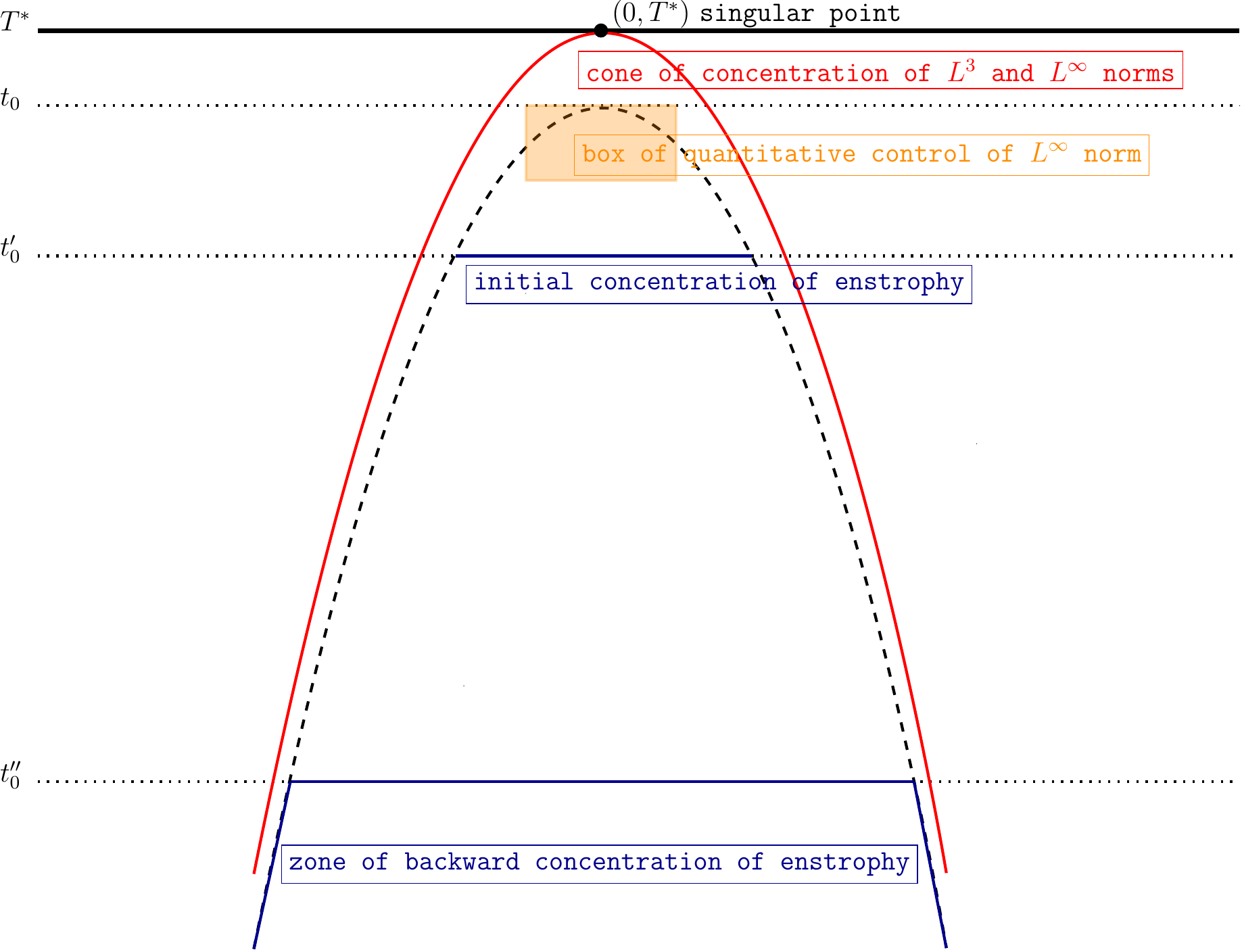}
\caption{Zones of concentration and quantitative regularity}
\end{center}
\end{figure}

Recall that Theorem \ref{theo.mainbis} is concerned with quantitative estimates on $u:\mathbb{R}^3\times (-1,0)\rightarrow\mathbb{R}^3$, where we assume
\begin{equation}\label{L3sequenceintro}
t_{(k)}\uparrow 0\,\,\,\textrm{with}\,\,\,\sup_{k}\|u(\cdot,t_{(k)})\|_{L^{3}(\mathbb{R}^3)}\leq M.
\end{equation}
First we remark that the local quantitative regularity statement \eqref{locinspaceintro}-\eqref{Linfinitylocainspaceintro} remains true (with ${t^*_{0}}$ replaced by $t_{k}$) if $u$ is a $C^{\infty}(\mathbb{R}^3\times (-1,0])$ finite-energy solution and the Type I condition is replaced by the weaker assumption that $\|u(\cdot,t_{(k)})\|_{L^{3}(\mathbb{R}^3)}\leq M$. Our goal then becomes the following
\begin{itemize}
\item[] \textbf{Our second goal:} If \eqref{locinspaceintro} fails for $t^*_{0}=t_{j}$ (with $t_0=0$ and $T=1$), what is an upper bound for $j$?
\end{itemize}
In this setting `\textbf{1) Backward propagation of vorticity concentration}' still remains valid if a sufficiently well-separated subsequence of $t_{(k)}$ is taken (see Lemma \ref{lem.backconctimeslice} and Proposition \ref{prop.maints}). To show this we use energy estimates in \cite{SeSv17} for solutions to the Navier-Stokes equations with $L^{3}(\mathbb{R}^3)$ initial data. Such estimates are also central to gain good quantitative control of the solution in certain space-time regions, which are required for applying the quantitative Carleman inequalities. The price one pays in this setting (when compared to the estimates  in \cite{Tao19}), is a gain of an additional exponential in the estimates. The reason is the control on  the energy of  $u(\cdot,t)-e^{t\Delta}u_{0}$ ( with $u_{0}\in L^{3}(\mathbb{R}^3)$) requires the use of Gronwall's lemma. 

In the strategy in \cite{Tao19} the lower bound on vorticity \eqref{vortlowerboundtao}, which is needed for getting a lower bound on the localized $L^{3}$ norm at $t_{0}$ via quantitative Carleman inequalities, is obtained from the frequency bubbles of concentration. 
In order for this transfer of scale-invariant information to take place, it appears essential that the solution has a scale-invariant control such as $\|u\|_{L^{\infty}_{t}L^{3}_{x}}\leq A.$
In our strategy, we instead work directly with quantities involving vorticity (similar to \eqref{vortlowerboundtao}), which are tailored for the immediate use of quantitative Carleman inequalities. In this way, we crucially avoid any need to transfer scale-invariant information, giving our strategy a certain degree of robustness.
\subsection{Final Remarks and Comments}
We give some heuristics about the quantitative estimates of the form \begin{equation}\label{quantgenform}
\|u\|_{L^{\infty}(\mathbb{R}^3\times (\frac{1}{2}, 1))}\leq G(\|u\|_{X})
\end{equation} that one can expect for the Navier-Stokes equations, when a finite-energy solution $u$ solution belongs to certain normed spaces $X\subset\mathcal{S}'(\mathbb{R}^3\times (0,1))$.
\subsubsection{Subcritical case} 
 Consider a space $X\subset\mathcal{S}'(\mathbb{R}^3\times (0,1))$ whose norm $\|\cdot\|_{X}$  is subcritical\footnote{A quantity $F(u)\in [0,\infty)$ is said to be \textit{subcritical} if, for the rescaling \eqref{NSErescale}, there exists $\alpha>0$ such that
$F(u_{\lambda})=\lambda^{\alpha} F(u)$.} (for example $L^{5+\delta}_{x,t}(\mathbb{R}^3\times (0,1))$ with $\delta>0$). If $u$ is a finite-energy solution with a finite subcritical norm on $\mathbb{R}^3\times (0,1)$, then it is known that $u$ must be belong to $C^{\infty}(\mathbb{R}^3\times (0,1])$. See, for example, \cite{ladyzhenskaya1967uniqueness}. Moreover, one typically has a quantitative estimate of the form \eqref{quantgenform} with 
$$G(x)= cx^{\beta}\,\,\,\textrm{with}\,\,\,\beta>0. $$ 
To demonstrate this, consider $u$ belong to $L^{5+\delta}_{x,t}(\mathbb{R}^3\times (0,1))$. An application of Caffarelli, Kohn and Nirenberg's result \cite{CKN82} (see also Proposition \ref{CKN}) gives that \eqref{quantgenform} holds true with $G(x)\sim x^{\frac{\delta+5}{\delta}}$. Such a quantitative estimate is invariant with respect to the Navier-Stokes scaling \eqref{NSErescale}. In this context, one could also gain similar quantitative estimates based on parabolic bootstrap arguments applied to the vorticity equation
\begin{equation}\label{vort}
\partial_{t}\omega-\Delta\omega=\omega\cdot\nabla u-u\cdot\nabla\omega,\,\,\,\,\omega=\nabla\times u
\end{equation}
as was done by Serrin in \cite{Serrin1962}.
\subsubsection{Critical case}
In the subcritical norm case, we saw that seeking estimates of the form \eqref{quantgenform} that are invariant with respect to the scaling \eqref{NSErescale}, gives a suitable candidate for $G$ that can be realised.
The case when the norm $\|\cdot\|_{X}$ is critical is more subtle, since a scaling argument does not provide a suitable candidate for $G$. 
We first mention that the case of sufficiently small critical norms, for example
\begin{equation}\label{smallL5}
\|u\|_{L^{5}(\mathbb{R}^3\times (0,1))}<\varepsilon_{0},
\end{equation}
is essentially of a similar category to the subcritical case (though a scaling argument is not applicable).
Indeed, a similar argument outlined as before (based on \cite{CKN82}, see also Proposition \ref{CKN})  gives that in this case we have \eqref{quantgenform} with $G(x)\sim x$. Is this consistent with the fact that solutions with small scale-invariant norms exhibit similar behaviour to the linear system and hence are typically expected to satisfy linear estimates.\newline
For obtaining quantitative estimates of the form \eqref{quantgenform} when the scale-invariant norm is large, it is less clear what the candidate for $G$ might be. This seems to be the case even for large \textit{global} scale-invariant norms that exhibit smallness at small \textit{local} scales\footnote{In particular, $u\in L^{5}(\mathbb{R}^3\times (0,1))\Rightarrow \lim_{r\downarrow 0}\|u\|_{L^{5}(B_{0}(r)\times (-r^2,0))}=0.$} (for example $L^{5}(\mathbb{R}^3\times (0,1))$). Such local smallness properties have been utilized to prove \textit{qualitative} regularity by essentially linear methods. See \cite{struwe1988partial}, for example. \newline
For the case of a  smooth finite-energy solution $u$ having finite scale-invariant $L^{5}(\mathbb{R}^3\times (0,1))$ norm, one way to obtain quantitative estimates\footnote{For very similar computations, see (for example) Chapter 11 of \cite{lemarie2016navier} and references therein.} is to consider the vorticity equation \eqref{vort} with initial vorticity $\omega_{0}\in L^{2}(\mathbb{R}^3)$. Performing an energy estimate yields for $t\in [0,1]$
\begin{equation}\label{vortenergyest}
\|\omega(\cdot,t)\|_{L^{2}(\mathbb{R}^3)}^2+2\int\limits_{0}^{t}\int\limits_{\mathbb{R}^3} |\nabla \omega|^2 dxdt'=\|\omega_{0}\|_{L^{2}(\mathbb{R}^3)}^2+ 2\int\limits_{0}^{t}\int\limits_{\mathbb{R}^3} (\omega\cdot\nabla u)\cdot\omega dxdt',
\end{equation} 
where the second term in right-hand side 
 is due to the \textit{vortex stretching term} $\omega\cdot\nabla u$ in \eqref{vort}. For the case that $u\in L^{5}(\mathbb{R}^3\times (0,1))$, application of H\"{o}lder's inequality, Lebesgue interpolation, Sobolev embedding theorems and Young's inequality lead to 
\begin{equation}\label{vortenergyestprodiserrin}
\|\omega(\cdot,t)\|_{L^{2}(\mathbb{R}^3)}^2+\int\limits_{0}^{t}\int\limits_{\mathbb{R}^3} |\nabla \omega|^2 dxdt'\leq \|\omega_{0}\|_{L^{2}(\mathbb{R}^3)}^2+ \int\limits_{0}^{t} \|u(\cdot,t')\|_{L^{5}(\mathbb{R}^3)}^5 \|\omega(\cdot,t')\|_{L^{2}(\mathbb{R}^3)}^2 dt'.
\end{equation}
Gronwall's lemma, followed by arguments similar to the subcritical case, yields
\begin{equation}\label{vortgronwall}
\|u\|_{L^{\infty}(\mathbb{R}^3\times (\frac{1}{2},1))}\lesssim \|\omega\|_{L^{\infty}(0,1;L^{2}(\mathbb{R}^3))}^2\leq \|\omega_{0}\|_{L^{2}(\mathbb{R}^3)}^2 \exp(\|u\|^{5}_{L^{5}(\mathbb{R}^3\times (0,1))}).
\end{equation}
Though this is not exactly of the form \eqref{quantgenform}, a slightly different argument gives that for any finite-energy solution $u$ in $L^{5}(\mathbb{R}^3\times (0,1))$ we get that \eqref{quantgenform} holds with $G(x)\sim \exp(O(1)x^{5})$. In particular, this can be achieved using $L^{q}$ energy estimates in \cite{montgomery2005conditions}, the pidgeonhole principle and reasoning in the previous subsection.

The above argument \eqref{vortenergyest}-\eqref{vortgronwall} shows that being able to substantially improve upon $G(x)\sim \exp(O(1)x^{5})$ would most likely require the utilization of a nonlinear mechanism that reduces the influence of the vortex stretching term $\omega\cdot\nabla u$ in \eqref{vort}. It seems plausible that the discovery of such a mechanism would have implications for the regularity theory of the Navier-Stokes equations (such as Type I blow-ups). 

\subsection{Outline of the paper}

In each of the Sections \ref{sec.2}-\ref{sec.6}, we distinguish between cases where: (i) one assumes a Type I control on the solution and (ii) one assumes a control on the velocity field on time slices only. 

In Section \ref{sec.2}, we state our main quantitative estimates (Propositions \ref{prop.main} and \ref{prop.maints}) and we demonstrate how these statements imply the main results of this paper: Theorem \ref{theologL3}, Corollary \ref{optimalrateDSS} and Theorem \ref{theo.mainbis}. Section \ref{sec.3} is devoted to the proof of Propositions \ref{prop.main} and \ref{prop.maints}. Section \ref{sec.further} contains three further applications of the technology developed in the present paper, in particular Corollary \ref{cor.number} concerning a quantitative bound for the number of singularities in a Type I blow-up scenario. In Section \ref{sec.4}, we quantify Jia and \v{S}ver\'{a}k's results regarding local-in-space short-time smoothing, which is a main tool for proving the quantitative estimates in Section \ref{sec.3}. The main result in Section \ref{sec.4} is Theorem \ref{theo.locshortime}. 
In Section \ref{sec.quantannulus}, we give a new proof of Tao's result that solutions possess `quantitative annuli of regularity', which is required for proving our main propositions in Section \ref{sec.3}. The central results in Section \ref{sec.quantannulus} are Lemma \ref{lem.ga} and Lemma \ref{lem.annulusts}.  Section \ref{sec.6} is concerned with the utilization of arguments from the papers of Leray and Tao to show existence of quantitative epochs of regularity (Lemma \ref{epochTypeI} and Lemma  \ref{epochtimeslice}). In Appendix \ref{sec.A} we recall known results about mild solutions and local energy solutions, and we give pressure formulas. In Appendix \ref{sec.B}, we recall the quantitative Carleman inequalities proven by Tao. 

\subsection{Notations}
\label{subsec.not}
\subsubsection{Universal constants}
For universal constants in the statements of propositions and lemmas associated to the Type I case (specifically Proposition \ref{prop.main} and Lemma \ref{lem.backconc}), we adopt the convention of a superscript $\sharp$.
For universal constants in the statements of propositions and lemmas associated to the Type I case (specifically Proposition \ref{prop.maints} and Lemma \ref{lem.backconctimeslice}), we adopt the convention of a superscript $\flat$.

In Lemma \ref{lem.backconc}, Lemma \ref{lem.backconctimeslice} and Section \ref{sec.4}, we track the numerical constants arising. 
Elsewhere in this paper, we adopt the convention that $C$ denotes a positive universal constant which may vary from line to line.

We use the notation $X\lesssim Y$, which means that there exists a positive universal constant $C$ such that $X\leq CY.$ 

In several places in this paper (notably Section \ref{sec.3} and Appendix \ref{sec.B}) the notation $O(1)$ is used to denote a positive universal constant and $-O(1)$ denotes a negative universal constant.

Whenever we refer to a quantity ($M$ for example) being `sufficiently large', we understand this as $M$ being larger than some universal constant that can (in principle) be specified.

\subsubsection{Vectors and Domains}
For a vector $a$, $a_{i}$ denotes the $i^{th}$ component of $a$. 

For $(x,t)\in\mathbb{R}^4$ and $r>0$ we denote
$B_{x}(r):=\{y\in\mathbb{R}^3: |y-x|<r \}$ and $Q_{(x,t)}(r):= B_{r}(x)\times (t-r^2,t)$. Here, $|\cdot|$ denotes the Euclidean metric. As is usual, for $a,\, b\in\R^3$, $(a\otimes b)_{\alpha\beta}=a_\alpha b_\beta$, and for $A,\, B\in M_3(\R)$, $A:B=A_{\alpha\beta}B_{\alpha\beta}$. Here and in the whole paper we use Einstein's convention on repeated indices.  For $F:\Omega\subseteq\mathbb{R}^3\rightarrow\mathbb{R}^3$, we define $\nabla F\in M_{3}(\mathbb{R})$ by $(\nabla F(x))_{\alpha\beta}:= \partial_{\beta} F_{\alpha}$. 

\textbf{Let us stress that in Section \ref{sec.4} only} we use cubes instead of balls: $B_{x}(r)=x+(-r,r)^3$. This is for computational convenience, since we track numerical constants in Section \ref{sec.4}. We emphasize that the results in Section \ref{sec.4} hold for spherical balls too, with certain universal constants adjusted.
\subsubsection{Mild, suitable and finite-energy solutions to the Navier-Stokes equations}\label{subsec.def}

Throughout this paper, we refer to $u: \mathbb{R}^3\times [0,T]$ as a \textit{mild solution} of the Navier-Stokes equations \eqref{e.nse} if it satisfies the Duhamel formula:
\begin{equation*}
u(x,t)=e^{t\Delta}u(\cdot,0)+\int\limits_{0}^{t}\mathbb{P}\partial_{i}e^{(t-s)\Delta}u_{i}(\cdot,s)u_{j}(\cdot,s)ds, 
\end{equation*}  
for all $t\in[0,T]$. 
Here,  $e^{t\Delta}$ is the heat semigroup, $\mathbb{P}$ is the projection onto divergence-free vector fields. A mild solution on $[0,T^*)$ is a function that is a mild solution on $[0,T]$ for any $T\in(0,T^*)$.

Let $\Omega\subset\mathbb{R}^3$. We say that $(u,p)$ is a \textit{suitable weak solution} to the Navier-Stokes equations \eqref{e.nse} in $\Omega\times (T_{1},T)$ if it fulfills the properties described in \cite{gregory2014lecture} (Definition 6.1 p.133 in \cite{gregory2014lecture}).

We say that $u$ is a \textit{suitable finite-energy solution} to the Navier-Stokes equations on $\mathbb{R}^3\times (T_{1},T)$
  if it is a solution to \eqref{e.nse} in the sense of distributions and
\begin{itemize}
\item $u\in C_{w}([T_{1},T]; L^{2}_{\sigma}(\mathbb{R}^3))\cap L^{2}_{t}(T_{1},T; \dot{H}^{1}(\mathbb{R}^3))$,
\item it satisfies the global energy inequality
\begin{equation}\label{energyinequalityLeray}
\|u(\cdot,t)\|_{L^{2}}^2+2\int\limits_{T_{1}}^{t}\int\limits_{\mathbb{R}^3}|\nabla u|^2 dyds\leq \|u(\cdot,T_{1})\|_{L^{2}}^2\,\,\,\,\textrm{for}\,\,\textrm{all}\,\,t\in [T_{1},T],
\end{equation}
\item $(u,p)$ is a \textit{suitable weak solution} on $B_{1}(x)\times (T_{1},T)$ for all $x\in\mathbb{R}^3.$
\end{itemize}
It is known that the above defining properties of suitable weak solutions imply that there exists $\Sigma\subset [T_{1},T]$ with 
full Lebesgue measure $|\Sigma|=T-T_{1}$ such that 
\begin{multline}\label{energyinequalityturbulent}
\|u(\cdot,t)\|_{L^{2}}^2+2\int\limits_{t'}^{t}\int\limits_{\mathbb{R}^3}|\nabla u|^2 dyds\leq \|u(\cdot,t')\|_{L^{2}}^2\,\,\,\,\textrm{and}\,\,\,\,\|\nabla u(\cdot,t')\|_{L^{2}}<\infty,\\
\textrm{for}\,\,\textrm{all}\,\,t\in [t',T]\,\,\,\textrm{and}\,\,\,t'\in\Sigma.
\end{multline}

\subsubsection{Lorentz spaces}
For a measurable subset $\Omega\subseteq\mathbb{R}^{d}$ and a measurable function $f:\Omega\rightarrow\mathbb{R}$ we define
\begin{equation}\label{defdist}
d_{f,\Omega}(\alpha):=\mu(\{x\in \Omega : |f(x)|>\alpha\}),
\end{equation}
where $\mu$ denotes the Lebesgue measure. The Lorentz space $L^{p,q}(\Omega)$, with $p\in [1,\infty[$, $q\in [1,\infty]$, is the set of all measurable functions $g$ on $\Omega$ such that the quasinorm $\|g\|_{L^{p,q}(\Omega)}$ is finite. The quasinorm is defined by
\begin{equation}\label{Lorentznorm}
\|g\|_{L^{p,q}(\Omega)}:= \Big(p\int\limits_{0}^{\infty}\alpha^{q}d_{g,\Omega}(\alpha)^{\frac{q}{p}}\frac{d\alpha}{\alpha}\Big)^{\frac{1}{q}},
\end{equation}
\begin{equation}\label{Lorentznorminfty}
\|g\|_{L^{p,\infty}(\Omega)}:= \sup_{\alpha>0}\alpha d_{g,\Omega}(\alpha)^{\frac{1}{p}}.
\end{equation}\\
Notice that there exists a norm, which is equivalent to the quasinorm defined above, for which $L^{p,q}(\Omega)$ is a Banach space. 
For $p\in [1,\infty)$ and $1\leq q_{1}< q_{2}\leq \infty$, we have the following continuous embeddings 
\begin{equation}\label{Lorentzcontinuousembedding}
L^{p,q_1}(\Omega) \hookrightarrow  L^{p,q_2}(\Omega)
\end{equation}
and the inclusion is strict.

\section{Main quantitative estimates}
\label{sec.2}

\subsection{Quantitative estimates in the Type I and time slices case}

\begin{proposition}[main quantitative estimate, Type I]\label{prop.main}
There exists a universal constant $M_2\in[1,\infty)$. Let $M\in[M_2,\infty)$, $t_0\in\R$ and $T\in(0,\infty)$. There exists $S^\sharp(M)\in(0,\frac14]$, such that the following holds. Let $(u,p)$  
be a smooth solution with sufficient decay\footnote{See footnote \ref{footdefsol}. Notice that by this definition $u$ is bounded up to $t_{0}$.} to the Navier-Stokes equations \eqref{e.nse} in $I=[t_0-T,t_0]$, which satisfies
\begin{equation}\label{e.t1A}
\|u\|_{L^\infty_tL^{3,\infty}_x(\R^3\times(t_0-T,t_0))}\leq M.
\end{equation}
Assume that there exists $t_0'\in [t_0-T,t_0)$ such that $t_0'$ is not too close to $t_0-T$ in the sense
\begin{equation*}
0\leq \frac{t_0-t_0'}{T}<C^\sharp M^{-548}
\end{equation*}
and such that the vorticity concentrates at time $t_0'$ in the following sense
\begin{equation*}
\int\limits_{B_0(4\sqrt{S^\sharp}^{-1}(t_0-t_0')^\frac12)}|\omega(x,t_0')|^2\, dx> M^2(t_0-t_0')^{-\frac12}\sqrt{S^\sharp}.
\end{equation*}
Then, we have the following lower bound
\begin{equation}
\frac{t_0-t_0'}{T}\geq \frac{C^\sharp}{8}M^{-749}\exp\Bigg\{-\exp(\exp(M^{1024}))\int\limits_{B_{0}(\exp(M^{1023})T^{\frac{1}{2}})}|u(x,t_{0})|^3\, dx\Bigg\}.
\end{equation}
Furthermore, for 
\begin{equation}\label{s0def}
-s_{0}:=\frac{C^\sharp}{16}M^{-749}\exp\Bigg\{-\exp(\exp(M^{1024})))\int\limits_{B_{0}(\exp(M^{1023})T^{\frac{1}{2}})}|u(x,t_{0})|^3\, dx\Bigg\},
\end{equation}
we also have the bound
\begin{equation}\label{Linfinitybound}
\|u\|_{L^{\infty}\big(B_{0}\big({C_{2}T^{\frac{1}{2}}M^{50}}(-s_{0})^{\frac{1}{2}}\big)\times \big(\tfrac{s_{0}T}{4}+t_{0}, t_{0}\big)\big)}\leq \frac{C_{1}M^{-23}}{(-s_{0})^{\frac{1}{2}} T^{\frac{1}{2}}},
\end{equation}
for universal constants $C_1, C_{2}\in(0,\infty)$. Here $C^\sharp$ and $S^\sharp(M)$ are the constants given by Lemma \ref{lem.backconc}.

Furthermore, if for fixed $\lambda\in (0, \exp(M^{1023}))$ we additionally assume that
\begin{equation}\label{L3lowerboundassumption}
\int\limits_{B_{0}(\lambda T^{\frac{1}{2}})}|u(x,t_{0})|^3 dx\geq \frac{3}{2}\exp(-\exp(M^{1024}))
\end{equation}
then we instead have the lower bound
\begin{equation}\label{lowerwithaddas}
\frac{t_0-t_0'}{T}\geq \frac{C^\sharp \lambda^2}{8}M^{-749}\exp\Bigg\{-4M^{1023}\exp(\exp(M^{1024}))\int\limits_{B_{0}(\lambda T^{\frac{1}{2}})}|u(x,t_{0})|^3\, dx\Bigg\}.
\end{equation}
Furthermore, for 
\begin{equation}\label{s1def}
-s_{1}:=\frac{C^\sharp\lambda^2}{16}M^{-749}\exp\Bigg\{-4M^{1023}\exp(\exp(M^{1024}))\int\limits_{B_{0}(\lambda T^{\frac{1}{2}})}|u(x,t_{0})|^3\, dx\Bigg\},
\end{equation}
we also have the bound
\begin{equation}\label{Linfinitybounds1}
\|u\|_{L^{\infty}\big(B_{0}\big(C_{2}{T^{\frac{1}{2}}M^{50}}(-s_{1})^{\frac{1}{2}}\big)\times \big(\tfrac{s_{1}T}{4}+t_{0}, t_{0}\big)\big)}\leq \frac{C_{1}M^{-23}}{(-s_{1})^{\frac{1}{2}} T^{\frac{1}{2}}}.
\end{equation}
\end{proposition}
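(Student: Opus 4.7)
The plan is to execute the three-step strategy advertised in the introduction; after rescaling I may assume $t_0=0$ and $T=1$, so the setting is on $\mathbb{R}^3\times(-1,0)$, with the hypothesis being a vorticity concentration lower bound at some $t_0'\in(-1,0)$ with $-t_0'<C^\sharp M^{-548}$. The first step is to iterate Lemma \ref{lem.backconc} backward in time to construct a sequence $-1<t_N<\cdots<t_1<t_0'<0$, well-separated in the sense that $-t_{n+1}/(-t_n)$ is essentially a fixed negative power of $M$, along which the scale-$(-t_n)^{1/2}$ vorticity lower bound persists at each $t_n$. The smallness of $-t_0'$ is what allows one to fit $N\gtrsim \log((-t_0')^{-1})/\log(M^{O(1)})$ such well-separated scales inside $(-1,t_0')$.

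The second step is to convert each localized vorticity lower bound at $t_n$ into a localized $L^3$ lower bound for $u$ at the final time $t_0=0$, on a spatial annulus $\{R_n\leq|x|\leq R_n'\}$ with $R_n$ comparable to $(-t_n)^{1/2}$ up to factors of $\exp(M^{O(1)})$. This uses the quantitative Carleman inequalities from Appendix \ref{sec.B} applied to the vorticity equation, exactly as in Tao's third step; their application requires quantitative pointwise control on $u$ on a product space-time region, which I would obtain by combining the quantitative epoch of regularity coming from the Type I hypothesis via Lemma \ref{epochTypeI} with the quantitative annulus of regularity of Lemma \ref{lem.ga}. The output is a bound of the form $\int_{R_n\leq|x|\leq R_n'}|u(x,0)|^3\,dx\geq \exp(-\exp(M^{O(1)}))$ for each $n$.

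The third step is to arrange the separation ratio in Step 1 so that the annuli from Step 2 are pairwise disjoint and all contained in $B_0(\exp(M^{1023}))$, and then sum over $n=1,\ldots,N$ to obtain
\[N\exp(-\exp(M^{O(1)}))\leq \int_{B_0(\exp(M^{1023}))}|u(x,0)|^3\,dx;\]
solving for $(-t_0')$ via the lower bound on $N$ from Step 1 yields the claimed inequality. The refined bound \eqref{lowerwithaddas} is proved in the same way by restricting the sum to those annuli already contained in $B_0(\lambda)$, whose number is comparable to $\log(\lambda(-t_0')^{-1/2})/\log(M^{O(1)})$, and then using assumption \eqref{L3lowerboundassumption} to absorb an $O(1)$ term and extract the factor of $\lambda^2$.

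Finally, for \eqref{Linfinitybound} and \eqref{Linfinitybounds1}, having established the lower bound on $-t_0'$, I define $-s_0$ (resp.\ $-s_1$) as one half of the right-hand side of that lower bound. If the vorticity concentration inequality held at the time $-s_0$, then applying the proposition just proved (with $t_0'$ replaced by $-s_0$; the smallness condition on $-s_0$ is automatic) would force $-s_0$ to be at least twice its own value, a contradiction; hence the concentration inequality \emph{fails} at $-s_0$, and Theorem \ref{theo.locshortime} (quantitative Jia--\v{S}ver\'ak local-in-space smoothing) then upgrades this failure into the claimed $L^\infty$ bound on the forward parabolic cylinder. I expect the main technical obstacle to be the combinatorics of Step 3 --- aligning the $M$-dependent separation ratio from Step 1 with the explicit radii and widths of the Carleman annuli from Step 2 so as to guarantee simultaneously disjointness and containment in $B_0(\exp(M^{1023}))$, all while propagating the double-exponential constants arising from the Carleman estimates through to the precise stated form.
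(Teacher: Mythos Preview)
Your proposal is correct and follows essentially the same route as the paper. Two minor remarks: first, Lemma \ref{lem.backconc} already yields concentration at \emph{every} well-separated backward time in one application, so no iteration is needed---the paper simply parametrizes the summing step by the continuous scale $T_2\in[\tfrac{8}{C^\sharp}M^{749}(-t_0'),1]$ rather than by a discrete sequence $(t_n)$; second, the number of disjoint annuli is governed by the $\exp(M^{O(1)})$ width coming out of the Carleman step (not by the polynomial separation $M^{548}$ in Lemma \ref{lem.backconc}), so your Step-1 count $N\gtrsim\log((-t_0')^{-1})/\log(M^{O(1)})$ will be revised to $N\gtrsim M^{-1023}\log((-t_0')^{-1})$ once you enforce disjointness in Step 3---exactly the ``combinatorics'' issue you flagged.
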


Figure \ref{fig.summary} illustrates Proposition \ref{prop.main}.

\begin{proposition}[main quantitative estimate, time slices]\label{prop.maints}
There exists a universal constant $M_{1}\in[1,\infty)$. Let $M\in[M_{1},\infty)$. 
We define $\Mp$ by \eqref{e.defM'theo}. 
There exists $S^\flat(M)\in(0,\frac14]$, such that the following holds. Let $(u,p)$ 
be a $C^{\infty}(\mathbb{R}^3\times (-1,0))$ 
finite-energy solution to the Navier-Stokes equations \eqref{e.nse} in $I=[-1,0]$. 
Assume  that there exists $t_{(k)}\in [-1,0)$ such that
\begin{equation}\label{e.tsA}
t_{(k)}\uparrow 0\,\,\,\,\textrm{with}\,\,\,\, \sup_{k} \|u(\cdot, t_{(k)})\|_{L^{3}(\mathbb{R}^3)}\leq M.
\end{equation}
Select any ``well-separated'' subsequence (still denoted $t_{(k)}$) such that
\begin{equation}\label{wellseppropts}
\sup_{k}\frac{-t_{(k+1)}}{-t_{(k)}}<\exp(-2(\Mp)^{1223}).
\end{equation}
For this well-separated subsequence, assume that there exists $j+1$ such that the vorticity concentrates at time $t_{(j+1)}$ in the following sense
\begin{equation}\label{e.concts}
\int\limits_{B_0(4\sqrt{S^\flat}^{-1}(-t_{(j+1)})^\frac12)}|\omega(x,t_{(j+1)})|^2\, dx> M^2(-t_{(j+1)})^{-\frac12}\sqrt{S^\flat}.
\end{equation}
Then, we have the following upper bound on $j$
\begin{equation}
j\leq \exp(\exp(({\Mp})^{1224}))
\end{equation}
Here $S^\flat(M)$ is the constant given by Lemma \ref{lem.backconctimeslice}. 
\end{proposition}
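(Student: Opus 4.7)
The strategy parallels that of Proposition \ref{prop.main}: backward propagation of vorticity concentration, transfer to localized $L^3$ lower bounds via the quantitative Carleman inequalities, and a summation of scales. The novelty is to adapt each of these steps to the time-slice hypothesis \eqref{e.tsA} in the absence of a global Type I bound, replacing scale-invariant pointwise control by $L^3$-in-time-slice control plus energy estimates \emph{\`a la} Seregin--\v Sver\'ak.

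First, starting from the vorticity concentration \eqref{e.concts} at $t_{(j+1)}$, I would apply Lemma \ref{lem.backconctimeslice} iteratively down the well-separated subsequence. The condition \eqref{wellseppropts}, with gap factor $\exp(-2(\Mp)^{1223})$, is tailored precisely so that a single application of the lemma pushes vorticity concentration from $t_{(k+1)}$ to $t_{(k)}$; an induction on $k$ then yields
\[
\int\limits_{B_0(4\sqrt{S^\flat}^{-1}(-t_{(k)})^{\frac12})} |\omega(x,t_{(k)})|^2\, dx > M^2 (-t_{(k)})^{-\frac12} \sqrt{S^\flat}
\]
for every $k = 1,\ldots,j+1$.

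Next, for each such $k$ I would apply the quantitative Carleman inequalities recalled in Appendix \ref{sec.B}, together with the quantitative epoch-of-regularity bound (Lemma \ref{epochtimeslice}) and the quantitative annulus-of-regularity bound (Lemma \ref{lem.annulusts}), to convert the vorticity lower bound at $t_{(k)}$ into a localized $L^3$ lower bound at $t=0$ on an annulus of radius $\sim (-t_{(k)})^{1/2}$:
\[
\int\limits_{R_k \leq |x| \leq R_k'} |u(x,0)|^3\, dx \geq \exp\bigl(-\exp((\Mp)^{O(1)})\bigr).
\]
The quantitative $L^\infty$ control of $u$ on the space-time regions needed to apply the Carleman inequalities to the vorticity equation comes from the energy estimates for $w := u - e^{(t-t_{(k)})\Delta} u(\cdot,t_{(k)})$ starting from $L^3$ data at the slice $t_{(k)}$ (see the footnote defining $\Mp$). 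Invoking Gronwall's inequality in those estimates is exactly what forces the substitution $M \rightsquigarrow \Mp$ and produces the extra exponential absent from Proposition \ref{prop.main}.

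Finally, the well-separation \eqref{wellseppropts} ensures that the parabolic scales $(-t_{(k)})^{1/2}$ are geometrically spaced, so the annuli produced in the previous step are pairwise disjoint across $k \in \{1,\ldots,j\}$. Summing the $L^3$ lower bounds and bounding the total by the $L^3$ norm of $u(\cdot,0)$ on a large ball, itself controlled by $M$ via \eqref{e.tsA} through Fatou's lemma along $t_{(k)}\uparrow 0$, gives
\[
j \cdot \exp\bigl(-\exp((\Mp)^{O(1)})\bigr) \lesssim M^3,
\]
from which the stated bound $j \leq \exp(\exp((\Mp)^{1224}))$ follows after tracking the numerical constants. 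The principal obstacle is the second step: without Type I the pointwise control required by the Carleman argument is not free, and each epoch-and-annulus estimate must instead be established slice by slice by energy methods with Gronwall's lemma. This is the unavoidable source of the double-exponential dependence on $M$ and the reason the exponent jumps from $1024$ in Proposition \ref{prop.main} to $1224$ here.
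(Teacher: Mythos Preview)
Your overall strategy matches the paper's, but the first step contains a genuine gap. Lemma \ref{lem.backconctimeslice} cannot be iterated as you describe: its hypothesis is concentration at $t_0'$ on a ball of radius $4\sqrt{S^\flat}^{-1}(-t_0')^{1/2}\sim M^{50}(-t_0')^{1/2}$ with lower bound $\sim M^{-48}(-t_0')^{-1/2}$, whereas its conclusion \eqref{e.concs_0} is concentration at $s_0$ on the much larger ball of radius $4\alpha^{106}(-s_0)^{1/2}$ with the much smaller lower bound $(M+1)^2\alpha^{-106}(-s_0)^{-1/2}$. Since $\alpha\geq\Mp=\exp(L_*M^5/2)$, the output is strictly weaker than the input form, and your induction from $t_{(k+1)}$ to $t_{(k)}$ does not close.

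The paper instead applies Lemma \ref{lem.backconctimeslice} \emph{once} for each $k\in\{1,\ldots,j\}$, always with $t_0'=t_{(j+1)}$ fixed and $t_0''=t_{(k)}$; the compounded gap $\frac{-t_{(j+1)}}{-t_{(k)}}<\exp(-2(\Mp)^{1223})<(\Mp)^{-1051}$ already satisfies the separation condition \eqref{e.wellseptimeslice} directly. Crucially, this yields concentration not merely at the single time $t_{(k)}$ but on the whole interval $[t_{(k)},t_{(k)}/(8(\Mp)^{201})]$, and it is precisely concentration over such a time interval that is fed into the unique-continuation Carleman inequality to bound $Z_1$ from below in Step~1. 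Your formulation, with concentration only at the discrete times $t_{(k)}$, would not suffice there even if the induction closed. The remainder of your outline---epochs and annuli of regularity in the time-slice setting, disjointness of the annuli at scales $(-t_{(k)})^{1/2}$ from \eqref{wellseppropts}, and summing against $\|u(\cdot,0)\|_{L^3}^3\leq M^3$ via weak lower semicontinuity along $t_{(k)}\uparrow0$---matches the paper.
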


\subsection{Proofs of the main results}
\label{sec.proofmainrests}

In this section we prove the main results stated in the Introduction.

\begin{proof}[Proof of Theorem \ref{theologL3}]
Take $M\geq M_{0}$. Here, $M_{0}:=\max(M_{2},M_{6})\geq 1$ with $M_{2}$ being from Proposition \ref{prop.main} and $M_{6}$ being from Corollary \ref{corL_inftyconc}. We prove here a slightly stronger statement than \eqref{L3localisedlog}, namely
\begin{equation}\label{L3localisedlogbis}
\int\limits_{B_{0}\big((T^*)^{\frac{1}{2}}(T^*-t)^{\frac{1-\delta}{2}}\big)} |u(x,t)|^3 dx\geq
\frac{\log\Big(\frac{1}{(T^*-t)^{\frac{\delta}{2}}}\Big)}{4M^{1023}\exp(\exp(M^{1024}))}.
\end{equation}
First we note that \cite{BP18} (specifically Theorem 2 in \cite{BP18}), together with assumptions (1)-(2) in the statement of Theorem \ref{theologL3} imply that there exist
 $ S_{BP}(M)$ and $\gamma_{univ}>0$ such that
\begin{equation*}
\int\limits_{B_{0}\big(2\sqrt{\frac{T^*-t}{S_{BP}(M)}}\big)} |u(x,t)|^3 dx\geq \gamma_{univ}^3\,\,\,\textrm{for}\,\,\textrm{all}\,\,t\in (0, T^*).
\end{equation*}
Thus we have
\begin{equation}\label{L3concBP}
\int\limits_{B_{0}(t^{\frac{1}{2}}(T^*-t)^{\frac{1-\delta}{2}})} |u(x,t)|^3 dx\geq \gamma_{univ}^3\,\,\,\,\textrm{for\,\,all}\,\,\, t\in \Big(\max\Big( T^*-\Big(\frac{T^*S_{BP}(M)}{8}\Big)^{\frac{1}{\delta}},\frac{T^*}{2}\Big),T^*\Big).
\end{equation}
Suppose for contradiction that \eqref{L3localisedlogbis}
 does not hold for some 
\begin{equation}\label{tcond}
t\in\big(\max(\tfrac{T^*}{2},T^*-c(\delta,M,T^*)), T^*\big),
\end{equation} 
where 
\begin{equation}\label{defCdeltaMT}
c(\delta,M,T^*):=\min\Big(c(\delta)(T^*M^{-802})^{\frac{2}{\delta}}, \Big(\frac{T^*S_{BP}(M)}{8}\Big)^{\frac{1}{\delta}} ,\exp(2M^{1023})\Big)
\end{equation}
for an appropriate $c(\delta)\in(0,\infty).$ This implies that for $M$ sufficiently large
\begin{equation}\label{maintheocontraimp}
\int\limits_{B_{0}(t^{\frac{1}{2}}(T^*-t)^{\frac{1-\delta}{2}})} |u(x,t)|^3 dx<\frac{1}{4M^{1023}\exp(\exp(M^{1024}))}\log\Big(\frac{1}{(T^*-t)^{\frac{\delta}{2}}}\Big).
\end{equation}
Considering $u$ on $\mathbb{R}^3\times [0,t]$ and observing Proposition \ref{prop.main}, we see that \eqref{L3concBP} implies that the assumption \eqref{L3lowerboundassumption} is satisfied with $\lambda:= (T^*-t)^{\frac{1-\delta}{2}}$ and $T:=t$. 
Furthermore, by \eqref{tcond}-\eqref{defCdeltaMT} we have that $\lambda\in (0, \exp(M^{1023})).$
 Hence, we can apply Proposition \ref{prop.main}. 
Namely by \eqref{Linfinitybounds1} for
$$
-s_{1}:=\frac{C^\sharp(T^*-t)^{1-\delta}}{16}M^{-749}\exp\Bigg\{-4M^{1023}\exp(\exp(M^{1024}))\int\limits_{B_{0}( t^{\frac{1}{2}}(T^*-t)^{\frac{1-\delta}{2}})}|u(x,t)|^3\, dx\Bigg\},
$$
we have the bound
\begin{equation}\label{locLinfinitymaintheo}
\|u(\cdot,t)\|_{L^{\infty}\big(B_{0}\big(C_{2}{t^{\frac{1}{2}}M^{50}}(-s_{1})^{\frac{1}{2}}\big)\big)}\leq \frac{C_{1}M^{-23}}{(-s_{1})^{\frac{1}{2}} t^{\frac{1}{2}}}.
\end{equation}
Using that $(0,T^*)$ is a singular point of $u$, the Type I bound on $u$ and Corollary \ref{corL_inftyconc}, we see that there exists a universal constant $C_{univ}$ such that
\begin{equation}\label{Linfinityconcmaintheo}
\|u(\cdot,t)\|_{L^{\infty}\big(B_{0}\big(\frac{2 M^{50}}{C_{univ}} (T^*-t)^{\frac{1}{2}}\big)\big)}>\frac{C_{univ}M^{-49}}{(T^*-t)^{\frac{1}{2}}}.
\end{equation}
From our contradiction assumption (which implies \eqref{maintheocontraimp}) we see that
\begin{equation}\label{contras1}
-{s}_{1}> M^{-750}(T^*-t)^{1-\frac{\delta}{2}}
\end{equation}
Using this and \eqref{tcond} for an appropriate $c(\delta)\in (0,\infty)$, we get that
$$B_{0}\Big(\frac{2M^{50}}{C_{univ}}(T^*-t)^{\frac{1}{2}}\Big)\subset B_{0}\Big(\frac{(T^*)^{\frac{1}{2}}C_{2}M^{50}}{2^{\frac12}}(-{s}_{1})^{\frac{1}{2}}\Big)$$
and 
$$\frac{2^{\frac12}C_{1}M^{-23}}{(-{s}_{1})^{\frac{1}{2}} (T^*)^{\frac{1}{2}}}\leq \frac{C_{univ}M^{-49}}{(T^*-t)^{\frac{1}{2}}}.$$
With these two facts, we see that \eqref{locLinfinitymaintheo} contradicts \eqref{Linfinityconcmaintheo}.
\end{proof}

\begin{proof}[Proof of Corollary \ref{optimalrateDSS}]
From \cite{chae2017removing} (specifically Theorem 1.1 in \cite{chae2017removing}), there exists $M>1$ such that
\begin{equation}\label{DSSboundchaewolf}
|u(x,t)|\leq \frac{M}{|x|+\sqrt{-t}}\,\,\,\textrm{for}\,\,\textrm{all}\,\,\,(x,t)\in (\mathbb{R}^3\times (-\infty,0])\setminus\{(0,0)\}.
\end{equation}
Integration of this then immediately gives the upper bound of \eqref{L3localisedlogDSS}, which in fact holds true for $t\in (-1,0)$.
Next, note that \eqref{DSSboundchaewolf} implies 
\begin{equation}\label{TypeIDSS}
\|u\|_{L^{\infty}_{t}L^{3,\infty}_{x}(\mathbb{R}^3\times (-\infty,0))}\leq M. 
\end{equation}
We also remark that since $u$ is non-zero and $\lambda$-DSS we must have
\begin{equation}\label{DSSsing} u\notin L^{\infty}_{x,t}(Q_{(0,0)}(r))\,\,\,\textrm{for}\,\,\textrm{all}\,\,\textrm{sufficiently}\,\,\textrm{small}\,\,r. \end{equation}
Indeed, suppose for contradiction that $u\in L^{\infty}_{x,t}(Q_{r}(0,0))$ then for any $(x,t)\in \mathbb{R}^3\times (-\infty,0)$ we have $(\lambda^{-k}x, \lambda^{-2k} t)\in Q_{r}(0,0)$ for all sufficiently large $k$. Using that $u$ is $\lambda$-DSS we have 
$$|u(x,t)|=|\lambda^{-k}u(\lambda^{-k}x, \lambda^{-2k}t)|\leq \lambda^{-k}\|u\|_{L^{\infty}(Q_{(0,0)}(r))}\downarrow 0. $$
We then see that \eqref{DSSboundchaewolf}-\eqref{DSSsing} allow us to apply Theorem \ref{theologL3} on $\mathbb{R}^3\times (-1,0)$ to get the lower bound in \eqref{L3localisedlogDSS}.
\end{proof}

\begin{proof}[Proof of Theorem \ref{theo.mainbis}]
Applying Proposition \ref{prop.maints} we see that for $j=\lceil\exp(\exp({(\Mp)^{1224}}))\rceil+1 $
 we have the contrapositive of \eqref{e.concts}.
In particular,
$$
\int\limits_{B_0(4\sqrt{S^\flat}^{-1}(-t_{(j+1)})^\frac12)}|\omega(x,t_{(j+1)})|^2\, dx< M^2(-t_{(j+1)})^{-\frac12}\sqrt{S^\flat}.
$$ 
Almost identical arguments to those utilized in the proof of Lemma \ref{lem.backconc}, except using the bound \eqref{e.conclbddu} instead of \eqref{e.conclbddnablau}, give
$$\|u\|_{L^{\infty}\big(B_{0}\big(C_{2}{M^{50}}(-t_{(j+1)})^{\frac{1}{2}}\big)\times \big(\tfrac{t_{(j+1)}}{4},0\big)\big)}\leq \frac{C_1M^{-23}}{(-t_{(j+1)})^{\frac{1}{2}}}.$$
Since all estimates are independent of the spatial point where \eqref{e.concts} occurs, we conclude that
$$\|u\|_{L^{\infty}\big(\mathbb{R}^3\times \big(\tfrac{t_{(j+1)}}{4},0\big)\big)}\leq \frac{C_1M^{-23}}{(-t_{(j+1)})^{\frac{1}{2}}}.$$
This concludes the proof of the theorem.
\end{proof}

\section{Proofs of the main quantitative estimates}
\label{sec.3}

\subsection{Backward propagation of concentration}

Here we state two pivotal results. These are concerned with backward propagation of concentration in the Type I case and in the time slices case. Figure \ref{fig.summary} illustrates Lemma \ref{lem.backconc} below. 

\begin{lemma}[backward propagation of concentration, Type I]\label{lem.backconc}
There exists two universal constants 
 $C^\sharp\in (0,\frac{1}{16}),\, M_3\in[1,\infty)$. 
For all $M\in[M_3,\infty)$, there exists $S^\sharp(M)\in(0,\frac14]$,  such that the following holds. Let $(u,p)$ 
be a `smooth solution with sufficient decay'\footnote{See footnote \ref{footdefsol}.} of the Navier-Stokes equations \eqref{e.nse} in $I=[-1,0]$ satisfying the Type I bound \eqref{e.t1A}. 
Assume that there exists $t_0'\in [-1,0)$ such that $t_0'$ is not too close to $-1$ in the sense
\begin{equation*}
0< -t_0'<C^\sharp M^{-548}
\end{equation*}
and such that the vorticity concentrates at time $t_0'$ in the following sense
\begin{equation}\label{e.conct_0'}
\int\limits_{B_0(4\sqrt{S^\sharp}^{-1}(-t_0')^\frac12)}|\omega(x,t_0')|^2\, dx> M^2(-t_0')^{-\frac12}\sqrt{S^\sharp}.
\end{equation}
Then, the vorticity concentrates in the following sense
\begin{equation}\label{e.conct_0''}
\int\limits_{B_0(4\sqrt{S^\sharp}^{-1}(-t_0'')^\frac12)}|\omega(x,t_0'')|^2\, dx> M^2(-t_0'')^{-\frac12}\sqrt{S^\sharp}
\end{equation}
at any well-separated backward time $t_0''\in [-1,t_0']$ such that
\begin{equation}\label{e.wellsep}
\frac{-t_0'}{-t_0''}<C^\sharp M^{-548}.
\end{equation}
Here $S^\sharp (M)$ is defined explicitly by \eqref{e.defS*two} and we have $S^\sharp (M)=O(1)M^{-100}$. 
\end{lemma}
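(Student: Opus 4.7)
My plan is to argue by contrapositive: assume the non-concentration bound $\int_{B_0(4\sqrt{S^\sharp}^{-1}(-t_0'')^{1/2})}|\omega(x,t_0'')|^2\,dx\leq M^2(-t_0'')^{-1/2}\sqrt{S^\sharp}$ holds at the earlier time $t_0''$, and derive from it the corresponding non-concentration bound at $t_0'$, thereby contradicting \eqref{e.conct_0'}. The key mechanism is the quantitative local-in-space smoothing of Jia--\v{S}ver\'{a}k (Theorem \ref{theo.locshortime} of Section \ref{sec.4}), which takes a localized vorticity bound at an initial time together with the Type I control \eqref{e.t1A} and delivers an explicit $L^\infty$ bound on $u$ on a parabolic cylinder extending forward in time. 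This is the natural direction in which to push quantitative bounds, and the contrapositive conveniently reverses it into a statement about the backward propagation of concentration.

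First, I would apply Theorem \ref{theo.locshortime} to $(u,p)$ with ``initial time'' $t_0''$, obtaining a bound of the form
\begin{equation*}
\|u\|_{L^\infty\bigl(B_0(\tfrac12\sqrt{S^\sharp}^{-1}(-t_0'')^{1/2})\times (\tfrac{t_0''}{4},0)\bigr)} \leq \frac{F(M)}{(-t_0'')^{1/2}}
\end{equation*}
for an explicit polynomial $F(M)$. The well-separation hypothesis $-t_0'/(-t_0'')<C^\sharp M^{-548}$, together with $C^\sharp\leq 1/16$ and $M$ large, would then ensure (i) $t_0'\in (t_0''/4,0)$, so the slice $t=t_0'$ lies in the good time window, and (ii) the concentration ball $B_0(4\sqrt{S^\sharp}^{-1}(-t_0')^{1/2})$ is compactly contained in $B_0(\tfrac14\sqrt{S^\sharp}^{-1}(-t_0'')^{1/2})$, with interior spatial margin of order $\sqrt{S^\sharp}^{-1}(-t_0'')^{1/2}$ and interior temporal margin of order $(-t_0'')$.

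Next, interior parabolic regularity applied to the vorticity equation $\partial_t\omega-\Delta\omega = \omega\cdot\nabla u - u\cdot\nabla\omega$ would convert the $L^\infty$ bound on $u$ over the full parabolic neighborhood into a pointwise estimate $|\omega(\cdot,t_0')| \lesssim F(M)\sqrt{S^\sharp}/(-t_0'')$ on the smaller ball. Squaring, integrating over that ball (of volume $\sim (S^\sharp)^{-3/2}(-t_0')^{3/2}$), and using $-t_0'/(-t_0'')<C^\sharp M^{-548}$ would yield an estimate
\begin{equation*}
\int_{B_0(4\sqrt{S^\sharp}^{-1}(-t_0')^{1/2})}|\omega(x,t_0')|^2\,dx \lesssim F(M)^2 (S^\sharp)^{-1/2}(C^\sharp)^2 M^{-1096}(-t_0')^{-1/2},
\end{equation*}
which must strictly beat the threshold $M^2\sqrt{S^\sharp}(-t_0')^{-1/2}$ to contradict \eqref{e.conct_0'}.

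The principal obstacle is the careful bookkeeping of universal constants, which forces the explicit choices $S^\sharp(M) = O(1) M^{-100}$ and the exponent $548$ in the well-separation threshold. I would need $S^\sharp$ simultaneously small enough that (a) the assumed non-concentration at $t_0''$ really matches the input required by Theorem \ref{theo.locshortime} and (b) the factor $F(M)^2(S^\sharp)^{-1}(C^\sharp)^2 M^{-1096}$ falls strictly below $M^2$, while keeping $C^\sharp M^{-548}$ small enough for the ball containment and for the final algebra to close. Getting these competing exponents to fit, driven by the explicit polynomial dependence $F(M)$ coming out of the quantitative Jia--\v{S}ver\'{a}k statement, is the most delicate part of the argument and is the reason the lemma is stated with these precise numerical exponents.
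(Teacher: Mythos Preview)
Your contrapositive strategy via Theorem \ref{theo.locshortime} is exactly the paper's approach, and the overall architecture is correct. There are two differences worth noting.

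First, the paper uses the \emph{gradient} estimate \eqref{e.conclbddnablau} from Theorem \ref{theo.locshortime} rather than the $L^\infty$ bound on $u$ \eqref{e.conclbddu}. After the rescaling $r=\sqrt{S^\sharp}^{-1}(-t_0'')^{1/2}$, $U(y,s)=ru(ry,r^2s+t_0'')$, the bound \eqref{e.conclbddnablau} reads directly as an $L^\infty_tL^2_x$ bound on $\Omega=\nabla\times U$ on $B_0(\tfrac14)\times(\tfrac{15}{16}S^\sharp,S^\sharp)$, which after undoing the rescaling is precisely the $L^2$ vorticity bound you want on the ball at time $t_0'$. Your route through the $L^\infty$ bound on $u$ followed by a separate parabolic-regularity bootstrap to pointwise $\omega$ bounds is workable but unnecessary; the theorem already packages the needed vorticity bound. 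Incidentally, your claimed factor $\sqrt{S^\sharp}$ in $|\omega|\lesssim F(M)\sqrt{S^\sharp}/(-t_0'')$ looks like linear-scaling reasoning; since the spatial and temporal scales of your cylinder are not parabolically matched (the temporal window is $(-t_0'')$, not $(S^\sharp)^{-1}(-t_0'')$), the correct pointwise gradient bound would have a different $M$-dependence. The argument still closes with adjusted exponents, but using \eqref{e.conclbddnablau} avoids this issue entirely.

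Second, you gloss over how the non-concentration hypothesis at $t_0''$ actually feeds into Theorem \ref{theo.locshortime}. The theorem requires $\|U(\cdot,0)\|_{L^6(B_0(2))}\leq N$, not an $L^2$ vorticity bound. The paper bridges this via the elliptic estimate $-\Delta U=\nabla\times\Omega$: combining the assumed bound $\|\Omega(\cdot,0)\|_{L^2(B_0(4))}\leq M$ with $\|U(\cdot,0)\|_{L^2(B_0(4))}\lesssim M$ (from the Type I control and $L^{3,\infty}\hookrightarrow L^2_{uloc}$) gives $\|\nabla U(\cdot,0)\|_{L^2(B_0(2))}\lesssim M$, and then Sobolev embedding yields $\|U(\cdot,0)\|_{L^6(B_0(2))}\lesssim M$. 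This elliptic step is what fixes the choice $N\sim M$ and hence the precise definition \eqref{e.defS*two} of $S^\sharp(M)$; it is the missing link in your sketch between the contrapositive hypothesis and the input to the smoothing theorem.
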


\begin{proof}[Proof of Lemma \ref{lem.backconc}]
The proof is by contradiction. It relies on Theorem \ref{theo.locshortime} below about local-in-space short-time smoothing. We define $S^\sharp \in (0,\frac14]$ in the following way:
\begin{equation}\label{e.defS*two}
S^\sharp =S^\sharp (M):=S_*\big(C_{weak}M,32C_{Sob}(1+C_{ellip})C_{weak}M\big),
\end{equation}
where $S_*$ is the constant defined in Theorem \ref{theo.locshortime} (see also the formula \eqref{e.defS*}), $C_{Sob}\in (0,\infty)$ is the best constant in the Sobolev embedding $H^1(B_0(2))\subset L^6(B_0(2))$ and $C_{ellip}\in (0,\infty)$ is the best constant in the estimate
\begin{equation}\label{e.ellipest}
\|\nabla U(\cdot,0)\|_{L^2(B_0(2))}\leq C_{ellip}\left(\|\Omega(\cdot,0)\|_{L^2(B_0(4))}+\|U(\cdot,0)\|_{L^2(B_0(4))}\right)
\end{equation}
for weak solutions to
\begin{equation*}
-\Delta U(\cdot,0)=\nabla\times\Omega(\cdot,0)\quad\mbox{in}\quad B_0(4).
\end{equation*}
Furthermore, $C_{weak}\in [1,\infty)$ is a universal constant from the embedding $L^{3,\infty}(\mathbb{R}^3)\subset L^{2}_{uloc}(\mathbb{R}^3)$.  
See, for example, Lemma 6.2 in Bradshaw and Tsai's paper \cite{BT19}.

Assume that 
\begin{equation*}
\int\limits_{B_0(4\sqrt{S^\sharp }^{-1}(-t_0'')^\frac12)}|\omega(x,t_0'')|^2\, dx\leq M^2(-t_0'')^{-\frac12}\sqrt{S^\sharp }
\end{equation*}
and 
\begin{equation*}
\int\limits_{B_0(4\sqrt{S^\sharp }^{-1}(-t_0')^\frac12)}|\omega(x,t_0')|^2\, dx> M^2(-t_0')^{-\frac12}\sqrt{S^\sharp }
\end{equation*}
for times $t_0',\, t_0''$ satisfying the condition that they are well-separated \eqref{e.wellsep}. 
Let $r:=\sqrt{S^\sharp }^{-1}(-t_0'')^\frac12$ and rescale in the following way $U(y,s):=ru(ry,r^2s+t_0'')$, 
$\Omega(y,s):=r^2\omega(ry,r^2s+t_0'')$. 
We have 
\begin{align*}
\|U(\cdot,0)\|_{L^6(B_0(2))}\leq\ &C_{Sob}\left(\|U(\cdot,0)\|_{L^2(B_0(2))}+\|\nabla U(\cdot,0)\|_{L^2(B_0(2))}\right)\\
\leq\ &C_{Sob}(1+C_{ellip})\left(\|U(\cdot,0)\|_{L^2(B_0(4))}+\|\Omega(\cdot,0)\|_{L^2(B_0(4))}\right)\\
\leq\ &C_{Sob}(1+C_{ellip})(8^\frac32+1)C_{weak}M\\
\leq\ &32C_{Sob}(1+C_{ellip})C_{weak}M.
\end{align*}
Here we used the scale-invariant bound \eqref{e.t1A} and  the embedding $L^{3,\infty}(\R^3)\subset L^2_{uloc}(\R^3)$.  Taking $M\geq M_3$ sufficiently large, we now apply the bound \eqref{e.conclbddnablau} with $C_{weak}M$ and $N:=32C_{Sob}(1+C_{ellip})C_{weak}M$. 
Using $C^{\sharp}\in (0,\frac{1}{16})$ and \eqref{e.wellsep}, we have $-t_0'<\frac1{16}(-t_0'')$. Therefore, we have  $t_0'\in (t_0''+\frac{15}{16}S^\sharp r^2,t_0''+S^\sharp r^2)$, hence
\begin{align*}
\int\limits_{B_{0}(4\sqrt{S^\sharp }^{-1}(-t_0')^\frac12)}|\omega(x,t_0')|^2\, dx\leq\ &\sup_{t\in (t_0''+\frac{15}{16}S^\sharp r^2,t_0''+S^\sharp r^2)}\int\limits_{B_{0}(4\sqrt{S^\sharp }^{-1}(-t_0')^\frac12)}|\omega(x,t)|^2\, dx\\
\leq\ &\sup_{t\in (t_0''+\frac{15}{16}S^\sharp r^2,t_0''+S^\sharp r^2)}\int\limits_{B_{0}(\frac14\sqrt{S^\sharp }^{-1}(-t_0'')^\frac12)}|\omega(x,t)|^2\, dx\\
\leq\ &r^{-1}\sup_{s\in (\frac{15}{16}S^\sharp ,S^\sharp )}\int\limits_{B_{0}(\frac 14)}|\Omega(y,s)|^2\, dx\\
\leq\ &CM^{65}N^{161}(-t_0'')^{-\frac12}\\
\leq\ &CM^{226}(-t_0'')^{-\frac12}.
\end{align*}
The contradiction follows then for a well chosen universal constant $C^\sharp \in (0,\frac{1}{16})$.
\end{proof}

\begin{remark}[Concentration of the enstrophy]\label{rem.concenstro}
A variant of the proof of Lemma \ref{lem.backconc} gives the following concentration result for the enstrophy near a Type I singularity. 

For all sufficiently large $M\in[1,\infty)$, let $S^\sharp(M)\in(0,\frac14]$ be the constant defined by \eqref{e.defS*two}.   Let $(u,p)$ be a suitable finite-energy solution\footnote{See Subsection \ref{subsec.def}.} of the Navier-Stokes equations \eqref{e.nse} in $I=[-1,0]$ satisfying the Type I bound \eqref{e.t1A}. 
Assume that the space-time point $(0,0)$ is a singularity for $u$. Then, for all $t'\in\Sigma$, where $\Sigma$ is a full measure subset of $[-1,0)$ defined in Subsection \ref{subsec.def}, the vorticity concentrates in the following sense
\begin{equation*}
\int\limits_{B_0(4\sqrt{S^\sharp}^{-1}(-t')^\frac12)}|\omega(x,t')|^2\, dx> M^2(-t')^{-\frac12}\sqrt{S^\sharp}.
\end{equation*} 
\end{remark}

\begin{lemma}[backward propagation of concentration, time slices]\label{lem.backconctimeslice}
There exists a universal constant 
$M_4\in[1,\infty)$ such that the following holds true. Let $M\in[M_4,\infty).$  
We define $\Mp$ by \eqref{e.defM'theo}.
 Fix any $\alpha\geq \Mp$ and let $t'_0, t''_{0}\in[-1,0)$ be such that 
 $$\frac{t''_{0}}{8\alpha^{201}}< t'_{0}<0.$$
  There exists $S^\flat (M)\in(0,\frac14]$,  such that the following holds. Let $(u,p)$ 
  be a $C^{\infty}(\mathbb{R}^3\times (-1,0))$ 
finite-energy solution of the Navier-Stokes equations \eqref{e.nse} in $I=[-1,0]$ satisfying 
\begin{equation}\label{timeslicecondition}
\|u(\cdot,t'_{0})\|_{L^{3}}\leq M\,\,\,\,\textrm{and}\,\,\,\,\|u(\cdot,t''_{0})\|_{L^{3}}\leq M.
\end{equation}
Suppose further that the vorticity concentrates at time $t_0'$ in the following sense
\begin{equation}\label{e.conct_0'timeslice}
\int\limits_{B_0(4\sqrt{S^\flat}^{-1}(-t_0')^\frac12)}|\omega(x,t_0')|^2\, dx> M^2(-t_0')^{-\frac12}\sqrt{S^\flat}.
\end{equation}
With the additional separation condition that
\begin{equation}\label{e.wellseptimeslice}
\frac{-t_0'}{-t_0''}<
\alpha^{-1051},
\end{equation}
the above assumptions imply that for  any $s_{0}\in [t''_{0}, \frac{t''_{0}}{8\alpha^{201}}]$  the vorticity concentrates in the following sense
\begin{equation}\label{e.concs_0}
\int\limits_{B_0(4(-s_{0})^\frac12 \alpha^{106})}|\omega(x,s_{0})|^2\, dx> \frac{(M+1)^2}{(-s_{0})^{\frac12}\alpha^{106}}.
\end{equation}
Here $S^\flat(M)=O(1)M^{-100}$. 
\end{lemma}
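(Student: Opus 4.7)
The strategy mirrors that of Lemma \ref{lem.backconc}: argue by contradiction, rescale around the time at which one wishes to demonstrate enstrophy concentration, bound the rescaled velocity in $L^6$ on a ball (by combining a control provided by hypothesis with the negation of the conclusion), and then invoke the local-in-space short-time smoothing of Theorem \ref{theo.locshortime} to derive at the later time $t_0'$ a vorticity bound that contradicts \eqref{e.conct_0'timeslice}. Two adaptations are needed relative to Lemma \ref{lem.backconc}: we rescale around the target time $s_0$ rather than $t_0''$ (so that the radius prescribed in \eqref{e.concs_0} becomes the rescaled unit scale), and the scale-invariant Type I bound is replaced by the decomposition
$$u(\cdot,t)=e^{(t-t_0'')\Delta}u(\cdot,t_0'')+w(\cdot,t-t_0''),$$
controlled by the $L^3$ bound at $t_0''$ together with the energy estimate on $w$ from footnote \ref{footenergy} (coming from \cite{SeSv17}).

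Concretely, I would fix $s_0\in[t_0'',t_0''/(8\alpha^{201})]$ and suppose for contradiction that
$$\int_{B_0(4(-s_0)^{1/2}\alpha^{106})} |\omega(x,s_0)|^2 \, dx \leq \frac{(M+1)^2}{(-s_0)^{1/2}\alpha^{106}}.$$
Set $r:=(-s_0)^{1/2}\alpha^{106}$ and define $U(y,s):=ru(ry,s_0+r^2 s)$, $\Omega(y,s):=r^2\omega(ry,s_0+r^2 s)$. The contradiction hypothesis translates to $\|\Omega(\cdot,0)\|_{L^2(B_0(4))}^2\leq (M+1)^2$. To bound $\|U(\cdot,0)\|_{L^2(B_0(4))}^2=r^{-1}\|u(\cdot,s_0)\|_{L^2(B_0(4r))}^2$, I split the velocity as above. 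H\"older on $B_0(4r)$ and $L^3$-contractivity of the heat semigroup give, after the $r^{-1}$ rescaling, a contribution $O(M^2)$ from the heat part. For the perturbation, footnote \ref{footenergy} yields $\|w(\cdot,s_0-t_0'')\|_{L^2(\R^3)}^2\leq (\Mp)^4(s_0-t_0'')^{1/2}$. The ratio $(s_0-t_0'')^{1/2}/r$ is controlled via $s_0-t_0''\leq -t_0''$ and $-s_0\geq -t_0''/(8\alpha^{201})$, yielding $(s_0-t_0'')^{1/2}/r\leq \sqrt{8}\,\alpha^{-11/2}$; since $\alpha\geq \Mp$ with $\Mp$ as in \eqref{e.defM'theo}, the product $(\Mp)^4\alpha^{-11/2}$ is bounded by a universal constant, so $\|U(\cdot,0)\|_{L^2(B_0(4))}\leq CM$. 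Combining with the enstrophy bound via the elliptic estimate \eqref{e.ellipest} and Sobolev embedding $H^1(B_0(2))\hookrightarrow L^6(B_0(2))$ gives $\|U(\cdot,0)\|_{L^6(B_0(2))}\leq C'M$.

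I then apply Theorem \ref{theo.locshortime} with $CM,\, C'M$ playing the role of the hypothesis norms, and set $S^\flat:=S_*(CM,C'M)=O(1)M^{-100}$. The theorem delivers a quantitative bound on $\Omega$ on $B_0(1/4)$ at appropriate rescaled times. The separation condition \eqref{e.wellseptimeslice} together with $s_0\leq t_0''/(8\alpha^{201})$ force $-t_0'/-s_0\leq 8\alpha^{-850}$, so that the rescaled time $s=(t_0'-s_0)/r^2=\alpha^{-212}(1-|t_0'|/|s_0|)$ is close to $\alpha^{-212}$, and moreover the ball inclusion $B_0(4\sqrt{S^\flat}^{-1}(-t_0')^{1/2})\subset B_0(r/4)$ follows from $4\sqrt{S^\flat}^{-1}(-t_0')^{1/2}\leq 4M^{50}\sqrt 8\,\alpha^{-425}(-s_0)^{1/2}\ll r/4$. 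Unscaling the resulting $\Omega$-bound on $B_0(1/4)$ produces
$$\int_{B_0(4\sqrt{S^\flat}^{-1}(-t_0')^{1/2})}|\omega(x,t_0')|^2\, dx \leq CM^{q} r^{-1}$$
for some universal power $q$, and this is strictly smaller than $M^2\sqrt{S^\flat}/(-t_0')^{1/2}$ as soon as $\alpha^{531}$ beats $M^{48+q}$, which holds since $\alpha\geq\Mp$ with $\Mp$ of the order $\exp(L_*M^5/2)$. This contradicts \eqref{e.conct_0'timeslice}, completing the proof.

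The main technical obstacle is threading all of the scale-dependent inequalities consistently. In particular, neutralizing the Gronwall factor $(\Mp)^4$ in the energy control of $w$ demands the large rescaling factor $\alpha^{106}$ (rather than the scale $\sqrt{S^\flat}^{-1}\sim M^{50}$ used in the Type I case), which in turn dictates the exponent $201$ appearing in the range of $s_0$ and the exponent $1051$ in the well-separation condition \eqref{e.wellseptimeslice}. Balancing these so that the rescaled time $(t_0'-s_0)/r^2$ lies in the valid window of Theorem \ref{theo.locshortime}, that the target ball is contained in the smoothing ball, and that the resulting vorticity bound beats $M^2\sqrt{S^\flat}/(-t_0')^{1/2}$, is the delicate accounting one must carry out.
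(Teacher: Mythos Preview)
Your approach is exactly that of the paper, and the overall structure is correct. There are however two points where the execution needs repair.

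First, a minor point: Theorem \ref{theo.locshortime} requires a \emph{global} bound $\|U(\cdot,0)\|_{L^2_{uloc}(\R^3)}$ on the rescaled initial data, not merely $\|U(\cdot,0)\|_{L^2(B_0(4))}$. Your decomposition does furnish this---the heat part lies in $L^3(\R^3)\hookrightarrow L^2_{uloc}$ with norm $\leq C_{Leb}M$, and the perturbation lies in $L^2(\R^3)\hookrightarrow L^2_{uloc}$ with rescaled norm $\leq 1$---but you should state the $L^2_{uloc}$ bound explicitly, since without it Theorem \ref{theo.locshortime} cannot be invoked.

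Second, and more substantively: after rescaling by $r=(-s_0)^{1/2}\alpha^{106}$, the rescaled solution exists only on $(0,\alpha^{-212})$, since the original solution terminates at $t=0$. You correctly compute that the rescaled time corresponding to $t_0'$ is close to $\alpha^{-212}$. But $\alpha^{-212}\ll S^\flat\sim M^{-100}$, so this rescaled time does \emph{not} lie in the window $(\tfrac{15}{16}S^\flat,S^\flat)$ where the basic estimate \eqref{e.conclbddnablau} of Theorem \ref{theo.locshortime} applies. The paper therefore invokes Remark \ref{gradientboundongeneraltimeint} with $\beta=\alpha^{-212}$, and the formula \eqref{nablalocgeneral} carries negative powers of $\beta$: one obtains $\|\nabla U^\lambda\|_{L^\infty_tL^2_x(B_0(1/6)\times(\frac{255}{256}\alpha^{-212},\alpha^{-212}))}\leq CM^4\alpha^{265}$. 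After unscaling, the vorticity bound at $t_0'$ is therefore of order $M^8\alpha^{424}(-s_0)^{-1/2}$, not $CM^qr^{-1}$ with $q$ independent of $\alpha$ as you claim. This $\alpha$-dependence is precisely what forces the large exponent $1051$ in the separation condition \eqref{e.wellseptimeslice}: the contradiction closes via $CM^{56}\alpha^{525}\big((-t_0')/(-t_0'')\big)^{1/2}<1$, not via ``$\alpha^{531}$ beats $M^{48+q}$''. Your final accounting thus does not close as written, though the corrected version follows the same template.
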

\begin{proof}
For $s\in [t''_{0},0]$ we decompose $u$ as 
\begin{equation}\label{udecomptimesliceconc}
u(\cdot,s)= e^{(s-t''_{0})\Delta}u(\cdot,t''_{0})+V(\cdot, s).
\end{equation} 
We then have 
\begin{equation}\label{linestconcslice}
\|e^{(s-t''_{0})\Delta}u(\cdot,t''_{0})\|_{L^{3}_{x}}\leq M.
\end{equation}
\begin{equation}\label{semigroupL3L4conc}
\|e^{(s-t''_{0})\Delta}u(\cdot,t''_{0})\|_{L^{4}_{x}}\leq \frac{CM}{(s_{0}-t''_{0})^{\frac{1}{8}}}.
\end{equation}
Furthermore, arguments from \cite{Giga86} imply that
\begin{equation}\label{semigroupL3L5conc}
\|e^{(t-t''_{0})\Delta}u(\cdot,t''_{0})\|_{L^{5}(\mathbb{R}^3\times (t''_{0},\infty))}\leq CM
\end{equation}
Moreover, similar arguments as those used in Proposition 2.2 of \cite{SeSv17}\footnote{Based on the energy method, Lebesgue interpolation, Sobolev embedding, H\"{o}lder's inequality and Young's inequality.}  yield that for $s\in [t''_{0},0]$
\begin{align*}
&\|V(\cdot,s)\|_{L^{2}_{x}}^{2}+\int\limits_{t''_{0}}^{s}\int\limits_{\mathbb{R}^3} |\nabla V|^2 dxdt
\\& \leq C\int\limits_{t''_{0}}^{s}\int\limits_{\mathbb{R}^3} |e^{(t-t''_{0})\Delta}u(\cdot,t''_{0})|^4 dxdt+C\int\limits_{t''_{0}}^{s}\|V(\cdot,t)\|_{L^{2}_{x}}^{2}\|e^{(t-t''_{0})\Delta}u(\cdot,t''_{0})\|_{L^{5}_{x}}^5 dt .
\end{align*}
Using \eqref{linestconcslice}-\eqref{semigroupL3L5conc} and Gronwall's lemma, we infer (for $M$ larger than some universal constant) that
$$
\|V(\cdot,s)\|_{L^{2}_{x}}^2+\int\limits_{t''_{0}}^{s}\int\limits_{\mathbb{R}^3} |\nabla V(x,t)|^2dxdt\leq C(\Mp)^4 (s-t''_{0})^{\frac{1}{2}}.
$$
Here $\Mp$ is defined by \eqref{e.defM'theo} for an appropriate universal constant $L^*\in(0,\infty)$ coming from the Gronwall estimate. 
In particular, using that $s\in [t''_{0}, \frac{t''_{0}}{{8\alpha^{201}}}]$ we have
\begin{equation}\label{L2almostscaleinvar}
\|V(\cdot,s)\|_{L^{2}_{x}}^2\leq C8^{\frac 12}\alpha^{101}(\Mp)^4 (-s)^{\frac{1}{2}}< \alpha^{106}(-s)^{\frac{1}{2}}.
\end{equation}
Here, we used the fact that $\alpha\geq \Mp$. 
Next assume for contradiction the under the assumptions of Lemma \ref{lem.backconctimeslice}, we have the converse of \eqref{e.concs_0}. Namely, there exists $s_0\in [t''_{0}, \frac{t''_{0}}{{8\alpha^{201}}}]$ such that
\begin{equation}\label{contradictionconc}
\int\limits_{B_{0}(4\alpha^{106}(-s_0)^{\frac{1}{2}})} |\omega(x,s_{0})|^2 dx\leq\frac{(M+1)^2}{(-s_{0})^{\frac{1}{2}}\alpha^{106}}.
\end{equation}
Define
\begin{equation}\label{rescaleparametersliceconc}
\lambda:= (-s_{0})^{\frac{1}{2}} \alpha^{106}
\end{equation}
and rescale to get $U^{\lambda}: \mathbb{R}^{3}\times (0,\alpha^{-212})\rightarrow \mathbb{R}^3$ and $P^{\lambda}: \mathbb{R}^{3}\times (0,\alpha^{-212})\rightarrow \mathbb{R}$. Here,
\begin{equation}\label{rescalesliceconc}
U^{\lambda}(y,t):= \lambda u(\lambda y, \lambda^2 t+s_{0})\,\,\,\,\textrm{and}\,\,\,\,\,P^{\lambda}(y,t):= \lambda^2 p(\lambda y, \lambda^2 t+s_{0}).
\end{equation}
Using \eqref{linestconcslice} and \eqref{L2almostscaleinvar}, we see that
\begin{equation}\label{rescaleidconcslice}
\|U^{\lambda}(y,0)\|_{L^{2}_{uloc}}\leq C_{Leb}M+1.
\end{equation}
Here, $C_{Leb}\in [1,\infty)$ is a universal constant from the embedding $L^{3}(\mathbb{R}^3)\subset L^{2}_{uloc}(\mathbb{R}^3)$.

Furthermore, defining $\Omega^{\lambda}=\nabla\times U^{\lambda}$, we see that \eqref{contradictionconc} implies that
\begin{equation}\label{rescalevortslice}
\int\limits_{B_{0}(4)} |\Omega^{\lambda}(y,0)|^2 dy\leq(M+1)^2.
\end{equation}
Similarly to Lemma \ref{lem.backconc}, we define $$S^\flat=S^\flat(M):=S_*\big(C_{Leb}M+1,32C_{Sob}(1+C_{ellip})(C_{Leb}M+1)\big)$$
where $S_*$ is the constant defined in Theorem \ref{theo.locshortime}, $C_{Sob}\in (0,\infty)$ is the best constant in the Sobolev embedding $H^1(B_0(2))\subset L^6(B_0(2))$ and $C_{ellip}\in (0,\infty)$ is the best constant in the estimate
$$
\|\nabla U(\cdot,0)\|_{L^2(B_0(2))}\leq C_{ellip}\left(\|\Omega(\cdot,0)\|_{L^2(B_0(4))}+\|U(\cdot,0)\|_{L^2(B_0(4))}\right)
$$
for weak solutions to
\begin{equation*}
-\Delta U(\cdot,0)=\nabla\times\Omega(\cdot,0)\quad\mbox{in}\quad B_0(4).
\end{equation*}
Next notice that from \eqref{e.defM'theo}, if we have for $M$ sufficiently large 
$$\alpha^{-212}<(\Mp)^{-212}<(1+M)^{{-101}}<S^\flat. $$
Using \eqref{rescaleidconcslice}-\eqref{rescalevortslice}, together with a similar reasoning to Lemma \ref{lem.backconc}, we can apply Theorem \ref{theo.locshortime} with $M\geq M_{4}$ being sufficiently large. Specifically, we apply Remark \ref{gradientboundongeneraltimeint} with $\beta= \alpha^{-212}$.
This gives
\begin{equation}\label{rescalegradboundslice}
\|\nabla U^{\lambda}\|_{L^\infty_tL^2_x(B_0(\frac16)\times(\frac{255}{256}\alpha^{-212},{\alpha^{-212}}))}\\
\leq C_{univ}M^{4}\alpha^{265}.
\end{equation}
This implies that 
\begin{equation}\label{gradboundslice}
\|\nabla u\|_{L^\infty_tL^2_x\big(B_0\big(\frac{(-s_0)^{\frac{1}{2}}\alpha^{106}}{6}\big)\times(\frac{s_{0}}{256},0)\big)}^2\\
\leq \frac{C_{univ}^2M^{8}\alpha^{424}}{(-s_0)^{\frac{1}{2}}}.
\end{equation}
Using that $s_{0}<t'_{0}<0$ and that $S^\flat(M)=O(1)M^{-100}$, we have for $M$ sufficiently large
 that 
$$B_{0}(4\sqrt{S^\flat}^{-1}(-t'_{0})^{\frac{1}{2}})\subset B_0(\tfrac16(-s_{0})^{\frac{1}{2}}(\Mp)^{106}). $$
So for $\frac{s_{0}}{256}<t'_{0}$, we see that \eqref{gradboundslice} implies that
$$\int\limits_{B_0(4\sqrt{S^\flat}^{-1}(-t_0')^\frac12)}|\omega(x,t_0')|^2 dx\leq \frac{C_{univ}^2M^{8}\alpha^{424}}{(-s_{0})^{\frac{1}{2}}}\leq \frac{M^{8}\alpha^{525}}{(-t''_{0})^{\frac{1}{2}}}.$$
Here, we used $s_{0}\in [t''_{0}, \frac{t''_{0}}{8\alpha^{201}}].$ Thus,
$$\int\limits_{B_0(4\sqrt{S^\flat}^{-1}(-t_0')^\frac12)}|\omega(x,t_0')|^2 dx\leq  {M^{2}(-t'_{0})^{-\frac{1}{2}}\sqrt{S^\flat}}\times \frac{\alpha^{525} M^6}{\sqrt{S^\flat}} \Big(\frac{-t'_{0}}{-t''_{0}}\Big)^{\frac{1}{2}}. $$
Now, $$\frac{{\alpha^{525}} M^6}{\sqrt{S^\flat}} \Big(\frac{-t'_{0}}{-t''_{0}}\Big)^{\frac{1}{2}}\leq C_{univ} M^{56}\alpha^{525}\Big(\frac{-t'_{0}}{-t''_{0}}\Big)^{\frac{1}{2}}.$$
Therefore, if 
\begin{equation}\label{sepconditionslice}
C_{univ} M^{56}\alpha^{525}\Big(\frac{-t'_{0}}{-t''_{0}}\Big)^{\frac{1}{2}}<1
\end{equation}
we contradict \eqref{e.conct_0'timeslice}. Thus, if \eqref{sepconditionslice} holds we must have that 
$$\int\limits_{B_{0}(4\alpha^{106}(-s_0)^{\frac{1}{2}})} |\omega(x,s_{0})|^2 dx>\frac{(M+1)^2}{(-s_{0})^{\frac{1}{2}}\alpha^{106}} $$ for all $s_{0}\in [t''_{0}, \frac{t''_{0}}{8\alpha^{201}}]$ as desired. Note that \eqref{e.wellseptimeslice} implies \eqref{sepconditionslice}.
\end{proof}

\subsection{Proof of the main quantitative estimate in the Type I case} \label{subsec.quantest}

This part is devoted to the proof of Proposition \ref{prop.main}. Following Tao \cite{Tao19}, the idea of the proof is to transfer the concentration of the enstrophy at times $t_0''$ far away in the past to large-scale lower bounds for the enstrophy at time $t_0$. This is done in Step 1-3 below. The last step, Step 4 below, consists in transferring the lower bound on the enstrophy at time $t_0$ to a lower bound for the $L^3$ norm at time $t_0$ and summing appropriate scales. In Step 5 we sum scales under the additional assumption \eqref{L3lowerboundassumption}.

Without loss of generality, we now take $t_0=0$. We also assume that $T=1$. The general statement is obtained by scaling. Let  
$M\in[M_3,\infty)$ where $M_3$ is a constant in Lemma \ref{lem.backconc}. 
In the course of the proof we will need to take $M$ larger, always larger than universal constants. Let $u:\, \R^3\times [-1,0]\rightarrow\R^3$ be a `smooth solution with sufficient decay'\footnote{See footnote \ref{footdefsol}.} of the Navier-Stokes equations \eqref{e.nse} in $I=[-1,0]$ satisfying the Type I bound \eqref{e.t1A}. 
Assume that there exists $t_0'\in [-1,0)$ such that $t_0'$ is not too close to $-1$ in the sense
\begin{equation*}
0< -t_0'<C^\sharp M^{-548}
\end{equation*}
and such that the vorticity concentrates at time $t_0'$ in the following sense
\begin{equation}\label{e.conct_0'bis}
\int\limits_{B_0(4\sqrt{S^\sharp }^{-1}(-t_0')^\frac12)}|\omega(x,t_0')|^2\, dx> M^2(-t_0')^{-\frac12}\sqrt{S^\sharp },
\end{equation}
where we recall that $S^\sharp =O(1)M^{-100}$. 
Lemma \ref{lem.backconc} then implies that
\begin{equation}\label{e.conct_0''bis}
\int\limits_{B_0(4\sqrt{S^\sharp }^{-1}(-t'')^\frac12)}|\omega(x,t'')|^2\, dx> M^2(-t'')^{-\frac12}\sqrt{S^\sharp }.
\end{equation}
at any well-separated backward time $t''\in [-1,t_0']$ such that\footnote{\label{foot.ae}Notice that the whole argument of Section \ref{subsec.quantest} goes through assuming that \eqref{e.conct_0''bis} holds for almost any 
$t''\in[-1,\frac1{C^\sharp }M^{548}t_0')$.}
\begin{equation}\label{e.wellsepbis}
\frac1{C^\sharp }M^{548}t_0'>t''.
\end{equation}
The rest of the proof relies on the Carleman inequalities of Proposition \ref{prop.firstcarl} and Proposition \ref{prop.sndcarl}. These are the tools used to transfer the concentration  information \eqref{e.conct_0''bis} from the time $t''$ 
to time $0$ and from the small scales $B_0(4\sqrt{S^\sharp }^{-1}(-t'')^\frac12)$ to large scales.

\noindent{\bf Step 1: quantitative unique continuation.} The purpose of this step is to prove the following estimate:
\begin{align}\label{e.claimstep1}
T_1^{\frac12}e^{-\frac{O(1)M^{149}R^2}{T_1}}
\lesssim\ & \int\limits_{-T_1}^{-\frac{T_1}2}\int\limits_{B_0(2R)\setminus B_0(R/2)}|\omega(x,t)|^2\, dxdt,
\end{align}
for all $T_1$ and $R$ such that 
\begin{equation}\label{e.wellsepT1}
\frac2{C^\sharp}M^{548}(-t_0')<T_1\leq \frac{1}{2}\quad\mbox{and}\quad R\geq M^{100}\Big(\frac{T_1}{2}\Big)^\frac12.
\end{equation} 
Let $t_0''$ be such that \eqref{e.wellsepbis} is satisfied  with $t''=\frac{t_0''}2$. 
Let $T_1:=-t_0''$ and $I_1:=(t_0'',t_0''+\frac{T_1}2)=(-T_1,-\frac{T_1}{2})\subset[-\tfrac12,0]\subset[-1,0]$. Thus, we can apply Lemma \ref{epochTypeI} and Remark \ref{rem.estcarlepo} with $t_{0}=0$ and $T=1$. The bound \eqref{carlemantypeIepochbound} in Remark \ref{rem.estcarlepo} implies that there exists an epoch of regularity $I_1''=[t_1''-T_1'',t_1'']\subset I_1$ such that 
\begin{equation}\label{e.sizeI1'}
T_1''=|I_1''|=\frac{M^{-48}}{4C_{univ}^3}|I_1|=\frac{M^{-48}}{8C_{univ}^3}T_1
\end{equation}
and for $j=0, 1, 2$,
\begin{equation}\label{e.epochI1'}
\|\nabla^j u\|_{L^{\infty}_{t}L^{\infty}_{x}(\mathbb{R}^3\times I_1'')}\leq \frac{1}{2^{j+1}} |I_1''|^{\frac{-(j+1)}{2}}=\frac{1}{2^{j+1}}(T_1'')^{\frac{-(j+1)}{2}}.
\end{equation}
Let $T_1''':=\frac34T_1''$ and $s''\in [t_1''-\frac{T_1''}4,t_1'']$. Let $x_1\in\R^3$ be such that $|x_1|\geq M^{100}(\frac{T_1}2)^\frac12$ and let 
$r_1:=M^{50}|x_1|\geq M^{150}(\frac{T_1}2)^\frac12$. 
Notice that for $M$ large enough 
\begin{equation}\label{e.condr1}
r_1:=M^{50}|x_1|\geq M^{150}\Big(\frac{T_1}2\Big)^\frac12\geq M^{99}\cdot 4\sqrt{S^\sharp}^{-1}(-t_0'')^\frac12
\end{equation}
and 
\begin{equation*}
r_1^2\geq 4000T_1'''.
\end{equation*}
We apply the second Carleman inequality, Proposition \ref{prop.sndcarl} (quantitative unique continuation), on the cylinder $\mathcal C_1=\{(x,t)\in\mathbb R^3\times\mathbb R\, :\ t\in [0,T_1'''],\ |x|\leq r_1\}$ to the function $w:\, \R^3\times [0,T_1''']\rightarrow\R^3$, defined by for all $(x,t)\in \R^3\times [0,T_1''']$, 
\begin{equation*}
w(x,t):=\omega(x_1+x,s''-t).
\end{equation*}
Notice that  the quantitative regularity \eqref{e.epochI1'} and the vorticity equation \eqref{vort} implies that on $\mathcal C_1$
\begin{equation*}
|(\partial_t+\Delta)w|\leq \frac3{16}{T_1'''}^{-1}|w|+\frac{\sqrt{3}}4{T_1'''}^{-\frac12}|\nabla w|,
\end{equation*}
so that \eqref{e.diffineqC} is satisfied with $S=S_1:=T_1'''$ and $C_{Carl}=\frac{16}3$. Let 
\begin{equation*}
\hat s_1=\frac{T_1'''}{20000},\qquad \check s_1=M^{-150}T_1'''.
\end{equation*}
For $M$ sufficiently large we have $0<\check s_1\leq \hat s_1\leq\frac{T_1'''}{10000}$. Hence by \eqref{e.conclcarltwo} we have 
\begin{equation}\label{e.estXYZ1}
Z_1\lesssim e^{-\frac{r_1^2}{500\hat s_1}}X_1+(\hat s_1)^\frac32\Big(\frac{e\hat s_1}{\check s_1}\Big)^{\frac{O(1)r_1^2}{\hat s_1}}Y_1,
\end{equation}
where
\begin{align*}
&X_1:=\int\limits_{s''-T_1'''}^{s''}\int\limits_{B_{x_1}(M^{50}|x_1|)}((T_1''')^{-1}|\omega|^2+|\nabla\omega|^2)\, dxds,\\
&Y_1:=\int\limits_{B_{x_1}(M^{50}|x_1|)}|\omega(x,s'')|^2(\check s_1)^{-\frac32}e^{-\frac{|x-x_1|^2}{4\check s_1}}\, dx,\\
&Z_1:=\int\limits_{s''-\frac{T_1'''}{10000}}^{s''-\frac{T_1'''}{20000}}\int\limits_{B_{x_1}(\frac{M^{50}|x_1|}2)}((T_1''')^{-1}|\omega|^2+|\nabla\omega|^2)e^{-\frac{|x-x_1|^2}{4(s''-s)}}\, dxds.
\end{align*}
We first use the concentration \eqref{e.conct_0''bis} for times $s\in [s''-\frac{T_1'''}{10000},s''-\frac{T_1'''}{20000}]$ to bound $Z_1$ from below. By \eqref{e.condr1}, we have
\begin{equation*}
B_0(4\sqrt{S^\sharp }^{-1}(-s)^\frac12)\subset B_{x_{1}}(2|x_{1}|)\subset B_{x_1}\Big(\frac{M^{50}|x_1|}{2}\Big)
\end{equation*}
for all $s\in [s''-\frac{T_1'''}{10000},s''-\frac{T_1'''}{20000}]$ and for $M$ sufficiently large.
We have 
\begin{align*}
Z_1\gtrsim\ &\int\limits_{s''-\frac{T_1'''}{10000}}^{s''-\frac{T_1'''}{20000}}\int\limits_{B_0(4\sqrt{S^\sharp }^{-1}(-s)^\frac12)}(T_1''')^{-1}|\omega(x,s)|^2\, dxds\, e^{-\frac{O(1)|x_1|^2}{T_1'''}}\\
\gtrsim\ &\int\limits_{s''-\frac{T_1'''}{10000}}^{s''-\frac{T_1'''}{20000}}M^{-48}(-s)^{-\frac12}\, ds(T_1'')^{-1}e^{-\frac{O(1)|x_1|^2}{T_1''}}\\
\gtrsim\ & M^{-48}\frac{T_1'''}{(-s''+\frac{T_1'''}{10000})^{\frac{1}{2}}}(T_1'')^{-1}e^{-\frac{O(1)|x_1|^2}{T_1''}}\\
\gtrsim\ &M^{-48}(T_1)^{-\frac12}e^{-\frac{O(1)|x_1|^2}{T_1''}}\\
\gtrsim\ &M^{-48}(M^{48}T_1'')^{-\frac12}e^{-\frac{O(1)|x_1|^2}{T_1''}}\\=\ & M^{-72}(T_1'')^{-\frac{1}{2}}e^{-\frac{O(1)|x_1|^2}{T_1''}}.
\end{align*}
Second, we bound from above $X_1$. We rely on the quantitative regularity \eqref{e.epochI1'} to obtain
\begin{align*}
X_1\lesssim  (T_1'')^{-2}M^{150}|x_1|^3.
\end{align*}
Hence,
\begin{align*}
e^{-\frac{r_1^2}{500\hat s_1}}X_1\lesssim\ & (T_1'')^{-2}M^{150}|x_1|^3e^{-\frac{O(1)M^{100}|x_1|^2}{T_1''}}\\
\lesssim\ &(T_1'')^{-\frac12}e^{-\frac{O(1)M^{100}|x_1|^2}{T_1''}}.
\end{align*}
Third, for $Y_1$ we decompose and estimate as follows
\begin{align*}
Y_1:=\ &\int\limits_{B_{x_1}(\frac{|x_1|}2)}|\omega(x,s'')|^2(\check s_1)^{-\frac32}e^{-\frac{|x-x_1|^2}{4\check s_1}}\, dx\\
&+\int\limits_{B_{x_1}(M^{50}|x_1|)\setminus B_{x_1}(\frac{|x_1|}2)}|\omega(x,s'')|^2(\check s_1)^{-\frac32}e^{-\frac{|x-x_1|^2}{4\check s_1}}\, dx\\
\lesssim\ &M^{225}(T_1'')^{-\frac32}\Bigg(\int\limits_{B_{x_1}(\frac{|x_1|}2)}|\omega(x,s'')|^2\, dx\\
&+\int\limits_{B_{x_1}(M^{50}|x_1|)\setminus B_{x_1}(\frac{|x_1|}2)}|\omega(x,s'')|^2
e^{-\frac{O(1)M^{150}|x_1|^2}{T_1''}}\, dx\Bigg)\\
\lesssim\ &M^{225}(T_1'')^{-\frac32}\Bigg(\int\limits_{B_{x_1}(\frac{|x_1|}2)}|\omega(x,s'')|^2\, dx+M^{150}|x_1|^3(T_1'')^{-2}e^{-\frac{O(1)M^{150}|x_1|^2}{T_1''}}\Bigg)\\
\lesssim\ &M^{225}(T_1'')^{-\frac32}\Bigg(\int\limits_{B_{x_1}(\frac{|x_1|}2)}|\omega(x,s'')|^2\, dx+(T_1'')^{-\frac12}e^{-\frac{O(1)M^{150}|x_1|^2}{T_1''}}\Bigg),
\end{align*}
where we used the quantitative regularity \eqref{e.epochI1'}. Hence,
\begin{align*}
(\hat s_1)^\frac32\Big(\frac{e\hat s_1}{\check s_1}\Big)^{\frac{O(1)r_1^2}{\hat s_1}}Y_1\lesssim\ &(T_1'')^\frac32e^{\frac{O(1)M^{100}|x_1|^2}{T_1''}\log(\frac{eM^{150}}{20000})}Y_1\\
\lesssim\ &M^{225}e^{\frac{O(1)M^{101}|x_1|^2}{T_1''}}\int\limits_{B_{x_1}(\frac{|x_1|}2)}|\omega(x,s'')|^2\, dx\\
&+M^{225}(T_1'')^{-\frac12}e^{-\frac{O(1)M^{150}|x_1|^2}{T_1''}}.
\end{align*}
Gathering these bounds and combining with \eqref{e.estXYZ1} yields
\begin{multline*}
M^{-72}(T_1'')^{-\frac12}e^{-\frac{O(1)|x_1|^2}{T_1''}}\lesssim (T_1'')^{-\frac12}e^{-\frac{O(1)M^{100}|x_1|^2}{T_1''}}\\
+M^{225}e^{\frac{O(1)M^{101}|x_1|^2}{T_1''}}\int\limits_{B_{x_1}(\frac{|x_1|}2)}|\omega(x,s'')|^2\, dx+M^{225}(T_1'')^{-\frac12}e^{-\frac{O(1)M^{150}|x_1|^2}{T_1''}}.
\end{multline*}
Using \eqref{e.sizeI1'} and $|x_1|\geq M^{100}(\frac{T_1}{2})^{\frac{1}{2}}$, we see that for $M$ sufficiently large
\begin{align*}
M^{-297}(T_1'')^{-\frac12}e^{-\frac{O(1)M^{101}|x_1|^2}{T_1''}}\lesssim\ & \int\limits_{B_{x_1}(\frac{|x_1|}2)}|\omega(x,s'')|^2\, dx.
\end{align*}
Hence, 
for all $s''\in I_1''=[t_1''-\frac{T_1''}{4},t_1'']$, for all $|x_1|\geq M^{100}(\frac{T_1}{2})^\frac12$,
\begin{equation*}
\int\limits_{B_{x_1}(\frac{|x_1|}{2})}|\omega(x,s'')|^2\, dx\gtrsim M^{-297}(T_1'')^{-\frac12}e^{-\frac{O(1)M^{101}|x_1|^2}{T_1''}}.
\end{equation*}
Let $R\geq M^{100}(\frac{T_1}2)^\frac12$ and $x_{1}\in\mathbb{R}^3$ be such that $|x_{1}|=R$. 
Integrating in time $[t_1''-\frac{T_1''}4,t_1'']$ yields the estimate
\begin{align*}
M^{-321}e^{O(1)M^{349}}T_1^{\frac12}e^{-\frac{2O(1)M^{149}R^2}{T_1}}\lesssim\ &M^{-321}T_1^{\frac12}e^{-\frac{O(1)M^{149}R^2}{T_1}}\\
\lesssim\ & \int\limits_{t_1''-\frac{T_1''}4}^{t_1''}\int\limits_{B_0(2R)\setminus B_0(R/2)}|\omega(x,t)|^2\, dxdt
\end{align*}
which yields the claim \eqref{e.claimstep1} of Step 1.

\noindent{\bf Step 2: quantitative backward uniqueness.} The goal of this step and Step 3 below is to prove the following claim:
\begin{align}\label{e.claimstep2}
\begin{split}
T_2^{-\frac12}\exp\big(-\exp({M^{1021}})\big)\lesssim\ &\int\limits_{B_0\big(\tfrac{3}{4}C(100)M^{1000}R_{2}'\big)\setminus B_0(2R_{2}')}|\omega(x,0)|^2\, dx,
\end{split}
\end{align}
for all $\frac{8}{C^\sharp }M^{749}(-t_0')< T_2\leq 1$ and $M$ sufficiently large. Here, $R_{2}$, $R_{2}'$ and $C(100)$ are as in \eqref{e.restrR2}-\eqref{e.defann2}. This is the key estimate for Step 4 below and the proof of Proposition \ref{prop.main}.

We apply here the results of Section \ref{sec.quantannulus} for the quantitative existence of an annulus of regularity. Although the parameter $\mu$ in Section \ref{sec.quantannulus} is any positive real number, 
here we need to take $\mu$ sufficiently large in order to have a large enough annulus of quantitative regularity, and hence a large $r_+$ below in the application of the first Carleman inequality Proposition \ref{prop.firstcarl}. 
To fix the ideas, we take $\mu=100$.\footnote{More specifically, we see that $\mu$ is chosen so that $10\mu>350$ in order to obtain \eqref{e.step2lowerbd1} from \eqref{e.conclcarlonebis} and \eqref{e.lowerZ2}.} Let $T_1$ and $T_2$ such that
\begin{equation}\label{e.sepT2}
\frac{8}{C^\sharp }M^{548+201}(-t_0')\leq T_2\leq 1\quad\mbox{and}\quad T_1:=\frac{T_2}{4M^{201}}.\footnote{The reason  for this is to ensure we can apply Step 1 to get a lower bound \eqref{e.lowerZ2} for $Z_{2}$. }
\end{equation}
Let 
\begin{equation}\label{e.restrR2}
R_2:=K^\sharp(T_2)^\frac12,
\end{equation}
for a universal constant $K^\sharp\geq 1$ to be chosen sufficiently large below. In particular it is chosen in Step 3 such that \eqref{e.choiceKsharp} holds, which makes it possible to absorb the upper bound \eqref{e.controlX3} of $X_3$ in the left hand side of \eqref{e.conclcarltwobis}. 
By Corollary \ref{cor.12}, for $M\geq M_{1}(100)$ there exists a scale
\begin{equation}\label{e.bdR2prime}
2R_2\leq R_2'\leq 2R_2\exp({C(100) M^{1020}})
\end{equation}
and a good cylindrical annulus 
\begin{equation}\label{e.defann2}
\mathcal A_2:=\{R_2'<|x|<c(100){M^{1000}}R_2'\}\times\Big(-\frac{T_2}{32},0\Big)
\end{equation} 
such that for $j=0,1$, 
\begin{align}\label{e.linftyderivstep2}
\begin{split}
\|\nabla^j u\|_{L^{\infty}(\mathcal A_2)}
\leq\ & 2^{\frac{j+1}{2}}\bar{C}_{j}C(100) M^{-{300}
}T_2^{-\frac{j+1}{2}},\\
\|\nabla \omega\|_{L^{\infty}(\mathcal A_2)}
\leq\ & 2^{\frac{3}{2}}\bar{C}_{2}C(100) M^{-{300}
}T_2^{-\frac32}.
\end{split}
\end{align}
We apply now the quantitative backward uniqueness, Proposition \ref{prop.firstcarl} to the function $w:\, \R^3\times [0,\frac{T_2}{M^{201}}]\rightarrow\R^3$ defined by for all $(x,t)\in\R^3\times [0,\frac{T_2}{M^{201}}]$,
\begin{equation*}
w(x,t)=\omega(x,-t).
\end{equation*}
An important remark is that although we have a large cylindrical annulus of quantitative  regularity $\mathcal A_2$, we apply the Carleman estimate on a much smaller annulus, namely
\begin{equation}\label{e.defann2bis}
\widetilde{\mathcal A}_2:=\Big\{4R_2'<|x|<\frac{c(100)}{4}{M^{1000}}R_2'\Big\}\times\Big(-\frac{T_2}{M^{201}},0\Big).
\end{equation}

Choosing $M$ sufficiently large such that $2\bar{C}_{j}C(100) M^{-{300}}\leq 1$ and $2^\frac32\bar{C}_{2}C(100) M^{-{300}}\leq 1$, we see that the bounds \eqref{e.linftyderivstep2} imply that the differential inequality \eqref{e.diffineq} is satisfied with $S=S_2:=\frac{T_2}{M^{201}}$ and  
$C_{Carl}=M^{201}$. Take 
\begin{equation*}
r_-=4R_2',\qquad r_+=\tfrac14c(100){M^{1000}}R_2'.
\end{equation*}
Then, 
\begin{equation*}
B_0(160R_2')\setminus B_0(40R_2')=B_0(40r_-)\setminus B_0(10r_-)\subset\left\{40R_2'<|x|<\tfrac{c(100)}{8}{M^{1000}}R_2'\right\}
\end{equation*}
on condition that $M$ is sufficiently large: one needs $c(100)M^{1000}>1280$. 
Note also that
\begin{equation*}
r_{-}^2= 16(R_{2}')^2\geq 64R_{2}^2=64(K^{\sharp})^2T_{2}>64T_{2}>4SC_{Carl}.
\end{equation*}
By \eqref{e.conclcarlone}, we get
\begin{equation}\label{e.conclcarlonebis}
Z_2\lesssim e^{-\frac{O(1)M^{1000}(R_2')^2}{T_2}}\big(X_2+e^{\frac{O(1)M^{2000}(R_2')^2}{T_2}}Y_2\big),
\end{equation}
where 
\begin{align*}
X_2:=\ &\int\limits_{-\frac{T_{2}}{M^{201}}}^{0}\int\limits_{r_{-}\leq |x|\leq r_{+}}e^{\frac{4|x|^2}{T_2}}(M^{201}T_2^{-1}|\omega|^2+|\nabla \omega|^2)\, dxdt,\\ 
Y_2:=\ &\int\limits_{r_-\leq |x|\leq r_+}|\omega(x,0)|^2\, dx,\\
Z_2:=\ &\int\limits_{-\frac{T_2}{4 M^{201}}}^{0}\int\limits_{10r_-\leq|x|\leq \frac{r_+}2}(M^{201}T_2^{-1}|\omega|^2+|\nabla \omega|^2)\, dxdt.
\end{align*}
Thanks to the separation condition \eqref{e.sepT2} and to the fact that for $M$ large enough \eqref{e.restrR2} implies
\begin{equation*}
20r_{-}\geq 10R_2'\geq 20R_2=20K^\sharp T_2^\frac12\geq M^{100}\Big(\frac{T_2}{8M^{201}}\Big)^\frac12=M^{100}\Big(\frac{T_1}{2}\Big)^\frac12,
\end{equation*} 
we can apply the concentration result of Step 1, taking there $T_1=\frac{T_2}{4 M^{201}}=\frac {S_2}4$ and $R=20r_-$. By \eqref{e.claimstep1} we have that
\begin{equation}\label{e.lowerZ2}
Z_2\gtrsim M^{201}\left(\frac{T_2}{4M^{201}}\right)^\frac12e^{-\frac{O(1)M^{350}(R_2')^2}{T_2}}T_2^{-1}\gtrsim T_2^{-\frac12}e^{-\frac{O(1)M^{350}(R_2')^2}{T_2}}.
\end{equation}
Therefore, one of the following two lower bounds holds
\begin{align}
T_2^{-\frac12}\exp\Big(\frac{O(1)M^{1000}(R_2')^2}{T_2}\Big)\lesssim X_2,\label{e.step2lowerbd1}\\
T_2^{-\frac12}\exp(-\exp(M^{1021}))
\lesssim e^{-\frac{O(1)M^{2000}(R_2')^2}{T_2}}T_{2}^{-\frac12}\lesssim Y_2,\label{e.step2lowerbd2}
\end{align}
where we used the upper bound \eqref{e.bdR2prime} for \eqref{e.step2lowerbd2}. 
The bound \eqref{e.step2lowerbd2} can be used directly in Step 4 below. On the contrary, if \eqref{e.step2lowerbd1} holds more work needs to be done to transfer the lower bound on the enstrophy at time $0$. This is the objective of Step 3 below.

\noindent{\bf Step 3: a final application of quantitative unique continuation.} Assume that the bound \eqref{e.step2lowerbd1} holds. We will apply the pigeonhole principle three times successively in order to end up in a situation where we can rely on the quantitative unique continuation to get a lower bound at time $0$. We first remark that this with the definition \eqref{e.defann2bis} of the annulus $\widetilde{\mathcal A}_2$ implies the following lower bound
\begin{multline*}
T_2^{-\frac12}\exp\Big(\frac{O(1)M^{1000}(R_2')^2}{T_2}\Big)\\
\lesssim \int\limits_{-\frac{T_2}{M^{201}}}^{0}\int\limits_{4R_2'\leq|x|\leq \frac14c(100)M^{1000}R_2'}e^{\frac{4|x|^2}{T_2}}(M^{201}T_2^{-1}|\omega|^2+|\nabla \omega|^2)\, dxdt.
\end{multline*}
By the pigeonhole principle, there exists 
\begin{equation}\label{e.condR3}
8R_2'\leq R_3\leq \tfrac12c(100){M^{1000}}R_2'
\end{equation}
such that 
\begin{equation*}
T_2^{-\frac12}\exp\Big(-\frac{4R_3^2}{T_2}\Big)
\lesssim \int\limits_{-\frac{T_2}{M^{201}}}^0\int\limits_{B_0(R_3)\setminus B_0(\frac{R_3}2)}(T_2^{-1}|\omega|^2+|\nabla \omega|^2)\, dxdt.
\end{equation*}
Using the bounds \eqref{e.linftyderivstep2}, we have that 
\begin{equation*}
T_2^{-\frac12}\exp\Big(-\frac{4R_3^2}{T_2}\Big)\lesssim \int\limits_{-\frac{T_2}{M^{201}}}^{-\exp(-\frac{8R_3^2}{T_2})T_2}\int\limits_{B_0(R_3)\setminus B_0(\frac{R_3}2)}(T_2^{-1}|\omega|^2+|\nabla \omega|^2)\, dxdt.
\end{equation*}
By the pigeonhole principle, there exists 
\begin{equation}\label{e.condt3}
\frac{1}{2}\exp\Big(-\frac{8R_3^2}{T_2}\Big)T_2\leq -t_3\leq\frac{T_2}{M^{201}}
\end{equation}
such that
\begin{equation*}
T_2^{-\frac12}\exp\Big(-\frac{5R_3^2}{T_2}\Big)\lesssim \int\limits_{2t_3}^{t_3}\int\limits_{B_0(R_3)\setminus B_0(\frac{R_3}2)}(T_2^{-1}|\omega|^2+|\nabla \omega|^2)\, dxdt.
\end{equation*}
We finally cover the annulus $B_0(R_3)\setminus B_0(\frac{R_3}2)$ with
\begin{equation*}
O(1)\frac{R_3^3}{(-t_3)^\frac32}\lesssim \frac{R_3^3}{T_2^\frac32}\exp\Big(\frac{12R_3^2}{T_2}\Big)\lesssim \exp\Big(\frac{13R_3^2}{T_2}\Big)
\end{equation*}
balls of radius $(-t_3)^\frac12$, and apply the pigeonhole principle a third time to find that there exists $x_3\in B_0(R_3)\setminus B_0(\frac{R_3}2)$ such that 
\begin{equation}\label{e.conct3}
T_2^{-\frac12}\exp\Big(-\frac{18R_3^2}{T_2}\Big)\lesssim \int\limits_{2t_3}^{t_3}\int\limits_{B_{x_3}((-t_3)^\frac12)}(T_2^{-1}|\omega|^2+|\nabla \omega|^2)\, dxdt.
\end{equation}
We apply now the second Carleman inequality, Proposition \ref{prop.sndcarl}, to the function $w:\, \R^3\times [0,-20000t_{3}]\rightarrow\R^3$ defined by for all $(x,t)\in\R^3\times [0,-20000t_{3}]$,
\begin{equation*}
w(x,t)=\omega(x+x_3,-t).
\end{equation*}
Let $S_3:=-20000t_{3}$. We take\footnote{\label{foot.tao}We follow here an idea of Tao which enables to remove one exponential from the final estimate. This idea appears on his blog \url{https://terrytao.wordpress.com/2019/08/15/quantitative-bounds-for-critically-bounded-solutions-to-the-navier-stokes-equations/} in a comment dated December 28, 2019. See also footnote \ref{foot.taobis} and \eqref{e.trivR3r3}.}
\begin{equation}\label{e.choicer_3}
r_3:=1000R_{3}\Big(-\frac{t_{3}}{T_{2}}\Big)^{\frac{1}{2}},\quad \hat s_3=\check s_3=-t_3.
\end{equation}
Notice that due to \eqref{e.restrR2}-\eqref{e.bdR2prime} and \eqref{e.condR3}, we have that
\begin{align}\label{e.condx3}
r_3^2=\ &{10^6 R_{3}^2\Big(-\frac{t_{3}}{T_{2}}\Big)}\geq (2.56\times 10^8)(K^\sharp)^2(-t_{3})
\geq 4000S_{3}=(8\times 10^7)(-t_{3}),\\
\frac{r_{3}}{2}\geq\ &8000R_{2}\Big(-\frac{t_{3}}{T_{2}}\Big)^{\frac{1}{2}}=8000K^\sharp(-t_{3})^{\frac{1}{2}}> (-t_3)^\frac12,
\end{align}
so that \eqref{e.lowerr} is satisfied. Furthermore, from \eqref{e.condt3} we have
$$\frac{|x_{3}|}{2}\geq \frac{R_{3}}{4}\geq 1000R_{3}\Big(\frac{1}{M^{201}}\Big)^{\frac{1}{2}}\geq r_{3}. $$
Thus
\begin{multline}\label{domaininclus}
B_{x_3}((-t_3)^\frac12)\subset B_{x_3}(\tfrac{r_3}2)\subset B_{x_{3}}(r_{3})\subset B_{x_3}\Big(\frac{|x_{3}|}{2}\Big)\\
\subset\{\tfrac{R_3}{4}<|y|<\tfrac{3}{2}R_3\}\subset\{2R_2'<|y|<\tfrac{3}{4}c(100)M^{1000}R_2'\}. 
\end{multline}
Moreover, 
\begin{equation*}
0\leq\hat s_3=\check s_3=-t_3\leq -2t_{3} =\frac{S_3}{10^4}.
\end{equation*} 
By \eqref{e.condt3}, we see that for $M$ large enough $S_{3}\leq \frac{T_{2}}{32}$, hence the bounds \eqref{e.linftyderivstep2} imply that the differential inequality \eqref{e.diffineq} is satisfied on $B_{0}(r)\times [0,S]$ with $S=S_3$, $r=r_{3}$ and $C_{Carl}=1$. 
Therefore, by \eqref{e.conclcarltwo} we have
\begin{equation}\label{e.conclcarltwobis}
Z_3\leq C_{univ} e^{\frac{r_3^2}{500t_3}}X_3+ C_{univ}(-t_3)^\frac32e^{-\frac{O(1)r_3^2}{t_3}}Y_3,
\end{equation}
where 
\begin{align*}
X_3:=\ &\int\limits_{-S_{3}}^0\int\limits_{B_{x_3}(r_3)}(S_{3}^{-1}|\omega|^2+|\nabla\omega|^2)\, dxdt,\qquad Y_3:=\int\limits_{B_{x_3}(r_3)}|\omega(x,0)|^2(-t_3)^{-\frac32}e^{\frac{|x-x_3|^2}{4t_3}}\, dx,\\
Z_3:=\ &\int\limits_{2t_3}^{t_3}\int\limits_{B_{x_3}(\frac{r_3}2)}(S_{3}^{-1}|\omega|^2+|\nabla \omega|^2)e^{\frac{|x-x_3|^2}{4t}}\, dxdt.
\end{align*}
Using \eqref{e.conct3} and $T_{2}^{-1}\leq S_{3}^{-1}$ we have
\begin{equation}\label{Z3lowerbound}
  T_2^{-\frac12}\exp\Big(-\frac{18R_3^2}{T_2}\Big)\lesssim \int\limits_{2t_3}^{t_3}\int\limits_{B_{x_3}((-t_{3})^{\frac{1}{2}})}(T_2^{-1}|\omega|^2+|\nabla \omega|^2)e^{\frac{|x-x_3|^2}{4t}}\, dxdt\leq Z_{3}
\end{equation} 
Using the bounds \eqref{e.linftyderivstep2} along with \eqref{e.condt3}, we find that
\begin{multline}\label{e.controlX3}
C_{univ}e^{\frac{r_3^2}{500t_3}}X_3\lesssim S_{3}^{-2}r_{3}^3e^{\frac{r_3^2}{500t_3}}\lesssim {(-t_{3})}^{-\frac12}e^{\frac{r_3^2}{1000t_3}}\lesssim T_2^{-\frac12}e^{\frac{4R_3^2}{T_2}}e^{\frac{r_3^2}{1000t_3}}\\
\lesssim T_2^{-\frac12}e^{-\frac{996 R_3^2}{T_2}}\lesssim T_2^{-\frac12}e^{-\frac{18 R_3^2}{T_2}}e^{-\frac{978 R_3^2}{T_2}}\leq C'_{univ} T_2^{-\frac12}e^{-\frac{18 R_3^2}{T_2}}e^{-978\cdot 256(K^\sharp)^2}.
\end{multline}
We choose $K^\sharp$ sufficiently large so that 
\begin{equation}\label{e.choiceKsharp}
C'_{univ}e^{-978\cdot 256(K^\sharp)^2}\leq \frac12,
\end{equation}
where $C'_{univ}\in(0,\infty)$ is the universal constant appearing in the last inequality of \eqref{e.controlX3}. 
Therefore, the term in the right hand side of \eqref{e.controlX3} is negligible 
with respect to the lower bound \eqref{Z3lowerbound} of $Z_3$. Combining now \eqref{e.conclcarltwobis} with the lower bound \eqref{Z3lowerbound}, we obtain\footnote{\label{foot.taobis}Here one notices a key advantage of taking $r_3$ to depend linearly on $-t_3$ as in \eqref{e.choicer_3}. Otherwise the trivial bound
\begin{equation}\label{e.trivR3r3}
\frac{r_3^2}{-t_3}\leq \frac{R_3^2}{4(-t_3)}\lesssim \exp\Big(\frac{8R_3^3}{T_2}\Big)\frac{R_3^2}{T_2},
\end{equation}
where we used the lower bound \eqref{e.condt3} on $-t_3$, would lead one more exponential in the final estimate. Taking $r_3$ as in \eqref{e.choicer_3} is Tao's idea; see footnote \ref{foot.tao}.}
\begin{align*}
T_2^{-\frac12}\exp\Big(-\frac{18R_3^2}{T_2}\Big)\lesssim\ 
 & \exp\Big(-{\frac{O(1)r_3^2}{t_3}}\Big)\int\limits_{B_{x_3}(r_3)}|\omega(x,0)|^2\, dx\\
\lesssim\ & \exp\Big(O(1){\frac{R_3^2}{T_2}}\Big)\int\limits_{B_{x_3}(r_3)}|\omega(x,0)|^2\, dx.
\end{align*}
Hence,
\begin{equation*}
T_2^{-\frac12}\exp\Big(-O(1)\frac{R_3^2}{T_2}\Big)\lesssim \int\limits_{B_{x_3}(r_3)}|\omega(x,0)|^2\, dx.
\end{equation*}
Using \eqref{e.restrR2}, \eqref{domaininclus} and the upper bound 
\begin{equation*}
R_3\leq\tfrac{1}2c(100)M^{1000}R_2'\leq c(100)M^{1000}\exp(C(100)M^{1020})R_2,
\end{equation*}
it follows that 
\begin{align}\label{e.lastbddstep3}
\begin{split}
T_2^{-\frac12}\exp(-\exp(M^{1021}))
\lesssim\ &\int\limits_{B_0\big(\tfrac{3}{4}C(100)M^{1000}R_{2}'\big)\setminus B_0(2R_{2}')}|\omega(x,0)|^2\, dx.
\end{split}
\end{align}

\noindent{\bf Step 4, conclusion: summing the scales and lower bound for the global $L^3$ norm.} The key estimate is \eqref{e.claimstep2}. From \eqref{e.restrR2}-\eqref{e.bdR2prime}, we see that the volume of ${B_0\big(\tfrac{3}{4}C(100)M^{1000}R_{2}'\big)\setminus B_0\big(2R_{2}'\big)}$ is less than or equal to $T_{2}^{\frac{3}{2}}\exp(M^{1021})$. 
By the pigeonhole principle, there exists $i\in \{1,2,3\}$ and $$x_4\in B_0\big(\tfrac{3}{4}C(100)M^{1000}R_{2}'\big)\setminus B_0\big(2R_{2}'\big)\,\,\,\,\textrm{such\,\,that}\,\,\,\,|\omega_{i}(x_4,0)|\geq 2T_2^{-1}\exp(-\exp(M^{1022})).
$$
Let $r_{4}:= T_{2}^{\frac{1}{2}} \exp(-\exp(M^{1022})).$ Using \eqref{e.restrR2}-\eqref{e.defann2}, we see that  $B_{r_{4}}(x_{4})\times \{0\}\subset\mathcal{A}_{2}.$
Thus the quantitative estimate \eqref{e.linftyderivstep2} gives that
$$|\omega_{i}(x,0)|\geq T_{2}^{-1}\exp(-\exp(M^{1022}))\,\,\textrm{in}\,\, B_{r_{4}}(x_{4})$$
and that $\omega_{i}(x,0)$ has constant sign in $B_{r_{4}}(x_{4})$. This along with H\"older's inequality yields that
\begin{align*}
T_2^{-1}\exp(-\exp(M^{1022}))\leq\ &\Big|\int\limits_{B_0(1)}\omega_{i}(x_4-r_4z,0)\varphi(z)\, dz\Big|\\
\leq\ &r_4^{-1}\Big|\int\limits_{B_0(1)}u(x_4-r_{4}z,0)\nabla\times\varphi(z)\, dz\Big|\\
\leq\ &r_4^{-2}\|u\|_{L^3(B_{0}(C(100)M^{1000}R_{2}')\setminus B_{0}(R_{2}'))}\|\nabla\times\varphi\|_{L^{\frac{3}{2}}(B_{0}(1))}
\end{align*}
for a fixed positive $\varphi\in C^\infty_c(B_0(1))$. Hence, using \eqref{e.restrR2}-\eqref{e.bdR2prime} we get 
\begin{equation}\label{e.lowerL3}
\int\limits_{B_0\big(\exp(M^{1023})T_2^\frac12\big)\setminus B_0\big(T_2^\frac12\big)}|u(x,0)|^3\, dx\geq \exp\big(-\exp(M^{1023})\big),
\end{equation}
for all $\frac{8}{C^\sharp }M^{749}(-t_0')\leq T_2\leq 1$. Next we divide into two cases.\\\\
\textbf{Case 1:} $-t'_{0}>\frac{{C}^{\sharp}}{8}M^{-749}\exp(-2M^{1023}) $\\\\\
In this case, we use \eqref{e.lowerL3} with $T_{2}=1$ to immediately get (for $M$ greater than a sufficiently large universal constant) 
$$-t_{0}'\geq \frac{C^\sharp }{8}M^{-749}\exp\Bigg\{-\exp(\exp(M^{1024})))\int\limits_{B_{0}(\exp(M^{1023}))}|u(x,0)|^3\, dx\Bigg\}. $$
\textbf{Case 2:} $-t'_{0}\leq \frac{{C}^{\sharp}}{8}M^{-749}\exp(-2M^{1023}) $\\\\\
In this case we sum \eqref{e.lowerL3} on the
\begin{equation*}
k:=\lfloor\tfrac12M^{-1023}\log(\tfrac{C^\sharp }{8}M^{-749}(-t_0')^{-1})\rfloor\geq 1
\end{equation*}
scales $T_2$,
\begin{align*}
\big(\tfrac{8}{C^\sharp }M^{749}(-t_0')\big)^\frac12\leq\ & \exp(M^{1023})\big(\tfrac{8}{C^\sharp }M^{749}(-t_0')\big)^\frac12\\
\leq\ &\ldots\leq \exp(kM^{1023})\big(\tfrac{8}{C^\sharp }M^{749}(-t_0')\big)^\frac12\leq 1,
\end{align*}
we obtain that 
\begin{align*}
&\exp\big(-\exp({M^{1024}})\big)\log(\tfrac{C^\sharp }{8}M^{-749}(-t_0')^{-1})\\
\leq\ &\int\limits_{B_0(\exp(M^{1023}))\setminus B_0\big(\big(\frac{8}{C^\sharp }M^{749}(-t_0')\big)^\frac12\big)}|u(x,0)|^3\, dx\\
\leq\ &\int\limits_{\R^3}|u(x,0)|^3\, dx.
\end{align*}
This gives $$-t_{0}'\geq \frac{C^\sharp }{8}M^{-749}\exp\Bigg\{-\exp(\exp(M^{1024})))\int\limits_{B_{0}(\exp(M^{1023}))}|u(x,0)|^3\, dx\Bigg\}, $$
which was also obtained in Case 1 and hence applies in all cases. Defining
$$-s_{0}:=\frac{C^\sharp }{16}M^{-749}\exp\Bigg\{-\exp(\exp(M^{1024})))\int\limits_{B_{0}(\exp(M^{1023}))}|u(x,0)|^3\, dx\Bigg\},$$
we see  we have the contrapositive of \eqref{e.conct_0'bis}. In particular, $$ \int\limits_{B_0(4\sqrt{S^\sharp }^{-1}(-s_0)^\frac12)}|\omega(x,s_{0})|^2\, dx\leq M^2(-s_0)^{-\frac12}\sqrt{S^\sharp }.$$ 
almost identical arguments to those utilized in the proof of Lemma \ref{lem.backconc}, except using the bound \eqref{e.conclbddu} instead of \eqref{e.conclbddnablau}, give
$$\|u\|_{L^{\infty}\big(B_{0}\big(C_{2}{M^{50}}(-s_{0})^{\frac{1}{2}}\big)\times \big(\tfrac{s_{0}}{4},0\big)\big)}\leq \frac{C_1M^{-23}}{(-s_{0})^{\frac{1}{2}}}.$$

\noindent{\bf Step 5, conclusion: summing of scales under additional assumption \eqref{L3lowerboundassumption}.}\\
\textbf{Case 1:} $-t'_{0}>\frac{{C}^{\sharp}\lambda^2}{8}M^{-749}\exp(-6M^{1023}) $\\\\\
In this case, we use the additional assumption 
 \eqref{L3lowerboundassumption}  to immediately get 
$$-t_{0}'> \frac{C^\sharp\lambda^2 }{8}M^{-749}\exp\Bigg\{-4M^{1023}\exp(\exp(M^{1024}))\int\limits_{B_{0}(\lambda)}|u(x,0)|^3\, dx\Bigg\}. $$
\textbf{Case 2:} $-t'_{0}\leq \frac{{C}^{\sharp}\lambda^2}{8}M^{-749}\exp(-6 M^{1023}) $\\\\\
First notice that in this case
\begin{equation*}
M^{-1023}\log\Big(\frac{{C}^{\sharp}\lambda^2}{8(-t'_{0})}M^{-749}\Big)\geq 6
\end{equation*}
which implies
\begin{equation}\label{enoughscales1}
{k+1:=\lfloor\tfrac12 M^{-1023}\log\Big(\frac{{C}^{\sharp}\lambda^2}{8(-t'_{0})}M^{-749}\Big)\rfloor\geq 2},
\end{equation}
\begin{equation}\label{enoughscales2}
k+1\geq \tfrac1 4 M^{-1023}\log\Big(\frac{{C}^{\sharp}\lambda^2}{8(-t'_{0})}M^{-749}\Big)
\end{equation}
and 
\begin{equation}\label{lambdabigscalecontrol}
\exp((k+1)M^{1023})\big(\tfrac{8}{C^\sharp }M^{749}(-t_0')\big)^\frac12\leq \lambda<\exp(M^{1023}).
\end{equation}
In this case we sum \eqref{e.lowerL3} on the $k+1\geq 2$ scales $T_2$,
\begin{align*}
\big(\tfrac{8}{C^\sharp }M^{749}(-t_0')\big)^\frac12\leq\ & \exp(M^{1023})\big(\tfrac{8}{C^\sharp }M^{749}(-t_0')\big)^\frac12\\
\leq\ &\ldots\leq \exp(kM^{1023})\big(\tfrac{8}{C^\sharp }M^{749}(-t_0')\big)^\frac12\leq 1.
\end{align*}
Using \eqref{enoughscales2}-\eqref{lambdabigscalecontrol} we obtain 
 
\begin{align*}
&\exp\big(-\exp({M^{1024}})\big)\tfrac{1}{4}M^{-1023}\log(\tfrac{C^\sharp \lambda^2}{8}M^{-749}(-t_0')^{-1})\\
\leq &\int\limits_{B_0(\lambda)\setminus B_0\big(\big(\frac{8}{C^\sharp }M^{749}(-t_0')\big)^\frac12\big)}|u(x,0)|^3\, dx\\
\lesssim\ &\int\limits_{\R^3}|u(x,0)|^3\, dx.
\end{align*}

This gives $${-t_{0}'\geq \frac{C^\sharp\lambda^2 }{8}M^{-749}\exp\Bigg\{-4M^{1023}\exp(\exp(M^{1024}))\int\limits_{B_{0}(\lambda)}|u(x,0)|^3\, dx\Bigg\}}, $$
{which was also obtained in Case 1 and hence applies in all cases.}\\
Defining
$$-s_{1}:=\frac{C^\sharp\lambda^2 }{16}M^{-749}\exp\Bigg\{-4M^{1023}\exp(\exp(M^{1024}))\int\limits_{B_{0}(\lambda)}|u(x,0)|^3\, dx\Bigg\},$$
we see  we have the contrapositive of \eqref{e.conct_0'bis}. In particular, $$ \int\limits_{B_0(4\sqrt{S^\sharp }^{-1}(-s_1)^\frac12)}|\omega(x,s_{1})|^2\, dx\leq M^2(-s_1)^{-\frac12}\sqrt{S^\sharp }.$$ 
almost identical arguments to those utilized in the proof of Lemma \ref{lem.backconc}, except using the bound \eqref{e.conclbddu} instead of \eqref{e.conclbddnablau}, give
$$\|u\|_{L^{\infty}\big(B_{0}\big(C_{2}{M^{50}}(-s_{1})^{\frac{1}{2}}\big)\times \big(\tfrac{s_{1}}{4},0\big)\big)}\leq \frac{C_1M^{-23}}{(-s_{1})^{\frac{1}{2}}}.$$ 
This concludes the proof of Proposition \ref{prop.main}.

\subsection{Proof of the main estimate in the time slices case}
\label{subsec.carlts}

We give the full proof of Proposition \ref{prop.maints} for the sake of completeness. Notice that the proof follows the same scheme as the proof of Proposition \ref{prop.main}. In most estimates $M$ is simply replaced by $\Mp$, albeit with slightly different powers. However, since in Step 2 below concentration is needed on the very small time interval $[-\frac{T_2}{4(\Mp)^{201}},0]$, some care is needed when applying Lemma \ref{epochtimeslice} on the epoch of quantitative regularity and Lemma \ref{lem.backconctimeslice} on the backward propagation of concentration.

Let 
$M\in[M_4,\infty)$ where $M_4$ is a constant in Lemma \ref{lem.backconctimeslice}. 
In the course of the proof we will need to take $M$ larger, always larger than universal constants. Let $u:\, \R^3\times [-1,0]\rightarrow\R^3$ be a $C^{\infty}(\mathbb{R}^3\times (-1,0))$ finite-energy solution to the Navier-Stokes equations \eqref{e.nse} in $I=[-1,0]$. Assume  that there exists $t_{(k)}\in [-1,0)$ such that
\begin{equation}\label{e.tsArecap}
t_{(k)}\uparrow 0\,\,\,\,\textrm{with}\,\,\,\, \sup_{k} \|u(\cdot, t_{(k)})\|_{L^{3}(\mathbb{R}^3)}\leq M.
\end{equation}
Selecting any ``well-separated'' subsequence (still denoted $t_{(k)}$) such that\footnote{This separation condition is stronger than that of Lemma \ref{lem.backconctimeslice}. This stronger condition is needed to sum disjoint annuli in Step 4 below.}
\begin{equation}\label{wellsepproptsrecap}
\sup_{k}\frac{-t_{(k+1)}}{-t_{(k)}}<\exp(-2(\Mp)^{1223}).
\end{equation}
For this well-separated subsequence, assume that there exists $j+1$ such that the vorticity concentrates at time $t_{(j+1)}$ in the following sense
\begin{equation}\label{e.conctsrecap}
\int\limits_{B_0(4\sqrt{S^\flat}^{-1}(-t_{(j+1)})^\frac12)}|\omega(x,t_{(j+1)})|^2\, dx> M^2(-t_{(j+1)})^{-\frac12}\sqrt{S^\flat}.
\end{equation}
where we recall that $S^\flat =O(1)M^{-100}$. 
Fix $k\in \{1,2,\ldots, j\}.$ 
Note that \eqref{wellsepproptsrecap} implies that for $M$ sufficiently large
$$
\frac{-t_{(j+1)}}{-t_{(k)}}<(\Mp)^{-1051}.
$$
Lemma \ref{lem.backconctimeslice} then implies imply that the vorticity concentrates in the following sense
\begin{equation}\label{e.concs_0recap}
\int\limits_{B_0(4(-s)^\frac12 (\Mp)^{106})}|\omega(x,s)|^2\, dx> \frac{(M+1)^2}{(-s)^{\frac12}(\Mp)^{106}}.
\end{equation}
for  any \begin{equation}\label{stkcondition}
s\in \Big[t_{(k)}, \frac{t_{(k)}}{8(\Mp)^{201}}\Big].
\end{equation}

\noindent{\bf Step 1: quantitative unique continuation.} The purpose of this step is to prove the following estimate:
\begin{align}\label{e.claimstep1ts}
T_1^{\frac12}e^{-\frac{O(1)(\Mp)^{965}R^2}{T_1}}
\lesssim\ & \int\limits_{-T_1}^{-\frac{T_1}2}\int\limits_{B_0(2R)\setminus B_0(R/2)}|\omega(x,t)|^2\, dxdt,
\end{align}
for all $T_1$, $s_{0}$ and $R$ such that 
\begin{equation}\label{e.wellsepT1ts}
s_{0}\in\Big[\frac{t_{(k)}}{2}, \frac{t_{(k)}}{4(\Mp)^{201}}\Big]\,\,\,\,\,\,T_{1}:=-s_{0}\,\,\,\,\quad\mbox{and}\quad R\geq (\Mp)^{100}\Big(\frac{T_1}{2}\Big)^\frac12.
\end{equation} 
Here, $k\in\{1,\ldots j\}$ is fixed. 
Let  $I_1:=(-T_1,-\frac{T_1}{2})\subset[\frac{t_{(k)}}{2}, \frac{t_{(k)}}{8(\Mp)^{201}}]\subset [-1,0]$. The bound \eqref{carlemantimesliceepochbound} in Remark \ref{rem.estcarlepotimeslice} implies that there exists an epoch of regularity $I_1''=[t_1''-T_1'',t_1'']\subset I_1$ such that 
\begin{equation}\label{e.sizeI1'ts}
T_1''=|I_1''|=\frac{(\Mp)^{-864}}{4C_{4}^3}|I_1|=\frac{(\Mp)^{-864}}{8C_{4}^3}T_1
\end{equation}
and for $j=0, 1, 2$,
\begin{equation}\label{e.epochI1'ts}
\|\nabla^j u\|_{L^{\infty}_{t}L^{\infty}_{x}(\mathbb{R}^3\times I_1'')}\leq \frac{1}{2^{j+1}} |I_1''|^{\frac{-(j+1)}{2}}=\frac{1}{2^{j+1}}(T_1'')^{\frac{-(j+1)}{2}}.
\end{equation}
Let $T_1''':=\frac34T_1''$ and $s''\in [t_1''-\frac{T_1''}4,t_1'']$. Let $x_1\in\R^3$ be such that $|x_1|\geq (\Mp)^{100}(\frac{T_1}2)^\frac12$ and let 
$r_1:=(\Mp)^{50}|x_1|\geq (\Mp)^{150}(\frac{T_1}2)^\frac12$. 
Notice that for $M$ large enough 
\begin{equation}\label{e.condr1ts}
\frac{(\Mp)^7|x_{1}|}{2}\geq \frac{(\Mp)^{107}}{2}\Big(\frac{T_1}2\Big)^\frac12\geq 4(-s_{0})^{\frac{1}{2}}(\Mp)^{106}
\end{equation}
and 
\begin{equation*}
r_1^2\geq 4000T_1'''.
\end{equation*}
We apply the second Carleman inequality, Proposition \ref{prop.sndcarl} (quantitative unique continuation), on the cylinder $\mathcal C_1=\{(x,t)\in\mathbb R^3\times\mathbb R\, :\ t\in [0,T_1'''],\ |x|\leq r_1\}$ to the function $w:\, \R^3\times [0,T_1''']\rightarrow\R^3$, defined by for all $(x,t)\in \R^3\times [0,T_1''']$, 
\begin{equation*}
w(x,t):=\omega(x_1+x,s''-t).
\end{equation*}
Notice that  the quantitative regularity \eqref{e.epochI1'ts} and the vorticity equation \eqref{vort} implies that on $\mathcal C_1$
\begin{equation*}
|(\partial_t+\Delta)w|\leq \frac3{16}{T_1'''}^{-1}|w|+\frac{\sqrt{3}}4{T_1'''}^{-\frac12}|\nabla w|,
\end{equation*}
so that \eqref{e.diffineqC} is satisfied with $S=S_1:=T_1'''$ and $C_{Carl}=\frac{16}3$. Let 
\begin{equation*}
\hat s_1=\frac{T_1'''}{20000},\qquad \check s_1=(\Mp)^{-150}T_1'''.
\end{equation*}
For $M$ sufficiently large we have $0<\check s_1\leq \hat s_1\leq\frac{T_1'''}{10000}$. Hence by \eqref{e.conclcarltwo} we have 
\begin{equation}\label{e.estXYZ1ts}
Z_1\lesssim e^{-\frac{r_1^2}{500\hat s_1}}X_1+(\hat s_1)^\frac32\Big(\frac{e\hat s_1}{\check s_1}\Big)^{\frac{O(1)r_1^2}{\hat s_1}}Y_1,
\end{equation}
where
\begin{align*}
&X_1:=\int\limits_{s''-T_1'''}^{s''}\int\limits_{B_{x_1}((\Mp)^{50}|x_1|)}((T_1''')^{-1}|\omega|^2+|\nabla\omega|^2)\, dxds,\\
&Y_1:=\int\limits_{B_{x_1}((\Mp)^{50}|x_1|)}|\omega(x,s'')|^2(\check s_1)^{-\frac32}e^{-\frac{|x-x_1|^2}{4\check s_1}}\, dx,\\
&Z_1:=\int\limits_{s''-\frac{T_1'''}{10000}}^{s''-\frac{T_1'''}{20000}}\int\limits_{B_{x_1}(\frac{(\Mp)^{50}|x_1|}2)}((T_1''')^{-1}|\omega|^2+|\nabla\omega|^2)e^{-\frac{|x-x_1|^2}{4(s''-s)}}\, dxds.
\end{align*}
We first use the concentration \eqref{e.concs_0recap} for times 
\begin{equation*}
s\in \Big[s''-\frac{T_1'''}{10000},s''-\frac{T_1'''}{20000}\Big]\subset \Big(s_{0}, \frac{s_{0}}{2}\Big)\subset \Big(t_{(k)}, \frac{t_{(k)}}{8(\Mp)^{201}}\Big)
\end{equation*}
to bound $Z_1$ from below. By \eqref{e.condr1ts}, we have
\begin{equation*}
B_0(4(-s)^\frac12 (\Mp)^{106})\subset B_0(4(-s_{0})^\frac12 (\Mp)^{106})\subset
B_{0}\Big(\frac{(\Mp)^7|x_1|}{2}\Big)\subset B_{x_1}((\Mp)^7|x_1|)
\end{equation*}
for all $s\in [s''-\frac{T_1'''}{10000},s''-\frac{T_1'''}{20000}]$ and for $M$ sufficiently large. 
Hence, we have
\begin{align*}
Z_1\gtrsim\ &\int\limits_{s''-\frac{T_1'''}{10000}}^{s''-\frac{T_1'''}{20000}}\int\limits_{B_0(4(-s)^\frac12(\Mp)^{106})}(T_1''')^{-1}|\omega(x,s)|^2\, dxds\, e^{-\frac{O(1)(\Mp)^{14}|x_1|^2}{T_1'''}}\\
\gtrsim\ &\int\limits_{s''-\frac{T_1'''}{10000}}^{s''-\frac{T_1'''}{20000}}(\Mp)^{-106}(-s)^{-\frac12}\, ds(T_1'')^{-1}e^{-\frac{O(1)(\Mp)^{14}|x_1|^2}{T_1''}}\\
\gtrsim\ & (\Mp)^{-106}\frac{T_1'''}{(-s''+\frac{T_1'''}{10000})^{\frac{1}{2}}}(T_1'')^{-1}e^{-\frac{O(1)(\Mp)^{14}|x_1|^2}{T_1''}}\\
\gtrsim\ &(\Mp)^{-106}(T_1)^{-\frac12}e^{-\frac{O(1)(\Mp)^{14}|x_1|^2}{T_1''}}\\
\gtrsim\ &(\Mp)^{-106}((\Mp)^{864}T_1'')^{-\frac12}e^{-\frac{O(1)(\Mp)^{14}|x_1|^2}{T_1''}}\\=& (\Mp)^{-538}(T_1'')^{-\frac{1}{2}}e^{-\frac{O(1)(\Mp)^{14}|x_1|^2}{T_1''}}.
\end{align*}
Second, we bound from above $X_1$. We rely on the quantitative regularity \eqref{e.epochI1'ts} to obtain
\begin{align*}
X_1\lesssim  (T_1'')^{-2}(\Mp)^{150}|x_1|^3.
\end{align*}
Hence,
\begin{align*}
e^{-\frac{r_1^2}{500\hat s_1}}X_1\lesssim\ & (T_1'')^{-2}(\Mp)^{150}|x_1|^3e^{-\frac{O(1)(\Mp)^{100}|x_1|^2}{T_1''}}\\
\lesssim\ &(T_1'')^{-\frac12}e^{-\frac{O(1)(\Mp)^{100}|x_1|^2}{T_1''}}.
\end{align*}
Third, for $Y_1$ we decompose and estimate as follows
\begin{align*}
Y_1:=\ &\int\limits_{B_{x_1}(\frac{|x_1|}2)}|\omega(x,s'')|^2(\check s_1)^{-\frac32}e^{-\frac{|x-x_1|^2}{4\check s_1}}\, dx\\
&+\int\limits_{B_{x_1}((\Mp)^{50}|x_1|)\setminus B_{x_1}(\frac{|x_1|}2)}|\omega(x,s'')|^2(\check s_1)^{-\frac32}e^{-\frac{|x-x_1|^2}{4\check s_1}}\, dx\\
\lesssim\ &(\Mp)^{225}(T_1'')^{-\frac32}\Bigg(\int\limits_{B_{x_1}(\frac{|x_1|}2)}|\omega(x,s'')|^2\, dx\\
&+\int\limits_{B_{x_1}((\Mp)^{50}|x_1|)\setminus B_{x_1}(\frac{|x_1|}2)}|\omega(x,s'')|^2
e^{-\frac{O(1)(\Mp)^{150}|x_1|^2}{T_1''}}\, dx\Bigg)\\
\lesssim\ &(\Mp)^{225}(T_1'')^{-\frac32}\Bigg(\int\limits_{B_{x_1}(\frac{|x_1|}2)}|\omega(x,s'')|^2\, dx+(\Mp)^{150}|x_1|^3(T_1'')^{-2}e^{-\frac{O(1)(\Mp)^{150}|x_1|^2}{T_1''}}\Bigg)\\
\lesssim\ &(\Mp)^{225}(T_1'')^{-\frac32}\Bigg(\int\limits_{B_{x_1}(\frac{|x_1|}2)}|\omega(x,s'')|^2\, dx+(T_1'')^{-\frac12}e^{-\frac{O(1)(\Mp)^{150}|x_1|^2}{T_1''}}\Bigg),
\end{align*}
where we used the quantitative regularity \eqref{e.epochI1'ts}. Hence,
\begin{align*}
(\hat s_1)^\frac32\Big(\frac{e\hat s_1}{\check s_1}\Big)^{\frac{O(1)r_1^2}{\hat s_1}}Y_1\lesssim\ &(T_1'')^\frac32e^{\frac{O(1)(\Mp)^{100}|x_1|^2}{T_1''}\log(\frac{e(\Mp)^{150}}{20000})}Y_1\\
\lesssim\ &(\Mp)^{225}e^{\frac{O(1)(\Mp)^{101}|x_1|^2}{T_1''}}\int\limits_{B_{x_1}(\frac{|x_1|}2)}|\omega(x,s'')|^2\, dx\\
&+(\Mp)^{225}(T_1'')^{-\frac12}e^{-\frac{O(1)(\Mp)^{150}|x_1|^2}{T_1''}}.
\end{align*}
Gathering these bounds and combining with \eqref{e.estXYZ1ts} yields 
\begin{align*}
(\Mp)^{-538}(T_1'')^{-\frac12}e^{-\frac{O(1)(\Mp)^{14}|x_1|^2}{T_1''}}\lesssim\ & (T_1'')^{-\frac12}e^{-\frac{O(1)(\Mp)^{100}|x_1|^2}{T_1''}}\\&+(\Mp)^{225}e^{\frac{O(1)(\Mp)^{101}|x_1|^2}{T_1''}}\int\limits_{B_{x_1}(\frac{|x_1|}2)}|\omega(x,s'')|^2\, dx\\&+(\Mp)^{225}(T_1'')^{-\frac12}e^{-\frac{O(1)(\Mp)^{150}|x_1|^2}{T_1''}},
\end{align*}
which implies 
\begin{align*}
(\Mp)^{-763}(T_1'')^{-\frac12}e^{-\frac{O(1)(\Mp)^{101}|x_1|^2}{T_1''}}\lesssim\ & \int\limits_{B_{x_1}(\frac{|x_1|}2)}|\omega(x,s'')|^2\, dx.
\end{align*}
Hence, for all  
 $s''\in I_1''=[t_1''-\frac{T_1''}{4},t_1'']$, for all $|x_1|\geq (\Mp)^{100}(\frac{T_1}{2})^\frac12$,
\begin{equation*}
\int\limits_{B_{x_1}(\frac{|x_1|}{2})}|\omega(x,s'')|^2\, dx\gtrsim (\Mp)^{-763}(T_1'')^{-\frac12}e^{-\frac{O(1)(\Mp)^{101}|x_{1}|^2}{T_1''}}.
\end{equation*}
Let $R\geq (\Mp)^{100}(\frac{T_1}2)^\frac12$ and $x_{1}\in\mathbb{R}^3$ be such that $|x_{1}|=R$. 
Integrating in time $[t_1''-\frac{T_1''}4,t_1'']$ yields the estimate 
$$
(\Mp)^{-763}(T_1'')^{\frac12}e^{-\frac{O(1)(\Mp)^{101}R^2}{T_1''}}
\lesssim \int\limits_{t_1''-\frac{T_1''}4}^{t_1''}\int\limits_{B_0(2R)\setminus B_0(R/2)}|\omega(x,t)|^2\, dxdt
$$
which yields the claim \eqref{e.claimstep1ts} of Step 1.

\noindent{\bf Step 2: quantitative backward uniqueness.} The goal of this step and Step 3 below is to prove the following claim:
\begin{align}\label{e.claimstep2ts}
\begin{split}
T_2^{-\frac12}\exp\big(-\exp({{(\Mp)^{1221}}})\big)\lesssim\ &\int\limits_{B_0\big(\frac{3(\Mp)^{1200}}{4}R'_{2}\big)\setminus B_0(2R'_{2})}|\omega(x,0)|^2\, dx,
\end{split}
\end{align}
for $T_{2}=-t_{(k)}$ with $k\in\{1,\ldots j\}$. Here, $R_{2}$ and $R'_{2}$ are as in \eqref{e.restrR2ts}-\eqref{e.bdR2primets}. This is the key estimate for Step 4 below and the proof of Proposition \ref{prop.maints}.

We apply here the results of Section \ref{sec.quantannulus} for the quantitative existence of an annulus of regularity. Although the parameter $\mu$ in Section \ref{sec.quantannulus} is any positive real number, 
here we need to take $\mu$ sufficiently large in order to have a large enough annulus of quantitative regularity, and hence a large $r_+$ below in the application of the first Carleman inequality, Proposition \ref{prop.firstcarl}. 
To fix the ideas, we take $\mu=120$. Let $T_1$ and $T_2$ such that
\begin{equation}\label{e.sepT2ts}
T_{2}:=-t_{(k)}\quad\mbox{and}\quad T_1:=\frac{T_2}{4(\Mp)^{201}}.
\end{equation}
Let 
\begin{equation}\label{e.restrR2ts}
R_2:=K^\flat(T_2)^\frac12
\end{equation}
for a universal constant $K^\flat\geq 1$ to be chosen sufficiently large below. In particular it is chosed such that \eqref{e.smallKflat} holds. 
By Corollary \ref{cor.rescaleannulusts} applied on the epoch $(t_{(k)},0)$, for $M\geq M_{2}(120)$ there exists a scale
\begin{equation}\label{e.bdR2primets}
2R_2\leq R_2'\leq 2R_2\exp(C(120)(\Mp)^{1220})
\end{equation}
and a good cylindrical annulus 
\begin{equation}\label{e.defann2ts}
\mathcal A_2:=\{R_2'<|x|<(\Mp)^{1200}R_2'\}\times\Big(-\frac{T_2}{32},0\Big)
\end{equation} 
such that for $j=0,1$,
\begin{align}\label{e.linftyderivstep2ts}
\begin{split}
\|\nabla^j u\|_{L^{\infty}(\mathcal A_2)}
\leq\ & 2^{j+1}\bar{C}_{j} (\Mp)^{-{360}
}T_2^{-\frac{j+1}{2}},\\
\|\nabla \omega\|_{L^{\infty}(\mathcal A_2)}
\leq\ & 2^\frac32\bar{C}_{2} (\Mp)^{-{360}
}T_2^{-\frac32}.
\end{split}
\end{align}
We apply now the quantitative backward uniqueness, Proposition \ref{prop.firstcarl} to the function $w:\, \R^3\times [0,\frac{T_2}{(\Mp)^{201}}]\rightarrow\R^3$ defined by for all $(x,t)\in\R^3\times [0,\frac{T_2}{(\Mp)^{201}}]$,
\begin{equation*}
w(x,t)=\omega(x,-t).
\end{equation*}
An important remark is that although we have a large cylindrical annulus of quantitative  regularity $\mathcal A_2$, we apply the Carleman estimate on a much smaller annulus, namely
\begin{equation}\label{e.defann2bists}
\widetilde{\mathcal A}_2:=\Big\{4R_2'<|x|<\frac{(\Mp)^{1200}}{4}R_2'\Big\}\times\Big(-\frac{T_2}{(\Mp)^{201}},0\Big).
\end{equation}
The reason  for this is to ensure we can apply Step 1 to get a lower bound \eqref{e.lowerZ2ts} for $Z_{2}$.

Choosing $M$ sufficiently large such that $2\bar{C}_{j} (\Mp)^{-{360}}\leq 1$ and $2^\frac32\bar{C}_{2} (\Mp)^{-{360}}\leq 1$, we see that the bounds \eqref{e.linftyderivstep2ts} imply that the differential inequality \eqref{e.diffineq} is satisfied with $S=S_2:=\frac{T_2}{(\Mp)^{201}}$ and 
$C_{Carl}=(\Mp)^{201}$. Take 
\begin{equation*}
r_-=4R_2',\qquad r_+=\tfrac{1}{4}{(\Mp)^{1200}}R_2'.
\end{equation*}
Then, 
\begin{equation*}
B_0(160R_2')\setminus B_0(40R_2')=B_0(40r_-)\setminus B_0(10r_-)\subset\left\{40R_2'<|x|<\tfrac{1}{8}{(\Mp)^{1200}}R_2'\right\}
\end{equation*}
on condition that $M$ is sufficiently large: one needs $(\Mp)^{1200}>1280$. By \eqref{e.conclcarlone}, we get
\begin{equation}\label{e.conclcarlonebists}
Z_2\lesssim e^{-\frac{O(1)(\Mp)^{1200}(R_2')^2}{T_2}}\big(X_2+e^{\frac{O(1)(\Mp)^{2400}(R_2')^2}{T_2}}Y_2\big),
\end{equation}
where 
\begin{align*}
X_2:=\ &\int\limits_{-\frac{T_{2}}{{(\Mp)}^{201}}}\int\limits_{r_{-}\leq |x|\leq r_{+}}e^{\frac{4|x|^2}{T_2}}((\Mp)^{201}T_2^{-1}|\omega|^2+|\nabla \omega|^2)\, dxdt,\\
Y_2:=\ &\int\limits_{r_-\leq |x|\leq r_+}|\omega(x,0)|^2\, dx,\\
Z_2:=\ &\int\limits_{-\frac{T_2}{4(\Mp)^{201}}}^{0}\int\limits_{10r_-\leq|x|\leq \frac{r_+}2}((\Mp)^{201}T_2^{-1}|\omega|^2+|\nabla \omega|^2)\, dxdt.
\end{align*}
For $M$ large enough \eqref{e.restrR2ts} implies
\begin{equation*}
20r_{-}\geq 10R_2'\geq 20R_2=20K^\flat(T_2)^\frac12\geq (\Mp)^{100}\Big(\frac{T_2}{8(\Mp)^{201}}\Big)^\frac12=(\Mp)^{100}\Big(\frac{T_1}{2}\Big)^\frac12.
\end{equation*} 
Hence, we can apply the concentration result of Step 1, taking $T_1=\frac{T_2}{4 (\Mp)^{201}}=\frac{-t_{(k)}}{4(\Mp)^{201}}=\frac {S_2}4$ and $R=20r_-$. By \eqref{e.claimstep1ts} we have that
\begin{equation}\label{e.lowerZ2ts}
Z_2\gtrsim (\Mp)^{201}\left(\frac{T_2}{4(\Mp)^{201}}\right)^\frac12e^{-\frac{O(1)(\Mp)^{1166}(R_2')^2}{T_2}}T_2^{-1}\gtrsim T_2^{-\frac12}e^{-\frac{O(1)(\Mp)^{1166}(R_2')^2}{T_2}}.
\end{equation}
Therefore, one of the following two lower bounds holds
\begin{align}
T_2^{-\frac12}\exp\Big(\frac{O(1)M^{1200}(R_2')^2}{T_2}\Big)\lesssim X_2,\label{e.step2lowerbd1ts}\\
T_2^{-\frac12}\exp(-\exp({(\Mp)^{1221}})
)
\lesssim e^{-\frac{O(1)(\Mp)^{2400}(R_2')^2}{T_2}}T_{2}^{-\frac12}\lesssim Y_2,\label{e.step2lowerbd2ts}
\end{align}
where we used the upper bound \eqref{e.bdR2primets} for \eqref{e.step2lowerbd2ts}. 
The bound \eqref{e.step2lowerbd2ts} can be used directly in Step 4 below. On the contrary, if \eqref{e.step2lowerbd1ts} holds more work needs to be done to transfer the lower bound on the enstrophy at time $0$. This is the objective of Step 3 below.

\noindent{\bf Step 3: a final application of quantitative unique continuation.} Assume that the bound \eqref{e.step2lowerbd1ts} holds. We will apply the pigeonhole principle three times successively in order to end up in a situation where we can rely on the quantitative unique continuation to get a lower bound at time $0$. We first remark that this with the definition \eqref{e.defann2bists} of the annulus $\widetilde{\mathcal A}_2$ implies the following lower bound
\begin{multline*}
T_2^{-\frac12}\exp\Big(\frac{O(1)(\Mp)^{1200}(R_2')^2}{T_2}\Big)\\
\lesssim \int\limits_{-\frac{T_2}{(\Mp)^{201}}}^{0}\int\limits_{4R_2'\leq|x|\leq \frac{(\Mp)^{1200}}{4}R_2'}e^{\frac{4|x|^2}{T_2}}((\Mp)^{201}T_2^{-1}|\omega|^2+|\nabla \omega|^2)\, dxdt.
\end{multline*}
By the pigeonhole principle, there exists 
\begin{equation}\label{e.condR3ts}
8R_2'\leq R_3\leq \tfrac12{(\Mp)^{1200}}R_2'
\end{equation}
such that 
\begin{equation*}
T_2^{-\frac12}\exp\Big(-\frac{4R_3^2}{T_2}\Big)
\lesssim \int\limits_{-\frac{T_2}{(\Mp)^{201}}}^0\int\limits_{B_0(R_3)\setminus B_0(\frac{R_3}2)}(T_2^{-1}|\omega|^2+|\nabla \omega|^2)\, dxdt.
\end{equation*}
Using the bounds \eqref{e.linftyderivstep2ts}, we have that 
\begin{equation*}
T_2^{-\frac12}\exp\Big(-\frac{4R_3^2}{T_2}\Big)\lesssim \int\limits_{-\frac{T_2}{(\Mp)^{201}}}^{-\exp(-\frac{8R_3^2}{T_2})T_2}\int\limits_{B_0(R_3)\setminus B_0(\frac{R_3}2)}(T_2^{-1}|\omega|^2+|\nabla \omega|^2)\, dxdt.
\end{equation*}
By the pigeonhole principle, there exists 
\begin{equation}\label{e.condt3ts}
\frac12\exp\Big(-\frac{8R_3^2}{T_2}\Big)T_2\leq -t_3\leq\frac{T_2}{(\Mp)^{201}}
\end{equation}
such that
\begin{equation*}
T_2^{-\frac12}\exp\Big(-\frac{5R_3^2}{T_2}\Big)\lesssim \int\limits_{2t_3}^{t_3}\int\limits_{B_0(R_3)\setminus B_0(\frac{R_3}2)}(T_2^{-1}|\omega|^2+|\nabla \omega|^2)\, dxdt.
\end{equation*}
We finally cover the annulus $B_0(R_3)\setminus B_0(\frac{R_3}2)$ with
\begin{equation*}
O(1)\frac{R_3^3}{(-t_3)^\frac32}\lesssim \frac{R_3^3}{T_2^\frac32}\exp\Big(\frac{12R_3^2}{T_2}\Big)\lesssim \exp\Big(\frac{13R_3^2}{T_2}\Big)
\end{equation*}
balls of radius $(-t_3)^\frac12$, and apply the pigeonhole principle a third time to find that there exists $x_3\in B_0(R_3)\setminus B_0(\frac{R_3}2)$ such that 
\begin{equation}\label{e.conct3ts}
T_2^{-\frac12}\exp\Big(-\frac{18R_3^2}{T_2}\Big)\lesssim \int\limits_{2t_3}^{t_3}\int\limits_{B_{x_3}((-t_3)^\frac12)}(T_2^{-1}|\omega|^2+|\nabla \omega|^2)\, dxdt.
\end{equation}
We apply now the second Carleman inequality, Proposition \ref{prop.sndcarl}, to the function $w:\, \R^3\times [0,-20000t_{3}]\rightarrow\R^3$ defined by for all $(x,t)\in\R^3\times [0,-20000t_{3}]$,
\begin{equation*}
w(x,t)=\omega(x+x_3,-t).
\end{equation*}
Let $S_3:=-20000t_{3}$. We take\footnote{As in the proof of Proposition \ref{prop.main} above, we follow here again Tao's idea; see footnotes \ref{foot.tao} and \ref{foot.taobis}.}
\begin{equation}
r_3:=1000R_{3}\Big(-\frac{t_{3}}{T_{2}}\Big)^{\frac{1}{2}},\quad \hat s_3=\check s_3=-t_3.
\end{equation}
Notice that due to \eqref{e.restrR2ts}-\eqref{e.bdR2primets} and \eqref{e.condR3ts}, we have that
\begin{align}\label{e.condx3ts}
r_3^2=\ &{10^6 R_{3}^2\Big(-\frac{t_{3}}{T_{2}}\Big)}\geq (2.56\times 10^8)(K^\flat)^2(-t_{3})\geq 4000S_{3}=(8\times 10^7)(-t_{3}),\\
\frac{r_{3}}{2}\geq\ &8000R_{2}\Big(-\frac{t_{3}}{T_{2}}\Big)^{\frac{1}{2}}=8000K^\flat (-t_3)^\frac12> (-t_3)^\frac12,
\end{align}
so that \eqref{e.lowerr} is satisfied. 
Furthermore, from \eqref{e.condt3ts} we have
$$\frac{|x_{3}|}{2}\geq \frac{R_{3}}{4}\geq 1000R_{3}\Big(\frac{1}{(\Mp)^{201}}\Big)^{\frac{1}{2}}\geq r_{3}. $$
Thus
\begin{multline}\label{domaininclusts}
B_{x_3}((-t_3)^\frac12)\subset B_{x_3}(\tfrac{r_3}2)\subset B_{x_{3}}(r_{3}) \subset B_{x_3}\Big(\frac{|x_{3}|}{2}\Big)\\
\subset\{\tfrac{R_3}{4}<|y|<\tfrac{3}{2}R_3\}\subset\Big\{2R_2'<|y|<\frac{3(\Mp)^{1200}}{4}R_2'\Big\}. 
\end{multline}
Moreover, 
\begin{equation*}
0\leq\hat s_3=\check s_3=-t_3\leq -2t_{3} =\frac{S_3}{10^4}.
\end{equation*} 
By \eqref{e.condt3ts}, we see that for $M$ large enough $S_{3}\leq \frac{T_{2}}{32}$, hence the bounds \eqref{e.linftyderivstep2ts} imply that the differential inequality \eqref{e.diffineq} is satisfied with $S=S_3$ and $C_{Carl}=1$.
Therefore, by \eqref{e.conclcarltwo} we have
\begin{equation}\label{e.conclcarltwobists}
Z_3\leq C_{univ}e^{\frac{r_3^2}{500t_3}}X_3+C_{univ}(-t_3)^\frac32e^{-\frac{O(1)r_3^2}{t_3}}Y_3,
\end{equation}
where 
\begin{align*}
X_3:=\ &\int\limits_{-S_{3}}^0\int\limits_{B_{x_3}(r_3)}(S_{3}^{-1}|\omega|^2+|\nabla\omega|^2)\, dxdt,\qquad Y_3:=\int\limits_{B_{x_3}(r_3)}|\omega(x,0)|^2(-t_3)^{-\frac32}e^{\frac{|x-x_3|^2}{4t_3}}\, dx,\\
Z_3:=\ &\int\limits_{2t_3}^{t_3}\int\limits_{B_{x_3}(\frac{r_3}2)}(S_{3}^{-1}|\omega|^2+|\nabla \omega|^2)e^{\frac{|x-x_3|^2}{4t}}\, dxdt.
\end{align*}
Using \eqref{e.conct3ts} and $T_{2}^{-1}\leq S_{3}^{-1}$ we have 
\begin{equation}\label{Z3lowerboundts}
  T_2^{-\frac12}\exp\Big(-\frac{18R_3^2}{T_2}\Big)\lesssim \int\limits_{2t_3}^{t_3}\int\limits_{B_{x_3}((-t_{3})^{\frac{1}{2}})}(T_2^{-1}|\omega|^2+|\nabla \omega|^2)e^{\frac{|x-x_3|^2}{4t}}\, dxdt\leq Z_{3}
\end{equation} 
Using the bounds \eqref{e.linftyderivstep2ts} along with \eqref{e.condt3ts}, we find that as in \eqref{e.controlX3},
\begin{equation}\label{e.controlX3ts}
C_{univ}e^{\frac{r_3^2}{500t_3}}X_3\lesssim T_2^{-\frac12}e^{-\frac{996 R_3^2}{T_2}}\leq C_{univ}'e^{-\frac{18R_3^2}{T_2}}e^{-978\cdot 256(K^\flat)^2}.
\end{equation}
We choose $K^\flat$ sufficiently large such that 
\begin{equation}\label{e.smallKflat}
C_{univ}'e^{-978\cdot 256(K^\flat)^2}\leq\frac12,
\end{equation}
where $C_{univ}'\in(0,\infty)$ is the constant appearing in the last inequality of \eqref{e.controlX3ts}. 
Combining now \eqref{e.conclcarltwobists} with the lower bound \eqref{Z3lowerboundts}, we obtain
\begin{align*}
T_2^{-\frac12}\exp\Big(-\frac{18R_3^2}{T_2}\Big)\lesssim\ 
 & \exp\Big(-{\frac{O(1)r_3^2}{t_3}}\Big)\int\limits_{B_{x_3}(r_3)}|\omega(x,0)|^2\, dx\\
\lesssim\ & \exp\Big(O(1){\frac{R_3^2}{T_2}}\Big)\int\limits_{B_{x_3}(r_3)}|\omega(x,0)|^2\, dx.
\end{align*}
Hence,
\begin{equation*}
T_2^{-\frac12}\exp\Big(-O(1){\frac{R_3^2}{T_2}}\Big)\lesssim \int\limits_{B_{x_3}(r_3)}|\omega(x,0)|^2\, dx.
\end{equation*}
Using \eqref{e.restrR2ts}, \eqref{domaininclusts} and the upper bound 
\begin{equation*}
R_3\leq \tfrac{1}2(\Mp)^{1200}R_2'\leq (\Mp)^{1200}\exp(C(120)(\Mp)^{1220})R_2,
\end{equation*}
it follows that 
\begin{align}\label{e.lastbddstep3ts}
\begin{split}
T_2^{-\frac12}\exp(-\exp((\Mp)^{1221}))
\lesssim\ &\int\limits_{B_0\big(\frac{3(\Mp)^{1200}}{4}R'_{2}\big)\setminus B_{0}(2R'_{2})}|\omega(x,0)|^2\, dx.
\end{split}
\end{align}
This together with the bounds \eqref{e.bdR2primets} and \eqref{e.condR3ts} for $R_3$ proves the claim \eqref{e.claimstep2ts}.

\noindent{\bf Step 4, conclusion: summing the scales and lower bound for the global $L^3$ norm.} The key estimate is \eqref{e.claimstep2ts}. From \eqref{e.restrR2ts}-\eqref{e.bdR2primets}, we see that the volume of the annulus ${B_0\big(\frac{3(\Mp)^{1200}}{4}R_{2}'\big)\setminus B_0(2R_{2}')}$ is less than or equal to $T_{2}^{\frac{3}{2}}\exp((\Mp)^{1221})$. 
By the pigeonhole principle, there exists $i\in \{1,2,3\}$ and $$x_4\in B_0\Big(\frac{3(\Mp)^{1200}}{4}R_{2}'\Big)\setminus B_0\big(2R_{2}'\big)\,\,\,\,\textrm{such\,\,that}\,\,\,\,|\omega_{i}(x_4,0)|\geq 2T_2^{-1}\exp(-\exp((\Mp)^{1222})).
$$
Let $r_{4}:= T_{2}^{\frac{1}{2}} \exp(-\exp((\Mp)^{1222})).$ Using \eqref{e.restrR2ts}-\eqref{e.defann2ts}, we see that  $B_{r_{4}}(x_{4})\times \{0\}\subset \mathcal{A}_{2}.$
Thus the quantitative estimate \eqref{e.linftyderivstep2ts} gives that
$$|\omega_{i}(x,0)|\geq T_{2}^{-1}\exp(-\exp((\Mp)^{1222}))\,\,\textrm{in}\,\, B_{r_{4}}(x_{4})$$
and that $\omega_{i}(x,0)$ has constant sign in $B_{r_{4}}(x_{4})$. This along with H\"older's inequality yields that
\begin{align*}
T_2^{-1}\exp(-\exp((\Mp)^{1222}))\leq\ &\Big|\int\limits_{B_0(1)}\omega_{i}(x_4-r_4z,0)\varphi(z)\, dz\Big|\\
\leq\ &r_4^{-1}\Big|\int\limits_{B_0(1)}u(x_4-r_{4}z,0)\nabla\times\varphi(z)\, dz\Big|\\
\leq\ &r_4^{-2}\|u\|_{L^3(B_{0}((\Mp)^{1200}R_{2}')\setminus B_{0}(R_{2}'))}\|\nabla\times\varphi\|_{L^{\frac32}(B_{0}(1))}
\end{align*}
for a fixed non-negative $\varphi\in C^\infty_c(B_0(1))$. Recalling \eqref{e.sepT2ts}-\eqref{e.bdR2primets} we conclude that, 
\begin{equation}\label{e.lowerL3ts}
\int\limits_{B_0\big(\exp((\Mp)^{1223})(-t_{(k)})^\frac12\big)\setminus B_0\big((-t_{(k)})^\frac12\big)}|u(x,0)|^3\, dx\geq \exp\big(-\exp((\Mp)^{1223})\big),
\end{equation}
for all $k\in \{1,\ldots j\}$. Note that \eqref{wellsepproptsrecap} implies that for distinct $k$ the spatial annuli in \eqref{e.lowerL3ts} are disjoint.
Summing \eqref{e.lowerL3ts} over such $k$
we obtain that 
\begin{align*}
&\exp\big(-\exp({(\Mp)^{1223}})\big)j\\
\leq\ &\int\limits_{B_0\big(\exp((\Mp)^{1223})(-t_{1})^{\frac{1}{2}}\big)\setminus B_0\big((-t_{(j)})^\frac12\big)}|u(x,0)|^3\, dx\\
\leq\ &\int\limits_{\R^3}|u(x,0)|^3\, dx.
\end{align*}
This gives $$j\leq \exp\big(\exp({(\Mp)^{1223}})\big) \int\limits_{\mathbb{R}^3}|u(x,0)|^3\, dx\leq \exp\big(\exp({(\Mp)^{1224}})\big). $$
This concludes the proof of Proposition \ref{prop.maints}.

\section{Further applications}
\label{sec.further}

\subsection{Effective regularity criteria based on the local smallness of the $L^{3,\infty}$ at blow-up time}

\begin{proposition}\label{prop.type1limsup}
For all $M\in[1,\infty)$ sufficiently large 
the following result holds true. 
Consider a suitable finite-energy solutions $(u,p)$ to the Navier-Stokes equations on $\R^3\times[-1,0]$ 
that satisfies the following Type I bound
\begin{equation*}
\|u\|_{L^\infty_tL^{3,\infty}_x(\R^3\times(-1,0))}\leq M.
\end{equation*}
Assume 
\begin{equation}\label{e.conctype1limsup}
\limsup_{r\rightarrow 0}\|u(\cdot,T^*)\|_{L^{3,\infty}(B_0(r))}\leq\exp(-\exp(M^{1023})).
\end{equation}
Then, $(0,T^*)$ is a regular point.
\end{proposition}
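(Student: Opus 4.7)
The plan is to argue by contradiction; we may assume $T^*=0$ after translating and restricting the time interval. If $(0,0)$ were a singular point of $u$, then Remark \ref{rem.concenstro} combined with the Type I bound forces the enstrophy concentration
\begin{equation*}
\int\limits_{B_0(4\sqrt{S^\sharp}^{-1}(-t')^{1/2})}|\omega(x,t')|^2\, dx > M^2(-t')^{-1/2}\sqrt{S^\sharp}
\end{equation*}
at almost every $t'\in[-1,0)$. The strategy is to run Steps 1--3 of the proof of Proposition \ref{prop.main} in order to transfer this concentration to a pointwise lower bound on $\omega(\cdot,0)$ in a small ball near the origin, and then to use the curl relation $\omega=\nabla\times u$ together with Lorentz duality to convert that pointwise information into a lower bound for $\|u(\cdot,0)\|_{L^{3,\infty}(B_0(r))}$ along a sequence $r\downarrow 0$. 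For $M$ large enough and the universal constants tracked correctly, this lower bound will strictly exceed $\exp(-\exp(M^{1023}))$, contradicting \eqref{e.conctype1limsup}.

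In detail, fix an arbitrary $T_2\in (0,1]$ and pick $t_0'\in[-1,0)$ satisfying both the enstrophy concentration above and $\tfrac{8}{C^\sharp}M^{749}(-t_0')\leq T_2$. Taking $t_0=0$ and $T=1$, I would apply Steps 1--3 of the proof of Proposition \ref{prop.main} verbatim: they rely only on the global Type I bound, the concentration at $t_0'$, the epoch/annulus lemmas, and the quantitative Carleman inequalities. As at the beginning of Step 4 of that proof, this produces an index $i\in\{1,2,3\}$, a point $x_4\in B_0(\exp(M^{1023})T_2^{1/2})\setminus B_0(T_2^{1/2})$, and a radius $r_4=T_2^{1/2}\exp(-\exp(M^{1022}))$ such that $\omega_i(\cdot,0)$ has constant sign on $B_{x_4}(r_4)$ and
\begin{equation*}
|\omega_i(x,0)|\geq T_2^{-1}\exp(-\exp(M^{1022}))\quad\text{for all }x\in B_{x_4}(r_4).
\end{equation*}

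Instead of closing the argument by the $L^3$--$L^{3/2}$ H\"{o}lder inequality as in Step 4 of Proposition \ref{prop.main}, I would test $\omega_i(\cdot,0)$ against a fixed positive $\varphi\in C^\infty_c(B_0(1))$ with $\int\varphi=1$, integrate by parts using $\omega=\nabla\times u$, and apply O'Neil's Lorentz H\"{o}lder inequality pairing $L^{3,\infty}$ with $L^{3/2,1}$. This gives
\begin{equation*}
T_2^{-1}\exp(-\exp(M^{1022}))\lesssim r_4^{-2}\|u(\cdot,0)\|_{L^{3,\infty}(B_{x_4}(r_4))}\|\nabla\times\varphi\|_{L^{3/2,1}(B_0(1))},
\end{equation*}
and substituting $r_4^2=T_2\exp(-2\exp(M^{1022}))$, together with $B_{x_4}(r_4)\subset B_0(2\exp(M^{1023})T_2^{1/2})$, yields
\begin{equation*}
\|u(\cdot,0)\|_{L^{3,\infty}(B_0(2\exp(M^{1023})T_2^{1/2}))}\gtrsim \exp(-3\exp(M^{1022})).
\end{equation*}
Letting $T_2\downarrow 0$ then produces $\limsup_{r\downarrow 0}\|u(\cdot,0)\|_{L^{3,\infty}(B_0(r))}\gtrsim \exp(-3\exp(M^{1022}))$. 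For $M$ large one has $\log 3+M^{1022}<M^{1023}$ so this lower bound strictly exceeds $\exp(-\exp(M^{1023}))$, contradicting \eqref{e.conctype1limsup}.

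The main technical obstacle is verifying that Steps 1--3 of the proof of Proposition \ref{prop.main} remain valid for suitable finite-energy solutions rather than the ``smooth solutions with sufficient decay'' in which Proposition \ref{prop.main} is stated; this reduces to checking that Lemma \ref{epochTypeI}, Corollary \ref{cor.12} and the Carleman estimates can be invoked in the weaker class, which should follow from the fact that the Type I bound already confines the singular set compactly in space-time while elsewhere the solution is smooth by partial regularity. The Lorentz-duality replacement at the last step and the $T_2\downarrow 0$ limit are then routine.
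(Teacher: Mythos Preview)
Your proposal is correct and follows essentially the same route as the paper: contradiction, enstrophy concentration via Remark \ref{rem.concenstro}, Steps 1--3 of Section \ref{subsec.quantest} verbatim, and then replacing H\"older by a Lorentz H\"older inequality (the paper uses Hunt's inequality, Proposition \ref{hunt}, which is the same pairing $L^{3,\infty}$--$L^{3/2,1}$ you invoke) to obtain the lower bound on $\|u(\cdot,0)\|_{L^{3,\infty}(B_0(r))}$ for all small $r$.

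One comment on the technical obstacle you flag: your suggested justification via partial regularity is not the mechanism the paper uses, and would be awkward to make precise. The paper instead observes that the Type I bound together with the energy bound gives $u\in L^4_{x,t}(\R^3\times(-1,0))$ by Lorentz interpolation, and $L^4_{x,t}$ is exactly the hypothesis \eqref{goodannulusL4} in Corollary \ref{cor.12} and \eqref{L4assumption} in Lemma \ref{epochTypeI}; this is the clean way to justify running Steps 1--3 in the suitable finite-energy class.
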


\begin{proof}[Proof of Proposition \ref{prop.type1limsup}]
We argue by contradiction and assume $(0,T^*)$ is a singular point. The proof relies on two ingredients: (i) the concentration of the enstrophy norm near a Type I singularity, see Remark \ref{rem.concenstro}, (ii) the transfer of concentration at backward times to a lower bound at final time in Section \ref{subsec.quantest}. Contrary to the proof of Proposition \ref{prop.main} no summing of scales argument is required. 

Without loss of generality, we assume that $u$ solves Navier-Stokes on $\R^3\times(-1,0)$, that $(0,0)$ is a singular point of $u$
 and that it satisfies the Type I bound $\|u\|_{L^\infty_tL^{3,\infty}_x(\R^3\times(-1,0))}\leq M$. First note that by Lebesgue interpolation (see Lemma 2.2 in \cite{mccormick2013generalised} for example) we have that any suitable finite-energy solution with Type I bound is a mild solution on $\mathbb{R}^3\times [-1,0]$ with 
\begin{equation}\label{quantsingpointsL4} 
u\in L^{4}_{x,t}(\mathbb{R}^3\times (-1,0)).
\end{equation}
 
   By Remark \ref{rem.concenstro} and following Step 1-3 in Section \ref{subsec.quantest}, see in particular footnote \ref{foot.ae}, we can prove that  
\begin{align*}
T_2^{-\frac12}\exp\big(-\exp({M^{1021}})\big)\lesssim\ &\int\limits_{B_0(\exp(M^{1021})(T_2)^\frac12)\setminus B_0((T_2)^\frac12)}|\omega(x,0)|^2\, dx,
\end{align*}
for all $0< T_2\leq 1$ and $M$ sufficiently large. Here we used that $u\in L^{4}_{x,t}(\mathbb{R}^3\times (-1,0)\cap L^{\infty}_{t}L^{3,\infty}(\mathbb{R}^3\times (-1,0))$, which allows an application of Corollary \ref{cor.12} and Lemma \ref{epochTypeI} in the course of following Steps 1-3.

Let $r\in (0,1]$. Define $T_2:=r^2\exp(-2M^{1023})$. Following Step 4 of Section \ref{subsec.quantest} and using Hunt's inequality in Proposition \ref{hunt} instead of H\"older's inequality, we then obtain that 
\begin{equation}\label{e.lowerL3infty}
\|u(\cdot,0)\|_{L^{3,\infty}(B_0(r))}\geq\|u(\cdot,0)\|_{L^{3,\infty}(B_0(\exp(M^{1023})(T_2)^\frac12)\setminus B_0(T_2^\frac12))}\geq 2\exp(-\exp(M^{1023})).
\end{equation}
This contradicts \eqref{e.conctype1limsup}.
\end{proof}

\subsection{Estimate for the number of singular points in a Type I scenario}

The technology developed in the present paper also enables us to give an effective bound for the number singularities in a Type I scenario. The following proposition and its corollary are effective versions of the results by Choe, Wolf and Yang \cite{CWY19} and Seregin \cite{seregin2019note}.

\begin{proposition}\label{prop.cwycrit}
Let $M\in[1,\infty)$ be sufficiently large and define
\begin{equation}\label{e.formepM}
\ep(M):=\exp(-4\exp(M^{1023})).
\end{equation}
For all suitable finite-energy solutions\footnote{For a definition of \textit{suitable finite-energy solutions} we refer to Section \ref{subsec.not} `Notations'.} $(u,p)$ to the Navier-Stokes equations on $\R^3\times[-1,0]$ 
that satisfy the following Type I bound
\begin{equation*}
\|u\|_{L^\infty_tL^{3,\infty}_x(\R^3\times(-1,0))}\leq M,
\end{equation*}
the following result holds. 

Let $x_0\in\R^3$. Assume that there exists $r\in(0,\exp(M^{1021}))$,
\begin{equation}\label{profileassumption}
\frac{1}{|B_{x_0}(r)|}\left|\left\{x\in B_{x_0}(r):\, |u(x,0)|\geq\frac{\ep(M)}{r}\right\}\right|\leq\ep(M).
\end{equation}
Then $(x_0,0)$ is a regular space-time point.
\end{proposition}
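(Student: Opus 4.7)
The plan is a contradiction argument, in the spirit of Proposition \ref{prop.type1limsup}, that converts the pointwise vorticity lower bound produced by the Carleman machinery of Section \ref{subsec.quantest} into a measure-theoretic contradiction against \eqref{profileassumption}. Translate $x_0$ to the origin and assume for contradiction that $(0,0)$ is a singular point of $u$. Remark \ref{rem.concenstro} then supplies the concentration \eqref{e.conct_0'bis} of the enstrophy at almost every backward time $t'\in[-1,0)$, which is exactly what feeds the Carleman iteration (interpreting \eqref{e.conct_0''bis} in the almost-everywhere sense of footnote \ref{foot.ae}).

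I would then run Steps 1--3 of the proof of Proposition \ref{prop.main} in Section \ref{subsec.quantest}, exactly as done in the proof of Proposition \ref{prop.type1limsup} (upgrading first via Lebesgue interpolation to $u\in L^4_{x,t}$ so as to apply Corollary \ref{cor.12} and Lemma \ref{epochTypeI}). Combining the outcome with the pigeonhole and the quantitative regularity bound \eqref{e.linftyderivstep2} at the beginning of Step 4 of that proof delivers, for every permissible $T_2\in(0,1]$, an index $i\in\{1,2,3\}$, a point $x_4\in B_0(\tfrac{3}{4}C(100)M^{1000}R_2')\setminus B_0(2R_2')$ and a radius $r_4:=T_2^{1/2}\exp(-\exp(M^{1022}))$ such that $\omega_i(\cdot,0)$ has constant sign on $B_{x_4}(r_4)$ with
\[
|\omega_i(x,0)|\geq T_2^{-1}\exp(-\exp(M^{1022}))\quad\text{for all }x\in B_{x_4}(r_4).
\]
Specialize to $T_2:=c_\sharp r^2\exp(-2M^{1023})$ for a small universal constant $c_\sharp>0$: the hypothesis $r\in(0,\exp(M^{1021}))$ ensures $T_2\in(0,1]$, and the outer-radius bound \eqref{e.bdR2prime} ensures $B_{x_4}(r_4)\subset B_0(r)$ for $M$ sufficiently large. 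Testing the above vorticity bound against a positive bump $\varphi\in C^\infty_c(B_0(1))$ with $\int\varphi\, dz=1$ under the rescaling $z\mapsto x_4-r_4 z$, integrating by parts the identity $\omega=\nabla\times u$, and pairing $L^1$ against $L^\infty$ (rather than the $L^3$-$L^{3/2}$ pairing of Step 4 in Section \ref{subsec.quantest}) yields
\[
\|u(\cdot,0)\|_{L^1(B_{x_4}(r_4))}\geq c_{univ}\, r_4^{4}T_2^{-1}\exp(-\exp(M^{1022}))\gtrsim r^{2}\exp\big(-2M^{1023}-5\exp(M^{1022})\big).
\]

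I would then split this $L^1$ integral at the threshold $\ep(M)/r$. On the sublevel set $\{|u(\cdot,0)|<\ep(M)/r\}\cap B_{x_4}(r_4)$ the contribution is at most $(\ep(M)/r)|B_{x_4}(r_4)|$, which, in view of the double-exponential form $\ep(M)=\exp(-4\exp(M^{1023}))$, is exponentially dominated by the $L^1$ lower bound above and can be absorbed on the left-hand side for $M$ large. On the complementary set $E':=B_{x_4}(r_4)\cap\{|u(\cdot,0)|\geq\ep(M)/r\}$, the Type I bound and the decreasing-rearrangement estimate $u^*(s)\leq Ms^{-1/3}$ give $\int_{E'}|u|\, dx\leq \tfrac{3}{2}M|E'|^{2/3}$. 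Combining these facts produces
\[
|E'|\geq c_{univ}'\, M^{-3/2}r^{3}\exp\big(-3M^{1023}-\tfrac{15}{2}\exp(M^{1022})\big).
\]
Since $E'\subset B_0(r)$ and, for $M$ large, $4\exp(M^{1023})$ dominates $3M^{1023}+\tfrac{15}{2}\exp(M^{1022})$, one gets $|E'|>\ep(M)|B_0(r)|$, which contradicts \eqref{profileassumption}.

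The main technical points are (i) extracting the pointwise vorticity lower bound that lives inside Step 4 of Section \ref{subsec.quantest} and verifying that the choice $T_2=c_\sharp r^2\exp(-2M^{1023})$ with $r\in(0,\exp(M^{1021}))$ respects all the rescaled smallness and separation constraints from Steps 1--3 together with the outer-radius bound \eqref{e.bdR2prime} for Corollary \ref{cor.12}; and (ii) the careful bookkeeping of the nested exponentials $\exp(M^{1022})$ and $\exp(M^{1023})$ so that both the sublevel contribution and the H\"older/rearrangement constants are genuinely absorbed into the $4\exp(M^{1023})$ margin designed into $\ep(M)$.
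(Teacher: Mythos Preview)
Your proof is correct and follows the same overall contradiction strategy as the paper: assume $(0,0)$ is singular, invoke Remark \ref{rem.concenstro}, run Steps 1--3 of Section \ref{subsec.quantest}, and use the pointwise vorticity bound $|\omega_i|\geq T_2^{-1}\exp(-\exp(M^{1022}))$ on $B_{x_4}(r_4)$ from the beginning of Step 4. The divergence is only in the final conversion to a superlevel-set estimate.

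The paper takes a more direct $L^\infty$ route. From the localized $L^3$ lower bound on $u$ in $B_{x_4}(r_4)$ it extracts a single point $x_5\in B_{x_4}(r_4)$ with $|u(x_5,0)|\geq 2T_2^{-1/2}\exp(-\tfrac13\exp(M^{1023}))$, and then uses the gradient bound $\|\nabla u\|_{L^\infty(\mathcal A_2)}\leq 1$ from \eqref{e.linftyderivstep2} to spread this to the whole ball $B_{x_5}(r_5)$ with $r_5:=T_2^{1/2}\exp(-\exp(M^{1023}))$. That ball then lies entirely inside the superlevel set $\{|u(\cdot,0)|\geq\exp(-\exp(M^{1023}))/r\}\cap B_0(r)$ (with $r:=T_2^{1/2}\exp(M^{1021})$), and the contradiction comes from $(r_5/r)^3>\exp(-4\exp(M^{1023}))=\ep(M)$. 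Your approach instead pairs $L^1$--$L^\infty$ and then slices via the rearrangement bound $u^*(s)\leq Ms^{-1/3}$ coming from the Type I assumption; this avoids invoking the gradient bound on $u$, at the cost of the extra splitting/rearrangement step. The paper's argument is a little shorter and incidentally shows that the larger threshold $\exp(-\exp(M^{1023}))$ already works in place of $\ep(M)$ in \eqref{profileassumption}; your more conservative choice $T_2\sim r^2\exp(-2M^{1023})$ (versus the paper's $T_2=r^2\exp(-2M^{1021})$) is harmless and simply makes the containment $B_{x_4}(r_4)\subset B_0(r)$ easier to verify.
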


This result is a variant of Theorem 1 in \cite{CWY19} and Proposition 1.3 in \cite{seregin2019note}. 
Our contribution is to provide the explicit formula \eqref{e.formepM} for $\ep(M)$ in terms of $M$. 

\begin{corollary}\label{cor.number}
Let $T^*\in(0,\infty)$ and $M\in[1,\infty)$ be sufficiently large. Assume that $(u,p)$ is a suitable finite-energy solution to the Navier-Stokes equations on $\R^3\times[0,T^*]$ that satisfies the following Type I bound
\begin{equation*}
\|u\|_{L^\infty_tL^{3,\infty}_x(\R^3\times(0,T^*))}\leq M.
\end{equation*}
Then $u$ has at most $\exp(\exp(M^{1024}))$ blow-up points at time $T^*$.
\end{corollary}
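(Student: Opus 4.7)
The plan is to deduce the bound from Proposition \ref{prop.cwycrit} via a disjoint-balls argument combined with the global weak-$L^3$ bound on $u(\cdot,T^*)$ implied by the Type I hypothesis.

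First I would reduce to $T^*=0$ by time translation and extract the distributional estimate $|\{x\in\R^3:\,|u(x,0)|>\alpha\}|\leq M^3\alpha^{-3}$ at the terminal time; this comes from the Type I bound on $(0,T^*)$ together with weak-$\ast$ continuity in time and lower semicontinuity of the Lorentz quasinorm under weak limits. Second, given any finite collection $x_1,\ldots,x_N$ of pairwise distinct singular space-time points at time $0$, I would pick a common radius $r_0\in(0,\exp(M^{1021}))$ strictly smaller than half the minimal pairwise separation, so that the balls $B_{x_i}(r_0)$ are disjoint and each lies in the range where Proposition \ref{prop.cwycrit} is applicable. The contrapositive of that proposition then yields, at each $x_i$, the pointwise mass lower bound
\begin{equation*}
\bigl|\{x\in B_{x_i}(r_0):\,|u(x,0)|\geq \ep(M)/r_0\}\bigr|>\ep(M)\,|B_{x_i}(r_0)|,
\end{equation*}
and summing over $i$ (using the disjointness of the balls) produces a global super-level set of measure at least $c\,N\,\ep(M)\,r_0^3$.

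Comparing this lower bound with the weak-$L^3$ upper bound applied at height $\alpha=\ep(M)/r_0$, the $r_0^3$ factors cancel and I obtain an inequality of the shape $N\lesssim M^3/\ep(M)^4$. Substituting the explicit formula $\ep(M)=\exp(-4\exp(M^{1023}))$, so that $\ep(M)^{-4}=\exp(16\exp(M^{1023}))$, and invoking the elementary bound $\log 16+M^{1023}\leq M^{1024}$ (valid for $M$ sufficiently large) to absorb the polynomial prefactor $M^3$ into the outer exponential, I arrive at $N\leq \exp(\exp(M^{1024}))$. Since this estimate is uniform in the cardinality $N$ of the chosen subfamily, the total blow-up set at $T^*$ is itself bounded by $\exp(\exp(M^{1024}))$.

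The heavy lifting has already been carried out in Proposition \ref{prop.cwycrit}, so no step here is a genuine obstacle; the only subtlety I anticipate is justifying that the weak-$L^3$ distributional inequality is available at the terminal slice $\{t=T^*\}$ rather than merely on the open interval $(0,T^*)$, which as mentioned is a standard weak-continuity argument for suitable finite-energy solutions.
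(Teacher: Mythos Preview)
Your proposal is correct and follows essentially the same approach as the paper's proof: both take a finite collection of singular points, choose a common small radius so the balls are disjoint, apply the contrapositive of Proposition~\ref{prop.cwycrit} to get a lower bound on the measure of the super-level set $\{|u(\cdot,0)|\geq\ep(M)/r\}$ in each ball, sum, and compare against the $L^{3,\infty}$ distributional bound to obtain $N\lesssim M^3/\ep(M)^4$. Your treatment of the terminal-slice issue is in fact slightly more careful than the paper's, which simply invokes the bound $\|u\|_{L^\infty_tL^{3,\infty}_x(\R^3\times(-1,0))}^3\leq M^3$ at $t=0$ without further comment.
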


\begin{proof}[Proof of Corollary \ref{cor.number}]
We follow here the argument of \cite{seregin2019note}. Without loss of generality we can assume that $u$ is defined on $[-1,0]$ rather than $[0,T^*]$. Let $\sigma$ denote the set of all singular points at time $0$. We take a finite collection of $p$ points
\begin{equation}
x_1,\ldots\, x_p\in\sigma.
\end{equation}
There exists $r\in(0,\exp(M^{1021}))$ such that $B_{x_i}(r)\cap B_{x_j}(r)=\emptyset$ for all $i\neq j$. Then, Proposition \ref{prop.cwycrit} implies that 
\begin{align*}
|B_0(1)|\ep(M)^4p<\ &\sum_{i=1}^p\Big(\frac{\ep(M)}{r}\Big)^3\left|\left\{x\in B_{x_i}(r):\, |u(x,0)|\geq\frac{\ep(M)}{r}\right\}\right|\\
\leq\ & \|u\|_{L^\infty_tL^{3,\infty}_x(\R^3\times(-1,0))}^3\leq M^3.
\end{align*}
This yields the result.
\end{proof}

\begin{proof}[Proof of Proposition \ref{prop.cwycrit}]
Without loss of generality we assume that $x_0=0$. As in the proof of Proposition \ref{prop.type1limsup}, we assume for contradiction that $(0,0)$ is a singular point. 
 Using verbatim reasoning as in the proof of Proposition \ref{prop.type1limsup}, we see that the outcome of Step 1-3 in Section \ref{subsec.quantest} holds, in particular estimate \eqref{e.claimstep2}, which holds for all $0<T_2\leq 1$.

Arguing as in Step 4, and using the same notation, we get that there exists $$x_4\in B_0(\frac34C(100)M^{1000}R_2')\setminus B_0(2R_2')$$ such that for $r_4:=T_2^\frac12\exp(-\exp(M^{1022}))$,
\begin{align*}
\exp(-\exp(M^{1023}))\leq\ &\int\limits_{B_{x_4}(r_4)}|u(x,0)|^3\, dx\\
\leq\ &T_2^\frac32\exp(-3\exp(M^{1022}))\sup_{B_{x_4}(r_4)}|u(x,0)|^3.
\end{align*}
Hence, there exists $x_5\in B_{x_4}(r_4)$ such that 
\begin{equation*}
|u(x_5,0)|\geq 2T_2^{-\frac12}\exp(-\tfrac13\exp(M^{1023})).
\end{equation*}
By estimate \eqref{e.linftyderivstep2} and the choice of $M$ sufficiently large, we have $\|\nabla u\|_{L^\infty(\mathcal A_2)}\leq 1$ in the good annulus. Hence, for $r_5:=T_2^\frac12\exp(-\exp(M^{1023}))$, the ball $B_{x_5}(r_5)$ is contained in $\mathcal A_2$ and 
\begin{equation}
|u(x,0)|\geq T_{2}^{-\frac12}\exp(-\exp(M^{1023}))\quad\mbox{in}\quad B_{x_5}(r_5),
\end{equation}
for all $0<T_2\leq 1$. For $r:=T_2^\frac12\exp(M^{1021})$, we have $B_{x_5}(r_5)\subset\mathcal A_2\subset B_0(r)$ and
\begin{equation}
|u(x,0)|\geq \frac{\exp(-\exp(M^{1023}))}{r}.
\end{equation}
Subsequently,
\begin{multline*}
\frac{1}{|B_{0}(r)|}\left|\left\{x\in B_{0}(r):\, |u(x,0)|\geq\frac{\exp(-\exp(M^{1023}))}{r}\right\}\right|\\
\geq\Big(\frac{r_5}{r}\Big)^3>\exp(-4\exp(M^{1023})).
\end{multline*}
This holds for $r=T_{2}^{\frac{1}{2}}\exp(M^{1021})$ and every $0<T_{2}\leq 1$, which contradicts our assumption \eqref{profileassumption} on $u(\cdot,0)$.
\end{proof}

\subsection{Effective regularity criteria based on the relative smallness of the $L^3$ norm at the final moment in time}

Here we prove an effective regularity criteria for $(u,p)$ 
a solution to the Navier-Stokes equations on $\R^3\times[-1,0]$ based on the relative smallness of $\|u(\cdot,0)\|_{L^3}$ vs. $\|u(\cdot,-1)\|_{L^3}$. A \textit{non-effective} version of this result (without explicit quantitative bounds) is in \cite[Theorem 4.1 (i)]{AlbrittonBarkerBesov2018}. 

\begin{proposition}\label{prop.relative} 
For all sufficiently large $M\in[1,\infty)$, we define 
 $\Mp$ by \eqref{e.defM'theo}. 
Let $(u,p)$  
be a suitable finite-energy 
 solution to the Navier-Stokes equations \eqref{e.nse} on $\R^3\times[-1,0]$. 
Assume  that 
\begin{equation*}
\|u(\cdot,-1)\|_{L^{3}(\R^3)}\leq M.
\end{equation*}
If 
\begin{equation*}
\|u(\cdot,0)\|_{L^3(B_{0}(\exp((M^{b})^{1221}))\setminus B_{0}(1))}\leq \exp(-\exp((M^{b})^{1223})) ,
\end{equation*}
then $(0,0)$ is a regular point.
\end{proposition}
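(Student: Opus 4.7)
The plan is to argue by contradiction, assuming that $(0,0)$ is a space-time singular point of $u$. The overall strategy mirrors that of Proposition~\ref{prop.maints} (and structurally that of Proposition~\ref{prop.type1limsup}), with $t_0'' := -1$ playing the role of the sole anchor time slice at which an $L^3$ bound is available. Writing $\alpha := \Mp$, the goal is to produce an $L^3$ lower bound for $u(\cdot,0)$ on an annulus contained in $B_0(\exp(\alpha^{1221}))\setminus B_0(1)$ large enough to contradict the smallness hypothesis.

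The first step is to produce vorticity concentration at some backward time $t_0'$ close to $0$. If the negation of \eqref{e.conct_0'timeslice} were to hold at every $t$ in an interval of the form $(-\alpha^{-1051},0)$, then the local-in-space short-time smoothing estimate of Theorem~\ref{theo.locshortime}, applied after rescaling near $(0,0)$, would yield boundedness of $u$ in a parabolic cylinder around $(0,0)$, contradicting the assumed singularity. This produces some $t_0' \in (-\alpha^{-1051},0)$ at which the vorticity concentration \eqref{e.conct_0'timeslice} holds.

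Next, I apply Lemma~\ref{lem.backconctimeslice} with $t_0'' := -1$. A crucial observation is that the proof of that lemma relies \emph{only} on the $L^3$ bound at $t_0''$: the decomposition $u(\cdot,s) = e^{(s+1)\Delta} u(\cdot,-1) + V(\cdot,s)$ and the subsequent $L^2$ energy estimate for $V$ via Gronwall's inequality depend solely on $\|u(\cdot,-1)\|_{L^3}\leq M$. This propagates vorticity concentration to all times $s \in [-1, -(8\alpha^{201})^{-1}]$. Having established backward concentration on a full interval of times anchored at $-1$, I then follow Steps~1--3 of the proof of Proposition~\ref{prop.maints} verbatim with $t_{(1)} := -1$ and $T_2 := 1$: the quantitative unique continuation of Proposition~\ref{prop.sndcarl} on an epoch of regularity provided by Lemma~\ref{epochtimeslice} gives the analog of \eqref{e.claimstep1ts}; the quantitative backward uniqueness of Proposition~\ref{prop.firstcarl} on a quantitative annulus of regularity furnished by Corollary~\ref{cor.rescaleannulusts}, followed by a final application of Proposition~\ref{prop.sndcarl}, yields the enstrophy lower bound analog of \eqref{e.claimstep2ts} at $t=0$. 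Step~4 of the proof of Proposition~\ref{prop.maints} then converts this enstrophy lower bound to an $L^3$ lower bound for $u(\cdot,0)$ on a suitable annulus inside $B_0(\exp(\alpha^{1221}))\setminus B_0(1)$, contradicting the smallness hypothesis.

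The principal obstacle lies in the first step: unlike the Type I setting of Proposition~\ref{prop.type1limsup} where Remark~\ref{rem.concenstro} furnishes enstrophy concentration at almost every backward time, here the concentration time $t_0'$ must be extracted using only the single-slice $L^3$ bound. This requires a careful contrapositive application of Theorem~\ref{theo.locshortime}, in which the $L^2_{uloc}$ control on the rescaled solution comes from $\|u(\cdot,-1)\|_{L^3}\leq M$ together with the \cite{SeSv17}-type energy estimate for $u - e^{(t+1)\Delta} u(\cdot,-1)$ on $[-1,0]$. A secondary concern is matching the exponents in the derived annulus to the annulus $B_0(\exp(\alpha^{1221}))\setminus B_0(1)$ of the hypothesis, which is expected to follow by carefully tracking constants as in Step~3 of the proof of Proposition~\ref{prop.maints}.
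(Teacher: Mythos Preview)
Your overall strategy—argue by contradiction, establish backward concentration of vorticity on a time interval anchored at $t_0''=-1$, then run Steps~1--4 of Section~\ref{subsec.carlts} with $T_2=1$—is exactly the paper's approach. Your observation that the proof of Lemma~\ref{lem.backconctimeslice} uses only the $L^3$ bound at $t_0''$ is also correct and is implicitly what the paper exploits.

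However, your first step (extracting a concentration time $t_0'\in(-\alpha^{-1051},0)$ and then invoking Lemma~\ref{lem.backconctimeslice}) contains a genuine gap. To deduce regularity at $(0,0)$ from the failure of \eqref{e.conct_0'timeslice} at such a $t_0'$, you would rescale by $r\sim\sqrt{S^\flat}^{-1}(-t_0')^{1/2}$ and apply Theorem~\ref{theo.locshortime}, which requires uniform $L^2_{uloc}$ control on the rescaled data. The linear part $e^{(t_0'+1)\Delta}u(\cdot,-1)$ is fine, but the Seregin--\v{S}ver\'ak energy estimate only gives $\|V(\cdot,t_0')\|_{L^2}^2\lesssim(\Mp)^4$, so the rescaled $L^2_{uloc}$ norm of $V$ is of order $r^{-1}\|V\|_{L^2}^2\sim(-t_0')^{-1/2}(\Mp)^4 M^{-50}$, which blows up as $t_0'\to 0$. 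You are caught in a bind: the separation condition \eqref{e.wellseptimeslice} forces $-t_0'<\alpha^{-1051}$, but the $L^2_{uloc}$ control needs $-t_0'$ bounded below.

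The paper sidesteps this by deleting the intermediate $t_0'$ entirely. It shows directly that for every $s_0\in[-1,-\tfrac{1}{8(\Mp)^{201}}]\cap\Sigma$ the concentration \eqref{e.concs_0} holds: if it failed at such an $s_0$, then the very argument in the proof of Lemma~\ref{lem.backconctimeslice} (rescale by $\lambda=(-s_0)^{1/2}(\Mp)^{106}$, use the energy estimate from $t_0''=-1$, apply Theorem~\ref{theo.locshortime}) yields boundedness of $u$ on a space-time cylinder reaching time $0$, contradicting the assumed singularity at $(0,0)$. This works precisely because $-s_0\geq\tfrac{1}{8(\Mp)^{201}}$ is bounded away from zero, so $\lambda$ is bounded below and $\lambda^{-1}\|V(\cdot,s_0)\|_{L^2}^2\leq 1$ as in \eqref{L2almostscaleinvar}. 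With concentration on the full interval $[-1,-\tfrac{1}{8(\Mp)^{201}}]\cap\Sigma$ in hand, Steps~1--4 proceed exactly as you outline.
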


\begin{proof}
Assume for contradiction that $(0,0)$ is a singular point.
 Since $(u,p)$ is a suitable finite-energy solution, there exists $\Sigma\subset (-1,0)$ such that $|\Sigma|=1$ and 
 \begin{itemize}
 \item $\|\nabla u(\cdot,t')\|_{L^{2}(\mathbb{R}^3)}<\infty$ for all $t'\in\Sigma$,
 \item $u$ satisfies the energy inequality on $[t',0]$.
 \end{itemize}
  Then, arguing in a similar way as in the proof of Lemma \ref{lem.backconctimeslice}, we show that for  any $s_{0}\in [-1,-\frac{1}{8\alpha^{201}}]\cap\Sigma$ the vorticity concentrates in the following sense,
\begin{equation*}
\int\limits_{B_0(4(-s_{0})^\frac12 (\Mp)^{106})}|\omega(x,s_{0})|^2\, dx> \frac{(M+1)^2}{(-s_{0})^{\frac12}(\Mp)^{106}}.
\end{equation*}
Using $|\Sigma|=1$ and then following Step 1-3 of Section \ref{subsec.carlts} with one time scale, we obtain
\begin{align}\label{e.claimrelatts}
\begin{split}
T_2^{-\frac12}\exp\big(-\exp({{(\Mp)^{1221}}})\big)\lesssim\ &\int\limits_{B_0\big(\frac{3(\Mp)^{1200}}{4}R'_{2}\big)\setminus B_0(2R'_{2})}|\omega(x,0)|^2\, dx,
\end{split}
\end{align}
for
\begin{equation*}
T_{2}=1,\quad R_2:=K^\flat(T_2)^\frac12,\quad 2R_2\leq R_2'\leq 2R_2\exp(C(120)(\Mp)^{1220})
\end{equation*}
for $K^\flat$ chosen such that \eqref{e.smallKflat} holds. Reasoning as in Step 4 of Section \ref{subsec.carlts}, we then obtain 
\begin{equation*}
\int\limits_{B_0(\exp((\Mp)^{1221}))\setminus B_{0}(1)}|u(x,0)|^3\, dx
\geq 2\exp\big(-\exp((\Mp)^{1223})\big).
\end{equation*}
This concludes the proof.
\end{proof}

\section{Main tool 1: local-in-space short-time smoothing}
\label{sec.4}
The role of the next result is central in our paper.

\begin{theorem}[local-in-space short-time smoothing]\label{theo.locshortime}
There exists three universal constants $C_*,\, M_5,\, N_1\in[1,\infty)$. For all $M\geq M_5$, $N\geq N_1$, there exists a time $S_*(M,N)\in(0,\frac14]$ such that the following holds. Consider an initial data $u_0$ satisfying the global control
\begin{align*}
\|u_0\|_{L^2_{uloc}(\R^3)}\leq M,\quad\|u_0\|_{L^2(B_{\bar x}(1))}\stackrel{|\bar x|\rightarrow\infty}{\longrightarrow}0,
\end{align*}
and, in addition, $u_0\in L^6(B_0(2))$ with
\begin{equation*}
\|u_0\|_{L^6(B_0(2))}\leq N.
\end{equation*}
Then, for any global-in-time added `global-in-time' local energy solution\footnote{We recall that the definition of a `local energy solution' is given in footnote \ref{footdefsol}.}
 $(u,p)$ to \eqref{e.nse}  with initial data $u_0$ 
we have the estimate 
\begin{align}\label{e.conclbddu}
\|u\|_{L^\infty(B_0(\frac12)\times(\frac34S_*,S_*))}\leq C_*M^{8}N^{19},\\
\|\nabla u\|_{L^\infty_tL^2_x(B_0(\frac14)\times(\frac{15}{16}S_*,S_*))}\leq C_*M^{40}N^{98}.\label{e.conclbddnablau}
\end{align}

Moreover, there is an explicit formula for $S_*$, see \eqref{e.defS*}, and $S_*(M,N)=O(1)M^{-30}N^{-70}$.
\end{theorem}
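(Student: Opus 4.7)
The plan is to adapt the local-in-space short-time smoothing scheme of Jia--\v{S}ver\'ak \cite{JS14} (see also \cite{BP18}) to the present quantitative setting, by tracking every constant. The argument proceeds in four steps. First, I split the initial datum via a smooth cutoff. Fix $\phi \in C^\infty_c(B_0(2))$ with $\phi \equiv 1$ on $B_0(3/2)$, and write a Bogovskii-corrected divergence-free decomposition $u_0 = a_0 + b_0$ with
\begin{equation*}
a_0 := \phi u_0 - \mathcal{B}(u_0 \cdot \nabla \phi),
\end{equation*}
where $\mathcal{B}$ is a Bogovskii operator on the annulus $\{3/2 < |x| < 2\}$. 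Then $a_0$ is divergence-free and compactly supported in $B_0(2)$, with $\|a_0\|_{L^6(\mathbb{R}^3)} \lesssim N + M$ and $\|a_0\|_{L^2(\mathbb{R}^3)} \lesssim M$; and $b_0$ is divergence-free, vanishes on $B_0(5/4)$, and satisfies $\|b_0\|_{L^2_{uloc}} \lesssim M$ with the same decay at spatial infinity as $u_0$.

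Second, I construct a mild solution $a$ on a short interval $[0, T_a]$ starting from $a_0 \in L^2 \cap L^6$, using Kato's fixed-point scheme in $L^\infty_t L^6_x$, with an explicit lifespan $T_a$ that is a negative power of $M+N$. This yields explicit smoothing bounds of the form $\|a(t)\|_{L^\infty} \lesssim (M+N)^{\alpha} t^{-1/4}$ and parabolic estimates on $\nabla a$, and $a$ remains essentially concentrated in a fixed enlargement of $B_0(2)$ modulo exponentially decaying tails. Third, I consider the perturbation $w := u - a$, which satisfies on $\mathbb{R}^3 \times (0, S_*)$ a perturbed Navier--Stokes system with initial datum $b_0$ and linear coupling to $a$. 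Since $u$ is a local energy solution and $a$ is classical with controlled pressure (Appendix \ref{sec.A}), $w$ enjoys a Caffarelli--Kohn--Nirenberg type local energy inequality
\begin{equation*}
\int|w|^2\phi_\ast^2 + 2\int_0^t\int|\nabla w|^2\phi_\ast^2 \leq \int|b_0|^2\phi_\ast^2 + \text{nonlinear, coupling, and pressure terms},
\end{equation*}
on cylinders inside $B_0(1)\times(0,S_*)$. Because $b_0$ vanishes on $B_0(5/4)$, the heat extension of $b_0$ is pointwise of order $M\exp(-c/S)$ on $B_0(1)\times(0,S)$ for $S \ll 1$. Choosing $S_* = c_0 M^{-30} N^{-70}$ for a small universal $c_0$ then forces the scale-invariant CKN quantity
\begin{equation*}
r^{-2}\int_{Q_{(0,S_*)}(r)}\bigl(|u|^3 + |p-p_a|^{3/2}\bigr)\, dxds
\end{equation*}
at the scale $r \sim \sqrt{S_*}$ to fall below the $\varepsilon$-regularity threshold, with explicit polynomial dependence on $M$ and $N$.

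Finally, the CKN $\varepsilon$-regularity theorem (Proposition \ref{CKN}) applied to $u = a + w$ on a suitable sub-cylinder gives the pointwise bound \eqref{e.conclbddu} on $B_0(1/2)\times(3S_*/4, S_*)$, the exponent $M^8 N^{19}$ arising from combining the direct $L^\infty$ bound on $a$ with the post-$\varepsilon$-regularity control of $w$. The gradient bound \eqref{e.conclbddnablau} then follows from a standard parabolic bootstrap from \eqref{e.conclbddu} on a slightly smaller sub-cylinder, the exponent $M^{40}N^{98}$ accounting for the estimates on $\nabla a$ and the iteration gain. The main obstacle is the explicit bookkeeping: the exponents in $S_*$ and in the estimates are dictated by a delicate interpolation between the ``good'' scale-invariant $L^6$ part and the ``bad'' $L^2_{uloc}$ part of the data, through the Bogovskii correction, the Kato iteration for $a$, and the perturbative local energy estimate for $w$. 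Use of cubes in place of balls in this section (as noted in Subsection~\ref{subsec.not}) helps keep the geometric constants transparent.
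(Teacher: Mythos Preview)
Your overall architecture---Bogovskii decomposition $u_0 = a_0 + b_0$, mild solution $a$ from $a_0 \in L^6$, perturbation $v := u - a$ satisfying a local energy inequality that gives $E(t) \lesssim (M^2+N^2)^{3/2} t^{1/10}$---matches the paper and the Jia--\v{S}ver\'ak scheme. The gap is at the $\varepsilon$-regularity step. You propose to apply the standard CKN criterion (Proposition \ref{CKN}) to $u$ on a cylinder of scale $r \sim \sqrt{S_*}$, but this does not close: splitting $|u|^3 \lesssim |a|^3 + |v|^3$, the $a$-contribution is indeed harmless, yet $r^{-2}\int_{Q(r)}|v|^3 \lesssim r^{-3/2} E(S_*)^{3/2} \sim (M^2+N^2)^{9/4}S_*^{-3/5}$, which \emph{diverges} as $S_* \to 0$ regardless of how $S_*$ is chosen in terms of $M,N$. (Your observation that $e^{t\Delta}b_0$ is exponentially small on $B_0(1)$ is correct but irrelevant, since $v$ is not the heat extension of $b_0$.) Working instead at the fixed scale $r=1$ would repair the exponent, but then $Q_{(0,S_*)}(1)$ extends into negative times, where $u$ is undefined and cannot be extended by zero since $u(\cdot,0)=u_0\neq 0$.

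The paper resolves this with the key ingredient you are missing: a dedicated $\varepsilon$-regularity theorem (Theorem \ref{theo.epreg}, Corollary \ref{cor.epreg}) for suitable solutions of the \emph{perturbed} system \eqref{e.pertnse} with subcritical drift $a\in L^m$, $m>5$. Because $v(\cdot,0)=0$ on $B_0(3/2)$, one extends $v$ by zero to negative times and applies Corollary \ref{cor.epreg} on the \emph{unit} cylinder $Q_{(0,S_*)}(1)$, where the smallness hypotheses \eqref{e.locenboundbis}--\eqref{e.hypepregbis} follow directly from the short-time bound on $E(S_*)$ and the pressure estimate \eqref{e.contpress}. This yields $\|v\|_{L^\infty(Q(1/2))}\lesssim N^2$, and then $\|u\|_{L^\infty}\leq \|v\|_{L^\infty}+\|a\|_{L^\infty}\lesssim N^2 + N S_*^{-1/4}$ produces the exponent in \eqref{e.conclbddu}. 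The gradient bound \eqref{e.conclbddnablau} is obtained in a separate step by rescaling $Q_{(0,S_*)}(\tfrac{\sqrt{S_*}}{2})$ to a unit cylinder and invoking local maximal regularity for the Stokes system, not a generic parabolic bootstrap.
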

\begin{remark}\label{gradientboundongeneraltimeint}
As a conclusion to the hypothesis in the above Theorem, one can also obtain general version of \eqref{e.conclbddnablau}. Specifically, for a local energy solution  with $\beta\in (0,S_{*})$, we get
\begin{multline}\label{nablalocgeneral}
\|\nabla u\|_{L^\infty_tL^2_x(B_0(\frac16)\times(\frac{255}{256}\beta,\beta))}\\
\leq C_*\beta^{-\frac{3}{4}}\Big(MN^2+MN\beta^{-\frac{1}{4}}+M\beta^{-\frac{1}{2}}+M^2+(N^2+N\beta^{-\frac{1}{4}})^2\Big).
\end{multline}
We will require this more general estimate. The computations producing it as identical to those used to show Theorem \ref{theo.locshortime} and hence are omitted.
\end{remark}

\begin{corollary}\label{corL_inftyconc}
There exists three universal constants $C_{**},\, M_6\in[1,\infty)$. For all $M\geq M_6$ there exists a time $S_{**}(M)\in(0,\frac14]$ with $S_{**}(M)=O(1)M^{-100}$ (given explicitly by \eqref{e.defbarS*}) such that the following holds. 
Suppose $(u,p)$ 
 is a `smooth solution with sufficient decay'\footnote{See footnote \ref{footdefsol}.} on $\mathbb{R}^3\times [0,T']$ for any $T'\in (0,T)$ and satisfies
\begin{equation}\label{uTypeILinftyconc}
\|u\|_{L^{\infty}_{t}L^{3,\infty}_{x}(\mathbb{R}^3\times [0,T])}\leq M.
\end{equation}
Furthermore, suppose there exists $t\in (0,T)$ such that
\begin{equation}\label{Linftyconcconverse}
\|u(\cdot, t)\|_{L^{\infty}(B_{0}(2\sqrt{S_{**}}^{-1}(T-t)^{\frac{1}{2}})}\leq \frac{M\sqrt{S_{**}}}{(T-t)^{\frac{1}{2}}}.
\end{equation}
Then we conclude that
\begin{equation}\label{localsmoothingLinfinity}
\|u\|_{L^{\infty}(B_{0}(\frac{1}{2}\sqrt{S_{**}}^{-1}(T-t)^{\frac{1}{2}})\times (t+\frac{3}{4}(T-t),T))}\leq \frac{C_{**}M^{27}\sqrt{S_{**}}}{(T-t)^{\frac{1}{2}}}.
\end{equation}
\end{corollary}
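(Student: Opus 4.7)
\begin{proofx}{Corollary \ref{corL_inftyconc} (proof plan)}
The plan is to reduce to Theorem \ref{theo.locshortime} by a parabolic rescaling that maps the ball $B_0(2\sqrt{S_{**}}^{-1}(T-t)^{1/2})$ at time $t$ onto the unit-scale ball $B_0(2)$ at time $0$, with the time interval $(t,T)$ mapped onto $(0,S_{**})$. Concretely, I would set
\begin{equation*}
r := \sqrt{S_{**}}^{-1}(T-t)^{1/2},\qquad U(y,s) := r\, u(ry,\, r^2 s + t),\qquad P(y,s) := r^2 p(ry,\, r^2 s + t),
\end{equation*}
so that $(U,P)$ solves Navier--Stokes on $\mathbb{R}^3\times[0,S_{**}]$ (using $r^2 S_{**}=T-t$), and $U$ inherits the ``smooth solution with sufficient decay'' property of $u$.

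The choice of $S_{**}$ will be dictated by Theorem \ref{theo.locshortime}: set $S_{**}(M):=S_*(C_{weak}M,\, C_{Sob}'M)$ for appropriate universal constants $C_{weak}$ (from the embedding $L^{3,\infty}(\mathbb{R}^3)\subset L^2_{uloc}(\mathbb{R}^3)$ used in the proof of Lemma \ref{lem.backconc}) and $C_{Sob}'$ (from $L^\infty(B_0(2))\hookrightarrow L^6(B_0(2))$). Since $S_*(M',N)=O(1)(M')^{-30}N^{-70}$, this yields $S_{**}(M)=O(1)M^{-100}$ as asserted. Then I would check the hypotheses of Theorem \ref{theo.locshortime} at time $0$ for $U$: the Type I bound \eqref{uTypeILinftyconc} is scale invariant and combined with $L^{3,\infty}\subset L^2_{uloc}$ gives $\|U(\cdot,0)\|_{L^2_{uloc}}\leq C_{weak}M$, while the pointwise hypothesis \eqref{Linftyconcconverse} rescales (using $r\cdot M\sqrt{S_{**}}/(T-t)^{1/2}=M$) to $\|U(\cdot,0)\|_{L^\infty(B_0(2))}\leq M$, hence $\|U(\cdot,0)\|_{L^6(B_0(2))}\leq C_{Sob}'M$; the required decay $\|U(\cdot,0)\|_{L^2(B_{\bar y}(1))}\to 0$ at infinity is inherited from the sufficient decay of $u(\cdot,t)$.

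With these hypotheses verified, I would invoke \eqref{e.conclbddu} to obtain
\begin{equation*}
\|U\|_{L^\infty(B_0(1/2)\times(\tfrac34 S_{**},S_{**}))}\leq C_*(C_{weak}M)^8(C_{Sob}'M)^{19}\leq C_{**}M^{27},
\end{equation*}
and then undo the scaling, observing that $\|u\|_{L^\infty(\cdot)}=r^{-1}\|U\|_{L^\infty(\cdot)}$ with $r^{-1}=\sqrt{S_{**}}/(T-t)^{1/2}$, the ball $B_0(1/2)$ pulls back to $B_0(\tfrac12\sqrt{S_{**}}^{-1}(T-t)^{1/2})$, and the time interval $(\tfrac34 S_{**},S_{**})$ pulls back to $(t+\tfrac34(T-t),T)$. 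This gives exactly \eqref{localsmoothingLinfinity}.

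The only conceptual subtlety---and thus the ``main obstacle'' such as there is one---is that Theorem \ref{theo.locshortime} is stated for a \emph{global-in-time} local energy solution, whereas $U$ is a priori only defined up to $s=S_{**}$. This is handled by appealing to the existence of a global local energy solution with initial data $U(\cdot,0)\in L^2_{uloc}$ enjoying the requisite decay, and to weak--strong uniqueness (cf.~footnote \ref{footdefsol} and Lemma 2.4 of \cite{JS13}), which identifies the extension with $U$ on $[0,S_{**}]$ where the latter is smooth. Apart from this essentially bookkeeping point, the proof is a direct rescaling argument.
\end{proofx}
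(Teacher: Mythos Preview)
Your proposal is correct and follows essentially the same approach as the paper: define $r=\sqrt{S_{**}}^{-1}(T-t)^{1/2}$, rescale to $U(y,s)=ru(ry,r^2s+t)$, verify the $L^2_{uloc}$ bound via $L^{3,\infty}\hookrightarrow L^2_{uloc}$ and the $L^6(B_0(2))$ bound via the $L^\infty$ hypothesis (the paper writes the embedding constant explicitly as $|B_0(2)|^{1/6}$, whereas you call it $C_{Sob}'$), then apply \eqref{e.conclbddu} and undo the scaling. Your extra paragraph on the ``global-in-time'' hypothesis of Theorem~\ref{theo.locshortime} is a point the paper leaves implicit (relying on the framework of footnote~\ref{footdefsol}), so you have in fact been slightly more careful.
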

\begin{proof}
We define $S_{**}\in (0,\frac14]$ in the following way:
\begin{equation}\label{e.defbarS*}
S_{**}=S_{**}(M):=S_*\big(C_{weak}M,|B_{0}(2)|^{\frac{1}{6}}M\big),
\end{equation}
where $S_*$ is the constant defined in Theorem \ref{theo.locshortime} (see also the formula \eqref{e.defS*}).
Define $$r:= \sqrt{S_{**}}^{-1}(T-t)^{\frac{1}{2}}$$
and rescale 
\begin{equation}\label{rescaleLinfinityconc}
U(y,s):= ru(ry, r^2 s+t)\,\,\,\textrm{for}\,\,\,\,(y,s)\in\mathbb{R}^3\times (0,S_{**}).
\end{equation}
Then assumptions \eqref{uTypeILinftyconc}-\eqref{Linftyconcconverse} imply that
$$
\|U(\cdot,0)\|_{L^{\infty}(B_{0}(2))}\leq M\quad\mbox{and}\quad
\|U\|_{L^{\infty}_{t}L^{3,\infty}_{x}(\mathbb{R}^3\times (0,S_{**})}\leq M.$$
Hence we have,
\begin{equation}\label{rescaleLinfinityininitial}
\|U(\cdot,0)\|_{L^{6}(B_{0}(2))}\leq M|B_{0}(2)|^{\frac{1}{6}}
\end{equation}
and
\begin{equation}\label{rescaleTypeI}
\|U(\cdot,0)\|_{L^{2}_{uloc}(\mathbb{R}^3)}\leq C_{weak}M.
\end{equation}
Here, $C_{weak}\in[1,\infty)$ is a universal constant from the embedding $L^{3,\infty}(\mathbb{R}^3)\subset L^{2}_{uloc}(\mathbb{R}^3)$. 
We then apply Theorem \ref{theo.locshortime} to $U$ and then rescale according to \eqref{rescaleLinfinityconc}. This gives \eqref{localsmoothingLinfinity} as desired.
\end{proof}
Theorem \ref{theo.locshortime} is proven in Section \ref{sec.theo.locshortime} below. It relies on an $\ep$-regularity result for suitable weak solutions\footnote{For a definition of suitable weak solutions for \eqref{e.pertnse}, we refer to \cite[Definition 1]{BP18}.} to the perturbed Navier-Stokes equations
\begin{equation}\label{e.pertnse}
\partial_tv-\Delta v+\nabla q=-v\cdot\nabla v-a\cdot\nabla v-\nabla\cdot(a\otimes v),\quad\nabla\cdot v=0,\quad \nabla\cdot a=0
\end{equation} 
around a subcritical drift $a\in L^m(Q_{(0,0)}(1))$, $m>5$. We recover the result of Jia and \v{S}ver\'{a}k {\cite[Theorem 2.2]{JS14}} by a Caffarelli, Kohn and Nirenberg scheme \cite{CKN82} already used in \cite{BP18} for critical drifts. We also point out here that local-in-space short-time regularity estimates near locally critical initial data were recently proved in \cite{KMT20} using compactness arguments. Contrary to the critical case, here we can prove boundedness directly.

\begin{theorem}[epsilon-regularity around a subcritical drift]\label{theo.epreg}
There exists $C_{***}\in(0,\infty)$, 
for all $m\in(5,\infty]$, there exists $\ep_*(m)\in(0,\infty)$ such that the following holds for all $\ep\in(0,\ep_*(m))$. Take any $a\in L^m(Q_{(0,0)}(1))$ and any suitable weak solution $(v,q)$ to \eqref{e.pertnse} satisfying 
\begin{equation}\label{e.locenapriori}
\sup_{-1<s<0}\int\limits_{B_0(1)}|v(x,s)|^2dx+\int\limits_{Q_{(0,0)}(1)}|\nabla v|^2\, dxds\leq \ep^\frac59.
\end{equation}
Assume that 
\begin{align}
\|a\|_{L^m(Q_{(0,0)}(1))}\leq\ &\ep^\frac19,\label{e.smalldrift}\\
\int\limits_{Q_{(0,0)}(1)}|v|^3+|q|^\frac32\, dxds\leq\ &\ep.\label{e.hypepreg}
\end{align}
Then,
\begin{equation}\label{e.linftyepreg}
\sup_{(\bar x,t)\in\overline{Q_{(0,0)}(\frac12)}}\sup_{r\in (0,\frac14]}\intbar_{Q_{(\bar x,t)}(r)}|v|^3\, dxds\leq C_{***}\ep^\frac23.
\end{equation}
\end{theorem}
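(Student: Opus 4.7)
The plan is to adapt the classical Caffarelli–Kohn–Nirenberg iteration scheme to the perturbed system, exploiting that for $m>5$ the drift $a$ is subcritical with respect to the Navier--Stokes scaling: indeed $\|a\|_{L^m(Q(r))}$ scales like $r^{1-5/m}$, which vanishes as $r\downarrow 0$. Compared with the critical case $m=5$ treated by a compactness argument in~\cite{BP18}, subcriticality here permits a direct, quantitative iteration on dyadic parabolic scales, and the hypothesis~\eqref{e.smalldrift} with exponent $\ep^{1/9}$ is tuned so that the drift contribution is always beaten by the geometric decay.

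First, I would derive a local energy inequality for the perturbed system~\eqref{e.pertnse}. For a parabolic cutoff $\varphi\in C^\infty_c(Q_{(0,0)}(1))$, suitable weak solutions satisfy
\[
\sup_{s}\int|v|^2\varphi\,dx+2\iint|\nabla v|^2\varphi\,dxds\leq \iint|v|^2(\partial_t+\Delta)\varphi\,dxds+\iint(|v|^2+2q)\,v\cdot\nabla\varphi\,dxds+\mathcal R_a,
\]
where the remainder $\mathcal R_a$ groups the drift contributions coming from $a\cdot\nabla v$ and $\nabla\cdot(a\otimes v)$. Using H\"older with the exponents $(m,\tfrac{2m}{m-2})$ and Sobolev, $|\mathcal R_a|$ is bounded by $\|a\|_{L^m(\mathrm{supp}\,\varphi)}$ times sub-critical Lebesgue norms of $v$ and $\nabla v$ that can be absorbed, thanks to~\eqref{e.smalldrift}. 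Second, I would split the pressure on each ball $B_{\bar x}(r)$ as $q=q_{\mathrm{loc}}+q_{\mathrm{har}}$, where $q_{\mathrm{loc}}$ is the Riesz-transform of $(v\otimes v+a\otimes v+v\otimes a)\chi_r$ on a slightly larger ball and $q_{\mathrm{har}}$ is harmonic in $x$. Calder\'on--Zygmund gives $\|q_{\mathrm{loc}}\|_{L^{3/2}}\lesssim \|v\|_{L^3}^2+\|a\|_{L^m}\|v\|_{L^{2m/(m-2)}}$, and $q_{\mathrm{har}}$ enjoys the standard interior decay $\|q_{\mathrm{har}}\|_{L^{3/2}(B(\theta r))}\lesssim \theta^{\text{good}}\|q_{\mathrm{har}}\|_{L^{3/2}(B(r))}$.

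Third, I would introduce the scale-invariant quantities
\[
E(\bar x,t,r):=\frac{1}{r}\sup_{t-r^2<s<t}\int_{B_{\bar x}(r)}|v|^2\,dx+\frac{1}{r}\iint_{Q_{(\bar x,t)}(r)}|\nabla v|^2\,dxds,
\]
\[
C_3(\bar x,t,r):=\frac{1}{r^2}\iint_{Q_{(\bar x,t)}(r)}|v|^3\,dxds,\qquad D(\bar x,t,r):=\frac{1}{r^2}\iint_{Q_{(\bar x,t)}(r)}|q|^{3/2}\,dxds,
\]
and, combining the local energy inequality with the pressure decomposition and Sobolev embedding, establish the recursive estimate
\[
E(\theta r)+C_3(\theta r)+D(\theta r)\leq C\theta^\alpha\bigl(E(r)+C_3(r)+D(r)\bigr)+C\theta^{-\beta}\|a\|_{L^m(Q(r))}^{\gamma}\bigl(E(r)+C_3(r)+D(r)\bigr)^{\delta}
\]
uniformly in $(\bar x,t)\in\overline{Q_{(0,0)}(\tfrac12)}$ and $r\in(0,\tfrac14]$, for some fixed $\alpha,\beta,\gamma,\delta>0$. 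Because $m>5$, the factor $\|a\|_{L^m(Q(r))}$ gains a positive power of $r$ as we zoom in; after fixing $\theta$ small so that $C\theta^\alpha\leq\tfrac12$, the drift term can be absorbed and a standard geometric iteration starting from the hypotheses~\eqref{e.locenapriori}--\eqref{e.hypepreg} yields $C_3(r)+D(r)\lesssim \ep^{2/3}$ at every scale and base point, which is precisely~\eqref{e.linftyepreg}.

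The main technical obstacle is the careful bookkeeping in this iteration: one must verify that every occurrence of the drift enters with a strictly positive power of $r$ coming from $\|a\|_{L^m(Q(r))}$, so that the drift never spoils the geometric decay, and simultaneously check that the exponent on the right of~\eqref{e.smalldrift} is small enough (here $\ep^{1/9}$) that $\|a\|_{L^m}^\gamma$ can indeed be absorbed into the target $\ep^{2/3}$. A secondary, essentially technical, difficulty is the harmonic part of the pressure, which is handled by the standard trick of averaging the defining cutoff over an annulus so that $\nabla q_{\mathrm{har}}$ is pointwise controlled by $L^{3/2}$ averages of $q$ on a slightly larger ball, avoiding any loss of smallness when the iteration base point moves within $\overline{Q_{(0,0)}(\tfrac12)}$.
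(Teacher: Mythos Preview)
Your proposal is correct and follows essentially the same Caffarelli--Kohn--Nirenberg iteration as the paper, which propagates dyadic bounds of the form
\[
\frac{1}{r_k^2}\int_{Q(r_k)}|v|^3+\frac{1}{r_k^{1/2+\kappa}}\int_{Q(r_k)}|q-(q)_{r_k}(s)|^{3/2}\leq \ep^{2/3}r_k^3,\qquad
\sup_s\int_{B(r_k)}|v|^2+\int_{Q(r_k)}|\nabla v|^2\leq C_B\ep^{2/3}r_k^3
\]
on scales $r_k=2^{-k}$, exploiting exactly the same subcriticality gain $\|a\|_{L^5(Q(r_k))}\lesssim r_k^{1-5/m}\ep^{1/9}$ that you identify. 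The only cosmetic differences are that the paper handles the pressure by subtracting the time-dependent spatial average $(q)_{r_k}(s)$ rather than your local/harmonic splitting, and tracks two separate inductive statements $(A_k),(B_k)$ rather than a single combined quantity; both are standard variants of the CKN bookkeeping. One minor correction: \cite{BP18} also proceeds by a CKN-type iteration in the critical case, not by compactness as you state, and the paper explicitly says its proof follows \cite{BP18} ``almost verbatim'' with the subcritical modifications.
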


This theorem is proved in Section \ref{sec.theo.epreg} below. 
Notice that the smallness on the large-scale quantity \eqref{e.hypepreg} in $Q_{(0,0)}(1)$ is transferred to the $L^\infty$ bound \eqref{e.linftyepregbis}. In the following statement, we remove the smallness assumption \eqref{e.smalldrift} on the drift.

\begin{corollary}\label{cor.epreg}
Let $m\in (5,\infty]$. Let $C_{***}$ and $\ep_*$ be given by Theorem \ref{theo.epreg}. For all $\ep\in(0,\min(\ep_*,2^{-9}))$, for all $N\in[1,\infty)$, for all $a\in L^m(Q_{(0,0)}(1))$ and any suitable weak solution $(v,q)$ to \eqref{e.pertnse} satisfying 
\begin{equation}
\sup_{-1<s<0}\int\limits_{B_0(1)}|v(x,s)|^2dx+\int\limits_{Q_{(0,0)}(1)}|\nabla v|^2\, dxds\leq N^{-\frac1{1-\frac5m}} \ep^{{\frac{1}9}\cdot\frac{6-\frac5m}{1-\frac5m}}.\label{e.locenboundbis}
\end{equation}
Assume that 
\begin{align}
\|a\|_{L^m(Q_{(0,0)}(1))}\leq\ &N,\label{e.smalldriftbis}\\
\int\limits_{Q_{(0,0)}(1)}|v|^3+|q|^\frac32\, dxds\leq\ &N^{-\frac2{1-\frac5m}} \ep^{{\frac{1}9}\cdot\frac{11-\frac{45}m}{1-\frac5m}}.\label{e.hypepregbis}
\end{align}
Then,
\begin{equation}\label{e.linftyepregbis}
\|v\|_{L^\infty(Q_{(0,0)}(\frac12))}\leq C_{***}^\frac13N^{\frac1{1-\frac5m}}\ep^{{\frac{1}9}\cdot\frac{1-\frac{10}m}{1-\frac5m}}.
\end{equation}
\end{corollary}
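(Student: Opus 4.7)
The plan is to reduce to Theorem~\ref{theo.epreg} by a Navier--Stokes rescaling at a scale $\lambda\in(0,1]$ chosen small enough to absorb the drift bound $N$ into $\epsilon^{1/9}$. The crucial point is subcriticality $m>5$: under rescaling the drift picks up a positive power $\lambda^{1-5/m}$, which makes the absorption possible.

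Concretely, for a parameter $\lambda\in(0,1]$ to be chosen, I would introduce the rescaled triple
\begin{equation*}
v_\lambda(y,s):=\lambda v(\lambda y,\lambda^2 s),\quad q_\lambda(y,s):=\lambda^2 q(\lambda y,\lambda^2 s),\quad a_\lambda(y,s):=\lambda a(\lambda y,\lambda^2 s),
\end{equation*}
on $Q_{(0,0)}(1)$. Since \eqref{e.pertnse} and the local energy identity are invariant under this rescaling, $(v_\lambda,q_\lambda)$ remains a suitable weak solution with drift $a_\lambda$. The change of variables $\xi=\lambda y$, $\tau=\lambda^2 s$ produces the scaling relations
\begin{equation*}
\|a_\lambda\|_{L^m(Q_{(0,0)}(1))}=\lambda^{1-\frac{5}{m}}\|a\|_{L^m(Q_{(0,0)}(\lambda))},
\end{equation*}
\begin{equation*}
\int_{Q_{(0,0)}(1)}(|v_\lambda|^3+|q_\lambda|^{3/2})=\lambda^{-2}\int_{Q_{(0,0)}(\lambda)}(|v|^3+|q|^{3/2}),
\end{equation*}
\begin{equation*}
\sup_{-1<s<0}\int_{B_0(1)}|v_\lambda|^2+\int_{Q_{(0,0)}(1)}|\nabla v_\lambda|^2=\lambda^{-1}\Big(\sup_{-\lambda^2<\tau<0}\int_{B_0(\lambda)}|v|^2+\int_{Q_{(0,0)}(\lambda)}|\nabla v|^2\Big).
\end{equation*}
I would then set $\lambda:=(\epsilon^{1/9}/N)^{1/(1-5/m)}$, which lies in $(0,1/2]$ because $\epsilon<2^{-9}$ and $N\geq 1$; this makes $\|a_\lambda\|_{L^m(Q_{(0,0)}(1))}\leq \epsilon^{1/9}$. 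The assumptions \eqref{e.locenboundbis}--\eqref{e.hypepregbis}, combined with the scaling relations above, then yield the local energy bound \eqref{e.locenapriori} and the $L^3+L^{3/2}$ smallness \eqref{e.hypepreg} for $(v_\lambda,q_\lambda)$ on $Q_{(0,0)}(1)$.

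Next I would apply Theorem~\ref{theo.epreg} to $(v_\lambda,q_\lambda,a_\lambda)$. The Morrey-type bound \eqref{e.linftyepreg} together with Lebesgue's differentiation theorem gives $\|v_\lambda\|_{L^\infty(Q_{(0,0)}(1/2))}\leq C_{***}^{1/3}\epsilon^{2/9}$. Unscaling via $v(\lambda y,\lambda^2 s)=\lambda^{-1}v_\lambda(y,s)$ turns this into $\|v\|_{L^\infty(Q_{(0,0)}(\lambda/2))}\leq \lambda^{-1}C_{***}^{1/3}\epsilon^{2/9}$, and substituting the chosen value of $\lambda$ collapses the prefactor into $C_{***}^{1/3}N^{1/(1-5/m)}\epsilon^{(1-10/m)/(9(1-5/m))}$, which is exactly the right-hand side of \eqref{e.linftyepregbis}. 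To upgrade from $L^\infty(Q_{(0,0)}(\lambda/2))$ to $L^\infty(Q_{(0,0)}(1/2))$ as stated, I would rerun the same argument centered at every base point $(\bar x,\bar t)\in Q_{(0,0)}(1/2)$, using that the unit-cylinder hypotheses on $Q_{(0,0)}(1)$ restrict to the analogous hypotheses on $Q_{(\bar x,\bar t)}(1/2)\subset Q_{(0,0)}(1)$, and take the supremum over a finite cover.

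There is no real obstacle here: the argument is entirely a rescaling built on top of Theorem~\ref{theo.epreg}. The only step where care is needed is the arithmetic verification that the precise $\epsilon$-powers in \eqref{e.locenboundbis}--\eqref{e.hypepregbis} are at least as strong as those dictated by the scaled counterparts of \eqref{e.locenapriori}--\eqref{e.hypepreg}, which reduces to elementary manipulation of the exponents using $m>5$.
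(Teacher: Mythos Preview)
Your approach is correct and essentially identical to the paper's proof: the paper also rescales by $R_0 = \big(\ep^{1/9}/N\big)^{1/(1-5/m)}$ (exactly your $\lambda$), verifies the three hypotheses of Theorem~\ref{theo.epreg} for the rescaled triple, applies the theorem, and unscales. The only cosmetic difference is that the paper centers the rescaling at an arbitrary $(x_0,t_0)\in Q_{(0,0)}(\tfrac12)$ from the outset rather than first treating the origin and then translating, so no ``finite cover'' step is needed---one just takes the supremum over $(x_0,t_0)$.
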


\begin{proof}[Proof of Corollary \ref{cor.epreg}]
We use a scaling argument as in \cite[Theorem 2.2]{JS14}. Let $(x_0,t_0)\in Q_{(0,0)}(\frac12)$ and define $R_0\in(0,\infty)$ as
\begin{equation}\label{e.choiceR0}
R_0=N^{-\frac1{1-\frac5m}}\ep^{\frac{1}9\cdot\frac{1}{1-\frac5m}}.
\end{equation}
Notice that due to $0<\ep<\min(\ep_*,2^{-9})$, we have $R_0<\frac12$, so that the following rescaling is well defined: for all $(y,s)\in Q_{(0,0)}(1)$,
\begin{equation*}
V(y,s):=R_0 v(x_0+R_0y,t_0+R_0^2s),\qquad Q(y,s):=R_0^2 q(x_0+R_0y,t_0+R_0^2s).
\end{equation*}
Then $(V,Q)$ is a suitable weak solution to \eqref{e.pertnse} with a drift $b$ defined by
\begin{equation*}
b(y,s):=R_0 a(x_0+R_0y,t_0+R_0^2s).
\end{equation*}
We have by our choice of $R_0$ in \eqref{e.choiceR0}
\begin{align*}
&\|b\|_{L^m(Q_{(0,0)}(1))}\leq R_0^{1-\frac5m}\|a\|_{L^m(Q_{(x_0,t_0)}(R_0))}\leq R_0^{1-\frac5m}\|a\|_{L^m(Q_{(0,0)}(1))}\leq R_0^{1-\frac5m}N\leq\ep^\frac{1}9
\end{align*}
for the drift,
\begin{align*}
&\sup_{-1<s<0}\int\limits_{B_0(1)}|V(y,s)|^2\, dy+\int\limits_{Q_{(0,0)}(1)}|\nabla V|^2\, dyds\\
&\leq R_0^{-1}\left(\sup_{t_0-R_0^2<s<t_0}\int\limits_{B_{x_0}(R_0)}|v(x,s)|^2\, dx+\int\limits_{Q_{(x_0,t_0)}(R_0)}|\nabla v|^2\, dxds\right)\\
&\leq R_0^{-1}\left(\sup_{-1<s<0}\int\limits_{B_{0}(1)}|v(x,s)|^2\, dx+\int\limits_{Q_{(0,0)}(1)}|\nabla v|^2\, dxds\right)\\
&\leq R_0^{-1}N^{-\frac1{1-\frac5m}} \ep^{{\frac19}\cdot\frac{6-\frac5m}{1-\frac5m}}\leq \ep^\frac59
\end{align*}
for the local energy and finally
\begin{align*}
&\int\limits_{Q_{(0,0)}(1)}|V|^3+|Q|^\frac32\, dyds\\
&\leq R_0^{-2}\int\limits_{Q_{(x_0,t_0)}(R_0)}|v|^3+|q|^\frac32\, dxds\\
&\leq R_0^{-2}\int\limits_{Q_{(0,0)}(1)}|v|^3+|q|^\frac32\, dxds\\
&\leq R_0^{-2}N^{-\frac2{1-\frac5m}} \ep^{{\frac19}\cdot\frac{11-\frac{45}m}{1-\frac5m}}= \ep.
\end{align*}
Therefore, \eqref{e.locenapriori}, \eqref{e.smalldrift} and \eqref{e.hypepreg} are satisfied for $(V,Q)$, and hence,
\begin{equation*}
\sup_{(\bar x,t)\in\overline{Q_{(0,0)}(\frac12)}}\sup_{r\in (0,\frac14]}\intbar_{Q_{(\bar x,t)}(r)}|V|^3\, dyds\leq C_{***}\ep^\frac23.
\end{equation*}
Rescaling, this gives
\begin{equation*}
\sup_{(\bar x,t)\in\overline{Q_{(x_0,t_0)}(\frac{R_0}2)}}\sup_{r\in (0,\frac{R_0}4]}\intbar_{Q_{(\bar x,t)}(r)}|v|^3\, dxds\leq C_{***}\ep^\frac23R_0^{-3},
\end{equation*}
hence the bound \eqref{e.linftyepregbis} by taking the supremum over $(x_0,t_0)\in Q_{(0,0)}(\frac12)$. This concludes the proof.
\end{proof}

\subsection{Sketch of the proof of Theorem \ref{theo.epreg}}
\label{sec.theo.epreg}

The proof follows almost verbatim the one of Theorem 3 in \cite{BP18}, provided the following modifications are made. We propagate the following two bounds: for $r_k=2^{-k}$,
\begin{align*}
\tag{$A_k$} \frac{1}{r_k^2}\int\limits_{Q_{(\bar x,t)}(r_k)}|v(x,s)|^3\, dxds+\frac{1}{r_k^{\frac12+\kappa}}\int\limits_{Q_{(\bar x,t)}(r_k)}|q-(q)_{r_k}(s)|^\frac32\, dxds\leq\ & \ep^\frac23r_k^3,\\
\tag{$B_k$} \sup_{t-r_k<s<t}\int\limits_{B_{\bar x}(r_k)}|v(x,s)|^2\, dx+\int\limits_{Q_{(\bar x,t)}(r_k)}|\nabla v|^2\, dxds\leq\ & C_B\ep^\frac23r_k^3,
\end{align*}
for a universal constant $C_B\in (0,\infty)$ chosen sufficiently large and $\kappa(m)\in (0,\infty)$ such that
\begin{equation*}
0<\kappa<\min\Big(2,3-\frac{15}m\Big).
\end{equation*} 
One takes advantage of the subcriticality of the drift $a$ in the following way:
\begin{equation*}
\|a\|_{L^5(Q_{(\bar x,t)}(r_k))}\lesssim r_k^{1-\frac5m}\|a\|_{L^m(Q_{(\bar x,t)}(r_k))}\lesssim \ep^\frac19r_k^{1-\frac5m}.
\end{equation*}
This plays a key role in the estimate of $I_4$ and $I_5$ in Step 3, $J_2$ and $J_4$ in Step 4, using the same notations as in the proof of \cite[Theorem 3]{BP18}. The restriction $\kappa<2$ comes from handling $J_5$ and $J_6$, while the restriction $\kappa<3-\frac{15}m$ comes from bounding $J_2$ and $J_4$.

\subsection{Proof of Theorem \ref{theo.locshortime}}
\label{sec.theo.locshortime}

We fix $n=6$ and $m=\frac{5n}3=10$ in this proof. Let $C_{***}$ and $\ep_*=\ep_*(10)$ be given by Theorem \ref{theo.epreg}. Let also $k_0=k_0(6)$ and $K_0=K_0(6)$ be given by Proposition \ref{prop.mild}.

Let $M,\, N\in[1,\infty)$. 
Let $u_0\in L^2_{uloc}(\R^3)$ such that $\|u_0\|_{L^2(B_{\bar x}(1))}\stackrel{|\bar x|\rightarrow\infty}{\longrightarrow}0$. We assume in addition that $u_0\in L^6(B_0(2))$. Moreover, 
\begin{align*}
\|u_0\|_{L^2_{uloc}(\R^3)}\leq M,\quad \|u_0\|_{L^6(B_0(2))}\leq N.
\end{align*}
Let $u$ be any local energy solution to \eqref{e.nse} with such a data $u_0$. The goal is to prove the local-in-space short-time smoothing for $u$ stated in Theorem \ref{theo.locshortime}.

\noindent{\bf Step 1: decomposition of the initial data.}

\begin{lemma}\label{lem.bogo}
Let $u_0\in L^2_{uloc}(\R^3)$ with, in addition, $u_0|_{B_0(2)}\in L^6(\R^3)$. Then, there exists a universal constant $K_2\in[1,\infty)$, there exists $u_{0,a}\in L^6_\sigma(\R^3)\cap L^2_\sigma(\R^3)$, $\supp(u_{0,a})\subset B_0(2)$, and $u_{0,b}\in L^2_{uloc}(\R^3)$ such that the following holds:
\begin{align*}
u_0=u_{0,a}+u_{0,b},\quad u_{0,a}=u_0\ \mbox{on}\ B_0(\tfrac32),\quad \|u_{0,a}\|_{L^6}\leq K_2\|u_0\|_{L^6(B_0(2))},\\
\|u_{0,a}\|_{L^2}\leq K_2\|u_0\|_{L^2(B_0(2))}\quad\mbox{and}\quad \|u_{0,b}\|_{L^2_{uloc}}\leq K_2\|u_0\|_{L^2_{uloc}}.
\end{align*}
\end{lemma}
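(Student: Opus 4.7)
The plan is to implement a standard cutoff-plus-Bogovski\u{\i}-correction construction. Let $\chi\in C^\infty_c(B_0(2))$ be a fixed radial cutoff with $\chi\equiv 1$ on $B_0(\tfrac32)$, $0\leq\chi\leq 1$, and $|\nabla\chi|\leq C$. The naive candidate $\chi u_0$ has support in $\overline{B_0(2)}$ and equals $u_0$ on $B_0(\tfrac32)$, but it fails to be divergence-free; indeed, since $u_0$ is (distributionally) solenoidal, one has $\nabla\cdot(\chi u_0)=\nabla\chi\cdot u_0$, which is a function $f$ supported in the spherical shell $\mathcal{A}:=B_0(2)\setminus B_0(\tfrac32)$ and satisfying $\int_{\mathcal{A}}f\,dx=\int\nabla\cdot(\chi u_0)\,dx=0$ by the divergence theorem applied to the compactly supported field $\chi u_0$.

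Next I would invoke the Bogovski\u{\i} operator on the bounded Lipschitz (in fact smooth, star-shaped with respect to a ball) domain $\mathcal{A}$: there exists a linear operator $\mathcal{B}\colon L^p_0(\mathcal{A})\to W^{1,p}_0(\mathcal{A})^3$ (here $L^p_0$ denotes mean-zero functions) such that $\nabla\cdot\mathcal{B}[f]=f$ and $\|\mathcal{B}[f]\|_{L^p(\mathcal{A})}+\|\nabla\mathcal{B}[f]\|_{L^p(\mathcal{A})}\leq C(p)\|f\|_{L^p(\mathcal{A})}$ for every $1<p<\infty$; see, e.g., Galdi's monograph. Applying this with $p=2$ and $p=6$ to $f=-\nabla\chi\cdot u_0$ (which has mean zero and lies in $L^2\cap L^6$ on $\mathcal{A}$), and extending by zero outside $\mathcal{A}$, one obtains $w\in L^2\cap L^6$ with $\supp(w)\subset\overline{\mathcal{A}}$, $\nabla\cdot w=-\nabla\chi\cdot u_0$, and
\begin{equation*}
\|w\|_{L^2(\R^3)}\lesssim \|u_0\|_{L^2(\mathcal{A})}\leq\|u_0\|_{L^2(B_0(2))},\qquad \|w\|_{L^6(\R^3)}\lesssim \|u_0\|_{L^6(\mathcal{A})}\leq\|u_0\|_{L^6(B_0(2))}.
\end{equation*}
I then define $u_{0,a}:=\chi u_0+w$ and $u_{0,b}:=u_0-u_{0,a}$.

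It remains to check the listed properties. Since $w$ vanishes on $B_0(\tfrac32)$ and $\chi\equiv 1$ there, $u_{0,a}=u_0$ on $B_0(\tfrac32)$; since both $\chi u_0$ and $w$ are supported in $\overline{B_0(2)}$, so is $u_{0,a}$. By construction $\nabla\cdot u_{0,a}=\chi\,\nabla\cdot u_0+\nabla\chi\cdot u_0+\nabla\cdot w=0$, so $u_{0,a}\in L^2_\sigma\cap L^6_\sigma$. The triangle inequality combined with $\|\chi u_0\|_{L^p}\leq\|u_0\|_{L^p(B_0(2))}$ and the Bogovski\u{\i} bounds above yields $\|u_{0,a}\|_{L^2}+\|u_{0,a}\|_{L^6}\lesssim\|u_0\|_{L^2(B_0(2))}+\|u_0\|_{L^6(B_0(2))}$ with a universal constant $K_2$. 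For the uniformly local bound on $u_{0,b}$, I split cases on a ball $B_{\bar x}(1)$: if $B_{\bar x}(1)\cap B_0(2)=\emptyset$ then $u_{0,b}=u_0$ on it, so $\|u_{0,b}\|_{L^2(B_{\bar x}(1))}\leq\|u_0\|_{L^2_{uloc}}$; otherwise $B_{\bar x}(1)\subset B_0(4)$, hence $\|u_{0,b}\|_{L^2(B_{\bar x}(1))}\leq\|u_0\|_{L^2(B_0(4))}+\|u_{0,a}\|_{L^2}\lesssim\|u_0\|_{L^2_{uloc}}$, covering $B_0(4)$ by $O(1)$ unit balls. Taking the supremum gives the claimed estimate for $u_{0,b}$.

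The construction is standard and I do not expect any real obstacle; the only point deserving care is mean-zero on $\mathcal{A}$ of the divergence error $\nabla\chi\cdot u_0$, which requires interpreting $\nabla\cdot u_0=0$ distributionally against the $C^\infty_c$ function $\chi$, and the fact that Bogovski\u{\i}'s constant on the fixed annulus $\mathcal{A}$ is universal (independent of $u_0$), which allows us to pick a single $K_2$ valid for both $p=2$ and $p=6$.
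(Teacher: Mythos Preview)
Your proposal is correct and is precisely the standard Bogovski\u{\i}-correction construction that the paper invokes (the paper's own proof simply cites Galdi and \cite{BP18} without details). One minor slip: the annulus $\mathcal A=B_0(2)\setminus B_0(\tfrac32)$ is \emph{not} star-shaped with respect to a ball, but this is harmless since Bogovski\u{\i}'s operator is still available on bounded Lipschitz domains via a finite decomposition into star-shaped pieces (as in Galdi, Chapter~III.3), with a constant depending only on the fixed domain $\mathcal A$.
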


\begin{proof}
The proof is standard using Bogovskii's operator \cite[Chapter III.3]{Galdibook}. We refer to \cite{BP18} for a detailed proof.
\end{proof}

\noindent{\bf Step 2: control of the local energy of the perturbation.} We use the decomposition given by Lemma \ref{lem.bogo} for $u_0$ as above. Let $a$ be the mild solution given by Proposition \ref{prop.mild} associated to the data $u_{0,a}\in L^6(\R^3)$. The mild solution $a$ exists at least on the time interval $(0,S_{mild}^a)$, where 
\begin{equation*}
S_{mild}^a:=k_0N^{-4}.
\end{equation*}
Moreover since $u_0\in L^2_\sigma(\R^3)$, the mild solution $a$ can be constructed to be a weak Leray-Hopf solution on $\R^3\times(0,S_{mild}^a)$ and we have the global energy control\footnote{This can be inferred from arguments similar to \cite[Section 3.1]{Barker18}.}
\begin{equation}\label{e.globalenmild}
\sup_{s\in(0,S_{mild}^a)}\int\limits_{\R^3}\frac{|a(x,s)|^2}2\, dx+\int\limits_0^{S_{mild}^a}\int\limits_{\R^3}|\nabla a|^2\, dxds\leq K_0'N^2,
\end{equation}
with $K_0'\in[1,\infty)$ a universal constant. This and Calder\'{o}n-Zygmund theory implies
\begin{equation}\label{qaest}
\|q_{a}\|_{L^{\frac 5 3}(\mathbb{R}^3\times (0, S^{a}_{mild}))}\leq K''_{0} N^{2},
\end{equation} where $q_{a}$ is the pressure associated to $a$.
Moreover, since $u$ is a local energy solution with the initial data $u_0$, Proposition \ref{prop.lews} implies the following control of the local energy
\begin{equation}\label{e.contlocenu}
\sup_{s\in(0,S_{locen}^u)}\sup_{\bar x\in\R^3}\int\limits_{B_{\bar x}(1)}\frac{|u(x,s)|^2}{2}\, dx+\sup_{\bar x\in\R^3}\int\limits_0^{S_{locen}^u}\int\limits_{B_{\bar x}(1)}|\nabla u(x,s)|^2\, dx\, ds\leq K_1M^2,
\end{equation}
where $S_{locen}^{u}(N):=k_1\min(M^{-4},1)$. As a consequence, the perturbation $v=u-a$ is a local energy solution to \eqref{e.pertnse} 
\begin{align}\label{e.contlocen}
\begin{split}
&\sup_{s\in(0,S^v)}\sup_{\bar x\in\R^3}\int\limits_{B_{\bar x}(1)}\frac{|v(x,s)|^2}{2}\, dx+\sup_{\bar x\in\R^3}\int\limits_0^{S^v}\int\limits_{B_{\bar x}(1)}|\nabla v(x,s)|^2\, dx\, ds\\
&\leq K_1'
(N^2+M^2),
\end{split}
\end{align}
where $K_1'\in[1,\infty)$ is a universal constant and
\begin{align}
\begin{split}\label{e.defSv}
S^v=\ &S^v(M,N):=\min(\tfrac14,S^a_{mild},S_{locen}^u)\\
=\ &\min\big(\tfrac14,k_0N^{-4},
k_1M^{-4},k_1\big).
\end{split}
\end{align}
Moreover, we have the following pressure estimate
\begin{equation}\label{e.contpress}
\|q-C_0(t)\|_{L^\frac53(B_0(\frac32)\times(0,S^v))}
\leq K_1'(M^2+N^2),
\end{equation}
with a universal constant $K_1'\in[1,\infty)$. 
This bound follows from \eqref{qaest}, \eqref{e.pressuredec} and \eqref{e.pressurest}.

\noindent{\bf Step 3: smallness of the local energy in short time.} Let $\phi\in C^\infty_c(\R^3)$ be a cut-off function such that 
\begin{align}\label{e.K3}
0\leq\phi\leq 1,\quad\supp\phi\subset B_{0}(\tfrac32),\quad \phi=1\ \mbox{on}\ B_0(1)\quad\mbox{and}\quad 
|\nabla(\phi^2)|+|\Delta(\phi^2)|\leq K_3, 
\end{align}
where $K_3\in[1,\infty)$. We estimate the local energy 
\begin{equation*}
E(t):=\sup_{s\in(0,t)}\int\limits_{\R^3}|v(x,t)|^2\phi^2\, dx+2\int\limits_0^t\int\limits_{\R^3}|\nabla\phi|^2\phi^2\, dxds
\end{equation*}
for all $t\in(0,S^v)$. 
The local energy inequality gives
\begin{equation*}
E(t)\leq I_1+\ldots I_6,
\end{equation*}
with 
\begin{align*}
&I_1=\int\limits_0^t\int\limits_{\R^3}|v|^2\Delta(\phi^2)\, dxds,\quad I_2=\int\limits_0^t\int\limits_{\R^3}|v|^2v\cdot\nabla(\phi^2)\, dxds,\\
&I_3=2\int\limits_0^t\int\limits_{\R^3}(q-C_0(t))v\cdot\nabla(\phi^2)\, dxds,\quad
I_4=-2\int\limits_0^t\int\limits_{\R^3}(a\cdot\nabla v)\cdot v\phi^2\, dxds,\\
&I_5=2\int\limits_0^t\int\limits_{\R^3}(a\otimes v):\nabla v\phi^2\, dxds\quad\mbox{and}\quad I_6=2\int\limits_0^t\int\limits_{\R^3}(a\otimes v):v\otimes\nabla(\phi^2)\, dxds.
\end{align*}
Let $t\in(0,S^v)$. Let us estimate each term in the right hand side. For that purpose, we rely on the bounds \eqref{e.contlocen} for the local energy and \eqref{e.contpress} for the pressure. For the terms involving only $v$, we have using that $|B_0(\frac32)|=3^3$,
\begin{align*}
|I_1|
\leq  3^3K_3\cdot2K_1'(M^2+N^2)t,
\end{align*}
and
\begin{align*}
|I_2|
\leq\ & 3^{3+\frac3{10}}K_3{(3^3\cdot 2K_1')}^\frac32(M^2+N^2)^\frac32t^\frac1{10}\\
=\ & 2^\frac32\cdot 3^\frac{39}5K_3{K_1'}^\frac32(M^2+N^2)^\frac32t^\frac1{10}.
\end{align*}
For the terms involving $a$ and $v$ we use \eqref{e.propmild} in Proposition \ref{prop.mild}, more precisely the bound $\|a\|_{L^{10}(\R^3\times(0,S_{mild}^a))}\leq K_0N$. This in turn implies the controls
\begin{align*}
|I_4+I_5+I_6|
\leq\ &3\|a\|_{L^{10}}3^3E(t)(3^3t)^{\frac15-\frac1{10}}\\
\leq\ &2\cdot 3^{\frac{43}{10}}K_0K_{3}K_1'N(M^2+N^2)t^{\frac1{10}}.
\end{align*}
For $I_{6}$, we used $t\in (0,S^{v})\subset(0,\frac{1}{4})$.

Finally, we estimate the term involving the pressure
\begin{align*}
|I_3|
\leq\ &2K_3\|q-C_0(t)\|_{L^\frac53(B_0(\frac32))}\|v\|_{L^\frac{10}3(B_0(\frac32))}(3^3t)^\frac1{10}\\
\leq\ &2\cdot 3^{\frac95}K_3{K_1'}^\frac32(M^2+N^2)^\frac32t^\frac1{10}.
\end{align*}
Finally, we get the following estimate: there exists a universal constant $K_*\in[1,\infty)$ such that for all $t\in(0,1]$, 
\begin{equation}\label{e.estEt}
E(t)\leq K_*(M^2+N^2)^\frac32t^\frac1{10}.
\end{equation}
Notice that $K_*$ can be taken as
\begin{equation}\label{e.K*}
K_*:=4\max\big(2\cdot3^3K_3K_1',2^\frac32\cdot 3^\frac{39}5K_3{K_1'}^\frac32,2\cdot 3^{\frac{43}{10}}K_0K_1'\big),
\end{equation}
where $K_0$ is defined in Proposition \ref{prop.mild}, $K_1'$ in \eqref{e.contlocen} and \eqref{e.contpress}, and $K_3$ is the constant in \eqref{e.K3}.

\noindent{\bf Step 4: boundedness of the perturbation.} Let $\ep\in(0,\min(\ep_*,2^{-9}))$. Our objective is now to apply the $\ep$-regularity result Corollary \ref{cor.epreg} in order to get the boundedness of the perturbation. As in \cite{JS14} and \cite{BP18}, we extend $v$ by $0$ in the past. The extension $v$ is a suitable weak solution on $B_0(1)\times(-\infty,S^v)$ to the Navier-Stokes equations \eqref{e.pertnse} with a drift $a$ defined to be the zero extension of $a$ to $\R^3\times(-\infty,0)$. The bound on the local energy \eqref{e.estEt} is crucial here, as is emphasized in \cite{JS14}. Notice that the extended $a$ is not a mild solution to the Navier-Stokes system \eqref{e.nse} on $\R^3\times(-\infty,S^a_{mild})$ but in Corollary \ref{cor.epreg} this fact is not required. We have the bound
\begin{equation*}
\|a\|_{L^{10}(\R^3\times(-\infty,S^a_{mild}))}\leq K_0N.
\end{equation*}
By the control \eqref{e.contlocen} on the local energy and \eqref{e.contpress} on the pressure, we have 
\begin{align}\label{e.contepreg}
\begin{split}
\int\limits_{t-1}^t\int\limits_{B_0(1)}|v|^3+|q-C_0(t)|^\frac32\, dxds=\ &\int\limits_{0}^t\int\limits_{B_0(1)}|v|^3+|q-C_0(t)|^\frac32\, dxds\\
\leq\ &2{K_1'}^\frac32(M^2+N^2)^\frac32(2^3t)^\frac1{10}.
\end{split}
\end{align}
Therefore, in order to apply Corollary \ref{cor.epreg}, we choose $S_*=S_*(M,N)\in(0,S^v)$ sufficiently small such that
\begin{align}\label{e.condS*}
\begin{split}
K_*(M^2+N^2)^\frac32S_*^\frac1{10}\leq\ &(K_0N)^{-2}\ep^\frac{11}{9},\\
2^{\frac{13}{10}}{K_1'}^\frac32(M^2+N^2)^\frac32 S_*^\frac1{10}\leq\ &(K_0N)^{-4}\ep^\frac{13}9.
\end{split}
\end{align}
Conditions \eqref{e.condS*} imply that \eqref{e.locenboundbis} and \eqref{e.hypepregbis} are satisfied on $B_0(1)\times(S_*-1,S_*)$. According to \eqref{e.condS*}, we define $S_*=S_*(M,N)$ in the following way 
\begin{equation}\label{e.defS*bis}
S_*:=\min\left(S^v,
\frac{2^{-\frac{3}{2}}\ep^\frac{130}9}{K_0^{40}M^{30}N^{70}}\min\Big(\frac1{2^{13}{K_1'}^{15}},\frac1{K_*^{10}}\Big)\right)
\end{equation}
Notice that for $M,\, N$ sufficiently large we have 
\begin{align}\label{e.defS*}
S_*=\ &\frac{2^{-\frac{3}{2}}\ep^\frac{130}9}{K_0^{40}M^{30}N^{70}}\min\Big(\frac1{2^{13}{K_1'}^{15}},\frac1{K_*^{10}}\Big)\\
=\ &O(M^{-30}N^{-70}).\nonumber
\end{align}
For the rest of the proof we take this definition of $S_*$. 
It follows from \eqref{e.linftyepreg} that
\begin{equation}\label{e.conclbdd}
\|u-a\|_{L^\infty(Q_{(0,S_*)}(\frac12))}=\|v\|_{L^\infty(Q_{(0,S_*)}(\frac12))}\leq C_{***}^\frac13N^2.
\end{equation}
Combining this estimate with \eqref{e.propmild} enables to obtain for all $\beta\in(0,S_*)$
\begin{equation}\label{e.conclbddubis}
\|u\|_{L^\infty(B_0(\frac12)\times(\beta,S_*))}\leq C_{***}^\frac13N^2+\frac{K_0N}{\beta^{\frac14}}.
\end{equation}
which implies estimate \eqref{e.conclbddu} as a particular case. 

\noindent{\bf Step 5: estimates of the gradient of the perturbation.} In this step, we take $\beta=\frac{3S_*}4$. Our goal is to prove the following claim 
\begin{equation}\label{e.estsupbarxLinftyL2}
\sup_{\bar x\in B_0(\frac14)}\sup_{t\in(\frac{15}{16}S_*,S_*)}\int\limits_{B_{\bar x}(\frac{\sqrt{S_*}}4)}|\nabla u(x,t)|^2\, dx\leq CM^{34}N^{80},
\end{equation}
for a universal constant $C\in[1,\infty)$. Estimate \eqref{e.estsupbarxLinftyL2} implies estimate \eqref{e.conclbddnablau} in the theorem by a covering argument. 
Let $\bar x\in B_0(\frac14)$. Notice that $B_{\bar x}(\frac{\sqrt{S_*}}2)\subset B_0(\frac13)$ for $M,\, N$ sufficiently large. Without loss of generality, we assume that $\bar x=0$. We bootstrap the regularity of $u$ in the parabolic cylinder
\begin{equation*}
Q_{(0,S_*)}\big(\tfrac{\sqrt{S_*}}2\big)=B_0\big(\tfrac{\sqrt{S_*}}2\big)\times\big(\tfrac{3}{4}S_*,S_*\big)
\end{equation*}
using the local maximal regularity for the non-stationary Stokes system. 
We first zoom in on the parabolic cylinder $Q_{(0,S_*)}\big(\frac{\sqrt{S_*}}2\big)$ and define 
\begin{align*}
U(y,s):=ru(ry,r^2s+S_*),\quad P(y,s):=r^2\big(p(ry,r^2s+S_*)-C_0(r^2s+S_*)\big),
\end{align*}
for all $(y,s)\in Q_{(0,0)}(1)$, where $r:=\frac{\sqrt{S_*}}2$. By the estimate \eqref{e.conclbddu}, we have
\begin{align}\label{e.estULinfty}
\begin{split}
\|U\|_{L^\infty(Q_{(0,0)}(1))}\leq\ &\tfrac{C_*}2M^8N^{19}S_*^\frac12\\
\leq\ &CM^{-7}N^{-16}.
\end{split}
\end{align}
Moreover, the local energy estimate \eqref{e.contlocenu} implies that
\begin{align*}
\|\nabla U\|_{L^2(Q_{(0,0)}(1))}=\ &r^{-\frac12}\|\nabla u\|_{L^2(Q_{(0,S_*)}(r))}\leq 2^\frac12K_1^\frac12M S_*^{-\frac14}\leq CM^9N^{18},
\end{align*}
For the pressure, we decompose $p-C_0(t)$ according to \eqref{e.pressuredec}. We have 
according to the estimates in Proposition \ref{prop.lews} 
\begin{align*}
\|p_{nonloc}\|_{L^2(Q_{(0,S_*)}(r))}\leq\ & \|p_{nonloc}\|_{L^2(B_0(\frac12)\times (\frac34S_*,S_*))}\\
\leq\ &S_*^\frac12\|p_{nonloc}\|_{L^\infty(B_0(\frac12)\times (\frac34S_*,S_*))}\\
\leq\ &S_*^\frac12K_1M^2,
\end{align*}
on the one hand, and 
\begin{align*}
\|p_{loc}\|_{L^2(Q_{(0,S_*)}(r))}\leq\ &\|p_{loc}\|_{L^2(B_0(\frac12)\times (\frac34S_*,S_*))}\\
\leq\ &C\|u\|_{L^4(B_0(\frac12)\times (\frac34S_*,S_*))}^2\\
\leq\ &CS_*^\frac12\|u\|_{L^\infty(B_0(\frac12)\times (\frac34S_*,S_*))}^2\\
\leq\ &CS_*^\frac12M^{16}N^{39}
\end{align*}
using \eqref{plocdef} and Calder\'on-Zygmund on the other hand.
Hence,
\begin{align*}
\|P\|_{L^2(Q_{(0,0)}(1))}=\ &r^{-\frac12}\|p-C(t)\|_{L^2(Q_{(0,S_*)}(r))}\leq 2^\frac12S_*^{-\frac14}(S_*^\frac12K_1M^2+CS_*^\frac12M^{16}N^{39})\\
\leq\ &CM^9N^{22}.
\end{align*}
Notice that these are rough bounds, but they are enough for our purposes. 
Therefore,
\begin{equation}\label{e.nablaUP}
\|\nabla U\|_{L^2(Q_{(0,0)}(1))}+\|P\|_{L^2(Q_{(0,0)}(1))}\leq CM^{9}N^{22}.
\end{equation}
Using the local maximal regularity \cite[Proposition 2.4]{Ser06} leads to
\begin{align*}
&\|\partial_tU\|_{L^2(Q_{(0,0)}(\frac34))}+\|\nabla^2U\|_{L^2(Q_{(0,0)}(\frac34))}+\|\nabla P\|_{L^2(Q_{(0,0)}(\frac34))}\\
\leq\ & C\left(\|U\|_{L^\infty(Q_{(0,0)}(1))}\|\nabla U\|_{L^2(Q_{(0,0)}(1))}\right.\\
&\left.+\|U\|_{L^2(Q_{(0,0)}(1))}+\|\nabla U\|_{L^2(Q_{(0,0)}(1))}+\|P\|_{L^2(Q_{(0,0)}(1))}\right)\\
\leq\ & CM^{9}N^{22},
\end{align*}
where we used the bounds \eqref{e.estULinfty} and \eqref{e.nablaUP}. A simple local energy estimate for $\nabla U$ then leads to the bound
\begin{equation*}
\|\nabla U\|_{L^\infty_tL^2_x(Q_{(0,0)}(\frac12))}\leq CM^{9}N^{22}.
\end{equation*}
Scaling back to the original variables leads to \eqref{e.estsupbarxLinftyL2} and concludes the proof.

\section{Main tool 2: quantitative annuli of regularity}
\label{sec.quantannulus}

In this section we prove that a solution $u$, satisfying the hypothesis of Propositions \ref{prop.main}-\ref{prop.maints}, enjoys good quantitative bounds in certain spatial annuli. This was crucially used in the aforementioned propositions in two places. Namely for applying the Carleman inequalities (Propositions \ref{prop.firstcarl}-\ref{prop.sndcarl}), as well as in \textbf{`Step 4'} for transferring the lower bound of the vorticity to a lower bound on the localized $L^{3}$ norm.

In the context of classical solutions to the Navier-Stokes equations in $L^{\infty}_{t}L^{3}_{x}(\mathbb{R}^3\times (t_{0}-T,t_{0}))$, a related version was proven by  Tao in \cite{Tao19} using a delicate analysis of local enstrophies from \cite{tao2013localisation}. Our proof is somewhat different and elementary (though we use the `pidgeonhole principle' as in \cite{Tao19}), instead we utilize known $\varepsilon$-regularity criteria.

\begin{proposition}[\cite{CKN82} and {\cite[Theorem 30.1]{LR02}}]\label{CKN}
There exists absolute constants $\ep_{0}^{*}>0$ and $C_{CKN}\in(0,\infty)$ such that if $(u,p)$ is a suitable weak solution to the Navier-Stokes equations on $Q_{(0,0)}(1)$ and for some $\ep_{0}\leq \ep_{0}^{*}$ 
\begin{equation}\label{CKNsmallness}
\int\limits_{Q_{(0,0)}(1)} |u|^3+|p|^{\frac{3}{2}} dxdt\leq \ep_{0}
\end{equation}
then  one concludes that
\begin{equation}\label{CKNboundedquant}
u\in L^{\infty}(Q_{(0,0)}({1}/{2}))\,\,\,\,\textrm{with}\,\,\,\,\|u\|_{L^{\infty}(Q_{(0,0)}(\frac{1}{2}))}\leq C_{CKN} \ep_{0}^{\frac{1}{3}}.
\end{equation}
\end{proposition}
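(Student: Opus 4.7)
The plan is to carry out the classical Caffarelli--Kohn--Nirenberg $\varepsilon$-regularity scheme of \cite{CKN82}, in the form presented in \cite[Ch.~30]{LR02}, and to then track the dependence on $\varepsilon_0$ by scaling. For $r\in(0,1]$ introduce the scale-invariant quantities
\begin{align*}
&A(r):=\sup_{-r^2<s<0}\frac1r\int\limits_{B_0(r)}|u(x,s)|^2\,dx,\qquad E(r):=\frac1r\int\limits_{Q_{(0,0)}(r)}|\nabla u|^2\,dxds,\\
&C(r):=\frac1{r^2}\int\limits_{Q_{(0,0)}(r)}|u|^3\,dxds,\qquad D(r):=\frac1{r^2}\int\limits_{Q_{(0,0)}(r)}|p|^{\frac32}\,dxds.
\end{align*}
First, I would plug a standard space-time cut-off supported in $Q_{(0,0)}(\sigma)$ and equal to $1$ on $Q_{(0,0)}(\rho)$, for concentric radii $\rho<\sigma\leq 1$, into the local energy inequality; after applying H\"older's inequality this gives a bound of the type
\begin{equation*}
A(\rho)+E(\rho)\lesssim\Big(\tfrac{\sigma}{\sigma-\rho}\Big)^{O(1)}\Big(C(\sigma)^{\frac23}+C(\sigma)+C(\sigma)^{\frac13}D(\sigma)^{\frac23}\Big).
\end{equation*}

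Next, the interpolation inequality obtained by combining Sobolev $\dot H^1(\R^3)\hookrightarrow L^6(\R^3)$ with H\"older (applied to $u$ after subtracting its spatial mean) yields a reverse bound
\begin{equation*}
C(\rho)\lesssim \Big(\tfrac{\rho}{\sigma}\Big)^3 A(\sigma)^{\frac32}+\Big(\tfrac{\sigma}{\rho}\Big)^{\frac32}A(\sigma)^{\frac34}E(\sigma)^{\frac34},
\end{equation*}
while the Poisson equation $-\Delta p=\partial_i\partial_j(u_iu_j)$, localized on $B_0(\sigma)$, combined with Calder\'on--Zygmund for the local part and the mean value property applied to the harmonic remainder, yields
\begin{equation*}
D(\rho)\lesssim\Big(\tfrac{\sigma}{\sigma-\rho}\Big)^{2}C(\sigma)+\Big(\tfrac{\rho}{\sigma}\Big)^{2}D(\sigma).
\end{equation*}
Setting $\Phi(r):=C(r)+D(r)$ and composing these three estimates at geometric scales $r_k:=\theta^k$ for a small fixed $\theta\in(0,\tfrac12)$ produces an iteration
\begin{equation*}
\Phi(\theta r)\leq K_1\theta^{-\alpha}\Phi(r)^{\gamma}+K_2\theta^{\beta}\Phi(r),\qquad \gamma>1,\ \beta>0,
\end{equation*}
with universal constants $K_1,K_2$. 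One first chooses $\theta$ small so that $K_2\theta^\beta<\tfrac14$; then one chooses $\varepsilon_0^{*}$ small enough that $K_1\theta^{-\alpha}\Phi(1)^{\gamma-1}<\tfrac14$, and one concludes by induction the geometric decay $\Phi(\theta^k)\leq 2\Phi(1)\theta^{\beta' k}$ for all $k\geq 0$ and some $\beta'>0$.

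From this Morrey-type decay, a standard parabolic bootstrap --- applying maximal regularity for the Stokes system, or alternatively the equation for the vorticity --- upgrades the smallness to $u\in L^\infty(Q_{(0,0)}(\tfrac12))$; this is the qualitative conclusion \eqref{CKNboundedquant} without the sharp power. To obtain the sharp dependence $\|u\|_{L^\infty}\leq C_{CKN}\varepsilon_0^{\frac13}$, I would run the iteration once more, this time carefully tracking how the constant at the $k$-th stage depends on $\Phi(1)\leq 2\varepsilon_0$, using that the nonlinear term scales like $\Phi^{\gamma}$ with $\gamma=\tfrac32$ or $\tfrac43$; the linear-in-$\Phi$ feedback from the pressure is the main obstacle, since it must be absorbed at every scale without generating a polynomial in $k$ that destroys the power of $\varepsilon_0$. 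The cleanest way to handle this is a rescaling argument on the terminal estimate: applying the qualitative version to the rescaled solution $u_\lambda(x,t)=\lambda u(\lambda x,\lambda^2 t)$ with $\lambda$ calibrated to $\varepsilon_0$ converts the boundedness into the claimed homogeneous-in-$\varepsilon_0$ bound, the power $\tfrac13$ being the unique exponent consistent with the parabolic scaling of $\int|u|^3+|p|^{\frac32}$ versus $\|u\|_{L^\infty}$.
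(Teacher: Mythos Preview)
The paper does not supply its own proof of this proposition; it is stated as a known result with references to \cite{CKN82} and \cite[Theorem 30.1]{LR02}. Your outline of the scale-invariant quantities $A,E,C,D$, the local energy inequality, the interpolation bound, the pressure decomposition, and the geometric iteration is precisely the scheme of those references, and that part of your proposal is fine.

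However, your final shortcut for extracting the sharp power $\varepsilon_0^{1/3}$ is flawed. You claim that applying the qualitative bound to the rescaled solution $u_\lambda(x,t)=\lambda u(\lambda x,\lambda^2 t)$ with $\lambda$ calibrated to $\varepsilon_0$ yields the homogeneous estimate, and that $\tfrac13$ is ``the unique exponent consistent with the parabolic scaling''. This is not correct: both $r^{-2}\int_{Q_{(0,0)}(r)}(|u|^3+|p|^{3/2})$ and $r\,\|u\|_{L^\infty(Q_{(0,0)}(r))}$ are individually invariant under the Navier--Stokes rescaling, so dimensional analysis imposes no relation between them, and any power would be equally ``consistent with scaling''. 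Concretely, if you rescale to a cylinder $Q_{(0,0)}(1/\lambda)$ and apply the qualitative bound there, you recover only $\|u\|_{L^\infty(Q_{(0,0)}(1/2))}\leq K$ with no improvement; if instead you shrink the cylinder, the resulting bound degenerates as $\varepsilon_0\to 0$. The power $\tfrac13$ genuinely comes from the iteration itself: one must propagate a bound of the form $r^{-2}\int_{Q_{(\bar x,t)}(r)}|u|^3\lesssim \varepsilon_0\, r^3$ down to all scales (this is exactly what is done in the paper's closely related Theorem \ref{theo.epreg}, see the hypothesis $(A_k)$ in Section \ref{sec.theo.epreg}), from which Lebesgue differentiation gives $|u|^3\lesssim\varepsilon_0$ pointwise. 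Your option (a) of tracking constants through the iteration is the right one; the scaling shortcut (b) should be dropped.
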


We require the following proposition, which is a quantitative version of  Serrin's result \cite{Serrin1962}. Since the procedure is the same as that described in \cite{Serrin1962}, we omit the proof.

\begin{proposition}\label{quantitativeSerrin}
Suppose $u\in L^{\infty}(Q_{(0,0)}(1/2))$ and $\omega:=\nabla\times u\in L^{2}(Q_{(0,0)}(1/2))$ is such that $(u,p)$ is a distributional solution to the Navier-Stokes equations in $Q_{(0,0)}(1/2)$.
Furthermore, suppose
\begin{equation}\label{serrinsmallnessu}
\|u\|_{L^{\infty}(Q_{(0,0)}(1/2))}<1
\end{equation}
and
\begin{equation}\label{serrinsmallnessvort}
\|\omega\|_{L^{2}(Q_{(0,0)}(1/2))}<1.
\end{equation}
There exists universal constants $C_{k}'\in(0,\infty)$ with $k=0,1$, such that the above assumptions imply that for $k=0,1$
\begin{equation}\label{quantserrinest}
\|\nabla^{k}\omega\|_{L^{\infty}(Q_{(0,0)}(1/3))}\leq C_{k}'(\|u\|_{L^{\infty}(Q_{(0,0)}(1/2))}+\|\omega\|_{L^{2}(Q_{(0,0)}(1/2))}).
\end{equation}

\end{proposition}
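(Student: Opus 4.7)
The plan is to carry out a parabolic bootstrap for the vorticity equation
\begin{equation*}
\partial_t\omega-\Delta\omega=\omega\cdot\nabla u-u\cdot\nabla\omega,
\end{equation*}
treating $u$ as a bounded drift and $\nabla u$ as a quantity that can be controlled, via the div--curl system $\nabla\cdot u=0$, $\nabla\times u=\omega$ together with local elliptic regularity, by $\omega$ and $u$ on slightly larger spatial balls. Because the hypotheses \eqref{serrinsmallnessu}--\eqref{serrinsmallnessvort} are scale-invariant smallness conditions (with constant $1$), the formally quadratic self-interaction $\omega\cdot\nabla u$ can be absorbed at each step, and the final bound depends only linearly on $\|u\|_{L^\infty}+\|\omega\|_{L^2}$, as required by \eqref{quantserrinest}.

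\emph{Step 1 (local elliptic control of $\nabla u$).} For any concentric spatial balls of radii $r<r'$, the identity $-\Delta u=\nabla\times\omega$, applied after multiplication by a cutoff equal to $1$ on $B_0(r)$ and supported in $B_0(r')$, gives
\begin{equation*}
\|\nabla u(\cdot,t)\|_{L^2(B_0(r))}\lesssim_{r'-r}\|\omega(\cdot,t)\|_{L^2(B_0(r'))}+\|u(\cdot,t)\|_{L^2(B_0(r'))},
\end{equation*}
and analogous $L^p$ bounds hold for $p>2$. This converts the product $\omega\cdot\nabla u$ into a quantity estimable in terms of $\omega$ and $u$ alone on slightly enlarged domains.

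\emph{Step 2 (parabolic bootstrap on a chain of shrinking cylinders).} Fix a finite, universal sequence of radii $\tfrac12=r_0>r_1>\cdots>r_N=\tfrac13$. On each step, treat the right-hand side $\omega\cdot\nabla u-u\cdot\nabla\omega$ as a forcing term in $L^p_{t,x}(Q_{(0,0)}(r_n))$ and apply local parabolic $L^p$-maximal regularity for $\partial_t-\Delta$ (via a space-time cutoff and the Solonnikov/Lady\v{z}enskaja estimates), using Step 1 to bound $\nabla u$ in $L^p$ by $\omega$ and $u$ on the next larger ball. Sobolev embedding for the parabolic Sobolev space $W^{2,1}_p$ raises the integrability exponent $p\mapsto p^\ast$ at each round; starting from $p=2$ and iterating a universal number $N$ of times one obtains $\omega\in L^\infty(Q_{(0,0)}(\tfrac13+\varepsilon))$ with a constant depending only on the initial $L^\infty$ norm of $u$ and $L^2$ norm of $\omega$. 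The smallness assumptions \eqref{serrinsmallnessu}--\eqref{serrinsmallnessvort} are used at each round to absorb the quadratic term $\omega\cdot\nabla u$ into the left-hand side so that the estimate propagates linearly in $\|u\|_{L^\infty}+\|\omega\|_{L^2}$; this establishes the $k=0$ case of \eqref{quantserrinest}.

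\emph{Step 3 (one derivative more).} Once $u$, $\omega$ and hence $\nabla u$ (again by Step 1) are bounded on a cylinder strictly containing $Q_{(0,0)}(\tfrac13)$, differentiate the vorticity equation in $x$; the forcing for $\nabla\omega$ is then a sum of products of bounded factors. A final application of parabolic $L^p$-maximal regularity (or the parabolic Schauder estimate) on a slightly smaller cylinder yields $\nabla\omega\in L^\infty(Q_{(0,0)}(\tfrac13))$ with the linear dependence on $\|u\|_{L^\infty}+\|\omega\|_{L^2}$ needed for the $k=1$ case.

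The main obstacle is quantitative bookkeeping: one must verify that a \emph{universal} (finite) number of bootstrap steps suffices, and that the normalization $\|u\|_{L^\infty},\,\|\omega\|_{L^2}<1$ lets one absorb the quadratic-in-$\omega$ contributions at each step so that the resulting constants $C_k'$ are truly universal and the dependence on the data remains linear rather than polynomial. Modulo this bookkeeping, the argument is a quantitative version of Serrin's 1962 regularity scheme, so the detailed computations are omitted as stated in the lemma.
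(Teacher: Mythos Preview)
Your proposal is correct and follows exactly the approach the paper intends: the paper itself omits the proof entirely, stating only that ``the procedure is the same as that described in \cite{Serrin1962}'', and your sketch is precisely a quantitative rendering of Serrin's parabolic bootstrap on the vorticity equation combined with the local div--curl estimate to control $\nabla u$ by $\omega$ and $u$. Your observation that the normalizations $\|u\|_{L^\infty},\,\|\omega\|_{L^2}<1$ are what allow the quadratic self-interaction to be absorbed at each step, yielding the \emph{linear} dependence in \eqref{quantserrinest}, is the key quantitative point and is handled correctly.
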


\begin{remark}\label{vortimprovesws}
If we instead use the framework of \textit{suitable weak solutions} in the above proposition, we can use the time integrability of the pressure to gain space-time H\"{o}lder continuity of all spatial derivatives of $u$ in $Q_{(0,0)}(1/3)$. The vorticity equation then implies   $w,\ \partial_tw,\ \nabla w$ and $\nabla^2w$ are continuous in space and time in $Q_{(0,0)}(1/3)$. 
\end{remark}

\begin{proposition}\label{CKNquanthigher}
There exists absolute constants $\ep_{1}^{*}\in (0,1)$ and $C_{k}''\in(0,\infty)$, $k=0, 1, 2$, such that if $(u,p)$ is a suitable weak solution to the Navier-Stokes equations on $Q_{(0,0)}(1)$ and for some $\ep_{1}\leq \ep_{1}^{*}$ 
\begin{equation}\label{CKNsmallnessrepeat}
\int\limits_{Q_{(0,0)}(1)} |u|^3+|p|^{\frac{3}{2}} dxdt\leq \ep_{1}
\end{equation}
then  one concludes that for $j=0,1$
\begin{equation}\label{CKNboundedquanthigher}
\nabla^{j}u\in L^{\infty}(Q_{(0,0)}({1}/{4}))\,\,\,\,\textrm{with}\,\,\,\,\|\nabla^j u\|_{L^{\infty}(Q_{(0,0)}(\frac{1}{4}))}\leq C_{j}'' \ep_{1}^{\frac{1}{3}}
\end{equation}
and 
\begin{equation}\label{CKNgradboundedvortquant}
\nabla\omega\in L^{\infty}(Q_{(0,0)}({1}/{4}))\,\,\,\,\textrm{with}\,\,\,\,\|\nabla \omega\|_{L^{\infty}(Q_{(0,0)}(\frac{1}{4}))}\leq C_{2}'' \epsilon_{1}^{\frac{1}{3}}.
\end{equation}
\end{proposition}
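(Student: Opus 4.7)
The plan is to run a short Serrin-type bootstrap on top of Proposition~\ref{CKN}: first use the $\ep$-regularity in Proposition~\ref{CKN} to produce an $L^\infty$ bound on $u$ of size $O(\ep_1^{1/3})$ at scale $1/2$, cash this in for an $L^2$ bound on $\omega$ via the local energy inequality at scale $3/8$, then rescale so as to satisfy the hypotheses of Proposition~\ref{quantitativeSerrin} and obtain $L^\infty$ bounds on $\omega$ and $\nabla\omega$ at scale $1/4$, and finally extract the bound on $\nabla u$ from the static identity $-\Delta u=\nabla\times\omega$ available in the divergence-free setting.

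Concretely, assuming $\ep_1\leq\ep_0^*$, Proposition~\ref{CKN} applies and yields $\|u\|_{L^\infty(Q_{(0,0)}(1/2))}\leq C_{CKN}\ep_1^{1/3}$. I would then test the local energy inequality satisfied by the suitable weak solution $(u,p)$ against a smooth space-time cutoff $\phi\in C_c^\infty(Q_{(0,0)}(1/2))$ with $\phi\equiv 1$ on $Q_{(0,0)}(3/8)$, and bound the right-hand side via H\"older's inequality together with the smallness hypothesis $\int_{Q_{(0,0)}(1)}|u|^3+|p|^{3/2}\,dxdt\leq\ep_1$. This gives $\int_{Q_{(0,0)}(3/8)}|\nabla u|^2\,dxdt\leq C\ep_1^{2/3}$, and in particular $\|\omega\|_{L^2(Q_{(0,0)}(3/8))}\leq C\ep_1^{1/3}$.

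Next, set $\lambda=3/4$ and consider the rescaled Navier-Stokes pair $v(y,s):=\lambda u(\lambda y,\lambda^2 s)$, $\mathfrak{q}(y,s):=\lambda^2 p(\lambda y,\lambda^2 s)$. Then $(v,\mathfrak{q})$ satisfies $\|v\|_{L^\infty(Q_{(0,0)}(1/2))}\leq\lambda C_{CKN}\ep_1^{1/3}$ and $\|\omega_v\|_{L^2(Q_{(0,0)}(1/2))}\leq C\lambda^{1/2}\ep_1^{1/3}$. Choosing $\ep_1^*$ small enough (depending only on $C_{CKN}$ and the constant from the previous step), both norms are strictly less than one, so Proposition~\ref{quantitativeSerrin} applies and returns $\|\omega_v\|_{L^\infty(Q_{(0,0)}(1/3))}+\|\nabla\omega_v\|_{L^\infty(Q_{(0,0)}(1/3))}\leq C\ep_1^{1/3}$. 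Unscaling yields \eqref{CKNgradboundedvortquant} on $Q_{(0,0)}(\lambda/3)=Q_{(0,0)}(1/4)$. The $j=0$ case of \eqref{CKNboundedquanthigher} is already contained in Step~1; for $j=1$ I would use the pointwise identity $-\Delta u=\nabla\times\omega$ (valid because $\nabla\cdot u=0$) combined with interior Schauder (or $W^{2,p}$) estimates on each time slice: the input $\|u\|_{L^\infty}+\|\omega\|_{L^\infty}+\|\nabla\omega\|_{L^\infty}\lesssim\ep_1^{1/3}$ on $Q_{(0,0)}(1/3)$ propagates to $\|\nabla u\|_{L^\infty(Q_{(0,0)}(1/4))}\lesssim\ep_1^{1/3}$. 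I do not foresee any serious obstacle here; the only delicate points are (i) fixing $\ep_1^*$ so that the rescaled hypotheses of Proposition~\ref{quantitativeSerrin} are strictly less than one, and (ii) tracking the universal constants $C_0''$, $C_1''$, $C_2''$, which arise as explicit products of $C_{CKN}$, the constants $C_0'$, $C_1'$ from Proposition~\ref{quantitativeSerrin}, and the scaling factor $\lambda$.
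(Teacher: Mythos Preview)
Your proposal is correct and follows essentially the same route as the paper: Proposition~\ref{CKN} for the $L^\infty$ bound on $u$, the local energy inequality for the $L^2$ bound on $\omega$, Proposition~\ref{quantitativeSerrin} for $\omega,\nabla\omega\in L^\infty$, and the elliptic identity $-\Delta u=\nabla\times\omega$ for $\nabla u$. The only cosmetic differences are that the paper takes the local-energy cutoff supported in $Q_{(0,0)}(1)$ (so the gradient bound lands directly on $Q_{(0,0)}(1/2)$ and no rescaling is needed before invoking Proposition~\ref{quantitativeSerrin}), and that your scaling exponent for $\|\omega_v\|_{L^2}$ should be $\lambda^{-1/2}$ rather than $\lambda^{1/2}$---harmless since $\lambda=3/4$ is a fixed universal constant.
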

\begin{proof}
Let $\epsilon_{1}^{*}\in (0, \min(1, \epsilon_{0}^{*}))$. Notice that $\epsilon^{*}_{1}$ still to be determined. Then for $\ep_{1}\leq \ep_{1}^{*}$ we can apply Proposition \ref{CKN} to get
\begin{equation}\label{CKNconsequence}
\|u\|_{L^{\infty}(Q_{(0,0)}(\frac{1}{2}))}\leq C_{CKN} \ep_{1}^{\frac{1}{3}}\leq C_{CKN} (\ep_{1}^{*})^{\frac{1}{3}}
\end{equation}
The local energy inequality for suitable weak solutions to the Navier-Stokes equations implies that
$$\int\limits_{Q_{(0,0)}(\frac{1}{2})} |\nabla u|^2 dxdt\leq C\Big(\int\limits_{Q(1)} |u|^3 dxdt\Big)^{\frac{2}{3}}+ C\int\limits_{Q(1)} |u|^3+|p|^{\frac{3}{2}} dxdt.$$
Thus using \eqref{CKNsmallnessrepeat} and the fact that $\ep_{1}^{*}<1$, we get that
\begin{equation}\label{CKNvortconsequence}
\int\limits_{Q_{(0,0)}(\frac{1}{2})} |\omega|^2 dxdt \leq C_{univ}\ep_{1}^{\frac{2}{3}}\leq  C_{univ}(\ep_{1}^{*})^{\frac{2}{3}} .
\end{equation}
So defining
\begin{equation}\label{epsilon0**def}
\ep_{1}^{*}:=\min\Big(\epsilon_{0}^{*},1,(2C_{univ})^{-\frac{3}{2}}, (2C_{CKN})^{-3}\Big),
\end{equation}
we get that \eqref{CKNconsequence}-\eqref{CKNvortconsequence} imply
\begin{equation}\label{uandvortlessthan1}
\|u\|_{L^{\infty}(Q_{(0,0)}(1/2))}<1\,\,\,\,\,\textrm{and}\,\,\,\,\,\,\|\omega\|_{L^{2}(Q_{(0,0)}(1/2))}<1.
\end{equation}
Applying Proposition \ref{quantitativeSerrin}, together with \eqref{CKNconsequence}-\eqref{CKNvortconsequence}, gives that for $k=0,1$
\begin{equation}\label{vorthighersmall}
\|\nabla^k\omega\|_{L^{\infty}(Q(\frac{1}{3}))}\leq C_{k}'(\|u\|_{L^{\infty}(Q_{(0,0)}(\frac{1}{2})}+\|\omega\|_{L^{2}(Q_{(0,0)}(\frac{1}{2})})\leq {C}_{k}''\ep_1^{\frac{1}{3}}.
\end{equation}
Using $-\Delta u= \nabla\times \omega$, known elliptic theory, \eqref{CKNconsequence} and \eqref{vorthighersmall}   gives 
\begin{align*}
\|\nabla u\|_{L^{\infty}(Q_{(0,0)}(\frac{1}{4}))}\leq\ & C(\|u\|_{L^{\infty}(Q_{(0,0)}(\frac{1}{3}))}+\|\omega\|_{L^{\infty}(Q_{(0,0)}(\frac{1}{3}))}+\|\nabla\omega\|_{L^{\infty}(Q_{(0,0)}(\frac{1}{3}))})\\
\leq\ & C_{1}''\ep_{1}^{\frac{1}{3}}. \qedhere
\end{align*}
\end{proof}
\begin{proposition}[annulus of regularity, general form]\label{goodannulusgneral}
For all $\mu>0$ there exists $\lambda_{0}(\mu)>1$ such that the following holds true. For all $\lambda\geq \lambda_{0}(\mu)$, $R\geq 1$ and for every solution $(u,p)$ to the Navier-Stokes equations on $\mathbb{R}^3\times [-1,0]$ that is a 
suitable weak solution on $Q_{(x_*,0)}(1)$ for any $x_*\in\R^3$ and satisfies
\begin{equation}\label{globalbound}
\int\limits_{-1}^{0}\int\limits_{\mathbb{R}^3} |u|^{\frac{10}{3}}+|p|^{\frac{5}{3}} dxdt\leq\lambda<\infty,
\end{equation}
there exists $R''(u,p,\lambda,\mu,R)$ with
\begin{equation}\label{annulusradiusbound}
2R\leq R''\leq 2R\exp{(2\mu \lambda^{\mu+2})}
\end{equation}
and universal constants $\bar{C}_{j}\in(0,\infty)$ for $j=0, 1, 2$ such that for $j=0,1$
\begin{equation}\label{nablagoodannulusbound}
\|\nabla^j u\|_{L^{\infty}(\{R''<|x|<\frac{\lambda^{\mu}}{4}R''\}\times (-\frac{1}{16},0))}\leq \bar{C}_{j} \lambda^{-\frac{3\mu}{10}}
\end{equation}
and
\begin{equation}\label{vortgoodannulusbound}
\|\nabla \omega\|_{L^{\infty}(\{R''<|x|<\frac{\lambda^{\mu}}{4}R''\}\times (-\frac{1}{16},0))}\leq \bar{C}_{2} \lambda^{-\frac{3\mu}{10}}.
\end{equation}
\end{proposition}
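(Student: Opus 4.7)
The plan is to combine a pigeonhole argument over disjoint annular scales with the quantitative $\ep$-regularity statement of Proposition \ref{CKNquanthigher} applied at unit parabolic scale. Since \eqref{globalbound} is a \emph{subcritical} bound and we will work at a fixed unit scale (the target annulus already has a large absolute size), H\"older's inequality will convert the pigeonhole smallness $\lambda/K$ into a polynomial factor $\lambda^{-3\mu/10}$ rather than mere smallness, which is precisely the rate claimed. No rescaling of the solution is needed.

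First, for $\lambda\geq\lambda_0(\mu)$ sufficiently large, I would set $r_k:=2R\lambda^{\mu(k-1)}$ and consider the pairwise disjoint dyadic annuli $A_k:=\{r_k\leq|x|<\lambda^\mu r_k\}$ for $k=1,\ldots,K$ with $K:=\lfloor 2\lambda^{\mu+2}/\log\lambda\rfloor$. By construction all of them lie inside $\{2R\leq|x|\leq 2R\exp(2\mu\lambda^{\mu+2})\}$. The global bound \eqref{globalbound} together with the pigeonhole principle produces an index $k^*\in\{1,\ldots,K\}$ such that
\begin{equation*}
\int_{-1}^0\int_{A_{k^*}}\bigl(|u|^{10/3}+|p|^{5/3}\bigr)\, dxdt\leq\frac{\lambda}{K}\leq\frac{\log\lambda}{\lambda^{\mu+1}}.
\end{equation*}
Setting $R'':=2r_{k^*}$ gives the upper bound in \eqref{annulusradiusbound}, since $R''\leq 4R\exp((K-1)\mu\log\lambda)\leq 2R\exp(2\mu\lambda^{\mu+2})$ once $\lambda_0(\mu)$ is large enough to absorb the factor of $2$.

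For any $x_0$ with $R''\leq|x_0|\leq(\lambda^\mu/4)R''$, the bounds $r_{k^*}\geq 2R\geq 2$ and $\lambda\geq\lambda_0(\mu)$ ensure $\overline{B_{x_0}(1)}\subset A_{k^*}$. Applying H\"older's inequality on the unit parabolic cylinder $Q_{(x_0,0)}(1)$ (whose volume is a universal constant) then yields
\begin{equation*}
\int_{Q_{(x_0,0)}(1)}|u|^3+|p|^{3/2}\, dxdt\leq C_{univ}\Bigl(\frac{\log\lambda}{\lambda^{\mu+1}}\Bigr)^{9/10}=:\ep_1.
\end{equation*}
Enlarging $\lambda_0(\mu)$ further I would arrange both $\ep_1\leq\ep_1^*$ (the threshold of Proposition \ref{CKNquanthigher}) and $(\log\lambda)^{3/10}\lambda^{-3/10}\leq 1$. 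Since $(u,p)$ is a suitable weak solution on $Q_{(x_0,0)}(1)$ by hypothesis, Proposition \ref{CKNquanthigher} applies and delivers, for $j=0,1$,
\begin{equation*}
\|\nabla^j u\|_{L^\infty(Q_{(x_0,0)}(1/4))}+\|\nabla\omega\|_{L^\infty(Q_{(x_0,0)}(1/4))}\leq C_{univ}\,\ep_1^{1/3}\leq \bar C\,\lambda^{-3\mu/10}.
\end{equation*}
Because $Q_{(x_0,0)}(1/4)\supset\{x_0\}\times(-1/16,0)$ and $x_0$ ranges over the entire claimed annulus, taking the supremum yields \eqref{nablagoodannulusbound}-\eqref{vortgoodannulusbound}.

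There is no essential analytic obstacle in this argument: the hard work is encapsulated in Proposition \ref{CKNquanthigher}. The only care required is the quantitative book-keeping: choosing $K$ so that the $K$ disjoint annuli fit into the logarithmic window of length $2\mu\lambda^{\mu+2}$, verifying that the pigeonhole smallness $\lambda/K$ is strong enough to beat $\ep_1^*$ after the $9/10$-power loss coming from H\"older, and absorbing the residual $(\log\lambda)^{3/10}$ slack into the polynomial factor $\lambda^{-3/10}$. All three requirements are met by taking $\lambda_0(\mu)$ sufficiently large depending only on $\mu$ and on the universal constants of Proposition \ref{CKNquanthigher}.
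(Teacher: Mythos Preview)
Your proof is correct and follows essentially the same approach as the paper: a pigeonhole argument over geometric annuli with ratio $\lambda^\mu$, followed by H\"older to pass from the $L^{10/3}\times L^{5/3}$ control to the $L^3\times L^{3/2}$ quantity, and then Proposition \ref{CKNquanthigher} applied at unit parabolic scale. The only cosmetic difference is that the paper takes roughly $\lambda^{\mu+1}$ annuli (yielding pigeonhole smallness $\lambda^{-\mu}$ and hence exactly $\lambda^{-3\mu/10}$ after the $9/10$ H\"older loss and the cube root), whereas you take the maximal number $K\approx 2\lambda^{\mu+2}/\log\lambda$ of annuli fitting in the window, which produces the slightly stronger smallness $\log\lambda/\lambda^{\mu+1}$ and an innocuous residual $(\log\lambda)^{3/10}\lambda^{-3/10}$ factor that you absorb at the end.
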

\begin{proof}
Fix any $R\geq 1$ and $\mu>1$. With these choices, take $\lambda>\lambda_{0}(\mu)\geq 1$. Here, $\lambda_{0}$ is to be determined.
Then
$$\sum_{k=0}^{\infty} \int\limits_{-1}^{0}\int\limits_{(\lambda^\mu)^{k}R<|x|<(\lambda^\mu)^{k+1}R} |u|^{\frac{10}{3}}+|p|^{\frac{5}{3}} dxdt\leq \lambda. $$
by the pigeonhole principle, there exists $k_{0}\in \{0,1,\ldots\lceil{\lambda^{\mu+1}\rceil}\}$ such that
$$\int\limits_{-1}^{0}\int\limits_{(\lambda^\mu)^{k_{0}}R<|x|<(\lambda^\mu)^{k_{0}+1}R} |u|^{\frac{10}{3}}+|p|^{\frac{5}{3}} dxdt\leq \lambda^{-\mu}. $$
Define $R':= R\lambda^{\mu k_{0}}.$ Then
\begin{equation}\label{R'bound}
R\leq R'\leq R\exp(2\mu \lambda^{\mu+2})
\end{equation}
and
\begin{equation}\label{decayonannulus}
\int\limits_{-1}^{0}\int\limits_{R'<|x|<\lambda^\mu R'} |u|^{\frac{10}{3}}+|p|^{\frac{5}{3}} dxdt\leq \lambda^{-\mu}
\end{equation}
Impose the restriction $\lambda_{0}(\mu)>4^{\frac{1}{\mu}}$ and define
\begin{equation}\label{Aannulusdef}
A:=\{x: R'+1<|x|<\lambda^{\mu}R'-1\}.
\end{equation}
By H\"{o}lder's inequality we have
\begin{align*} 
\sup_{x_*\in A}\int\limits_{-1}^{0}\int\limits_{B(x_*,1)} |u|^{3}+|p|^{\frac{3}{2}} dxdt\leq\ & C_{univ}\Big(\sup_{x_*\in A}\int\limits_{-1}^{0}\int\limits_{B_{x_*}(1)} |u|^{\frac{10}{3}}+|p|^{\frac{5}{3}} dxdt\Big)^{\frac{9}{10}} 
\\
\leq\ & C_{univ}\lambda^{-\frac{9\mu}{10}}.
\end{align*}
Defining
\begin{equation}\label{lambda0def}
\lambda_{0}(\mu):=\max\Big(2\cdot4^{\frac{1}{\mu}}, \Big(\frac{2C_{univ}}{\ep_1^{*}}\Big)^{\frac{10}{9\mu}}\Big),
\end{equation}
the inequality $\lambda\geq \lambda_{0}(\mu)$ implies that
$$\sup_{x_*\in A}\int\limits_{-1}^{0}\int\limits_{B_{x_*}(1)} |u|^{3}+|p|^{\frac{3}{2}} dxdt\leq C_{univ}\lambda^{-\frac{9\mu}{10}}< \ep_1^{*}. $$
Thus, we can apply Proposition \ref{CKNquanthigher} to get that for $j=0,1$
$$\sup_{x_{*}\in A}\|\nabla^{j} u\|_{L^{\infty}Q_{(x_{*},0)}(\frac{1}{4})}\leq C_{j}''C_{univ}^{\frac{1}{3}} \lambda^{-\frac{3\mu}{10}}\,\,\,\,\textrm{and}\,\,\,\,\,\sup_{x_{*}\in A}\|\nabla \omega\|_{L^{\infty}Q_{(x_{*},0)}(\frac{1}{4})}\leq C_{2}''C_{univ}^{\frac{1}{3}} \lambda^{-\frac{3\mu}{10}}.$$
Hence, 
$$\|\nabla^{j} u\|_{L^{\infty}( A\times (-\frac{1}{16},0))}\leq C_{j}''C_{univ}^{\frac{1}{3}} \lambda^{-\frac{3\mu}{10}}\,\,\,\,\textrm{and}\,\,\,\,\,\|\nabla \omega\|_{L^{\infty}( A\times (-\frac{1}{16},0))}\leq C_{2}''C_{univ}^{\frac{1}{3}} \lambda^{-\frac{3\mu}{10}}.$$
Finally, note that \eqref{lambda0def} and $\lambda\geq \lambda_{0}$ imply that $\lambda^{\mu}R'-1>\frac{\lambda^{\mu}}{2} R'>2R'>R'+1.$
Defining $R'':= 2R'$ and using $\{x: R''<|x|< \frac{\lambda^{\mu}}{4} R''\}\subset A$, we see that the above estimates readily give the desired conclusion.
\end{proof}
Bearing in mind \eqref{semigroupL103}, the energy estimates \eqref{wkineticenergyest}, \eqref{wdissipationest} and Calder\'{o}n-Zygmund estimates for the pressure, the following Lemma is obtained as an immediate corollary to the above Proposition. We also use the known fact that mild solutions to the Navier-Stokes equations in $L^{4}_{x,t}(\mathbb{R}^3\times (0,T))$ are suitable weak solutions on $\mathbb{R}^3\times (0,T)$, which can be seen by using a mollification argument along with Calder\'{o}n-Zygmund estimates for the pressure.
 
\begin{lemma}[annulus of regularity, Type I]\label{lem.ga}
For all $\mu>0$ there exists $M_1(\mu)>1$ such that the following holds true. For all $M\geq M_1(\mu)$, $R\geq 1$ and for every 
mild solution $(u,p)$ of the Navier-Stokes equations on $\mathbb{R}^3\times [-2,0]$ satisfying
\begin{equation}\label{globalboundTypeI}
\|u\|_{L^{\infty}_{t}L^{3,\infty}_{x}(\mathbb{R}^3\times (-2,0))}\leq M
\end{equation}
and 
\begin{equation}\label{goodannulusL4}
u\in L^{4}_{x,t}(\mathbb{R}^3\times (-2,0)),
\end{equation}
there exists $R''(u,p,M,\mu,R)$ with
\begin{equation}\label{annulusradiusboundTypeI}
2R\leq R''\leq 2R\exp{(C(\mu) M^{10(\mu+2)})}
\end{equation}
and universal constants $\bar{C}_{j}\in(0,\infty)$ for $j=0, 1, 2$ such that for $j=0,1$
\begin{equation}\label{nablagoodannulusboundTypeI}
\|\nabla^j u\|_{L^{\infty}(\{R''<|x|<c(\mu){M^{10\mu}}R''\}\times (-\frac{1}{16},0))}\leq \bar{C}_{j}C(\mu) M^{-{3\mu}}
\end{equation}
and 
\begin{equation}\label{vortgoodannulusboundTypeI}
\|\nabla \omega\|_{L^{\infty}(\{R''<|x|<c(\mu){M^{10\mu}}R''\}\times (-\frac{1}{16},0))}\leq \bar{C}_{2}C(\mu) M^{-{3\mu}}.
\end{equation}
\end{lemma}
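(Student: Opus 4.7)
\begin{proofx}{Lemma~\ref{lem.ga} (proposal)}
The plan is to verify the hypothesis \eqref{globalbound} of Proposition~\ref{goodannulusgneral} with a polynomial bound $\lambda=\lambda(M)\sim M^{10}$, and then simply read off the conclusions from \eqref{annulusradiusbound}, \eqref{nablagoodannulusbound}, \eqref{vortgoodannulusbound} under the identification $\lambda^\mu\sim M^{10\mu}$, $\lambda^{\mu+2}\sim M^{10(\mu+2)}$ and $\lambda^{-3\mu/10}\sim M^{-3\mu}$.

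First I would perform the standard splitting $u=v+w$ on $\R^3\times(-2,0)$, where $v(\cdot,t):=e^{(t+2)\Delta}u(\cdot,-2)$ is the linear heat evolution of the Type I datum $u(\cdot,-2)\in L^{3,\infty}(\R^3)$, and $w$ is the nonlinear perturbation with $w(\cdot,-2)=0$. By the semigroup estimate \eqref{semigroupL103} (sending $L^{3,\infty}$ data to $L^{10/3}_{t,x}$), one obtains $\|v\|_{L^{10/3}(\R^3\times(-2,0))}\lesssim M$. The energy identity for $w$, combined with the bounds \eqref{wkineticenergyest}--\eqref{wdissipationest} (which exploit the Type I bound on $u$ together with $\|v\|_{L^{10/3}_{t,x}}\lesssim M$ to close the nonlinear contribution without Grönwall-induced exponential dependence), yields
\begin{equation*}
\|w\|_{L^\infty_tL^2_x(\R^3\times(-2,0))}^2+\|\nabla w\|_{L^2_{t,x}(\R^3\times(-2,0))}^2\lesssim M^{c_0}
\end{equation*}
for some universal power $c_0$; Ladyzhenskaya--Prodi interpolation then gives $\|w\|_{L^{10/3}_{t,x}(\R^3\times(-2,0))}\lesssim M^{c_0'}$ with $c_0'\leq 3$. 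Summing, $\|u\|_{L^{10/3}_{t,x}(\R^3\times(-2,0))}\lesssim M^3$.

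Next I would bound the pressure. Since $(u,p)$ is a mild solution with the a priori regularity $u\in L^4_{x,t}(\R^3\times(-2,0))$, the mollification argument together with Calderón--Zygmund theory shows that $(u,p)$ is a suitable weak solution on $Q_{(x_*,0)}(1)$ for every $x_*\in\R^3$; moreover the pressure formulas of Appendix~\ref{sec.A} and the Calderón--Zygmund bound give
\begin{equation*}
\|p\|_{L^{5/3}_{t,x}(\R^3\times(-2,0))}\lesssim \|u\otimes u\|_{L^{5/3}_{t,x}}=\|u\|_{L^{10/3}_{t,x}}^2\lesssim M^6.
\end{equation*}
Therefore $\int_{-1}^0\int_{\R^3}|u|^{10/3}+|p|^{3/2}\,dxdt\leq\lambda(M)$ where $\lambda(M):=C_{univ}M^{10}$ for some universal $C_{univ}\in[1,\infty)$.

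Finally I would apply Proposition~\ref{goodannulusgneral} with this $\lambda$, choosing $M_1(\mu)$ so large that $\lambda(M)\geq\lambda_0(\mu)$ for all $M\geq M_1(\mu)$, where $\lambda_0(\mu)$ is the threshold in Proposition~\ref{goodannulusgneral}. The resulting scale $R''$ satisfies $2R\leq R''\leq 2R\exp(2\mu\lambda(M)^{\mu+2})\leq 2R\exp(C(\mu)M^{10(\mu+2)})$, and on the annulus $\{R''<|x|<\tfrac14\lambda(M)^\mu R''\}\supset\{R''<|x|<c(\mu)M^{10\mu}R''\}$ (for an appropriate $c(\mu)>0$) the estimates \eqref{nablagoodannulusbound}--\eqref{vortgoodannulusbound} read, after substitution,
\begin{equation*}
\|\nabla^j u\|_{L^\infty(\cdots\times(-\tfrac1{16},0))}\leq \bar C_j\lambda(M)^{-3\mu/10}\leq \bar C_j C(\mu)M^{-3\mu},\quad j=0,1,
\end{equation*}
and similarly for $\nabla\omega$, which are exactly \eqref{nablagoodannulusboundTypeI}--\eqref{vortgoodannulusboundTypeI}. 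The principal obstacle I anticipate is the verification that the energy estimates \eqref{wkineticenergyest}--\eqref{wdissipationest} genuinely deliver polynomial rather than exponential dependence on $M$; this is precisely the place where the Type~I (as opposed to merely critical) hypothesis must be used, by treating the stretching term via the Lorentz-space H\"older inequality and absorbing it into the dissipation without invoking Grönwall.
\end{proofx}
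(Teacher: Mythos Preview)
Your proposal is correct and follows essentially the same route as the paper: reduce to Proposition~\ref{goodannulusgneral} by producing a polynomial bound $\lambda\lesssim M^{10}$ on $\int_{-1}^0\int_{\R^3}|u|^{10/3}+|p|^{5/3}$ via the splitting $u=e^{(t+2)\Delta}u(\cdot,-2)+w$, the heat bound \eqref{semigroupL103}, the energy bounds \eqref{wkineticenergyest}--\eqref{wdissipationest}, Calder\'on--Zygmund for $p$, and the fact that mild solutions in $L^4_{x,t}$ are suitable. One small clarification on your closing ``obstacle'': the polynomial dependence in \eqref{wkineticenergyest} is not obtained by an energy argument at all, but directly from the Duhamel formula and O'Neil's convolution inequality (Proposition~\ref{O'Neil}) applied to $L^{3,\infty}\cdot L^{3,\infty}\hookrightarrow L^{3/2,\infty}$; once $\|w\|_{L^\infty_tL^2_x}\lesssim M^2$ is known a priori, the energy identity for $w$ gives \eqref{wdissipationest} by direct substitution rather than by absorption or Gr\"onwall.
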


A simple rescaling  
gives the following corollary, which is directly used in the proof of Proposition \ref{prop.main}.

\begin{corollary}
\label{cor.12}
Let $S\in(0,\infty)$. For all $\mu>0$, let $M_1(\mu)>1$ be given by Lemma \ref{lem.ga}. For all $M\geq M_1(\mu)$, $R\geq S^\frac12$ and for every mild solution $(u,p)$ of the Navier-Stokes equations on $\mathbb{R}^3\times [-S,0]$ satisfying
\begin{equation}\label{globalboundTypeIbis}
\|u\|_{L^{\infty}_{t}L^{3,\infty}_{x}(\mathbb{R}^3\times (-S,0))}\leq M
\end{equation}
and
\begin{equation}\label{L4goodannulusrescaled}
u\in L^{4}_{x,t}(\mathbb{R}^3\times (-S,0)),
\end{equation}
there exists $R''(u,p,M,\mu,R)$ with  \eqref{annulusradiusboundTypeI} 
and universal constants $\bar{C}_{j}$ for $j=0,1,2$ such that for $j=0,1$
\begin{equation}\label{nablagoodannulusboundTypeIbis}
\|\nabla^j u\|_{L^{\infty}(\{R''<|x|<c(\mu){M^{10\mu}}R''\}\times (-\frac{S}{32},0))}\leq 2^{\frac{j+1}{2}}\bar{C}_{j}C(\mu) M^{-{3\mu}}S^{-\frac{j+1}{2}}
\end{equation}
and 
\begin{equation}\label{vortgoodannulusboundTypeIbis}
\|\nabla \omega\|_{L^{\infty}(\{R''<|x|<c(\mu){M^{10\mu}}R''\}\times(-\frac{S}{32},0))}\leq 2^{\frac23}\bar{C}_{2}C(\mu) M^{-{3\mu}}S^{-\frac{3}{2}}.
\end{equation}
\end{corollary}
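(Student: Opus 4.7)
The plan is to reduce the statement to Lemma \ref{lem.ga} by the natural Navier--Stokes rescaling, exploiting the scale invariance of both the Type I bound \eqref{globalboundTypeIbis} and the $L^4_{x,t}$ condition \eqref{L4goodannulusrescaled}. Set $\lambda := (S/2)^{1/2}$ and define, for $(y,s) \in \mathbb{R}^3 \times (-2, 0)$,
\begin{equation*}
\tilde u(y,s) := \lambda\, u(\lambda y, \lambda^2 s), \qquad \tilde p(y,s) := \lambda^2\, p(\lambda y, \lambda^2 s).
\end{equation*}
Then $(\tilde u, \tilde p)$ is a mild solution to the Navier--Stokes equations on $\mathbb{R}^3 \times [-2, 0]$; moreover $\|\tilde u\|_{L^\infty_t L^{3,\infty}_x(\mathbb{R}^3 \times (-2,0))} = \|u\|_{L^\infty_t L^{3,\infty}_x(\mathbb{R}^3 \times (-S,0))} \leq M$ by scale invariance of $L^{3,\infty}$, and similarly $\tilde u \in L^4_{x,t}(\mathbb{R}^3 \times (-2,0))$.

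Let $\tilde R := R/\lambda = R(2/S)^{1/2}$. The hypothesis $R \geq S^{1/2}$ ensures $\tilde R \geq \sqrt{2} \geq 1$, so Lemma \ref{lem.ga} applies to $(\tilde u, \tilde p)$ with the same parameter $\mu$ and the same threshold $M_1(\mu)$. This yields $\tilde R''(\tilde u, \tilde p, M, \mu, \tilde R)$ satisfying
\begin{equation*}
2\tilde R \leq \tilde R'' \leq 2\tilde R \exp\bigl(C(\mu) M^{10(\mu+2)}\bigr),
\end{equation*}
together with the conclusions \eqref{nablagoodannulusboundTypeI}--\eqref{vortgoodannulusboundTypeI} for $\tilde u$ on $\{\tilde R'' < |y| < c(\mu) M^{10\mu} \tilde R''\} \times (-\tfrac{1}{16}, 0)$.

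Now set $R'' := \lambda \tilde R''$. Multiplying the bracketing estimate for $\tilde R''$ by $\lambda$ gives exactly \eqref{annulusradiusboundTypeI}. The time interval $(-\tfrac{1}{16}, 0)$ in the $s$-variable pulls back to $(-\tfrac{\lambda^2}{16}, 0) = (-\tfrac{S}{32}, 0)$ in the $t$-variable, and the spatial annulus $\{\tilde R'' < |y| < c(\mu) M^{10\mu} \tilde R''\}$ pulls back to $\{R'' < |x| < c(\mu) M^{10\mu} R''\}$. Finally, from $\nabla^j u(x,t) = \lambda^{-(j+1)} (\nabla^j \tilde u)(x/\lambda, t/\lambda^2)$ and $\lambda^{-(j+1)} = (S/2)^{-(j+1)/2} = 2^{(j+1)/2} S^{-(j+1)/2}$, we read off \eqref{nablagoodannulusboundTypeIbis} for $j=0,1$. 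The same scaling applied to $\nabla \omega$ (which scales as $\lambda^{-3}$) delivers \eqref{vortgoodannulusboundTypeIbis}. Since every step is a routine change of variables, there is no genuine obstacle; the only point requiring care is checking that the constraint $R \geq S^{1/2}$ yields $\tilde R \geq 1$ after rescaling, which is where the factor of $2$ in the choice $\lambda = (S/2)^{1/2}$ is used.
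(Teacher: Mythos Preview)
Your proof is correct and matches the paper's approach exactly: the paper simply says ``a simple rescaling gives the following corollary,'' and your rescaling with $\lambda=(S/2)^{1/2}$ is precisely that argument carried out in full. One small remark: your scaling computation for $\nabla\omega$ yields the factor $2^{3/2}$ (since $\lambda^{-3}=2^{3/2}S^{-3/2}$), not the $2^{2/3}$ printed in \eqref{vortgoodannulusboundTypeIbis}; this is evidently a typo in the statement (compare the parallel Corollary~\ref{cor.rescaleannulusts}, which has $2^{3/2}$), and your argument gives the intended bound.
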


Bearing in mind \eqref{semigroupL3L103}, the energy estimate in footnote \ref{footenergy} (see also Lemma \ref{lem.backconctimeslice}) and Calder\'{o}n-Zygmund estimates for the pressure, the following Lemma is obtained as an immediate corollary to Proposition \ref{goodannulusgneral}.

\begin{lemma}[annulus of regularity, time slices]\label{lem.annulusts}
For all $\mu>0$ there exists $M_2(\mu)>1$ such that the following holds true. For all $M\geq M_2(\mu)$, $R\geq 1$ and for every suitable finite-energy solution\footnote{See Section \ref{subsec.not} `Notations' for a definition of \textit{suitable finite-energy solutions.}} 
$(u,p)$ of the Navier-Stokes equations on $\mathbb{R}^3\times [-1,0]$ satisfying 
\begin{equation}\label{globalboundtimeslice}
\|u(\cdot,-1)\|_{L^{3}(\mathbb{R}^3)}\leq M
\end{equation}
and with $\Mp$ defined by \eqref{e.defM'theo}, 
there exists $R''(u,p,M,\mu,R)$ with 
\begin{equation}\label{annulusradiusboundtimeslice}
2R\leq R''\leq 2R\exp{(C(\mu)(\Mp)^{{10(\mu+2)}})}
\end{equation}
and universal constants $\bar{C}_{j}$ for $j=0,1,2$ such that for $j=0,1$
\begin{equation}\label{nablagoodannulusboundtimeslice}
\|\nabla^j u\|_{L^{\infty}(\{R''<|x|<(\Mp)^{{10\mu}}R''\}
\times(-\frac{1}{16},0))}\leq \bar{C}_{j} (\Mp)^{-3\mu}
\end{equation}
and
\begin{equation}\label{vortgoodannulusboundtimeslice}
\|\nabla \omega\|_{L^{\infty}(\{R''<|x|<(\Mp)^{10\mu}R''\}
\times(-\frac{1}{16},0))}\leq \bar{C}_{2} (\Mp)^{-3\mu}.
\end{equation}
\end{lemma}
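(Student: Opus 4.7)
The plan is to deduce Lemma~\ref{lem.annulusts} as a corollary of the general annulus--of--regularity result Proposition~\ref{goodannulusgneral}, which needs as input only an explicit bound $\lambda$ on $\int_{-1}^{0}\int_{\R^3}(|u|^{10/3}+|p|^{5/3})\,dxdt$ together with the suitability of $u$. The suitable--weak condition on each $Q_{(x_*,0)}(1)$ is built into the definition of a suitable finite-energy solution, so the whole task is to produce a quantitative $L^{10/3}_{x,t}\cap L^{5/3}_{x,t}$ bound on $(u,p)$ in terms of $\Mp$.

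The core of the argument is the Giga--Koch--Tataru--style decomposition
\begin{equation*}
u(\cdot,t)=e^{(t+1)\Delta}u(\cdot,-1)+V(\cdot,t),
\end{equation*}
valid on $[-1,0]$, with $V(\cdot,-1)=0$. For the linear part I would invoke the semigroup estimate~\eqref{semigroupL3L103}, which gives $\|e^{(t+1)\Delta}u(\cdot,-1)\|_{L^{10/3}_{x,t}(\R^3\times(-1,0))}\leq C M$. For the perturbation $V$ I would apply the energy estimate described in footnote~\ref{footenergy} (and reproved inside Lemma~\ref{lem.backconctimeslice}), which, via Gronwall applied to the perturbed Navier--Stokes equation for $V$ around the subcritical drift $e^{(t+1)\Delta}u(\cdot,-1)$, yields
\begin{equation*}
\sup_{t\in(-1,0)}\|V(\cdot,t)\|_{L^2(\R^3)}^2+\int_{-1}^{0}\!\!\int_{\R^3}|\nabla V|^2\,dxdt\leq C(\Mp)^4.
\end{equation*}
Combined with $\dot H^1_x\hookrightarrow L^6_x$ and interpolation between $L^\infty_tL^2_x$ and $L^2_tL^6_x$, this gives $\|V\|_{L^{10/3}_{x,t}}\leq C(\Mp)^2$. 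Hence $\|u\|_{L^{10/3}_{x,t}(\R^3\times(-1,0))}\leq C(\Mp)^2$. The pressure is handled by Calder\'on--Zygmund on $-\Delta p=\partial_i\partial_j(u_iu_j)$, which gives $\|p\|_{L^{5/3}_x}\leq C\|u\|_{L^{10/3}_x}^2$, and integrating in time yields $\|p\|_{L^{5/3}_{x,t}}^{5/3}\leq C\|u\|_{L^{10/3}_{x,t}}^{10/3}\leq C(\Mp)^{20/3}$. Altogether
\begin{equation*}
\int_{-1}^{0}\!\!\int_{\R^3}\bigl(|u|^{10/3}+|p|^{5/3}\bigr)\,dxdt\leq \lambda, \qquad \lambda:=C(\Mp)^{20/3}.
\end{equation*}

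With this $\lambda$ in hand I apply Proposition~\ref{goodannulusgneral} with auxiliary parameter $\mu':=\tfrac{3\mu}{2}$, choosing $M_2(\mu)$ large enough that $\lambda\geq\lambda_0(\mu')$. The annulus size produced by the proposition is $\{R''<|x|<\tfrac14\lambda^{\mu'}R''\}$ with $\lambda^{\mu'}=C(\Mp)^{10\mu}$, matching the annulus in the lemma's statement (after absorbing the constant $1/4$ into $c(\mu)$, or more simply since here the statement allows the outer radius $(\Mp)^{10\mu}R''$). The localization bounds become $\lambda^{-3\mu'/10}=(\Mp)^{-3\mu}$ (up to a universal constant depending only on $\mu$), which is exactly the decay rate claimed in~\eqref{nablagoodannulusboundtimeslice}--\eqref{vortgoodannulusboundtimeslice}. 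The bound on the inner radius reads $R''\leq 2R\exp(2\mu'\lambda^{\mu'+2})\leq 2R\exp(C(\mu)(\Mp)^{10\mu+40/3})\leq 2R\exp(C(\mu)(\Mp)^{10(\mu+2)})$, again matching~\eqref{annulusradiusboundtimeslice}.

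The main technical point, and the one I would treat most carefully, is the quantitative energy estimate on $V$: it relies on a Gronwall argument against the $L^5_{x,t}$ norm of the heat extension of $u(\cdot,-1)\in L^3$, and it is precisely this step that produces the exponential $\Mp=\exp(L_*M^5/2)$. Beyond that, the proof is purely bookkeeping: verifying the suitable--weak hypothesis (automatic), checking that $\lambda\geq\lambda_0(\mu')$ for $M\geq M_2(\mu)$ sufficiently large, and tracking the powers of $\Mp$ through the scaling $\mu'=3\mu/2$ into the conclusions of Proposition~\ref{goodannulusgneral}.
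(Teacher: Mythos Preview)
Your proof is correct and follows essentially the same route as the paper: the paper states that the lemma ``is obtained as an immediate corollary to Proposition~\ref{goodannulusgneral}'' by combining the semigroup estimate~\eqref{semigroupL3L103}, the Gronwall-based energy estimate on $V$ from footnote~\ref{footenergy} (see also Lemma~\ref{lem.backconctimeslice}), and Calder\'on--Zygmund for the pressure. Your bookkeeping with $\mu'=\tfrac{3\mu}{2}$ and $\lambda\sim(\Mp)^{20/3}$ reproduces exactly the claimed powers of $\Mp$.
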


A simple rescaling 
gives the following corollary, which is directly used in the proof of Theorem \ref{theo.mainbis}.

\begin{corollary}\label{cor.rescaleannulusts}
Let $S\in(0,\infty)$. For all $\mu>0$, let $M_2(\mu)>1$ and $\Mp$ be given by Lemma \ref{lem.annulusts}. For all $M\geq M_2(\mu)$, $R\geq S^\frac12$ and for every suitable  
finite-energy solution $(u,p)$ of the Navier-Stokes equations on $\mathbb{R}^3\times [-S,0]$ satisfying
\begin{equation}\label{globalboundts}
\|u(\cdot,-S)\|_{L^{3}(\mathbb{R}^3)}\leq M,
\end{equation}
there exists $R''(u,p,M,\mu,R)$ with
\begin{equation}\label{annulusradiusboundts}
2R\leq R''\leq 2R\exp{(C(\mu)(\Mp)^{10(\mu+2)})}
\end{equation}
and universal constants $\bar{C}_{j}$ for $j=0,1,2$ such that for $j=0,1$ 
\begin{equation}\label{nablagoodannulusboundts}
\|\nabla^j u\|_{L^{\infty}(\{R''<|x|<(\Mp)^{10\mu}R''\}\times (-\frac{S}{32},0))}
\leq 2^{\frac{j+1}{2}}\bar{C}_{j}(\Mp)^{-{3\mu}}S^{-\frac{j+1}{2}}
\end{equation} 
and 
\begin{equation}\label{vortgoodannulusboundts}
\|\nabla \omega\|_{L^{\infty}(\{R''<|x|<(\Mp)^{10\mu}R''\}\times(-\frac{S}{32},0))}\leq 2^\frac32\bar{C}_{2} (\Mp)^{-{3\mu}}S^{-\frac{3}{2}}.
\end{equation}
\end{corollary}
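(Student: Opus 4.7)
The plan is to deduce this corollary from Lemma \ref{lem.annulusts} by a direct application of the Navier-Stokes scaling symmetry \eqref{NSErescale}, mirroring the reduction of Corollary \ref{cor.12} to Lemma \ref{lem.ga} in the Type I setting. Given a suitable finite-energy solution $(u,p)$ on $\R^3 \times [-S,0]$ satisfying \eqref{globalboundts}, I set $\lambda := S^{1/2}$ and introduce
\[
\tilde u(y,\tau) := \lambda\, u(\lambda y,\lambda^2\tau), \qquad \tilde p(y,\tau) := \lambda^2 p(\lambda y,\lambda^2\tau),\qquad (y,\tau)\in \R^3\times[-1,0].
\]
The defining properties of a suitable finite-energy solution (the class $C_w([-1,0];L^2_\sigma)\cap L^2_t\dot H^1_x$, the global energy inequality, and local suitability on each $B_{\bar x}(1)\times(-1,0)$) are manifestly preserved by this rescaling, so $(\tilde u,\tilde p)$ is again a suitable finite-energy solution on $\R^3\times[-1,0]$. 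Scale-invariance of the $L^3_x$ norm then gives $\|\tilde u(\cdot,-1)\|_{L^3(\R^3)} = \|u(\cdot,-S)\|_{L^3(\R^3)} \leq M$, so the hypothesis of Lemma \ref{lem.annulusts} is satisfied; note that $\Mp$ depends only on $M$ via formula \eqref{e.defM'theo}, and is thus $S$-independent.

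Next I would apply Lemma \ref{lem.annulusts} to $(\tilde u,\tilde p)$ with the same exponent $\mu$ and with the spatial scale $\tilde R := R/\lambda = R S^{-1/2}$, which satisfies $\tilde R \geq 1$ by the hypothesis $R\geq S^{1/2}$. This produces a radius $\tilde R''$ with $2\tilde R \leq \tilde R'' \leq 2\tilde R\exp(C(\mu)(\Mp)^{10(\mu+2)})$ together with the bounds
\[
\|\nabla_y^j \tilde u\|_{L^\infty(\{\tilde R''<|y|<(\Mp)^{10\mu}\tilde R''\}\times(-1/16,0))} \leq \bar C_j (\Mp)^{-3\mu}, \quad j=0,1,
\]
and similarly $\|\nabla_y\tilde\omega\|_{L^\infty(\cdots)} \leq \bar C_2(\Mp)^{-3\mu}$. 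Setting $R'' := \lambda \tilde R'' = S^{1/2}\tilde R''$ immediately yields \eqref{annulusradiusboundts}. Under the change of variables $x=\lambda y$, $t=\lambda^2\tau$ one has $\nabla_x^j u = \lambda^{-(j+1)}\nabla_y^j\tilde u$ (and analogously $\nabla_x\omega = \lambda^{-2}\nabla_y\tilde\omega$), so that the undone bounds read
\[
\|\nabla^j u\|_{L^\infty(\{R''<|x|<(\Mp)^{10\mu}R''\}\times(-S/16,0))} \leq \bar C_j(\Mp)^{-3\mu}S^{-(j+1)/2}
\]
for $j=0,1$ and the corresponding estimate for $\nabla\omega$ with exponent $S^{-3/2}$.

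Finally, restricting the time interval from $(-S/16,0)$ to $(-S/32,0)$ and absorbing the trivial factor $1\leq 2^{(j+1)/2}$ into the constant delivers exactly the estimates \eqref{nablagoodannulusboundts}--\eqref{vortgoodannulusboundts}; these slightly lossy constants are chosen solely to match the format of Corollary \ref{cor.12}. There is no substantive obstacle to this argument beyond confirming that the class of suitable finite-energy solutions is invariant under \eqref{NSErescale}, which is a routine check from the definitions in Subsection \ref{subsec.def}; the corollary follows.
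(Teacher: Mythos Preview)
Your proposal is correct and matches the paper's approach exactly: the paper simply states that ``a simple rescaling gives the following corollary'' without further detail, and your argument with $\lambda=S^{1/2}$ is precisely that rescaling. Your observation that the factors $2^{(j+1)/2}$ and the shrinkage to $(-S/32,0)$ are actually unnecessary (since the rescaling already yields the bound on $(-S/16,0)$ without them) is accurate; these appear in the statement purely for parallelism with Corollary~\ref{cor.12}, where Lemma~\ref{lem.ga} lives on $[-2,0]$ and the natural scale is $\lambda=(S/2)^{1/2}$.
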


\begin{remark}\label{rem.contomega}
According to Remark \ref{vortimprovesws}, in Proposition \ref{goodannulusgneral}-Corollary \ref{cor.rescaleannulusts} we have that $w,\ \partial_tw,\ \nabla w$ and $\nabla^2w$ are continuous in space and time on the annuli considered. 
This remark is needed to apply the first Carleman inequality, Proposition \ref{prop.firstcarl}, in Section \ref{sec.3} and \ref{sec.further}.
\end{remark}

\section{Main tool 3: quantitative epochs of regularity}
\label{sec.6}

In this section, we prove that a solution satisfying the hypothesis of Propositions \ref{prop.main}-\ref{prop.maints} enjoys good quantitative estimates in certain time intervals. In the literature, these are commonly referred to as `epochs of regularity'. Such a property is crucially used in `Step 1' of the above propositions, when applying a quantitative Carleman inequality based on unique continuation (Proposition \ref{prop.sndcarl}).

To show the existence of such epochs of regularity, we follow Leray's approach in \cite{Leray}. In particular, we utilize arguments involving existence of mild solutions for subcritical data and weak-strong uniqueness. We provide full details for the reader's convenience.

We first recall a result known as `O'Neil's convolution inequality' (Theorem 2.6 of  O'Neil's paper \cite{O'Neil}).
\begin{proposition}\label{O'Neil}
Suppose $1< p_{1}, p_{2}, r<\infty$ and $1\leq q_{1}, q_{2}, s\leq\infty $ are such that
\begin{equation}\label{O'Neilindices1}
\frac{1}{r}+1=\frac{1}{p_1}+\frac{1}{p_{2}}
\end{equation}
and
\begin{equation}\label{O'Neilindices2}
\frac{1}{q_1}+\frac{1}{q_{2}}\geq \frac{1}{s}.
\end{equation}
Suppose that
\begin{equation}\label{fghypothesis}
f\in L^{p_1,q_1}(\mathbb{R}^{d})\,\,\textrm{and}\,\,g\in  L^{p_2,q_2}(\mathbb{R}^{d}).
\end{equation}
Then
\begin{equation}\label{fstargconclusion1}
f\ast g \in L^{r,s}(\mathbb{R}^d)\,\,\rm{with} 
\end{equation}
\begin{equation}\label{fstargconclusion2}
\|f\ast g \|_{L^{r,s}(\mathbb{R}^d)}\leq 3r \|f\|_{L^{p_1,q_1}(\mathbb{R}^d)} \|g\|_{L^{p_2,q_2}(\mathbb{R}^d)}. 
\end{equation}
\end{proposition}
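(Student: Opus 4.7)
The natural approach is to reduce the Lorentz convolution estimate to a pointwise inequality for the decreasing rearrangement of $f*g$, and then apply Hardy-type inequalities. Recall that for a measurable $h$ on $\R^d$ the decreasing rearrangement is $h^*(t):=\inf\{\alpha>0\,:\,d_{h,\R^d}(\alpha)\leq t\}$ and $h^{**}(t):=\tfrac1t\int_0^t h^*(s)\,ds$. A standard computation using the layer-cake formula shows that $\|h\|_{L^{p,q}(\R^d)}$ is, up to constants depending only on $p$, equivalent to $\|t^{1/p}h^*(t)\|_{L^q(\R_+,dt/t)}$, and for $p>1$ also to $\|t^{1/p}h^{**}(t)\|_{L^q(\R_+,dt/t)}$ (the advantage of $h^{**}$ being that it gives a genuine norm on $L^{p,q}$).

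The core step is O'Neil's pointwise rearrangement inequality for convolution,
\begin{equation*}
(f*g)^{**}(t)\leq t\,f^{**}(t)\,g^{**}(t)+\int_t^\infty f^*(s)\,g^*(s)\,ds,\qquad t>0.
\end{equation*}
I would follow O'Neil's original decomposition argument: for each fixed $t>0$ write $f=f_0+f_1$, where $f_0$ retains the part of $f$ on which $|f|>f^*(t)$ and $f_1=f-f_0$, and similarly $g=g_0+g_1$. Then $f*g=f_0*g_0+f_0*g_1+f_1*g_0+f_1*g_1$, and each cross term is estimated by Young's inequality using the $L^\infty$ bounds $\|f_1\|_\infty,\|g_1\|_\infty\leq f^*(t),g^*(t)$ together with $\|f_0\|_1=\int_0^t(f^*(s)-f^*(t))\,ds$ and the analogous identity for $g_0$. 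Summing these four bounds and averaging in the definition of $(f*g)^{**}$ produces the claimed pointwise estimate.

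Once the pointwise inequality is in hand, the Lorentz bound is obtained by taking the $L^s(\R_+,dt/t)$ quasinorm of $t^{1/r}$ times the right-hand side. For the first term, the relation $\tfrac{1}{r}+1=\tfrac1{p_1}+\tfrac1{p_2}$ rewrites
\begin{equation*}
t^{1/r}\cdot t\,f^{**}(t)g^{**}(t)=\bigl(t^{1/p_1}f^{**}(t)\bigr)\bigl(t^{1/p_2}g^{**}(t)\bigr),
\end{equation*}
so H\"older's inequality in $L^q(\R_+,dt/t)$, using $\tfrac{1}{q_1}+\tfrac1{q_2}\geq\tfrac1s$, gives the desired product bound in terms of $\|f\|_{L^{p_1,q_1}}\|g\|_{L^{p_2,q_2}}$. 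For the tail term $\int_t^\infty f^*(s)g^*(s)\,ds$, one invokes the Hardy inequality $\|t^{\alpha}\int_t^\infty\phi(s)\,ds\|_{L^q(dt/t)}\leq\alpha^{-1}\|t^{\alpha+1}\phi(t)\|_{L^q(dt/t)}$ with $\alpha=1/r>0$ (this is where $r<\infty$ matters), reducing it again to a product of the $L^{p_i,q_i}$ quasinorms via H\"older.

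The main technical obstacle is twofold: producing the precise sharp constant $3r$ requires careful bookkeeping in the Hardy step (the factor $\alpha^{-1}=r$ appears there) and in passing between $h^*$ and $h^{**}$; and ensuring the argument covers the borderline case $q_1$ or $q_2$ equal to $\infty$, where H\"older in $L^q(dt/t)$ must be replaced by an essential-supremum bound. Since this statement is invoked as a classical result from \cite{O'Neil}, the cleanest presentation is simply to cite the reference once the pointwise rearrangement inequality has been recorded.
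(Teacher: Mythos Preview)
The paper does not prove this proposition at all; it is simply recalled as a known result (``O'Neil's convolution inequality'') with a citation to Theorem~2.6 of O'Neil's original paper. Your sketch is a correct outline of O'Neil's own argument --- the pointwise rearrangement inequality $(f*g)^{**}(t)\le t\,f^{**}(t)\,g^{**}(t)+\int_t^\infty f^*g^*$ followed by Hardy and H\"older --- and you yourself note in the final paragraph that citing the reference is the cleanest route, which is exactly what the paper does.
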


We will also use an inequality that we will refer to as `Hunt's inequality'. The statement below and its proof can be found in Hunt's paper \cite{hunt} (Theorem 4.5, p.271 of \cite{hunt}).

\begin{proposition}\label{hunt}
Suppose that $1\leq p,q,r\leq\infty$ and $1\leq s_1,s_2\leq\infty$. Furthermore, suppose that $p$, $q$, $r$, $s_1$ and $s_2$ satisfy the following relations: $$\frac{1}{p}+\frac{1}{q}=\frac{1}{r}$$ and $$\frac{1}{s_1}+\frac{1}{s_2}=\frac{1}{s}.$$ 
Then the assumption that $f\in L^{p,s_1}(\Omega)$ and $g\in L^{q,s_2}(\Omega)$ implies  that $fg \in L^{r,s}(\Omega)$, with the estimate 
\begin{equation}\label{Holderverygeneral}
\|fg\|_{L^{r,s}(\Omega)}\leq C(p,q,s_1,s_2)\|f\|_{L^{p,s_1}(\Omega)}\|g\|_{L^{q,s_2}(\Omega)}.
\end{equation}
\end{proposition}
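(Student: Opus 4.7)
The plan is to prove Proposition~\ref{hunt} via the decreasing rearrangement characterization of Lorentz norms, reducing everything to a classical Hölder inequality on the half-line with the measure $dt/t$. Recall that if $d_f(\alpha)$ is the distribution function as defined in \eqref{defdist}, the decreasing rearrangement is
\begin{equation*}
f^*(t) := \inf\{\alpha > 0 : d_f(\alpha) \leq t\},
\end{equation*}
and a standard computation (via the layer cake representation) shows that the quasinorms \eqref{Lorentznorm}--\eqref{Lorentznorminfty} are equivalent to
\begin{equation*}
\|f\|_{L^{p,q}(\Omega)}^* := \Bigl(\int_0^\infty \bigl(t^{1/p} f^*(t)\bigr)^q \, \frac{dt}{t}\Bigr)^{1/q}, \qquad \|f\|_{L^{p,\infty}(\Omega)}^* := \sup_{t>0} t^{1/p} f^*(t),
\end{equation*}
with constants depending only on $p$ and $q$. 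This is the formulation I would work with throughout.

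The key pointwise ingredient is the classical submultiplicative bound for rearrangements: for all $t_1, t_2 > 0$,
\begin{equation*}
(fg)^*(t_1 + t_2) \leq f^*(t_1)\, g^*(t_2),
\end{equation*}
which follows from the observation $\{|fg| > \alpha\beta\} \subseteq \{|f| > \alpha\} \cup \{|g| > \beta\}$ applied with $\alpha = f^*(t_1)$ and $\beta = g^*(t_2)$. Specializing to $t_1 = t_2 = t/2$ gives $(fg)^*(t) \leq f^*(t/2)\, g^*(t/2)$. I would then plug this into the $L^{r,s}$ quasinorm, change variables $t \mapsto 2t$, and use the index relations $\tfrac{1}{r} = \tfrac{1}{p} + \tfrac{1}{q}$ to factor $t^{1/r} = t^{1/p} \cdot t^{1/q}$, obtaining
\begin{equation*}
\|fg\|_{L^{r,s}(\Omega)}^* \lesssim \Bigl(\int_0^\infty \bigl(t^{1/p} f^*(t) \cdot t^{1/q} g^*(t)\bigr)^s \, \frac{dt}{t}\Bigr)^{1/s}.
\end{equation*}
Applying the ordinary Hölder inequality on $(0,\infty, dt/t)$ with exponents $s_1, s_2$ (using $\tfrac{1}{s_1} + \tfrac{1}{s_2} = \tfrac{1}{s}$) to the factors $t^{1/p} f^*(t)$ and $t^{1/q} g^*(t)$ yields precisely $\|f\|_{L^{p,s_1}(\Omega)}^* \|g\|_{L^{q,s_2}(\Omega)}^*$, which gives the claim after returning to the original quasinorms.

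The genuinely delicate points, though not deep, are the following. First, the endpoint cases where one or more of the exponents equals $\infty$ must be treated separately since the $dt/t$-integral is replaced by a supremum; the same rearrangement inequality works, but one must carefully identify the weighted Hölder inequality being used (or, in the $s_1 = s_2 = \infty$ case, directly bound $\sup t^{1/r} (fg)^*(t)$). Second, the restrictions $1 \leq p, q, r, s, s_1, s_2$ and $p, q, r$ finite are exactly what is needed so that the equivalence between the quasinorm \eqref{Lorentznorm} and $\|\cdot\|^*$ holds with a finite constant depending only on the exponents, which is where the final constant $C(p,q,s_1,s_2)$ absorbs the losses from the change of variable $t \mapsto t/2$, from the comparison of quasinorms, and from the Hölder step. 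I expect this bookkeeping of constants across the various exponent configurations to be the main obstacle, but no new idea is required beyond the rearrangement inequality and one-dimensional Hölder.
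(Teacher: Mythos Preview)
Your argument is correct and is the standard proof via decreasing rearrangements and Hölder on $(0,\infty,\tfrac{dt}{t})$. The paper does not supply its own proof of this proposition; it simply cites Hunt's original paper \cite{hunt} (Theorem 4.5), where essentially the same rearrangement-based argument appears, so there is nothing to compare.
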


\begin{lemma}[epoch of regularity, Type I]\label{epochTypeI}
There exists a universal constant $C_3\in[1,\infty)$ 
such that the following holds. Suppose $u: \mathbb{R}^3\times [t_{0}-T, t_{0}]\rightarrow \mathbb{R}^3$ and $p: \mathbb{R}^3\times [t_{0}-T, t_{0}]\rightarrow \mathbb{R}$ is a mild solution\footnote{See Subsection \ref{subsec.def} of Section \ref{subsec.not} `Notations'.}
 of the Navier-Stokes equations. 
Furthermore, assume for some $M\geq 1$ 
that
\begin{equation}\label{typeIboundepoch}
\|u\|_{L^{\infty}_{t}L^{3,\infty}_{x}(\mathbb{R}^3\times (t_{0}-T, t_{0}))}\leq M
\end{equation}
and
\begin{equation}\label{L4assumption}
u\in L^{4}_{x,t}(\mathbb{R}^3\times (t_0-T,t_0)).
\end{equation}
Then for all intervals $I\subset [t_{0}-\frac{T}{2}, t_{0}]$ there exists a subinterval $I'\subset I$ 
 such that the following holds true.
Namely, 
\begin{equation}\label{epochTypeIsmoothing} 
\|\nabla^j u\|_{L^{\infty}_{t}L^{\infty}_{x}(\mathbb{R}^3\times I')}\leq C_3 M^{18} |I|^{\frac{-(j+1)}{2}}
\end{equation}
 for $j=0,1,2$ and 
\begin{equation}\label{epochTypeIintervallength}
|I'|\geq C_3^{-1} M^{-12} |I|.
\end{equation}
\end{lemma}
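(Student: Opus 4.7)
The plan is to follow Leray's classical strategy \cite{Leray} of producing epochs of regularity, made quantitative by tracking the dependence on the Type~I constant $M$ at every step. The four main ingredients are: a Calder\'on-type splitting of the critical $L^{3,\infty}$ datum, a Seregin--\v Sver\'ak-type energy estimate for the perturbation (identical in structure to the one used in the proof of Lemma~\ref{lem.backconctimeslice}), the CKN $\varepsilon$-regularity Proposition~\ref{CKN}, and the bootstrap of Proposition~\ref{quantitativeSerrin}/Proposition~\ref{CKNquanthigher} to bound $\nabla u$ and $\nabla^2 u$. Weak-strong uniqueness, justified by the Serrin hypothesis \eqref{L4assumption}, will identify $u$ with the mild solution produced from any ``good'' initial time $t_*\in I$.

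By translation and the parabolic rescaling \eqref{NSErescale} I would normalize to $t_0=0$, $T=2$, and $I=[t_*,t_*+\tau]$ with $t_*+\tau\le 0$. At the left endpoint $t_*$ the Type~I hypothesis gives $\|u(\cdot,t_*)\|_{L^{3,\infty}}\le M$. For a parameter $\lambda>0$ to be fixed later, I split
\begin{equation*}
u(\cdot,t_*)=U_\lambda+V_\lambda,\qquad \|U_\lambda\|_{L^\infty}\lesssim \lambda,\qquad \|V_\lambda\|_{L^2}^2\lesssim M^{3}/\lambda,
\end{equation*}
where the second bound comes from the standard distribution-function computation for $L^{3,\infty}$ and the divergence-free property of each piece is restored by a Bogovskii correction as in Lemma~\ref{lem.bogo}. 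Setting $a(s):=e^{(s-t_*)\Delta}u(\cdot,t_*)$ and $w:=u-a$, one obtains from O'Neil's inequality (Proposition~\ref{O'Neil}) quantitative linear bounds, typically of the form $\|a\|_{L^4_{x,t}}^{4}\lesssim \lambda^{4}(s-t_*)+(M^3/\lambda)^2(s-t_*)^{1/2}$ and analogous controls in $L^{5}_{x,t}$.

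The perturbation $w$ then solves the perturbed Navier--Stokes system \eqref{e.pertnse} with drift $a$ and $w(\cdot,t_*)=0$. A standard $L^2$ energy estimate, combined with H\"older, Sobolev and Young as in the proof of Lemma~\ref{lem.backconctimeslice}, gives
\begin{equation*}
\|w(\cdot,s)\|_{L^2}^{2}+\int_{t_*}^{s}\|\nabla w\|_{L^2}^{2}\,dt\lesssim \int_{t_*}^{s}\|a\|_{L^4}^{4}\,dt+\int_{t_*}^{s}\|a\|_{L^5}^{5}\,\|w\|_{L^2}^{2}\,dt,
\end{equation*}
which Gronwall closes into an explicit bound of the form $\|w\|_{L^\infty_tL^2_x}^{2}+\|\nabla w\|_{L^2_{x,t}}^{2}\le P(M,\lambda,s-t_*)$ on a subinterval $[t_*,t_*+\delta]$. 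Combining with the linear bounds on $a$ and with the Calder\'on--Zygmund estimate for the associated pressure, I obtain a quantitative control of $\|u\|_{L^{10/3}_{x,t}}^{10/3}+\|p\|_{L^{5/3}_{x,t}}^{5/3}$ on $\mathbb R^3\times[t_*,t_*+\delta]$ of the shape $M^{\alpha}$ for an explicit $\alpha$.

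At this stage, arguing as in Proposition~\ref{goodannulusgneral} but in time rather than in space, I would apply the pigeonhole principle to extract a subinterval $I'\subset[t_*,t_*+\delta]$, of length $\gtrsim M^{-12}|I|$, on which for every parabolic cylinder $Q_{(x_*,t)}(r)$ of radius $r\sim |I|^{1/2}$ centered in $\mathbb R^3\times I'$ the CKN smallness condition \eqref{CKNsmallness} holds; this then forces via \eqref{CKNboundedquant} and Proposition~\ref{CKNquanthigher} the bound \eqref{epochTypeIsmoothing} for $j=0,1$, and Proposition~\ref{quantitativeSerrin} (with the last conclusion \eqref{CKNgradboundedvortquant}) upgrades it to $j=2$. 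The main obstacle is the careful bookkeeping of powers of $M$: balancing the splitting parameter against the interval length---taking $\lambda\sim M^{3/2}|I|^{-1/2}$ and $\delta\sim M^{-12}|I|$---so that the Gronwall exponential stays universal and the pigeonhole loss in time matches the prescribed $M^{-12}$, while the final bound scales as $M^{18}|I|^{-(j+1)/2}$ as required.
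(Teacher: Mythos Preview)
Your approach has a genuine gap at the energy/Gronwall step, and it also misses the key structural point that the paper exploits.

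You set $a(s)=e^{(s-t_*)\Delta}u(\cdot,t_*)$, the full heat evolution of the $L^{3,\infty}$ datum at the \emph{left endpoint} of $I$, and then run a Gronwall argument with $\int\|a\|_{L^5}^5$ (or $\int\|a\|_{L^\infty}^2$, or $\int\|a\|_{L^6}^4$) as the kernel. But for critical $L^{3,\infty}$ data one has $\|a(t)\|_{L^5}\lesssim M(t-t_*)^{-1/5}$, so $\int_0^{\delta}\|a\|_{L^5}^5\,dt\sim M^5\int_0^{\delta}t^{-1}\,dt=\infty$; the same $t^{-1}$ singularity appears in any of the alternative forms. Your Calder\'on splitting does not fix this as written: since you keep $a$ as the full evolution and only use $U_\lambda,V_\lambda$ to \emph{bound} $a$, the $V_\lambda$-contribution $\|e^{t\Delta}V_\lambda\|_{L^4}^4\sim (M^3/\lambda)^2 t^{-3/2}$ still produces a divergent time integral, so the claimed bound $\|a\|_{L^4_{x,t}}^4\lesssim\lambda^4(s-t_*)+(M^3/\lambda)^2(s-t_*)^{1/2}$ is false. (The first term is also off: one only has $\|U_\lambda\|_{L^4}^4\lesssim M^3\lambda$, not $\lambda^4$.) The splitting \emph{would} work if you took $a=e^{t\Delta}U_\lambda$ and put $V_\lambda$ into the initial data of $w$, but that is not what you wrote, and even then one must redo the weak--strong identification since this $a$ is no longer a Navier--Stokes solution.

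The paper sidesteps all of this by using the hypothesis $I\subset[t_0-\tfrac{T}{2},t_0]$ to start the decomposition at $t_0-T$ rather than at $t_*$. After rescaling to $I=[0,1]\subset[-1,1]$, one sets $a(t)=e^{(t+1)\Delta}u(\cdot,-1)$, so on $[0,1]$ the heat flow has already run for time at least one and all subcritical norms of $a$ are bounded by powers of $M$ with \emph{no} short-time singularity. Crucially the paper does \emph{not} use Gronwall: the bound $\|w\|_{L^\infty_tL^2_x}\lesssim M^2$ comes directly from the Duhamel formula via O'Neil's inequality, and the energy identity is used only once to extract $\int_0^1\int|\nabla w|^2\lesssim M^6$. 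Pigeonhole on this dissipation yields $t_1\in[0,\tfrac12]$ with $\|u(\cdot,t_1)\|_{L^6}\lesssim M^3$; then Kato's subcritical mild solution theory in $L^6$ (Proposition~\ref{prop.mild}) plus weak--strong uniqueness (justified by \eqref{L4assumption}) give the epoch $I'$ of length $\gtrsim M^{-12}$ on which $\|u\|_{L^\infty_tL^6_x}\lesssim M^3$. The final CKN step is applied at scale $r\sim M^{-6}$, producing $|\nabla^j u|\lesssim M^{6(j+1)}$, which for $j=2$ gives the stated $M^{18}$. Your direct pigeonhole-in-time on an $L^{10/3}_{x,t}$ quantity, without passing through the $L^6$ mild solution step, does not obviously produce CKN smallness at a controlled scale with the correct powers of $M$.
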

\begin{remark}[estimates for applying Carleman inequalities (Type I)]\label{rem.estcarlepo}
Let $I''\subset I'$ be such that $$|I''|= \frac{M^{-36}}{4 C_3^2}|I'|.$$ 
Then $$|I'|^{-1}= \frac{M^{-36}}{4 C_3^2}|I''|^{-1}.$$
Using \eqref{typeIboundepoch}-\eqref{epochTypeIsmoothing}, together with the fact that $C_3$ and $M\in [1,\infty)$, we see that
\begin{equation}\label{carlemantypeIepochsmoothing}
\|\nabla^j u\|_{L^{\infty}_{t}L^{\infty}_{x}(\mathbb{R}^3\times I'')}\leq \frac{1}{2^{j+1}} |I''|^{\frac{-(j+1)}{2}}
\end{equation}
for $j=0,1,2$ and
\begin{equation}\label{carlemantypeIepochbound}
|I''|\geq \frac{M^{-48}}{4C_3^3}{|I|}.
\end{equation} 
\end{remark}
\begin{proof}

The first part of the proof closely follows arguments in Tao's paper \cite{Tao19}. The only difference in the first part of the proof is that we exploit the above facts regarding Lorentz spaces.  For completeness, we give full details.

As observed by Tao in \cite{Tao19}, \eqref{typeIboundepoch}-\eqref{epochTypeIintervallength} are invariant with respect to the Navier-Stokes scaling and time translation. So we can assume without loss of generality
\begin{equation}\label{intervalWLOG}
I=[0,1]\subset [t_{0}-\tfrac{T}{2}, t_{0}]\Rightarrow [-1,1]\subset [t_{0}-T, t_{0}]
\end{equation}
\noindent\textbf{Step 1: a priori energy estimates.} Clearly we have from the standing assumptions that
\begin{equation}\label{TypeI-11}
\|u\|_{L^{\infty}_{t}L^{3,\infty}_{x}(\mathbb{R}^3\times (-1,1))}\leq M.
\end{equation}
On $\mathbb{R}^3\times (-1,1)$ we have
\begin{equation}\label{udecompEpoch}
u= e^{(t+1)\Delta}u(\cdot,-1)+w, \,\,\,\,\,\,w:=-\int\limits_{-1}^{t}e^{(t-s)\Delta}\mathbb{P}\nabla\cdot(u\otimes u)(\cdot,s) ds.
\end{equation}
It is known that $e^{t\Delta}\mathbb{P}\nabla\cdot$ has an associated convolution kernel $K$. 
Furthermore, from Solonnikov's paper \cite{solonnikov1964}, this satisfies the estimate
\begin{equation}\label{Oseenest}
|\partial_{t}^m\nabla^{j}K(x,t)|\leq \frac{C(m,j)}{(|x|^2+t)^{2+\frac{j}{2}+m}},\,\,\,\textrm{for}\,\,\,j,m=0,1\ldots
\end{equation}
Thus we may apply O'Neil's convolution inequality (Proposition \ref{O'Neil}) with $r=s=2$, $q_{1}=p_{1}=\frac{6}{5}$, $p_{2}=\frac{3}{2}$ and $q_{2}=\infty$. This and Hunt's inequality (Proposition \ref{hunt}) gives that for $t\in [-1,1]$
$$\|w(\cdot,t)\|_{L^{2}_{x}}\leq \int\limits_{-1}^{t} \frac{\|u\otimes u(\cdot,s)\|_{L^{\frac{3}{2},\infty}}}{(t-s)^{\frac{3}{4}}} ds\leq M^{2}(t+1)^{\frac{1}{4}}. $$
Thus,\begin{equation}\label{wkineticenergyest}
 \|w\|_{L^{\infty}_{t}L^{2}_{x}(\mathbb{R}^3\times (-1,1))}\leq CM^2.
\end{equation}
Using O'Neil's convolution inequality once more gives
\begin{equation}\label{semigroupLinfinity}
\|e^{(t+1)\Delta}u(\cdot,-1)\|_{L^{\infty}_{x}}\leq \frac{CM}{(t+1)^{\frac{1}{2}}},
\end{equation}
\begin{equation}\label{semigroupL103}
\|e^{(t+1)\Delta}u(\cdot,-1)\|_{L^{\frac{10}{3}}_{x}}\leq \frac{CM}{(t+1)^{\frac{1}{20}}},
\end{equation}
\begin{equation}\label{semigroupL4}
\|e^{(t+1)\Delta}u(\cdot,-1)\|_{L^{4}_{x}}\leq \frac{CM}{(t+1)^{\frac{1}{8}}},
\end{equation}
and
\begin{equation}\label{semigroupL62}
\|e^{(t+1)\Delta}u(\cdot,-1)\|_{L^{6,2}_{x}}\leq \frac{CM}{(t+1)^{\frac{1}{4}}}.
\end{equation}

Next, we see that $w$ satisfies
\begin{equation}\label{wequation}
\partial_{t}w-\Delta w+u\cdot\nabla  u+\nabla p=0,\,\,\,\,\nabla\cdot w=0\,\,\,\, w(\cdot,-1)=0.  
\end{equation}
Using \eqref{L4assumption}, \eqref{semigroupLinfinity} and \eqref{semigroupL4} we infer the following. Namely, $w\in C([0,1]; L^{2}_{\sigma}(\mathbb{R}^3))\cap L^{2}(0,1; \dot{H}^{1}(\mathbb{R}^3))$ and satisfies the energy equality\footnote{This can be shown by utilizing arguments in \cite{lemarie2016navier} (Lemma 7.2 in \cite{lemarie2016navier}) and \cite{sohr2001navier} (Theorem 2.3.1 in \cite{sohr2001navier}), for example.} for $t\in [0,1]$:
\begin{align*}
&\frac{1}{2}\|w(\cdot,t)\|_{L^{2}_{x}}^{2}+\int\limits_{0}^{t}\int\limits_{\mathbb{R}^3} |\nabla w|^2 dxdt'
\\& =\frac{1}{2}\|w(\cdot,0)\|_{L^{2}_{x}}^{2}+\int\limits_{0}^{t}\int\limits_{\mathbb{R}^3} e^{(t'+1)\Delta} u(\cdot,-1)\otimes (w+e^{(t'+1)\Delta}u(\cdot,-1)):\nabla w dxdt'.
\end{align*}
Using H\"{o}lder's inequality followed by Young's inequality we see that
\begin{align*}
\|w(\cdot,t)\|_{L^{2}_{x}}^{2}+\int\limits_{0}^{t}\int\limits_{\mathbb{R}^3} |\nabla w|^2 dxdt'
\leq\ & \|w(\cdot,0)\|_{L^{2}_{x}}^{2}+C\int\limits_{0}^{t}\int\limits_{\mathbb{R}^3} |e^{(t'+1)\Delta}u(\cdot,-1)|^4 dxdt'\\
&+C\int\limits_{0}^{t}\|w(\cdot,t')\|_{L^{2}_{x}}^{2}\|e^{(t'+1)\Delta}u(\cdot,-1)\|_{L^{\infty}_{x}}^2 dt' .
\end{align*}
Using this, together with \eqref{wkineticenergyest}-\eqref{semigroupL4} and the fact $M>1$, we obtain
\begin{equation}\label{wdissipationest}
\int\limits_{0}^{1}\int\limits_{\mathbb{R}^3} |\nabla w|^2 dx dt'\leq C_{univ}M^{6}.
\end{equation}
Here, $C_{univ}$ is a universal constant.
\\
\noindent\textbf{Step 2: higher integrability via weak-strong uniqueness.} 
From \eqref{wdissipationest}, the pigeonhole principle, the Sobolev embedding theorem and \eqref{semigroupL62}, there exists $t_{1}\in [0,\frac{1}{2}]$ such that
\begin{equation}\label{pidgeonholeepoch}
{\|u(\cdot,t_{1})\|_{L^{6}(\mathbb{R}^3)}}\leq C_{univ}M^{3}.
\end{equation}
This, \eqref{typeIboundepoch} and Lebesgue interpolation (see Lemma 2.2 in \cite{mccormick2013generalised} for example) 
implies that $u_{0}\in L^{4}_{\sigma}(\mathbb{R}^3)$. This and \eqref{pidgeonholeepoch} allows us to apply Proposition \ref{prop.mild} and Remark \ref{persistency}. In particular, there exists $C'_{univ}\in (0,\infty)$ and a mild solution $U:\mathbb{R}^3\times [t_1,t_1+\frac{C'_{univ}}{M^{12}}]\rightarrow\mathbb{R}^3$ to the Navier-Stokes equations, with initial data $u(\cdot,t_1)$, which satisfies the following properties. Specifically,
\begin{equation}\label{epochmildL6}
\|U\|_{L^{\infty}_{t}L^{6}_{x}(\mathbb{R}^3\times [t_1,t_1+\frac{C'_{univ}}{M^{12}}])}\leq CM^3
\end{equation}
and
\begin{equation}\label{epochmildL4}
U\in L^{\infty}_{t}L^{4}_{x}(\mathbb{R}^3\times [t_1,t_1+\frac{C'_{univ}}{M^{12}}]).
\end{equation}
Let $W:\mathbb{R}^3\times [t_1,t_1+\frac{C'_{univ}}{M^{12}}]\rightarrow\mathbb{R}^3$ be defined by
\begin{equation}\label{WdefuminusU}
W:=u-U=-\int\limits_{t_1}^{t}e^{(t-s)\Delta}\mathbb{P}\nabla\cdot(u\otimes u-U\otimes U)(\cdot,s) ds\,\,\,\textrm{for}\,\,\,t\in[t_1,t_1+\frac{C'_{univ}}{M^{12}}]. 
\end{equation}
Using \eqref{L4assumption} and \eqref{epochmildL4}, we see that
\begin{equation}\label{Wenergyspace}
W\in C\Big(\Big[t_1,t_1+\frac{C'_{univ}}{M^{12}}\Big]; L^{2}_{\sigma}(\mathbb{R}^3)\Big)\cap L^{2}_{t}\Big(t_1,t_1+\frac{C'_{univ}}{M^{12}};\dot{H}^{1}(\mathbb{R}^3)\Big)
\end{equation}
and $W$ satisfies the energy equality for $t\in[t_1,t_1+\frac{C'_{univ}}{M^{12}}]$: 
\begin{equation}\label{Wenergyequality}
\|W(\cdot,t)\|_{L^{2}_{x}}^{2}+2\int\limits_{t_{1}}^{t}\int\limits_{\mathbb{R}^3} |\nabla W|^2 dxdt'= 2\int\limits_{t_1}^{t}\int\limits_{\mathbb{R}^3} U\otimes W:\nabla W dxdt'.
\end{equation}
Using this, \eqref{epochmildL6} and known weak-strong uniqueness arguments from \cite{Leray}, we infer that $W\equiv 0$ on $\mathbb{R}^3\times [t_1,t_1+\frac{C'_{univ}}{M^{12}}]$. Using this together with \eqref{epochmildL6},
 we get that for $\tau(s):=t_{1}+\frac{ s C_{univ}'}{M^{12}}$:
\begin{equation}\label{usubcriticalestepoch}
\|u\|_{L^{\infty}_{t}L^{6}_{x}(\mathbb{R}^3\times(\tau(0),\tau(1)))}\leq CM^3.
\end{equation}
Using that the pressure is given by a Riesz transform acting on $u\otimes u$, we can apply Calder\'{o}n-Zygmund to get that the pressure $p$ associated to $u$ satisfies
\begin{equation}\label{pressuresubcriticalepoch}
\|p\|_{L^{\infty}_{t}L^{3}_{x}(\mathbb{R}^3\times(\tau(0),\tau(1)))}\leq C M^6.
\end{equation}
\\
\noindent\textbf{Step 3: higher derivative estimates.} Here, the arguments differ from those utilized in \cite{Tao19}. Fix any $x\in \mathbb{R}^3$ and $t\in [\tau(\frac{1}{2}),\tau(1)]$.
Take any $r\in (0,\sqrt{\frac{C_{univ}'}{2M^{12}}}]$, which ensures that $t-r^2\in [\tau(0),\tau(1)]$.
Using this and \eqref{usubcriticalestepoch}-\eqref{pressuresubcriticalepoch} we see that 
\begin{equation}\label{CKNepoch}
\frac{1}{r^2}\int\limits_{Q_{(x,t)}(r)} |u|^3+|p|^{\frac{3}{2}} dxdt'\leq C_{univ}''r^{\frac{3}{2}}M^{9}=C_{univ}''(rM^{6})^{\frac{3}{2}}. 
\end{equation} 
Taking 
$$r=r_{0}:=\frac{1}{M^{6}}
\min{\Big(\sqrt{\frac{C_{univ}'}{2}}, \frac{\epsilon_{CKN}^{\frac{2}{3}}}{(C_{univ}'')^{\frac{2}{3}}}\Big)},$$ 
we can then apply the Caffarelli-Kohn-Nirenberg theorem \cite{CKN82} to get that for $j=0,1,\ldots$
$$\sup_{(x,t)\in\mathbb{R}^3\times [\tau(\frac{1}{2}),\tau(1)]} |\nabla^{j} u(x,t)|\leq \frac{C}{r_{0}^{j+1}}\simeq C(j)(M^6)^{j+1} .$$
This concludes the proof.
\end{proof}

\begin{lemma}[epoch of regularity, time slices]
There exists a universal constant $C_{4}\in[1,\infty)$ 
such that the following holds.
\label{epochtimeslice}
Suppose $u:[-1,0]\times \mathbb{R}^3\rightarrow \mathbb{R}^3$ and $p:[-1,0]\times \mathbb{R}^3\rightarrow \mathbb{R}$ 
 is a suitable finite-energy 
 solution to the Navier-Stokes equations. 
Furthermore, assume for some $M\geq 1$ and $t_{0}\in [-1,0)$ that
\begin{equation}\label{timesliceboundepoch}
\|u(\cdot,t_{0})\|_{L^{3}_{x}}\leq M
\end{equation}
and $u$ satisfies the energy inequality \eqref{energyinequalityturbulent} starting from $t'=t_{0}$.

We define $M^{\flat}$ as in \eqref{e.defM'theo}. Fix any $\alpha\geq \Mp$ and let
 \begin{equation}\label{s0restriction}
 s_{0}\in \Big[\frac{t_{0}}{2}, \frac{t_{0}}{4\alpha^{201}}\Big].
 \end{equation}
 Define
 \begin{equation}\label{Idef}
 I:=\Big[s_{0}, \frac{s_{0}}{2}\Big].
 \end{equation}
There exists 
 a subinterval $I'\subset I$ such that the following holds true.
Namely, 
\begin{equation}\label{epochtimeslicesmoothing} 
\|\nabla^j u\|_{L^{\infty}_{t}L^{\infty}_{x}(\mathbb{R}^3\times I')}\leq C_4 \alpha^{324} |I|^{\frac{-(j+1)}{2}}
\end{equation}
 for $j=0,1,2$ and
\begin{equation}\label{epochtimesliceintervallength}
|I'|\geq C_{4}^{-1}\alpha^{-216} |I|.
\end{equation}
\end{lemma}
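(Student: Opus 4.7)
The strategy mirrors the one used for Lemma \ref{epochTypeI}, with the Type I control replaced by the energy-type estimates for the perturbation of the heat extension of $u(\cdot,t_0)$, exactly as in the proof of Lemma \ref{lem.backconctimeslice}. The first step is to decompose, for $s\in[t_0,0]$,
\begin{equation*}
u(\cdot,s)=e^{(s-t_0)\Delta}u(\cdot,t_0)+V(\cdot,s),
\end{equation*}
and collect the standard heat-semigroup estimates $\|e^{(s-t_0)\Delta}u(\cdot,t_0)\|_{L^p_x}\lesssim M(s-t_0)^{-\frac{3}{2}(\frac{1}{3}-\frac{1}{p})}$ for $p\in[3,\infty]$ together with Giga's space-time bound $\|e^{(s-t_0)\Delta}u(\cdot,t_0)\|_{L^5(\mathbb{R}^3\times(t_0,\infty))}\lesssim M$. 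For $V$, I would reproduce verbatim the Gronwall-type energy estimate from the proof of Lemma \ref{lem.backconctimeslice} to obtain $\|V(\cdot,s)\|_{L^2}^2+\int_{t_0}^s\int|\nabla V|^2\,dx\,dt'\leq C(\Mp)^4(s-t_0)^{1/2}$ on $[t_0,0]$.

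Next, I would apply the pigeonhole principle on the time interval $I=[s_0,s_0/2]$ to locate $t_1\in I$ satisfying
\begin{equation*}
\|\nabla V(\cdot,t_1)\|_{L^2}^2\leq\frac{C(\Mp)^4(-t_0)^{1/2}}{|I|}.
\end{equation*}
Because $s_0\in [t_0/2,t_0/(4\alpha^{201})]$, one has both $|t_0|\geq 4|I|$ and $|t_0|\leq 8\alpha^{201}|I|$. Combining the upper bound with Sobolev $H^1\hookrightarrow L^6$, and adding the linear piece controlled by $M|I|^{-1/4}$, yields $\|u(\cdot,t_1)\|_{L^6}\leq C\alpha^{c_1}|I|^{-1/4}$ for an explicit power $c_1=O(1)$ (tracking $\alpha\geq\Mp\geq M$). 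The main upshot is that at time $t_1$ the solution enjoys a subcritical $L^6$ bound, with explicit $\alpha$-quantitative constants.

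From here the proof follows Steps 2--3 of Lemma \ref{epochTypeI} with the replacement $M\leadsto \alpha^{c_1}$. Applying Proposition \ref{prop.mild} to the Cauchy problem starting at $t_1$ produces a mild solution $U$ on an interval $[t_1,t_1+c\alpha^{-c_2}|I|]$, with $\|U\|_{L^\infty_tL^6_x}\leq C\alpha^{c_1}|I|^{-1/4}$ and $U\in L^5_{x,t}$. The hypothesis that $(u,p)$ is a suitable finite-energy solution that satisfies the energy inequality from $t'=t_0$ (so that, after the pigeonhole step, the energy inequality starting from $t_1$ is available too) allows us to set $W:=u-U$, deduce that $W$ belongs to the energy space on $[t_1,t_1+c\alpha^{-c_2}|I|]$ and satisfies Leray's energy identity, and then to apply the standard Leray weak-strong uniqueness argument to conclude $W\equiv 0$. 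This transfers the $L^\infty_tL^6_x$ bound and (via Calder\'on--Zygmund) the $L^\infty_tL^3_x$ bound for the pressure to $u$ itself on the same time interval.

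The final step is to invoke Caffarelli--Kohn--Nirenberg $\varepsilon$-regularity in the form of Propositions \ref{CKN} and \ref{CKNquanthigher} at a suitable scale $r\sim|I|^{1/2}/\alpha^{c_3}$ chosen so that $r^{-2}\int_{Q_{(x,t)}(r)}(|u|^3+|p|^{3/2})\,dx\,dt'\leq\varepsilon_1^*$; this produces the quantitative pointwise bounds $\|\nabla^j u\|_{L^\infty}\lesssim \alpha^{c_3(j+1)}|I|^{-(j+1)/2}$ for $j=0,1,2$ on the cylinder $\mathbb{R}^3\times I'$, where $I'\subset[t_1,t_1+c\alpha^{-c_2}|I|]$ has length $|I'|\geq C^{-1}\alpha^{-216}|I|$. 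Choosing $c_1,c_2,c_3$ so that the combined exponents come out to the asserted $324$ (for the smoothing) and $-216$ (for the length) concludes the proof. The main obstacle is the bookkeeping of the $\alpha$-exponents through the pigeonhole, Sobolev embedding, mild-solution time-of-existence, and CKN rescaling, along with verifying that the suitable-finite-energy framework together with the $t_0$-energy-inequality hypothesis is robust enough to run weak-strong uniqueness cleanly (this is why smoothness of $u$ on the time interval, and not just almost-everywhere energy inequality, enters the statement).
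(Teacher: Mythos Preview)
Your proposal is correct and follows essentially the same route as the paper. The paper first performs a parabolic rescaling/translation so that $t_0\mapsto 0$ and $I$ becomes $[\hat t,\hat t+\tfrac12]$ with $\hat t\in(1,4\alpha^{201}-1)$, then runs exactly the decomposition $u=e^{t\Delta}u(\cdot,0)+w$, the Gronwall energy bound $\sup\|w\|_{L^2}^2+\int\!\!\int|\nabla w|^2\leq (\Mp)^4(\hat t+\tfrac12)^{1/2}<2\alpha^{105}$, pigeonholes to find $t_1\in I\cap\Sigma$ with $\|u(\cdot,t_1)\|_{L^6}^2\leq C\alpha^{105}$, and then explicitly invokes ``similar arguments to those used in Lemma \ref{epochTypeI} replacing $M$ by $\alpha^{18}$'' to obtain $|I'|\geq C^{-1}\alpha^{-216}|I|$ and $\|\nabla^j u\|_{L^\infty}\leq C\alpha^{324}|I|^{-(j+1)/2}$. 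Your direct (unrescaled) version is equivalent; just be sure when you pigeonhole that you also intersect with the full-measure set $\Sigma$ so that the energy inequality genuinely restarts from $t_1$, which is what makes the Leray weak--strong uniqueness step go through without an a priori $L^4_{x,t}$ assumption.
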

\begin{remark}[estimates for applying Carleman inequalities, time slices]\label{rem.estcarlepotimeslice}
Let $I''\subset I'$ be such that $$|I''|= \frac{\alpha^{-648}}{4 C_4^2}|I'|.$$ 
Then $$|I'|^{-1}= \frac{\alpha^{-648}}{4 C_4^2}|I''|^{-1}.$$
Using \eqref{epochtimeslicesmoothing}-\eqref{epochtimesliceintervallength}, together with the fact that $C_4$ and $M\in [1,\infty)$, we see that
\begin{equation}\label{carlemantimesliceepochsmoothing}
\|\nabla^j u\|_{L^{\infty}_{t}L^{\infty}_{x}(\mathbb{R}^3\times I'')}\leq \frac{1}{2^{j+1}} |I''|^{\frac{-(j+1)}{2}}
\end{equation}
for $j=0,1,2$ and
\begin{equation}\label{carlemantimesliceepochbound}
|I''|\geq \frac{\alpha^{-864}}{4C_4^{3}}{|I|}.
\end{equation} 
\end{remark}
\begin{proof}
Define, 
$$\hat{t}:=\frac{t_{0}}{s_{0}}-1.$$
Note that \eqref{s0restriction} implies that 
\begin{equation}\label{thatnotclosetozero}
\hat{t}\in (1, 4\alpha^{201}-1).
\end{equation}
 By appropriate scalings and translations, we can assume without loss of generality that $u:\mathbb{R}^3\times (0,\hat T)\rightarrow \mathbb{R}^3$, for some $\hat T\in (0,\infty)$\footnote{The time $\hat T$ is the image of $0$ by the scalings and translations. Its precise value does not matter at all, since the proof is carried out on the time interval $[0,\hat t+\frac12]$.} and
 \begin{equation}\label{epochWLOGid}
 \|u(\cdot,0)\|_{L^{3}(\mathbb{R}^3)}\leq M,
 \end{equation}
\begin{equation}\label{energyinequalityWLOG}
 \|u(\cdot,t)\|_{L^{2}}^2+2\int\limits_{0}^{t}\int\limits_{\mathbb{R}^3}|\nabla u(y,s)|^2 dyds\leq \|u(\cdot,0)\|_{L^{2}}^2
 \end{equation}
 and
 \begin{equation}\label{epochWLOGI}
 I:=\Big[\hat{t}, \hat{t}+\frac{1}{2}\Big]\subset (1,\hat T).
 \end{equation}
On $\mathbb{R}^3\times (0,\hat T)$ we have
\begin{equation}\label{udecomptimeslice}
u= e^{t\Delta}u(\cdot,0)+w.
\end{equation}
Using O'Neil's convolution inequality once more gives for all $t\in(0,\infty)$,
\begin{equation}\label{semigroupL3}
\|e^{t\Delta}u(\cdot,0)\|_{L^{3}_{x}}\leq CM,
\end{equation}
\begin{equation}\label{semigroupL3L103}
\|e^{t\Delta}u(\cdot,0)\|_{L^{\frac{10}{3}}_{x}}\leq \frac{CM}{t^{\frac{1}{20}}},
\end{equation}
\begin{equation}\label{semigroupL3L4}
\|e^{t\Delta}u(\cdot,0)\|_{L^{4}_{x}}\leq \frac{CM}{t^{\frac{1}{8}}},
\end{equation}
and
\begin{equation}\label{semigroupL3L62}
\|e^{t\Delta}u(\cdot,0)\|_{L^{6,2}_{x}}\leq \frac{CM}{t^{\frac{1}{4}}}.
\end{equation}
Furthermore, arguments from \cite{Giga86} imply that
\begin{equation}\label{semigroupL3L5}
\|e^{t\Delta}u(\cdot,0)\|_{L^{5}(\mathbb{R}^3\times (0,\infty))}\leq CM.
\end{equation}
Moreover, similar arguments as those used in Proposition 2.2 of \cite{SeSv17} 
 yield that for $t\in (0,\hat T)$,
 
\begin{align*}
&\|w(\cdot,t)\|_{L^{2}_{x}}^{2}+\int\limits_{0}^{t}\int\limits_{\mathbb{R}^3} |\nabla w|^2 dxdt'
\\& \leq C\int\limits_{0}^{t}\int\limits_{\mathbb{R}^3} |e^{t\Delta}u(\cdot,0)|^4 dxdt'+C\int\limits_{0}^{t}\|w(\cdot,t')\|_{L^{2}_{x}}^{2}\|e^{t'\Delta}u(\cdot,0)\|_{L^{5}_{x}}^5 dt' .
\end{align*}
Note that the energy inequality for $w$, which is used to produce this estimate, can be justified rigorously using \eqref{semigroupL3L5} and similar arguments as those used in Proposition 14.3 in \cite{LR02}.

Using \eqref{semigroupL3}-\eqref{semigroupL3L5} and Gronwall's lemma, we infer that
\begin{equation}\label{wenergyesttimeslice}
\sup_{0<t<\hat{t}+\frac{1}{2}}\|w(\cdot,t)\|_{L^{2}_{x}}^2+\int\limits_{0}^{\hat{t}+\frac{1}{2}}\int\limits_{\mathbb{R}^3} |\nabla w|^2 dxdt'\leq (\Mp)^4(\hat{t}+\tfrac{1}{2})^{\frac{1}{2}}< 2\alpha^{105}.
\end{equation}
Here we used \eqref{thatnotclosetozero}. Now let $\Sigma\subset [0, \hat{T}]$ be such that \eqref{energyinequalityturbulent} is satisfied for all $t\in [t',\hat{T}]$ and $t'\in \Sigma$. Since $u$ is a suitable  finite-energy solution we have that $|\Sigma|=\hat{T}$. Furthermore, $\Sigma$ can be chosen without loss of generality such that
$$\int\limits_{\mathbb{R}^3}|\nabla w(x,t')|^2 dx<\infty\,\,\,\textrm{for}\,\,\textrm{all}\,\,t'\in\Sigma. $$
Using \eqref{wenergyesttimeslice}, the Sobolev embedding theorem, the pidgeonhole principle and \eqref{semigroupL3L62}, we see that there exists $t_{1}\in [\hat{t}, \hat{t}+\frac{1}{4}]\cap\Sigma$ such that 
\begin{equation}\label{L6timeslice}
\|u(\cdot,t_1)\|_{L^{6}}^2\leq C\alpha^{105}.
\end{equation}
Making use of the fact that $u$ satisfies the energy inequality starting from $t_1$ and \eqref{L6timeslice}, we can utilize similar arguments  to those used in Lemma \ref{epochTypeI} replacing $M$ by $\alpha^{18}$.
\end{proof}

\appendix

\section{Auxiliary results}
\label{sec.A}

We first state the existence result of mild solutions with subcritical data.

\begin{proposition}[\cite{W80,Giga86}]\label{prop.mild}
Let $n\in(3,\infty)$. There exists $k_0(n)\in (0,\infty)$, $K_0(n)\in[1,\infty)$ such that the following holds. For all $u_0\in L^n_\sigma(\R^3)$, we define 
\begin{equation*}
S_{mild}(u_0):=k_0\|u_0\|_{L^n}^{-\frac{2n}{n-3}}\in(0,\infty).
\end{equation*}
There exists a unique mild solution $a\in C([0,S_{mild});L^n)\cap L^\infty((0,S_{mild});L^n)$ with initial data $u_0$ such that 
\begin{multline}\label{e.propmild}
\sup_{t\in(0,S_{mild})}\big(\|a(\cdot,t)\|_{L^n}+t^{\frac3{2n}}\|a(\cdot,t)\|_{L^\infty}+t^\frac12\|\nabla a(\cdot,t)\|_{L^n}+t^{\frac12+\frac3{2n}}\|\nabla a(\cdot,t)\|_{L^\infty}\big)\\
+\|a\|_{L^{\frac{5n}3}(\R^3\times(0,S_{mild}))}\leq K_0\|u_0\|_{L^n}.
\end{multline}
\end{proposition}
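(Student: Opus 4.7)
The plan is to carry out the classical Kato--Fujita Picard iteration in a suitable Banach space $X$, taking advantage of the fact that the initial data $u_0 \in L^n_\sigma(\R^3)$ is subcritical (since $n>3$) so that the bilinear part of the Duhamel formulation acquires a positive power of the length $S$ of the time interval. The solution is sought as the fixed point of the map $\Phi(a):= e^{t\Delta}u_0 + B(a,a)$, where $B(a,b)(t):=-\int_0^t e^{(t-s)\Delta}\mathbb{P}\nabla\cdot (a\otimes b)(s)\,ds$, and the condition $S_{mild}\sim\|u_0\|_{L^n}^{-\frac{2n}{n-3}}$ is exactly the smallness threshold needed for $\Phi$ to be a contraction on an appropriate ball.

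Concretely, I would work in the space $X=X_S$ of divergence-free vector fields on $\R^3\times(0,S)$ equipped with the norm
\begin{equation*}
\|a\|_X := \sup_{0<t<S}\bigl(\|a(\cdot,t)\|_{L^n}+t^{\frac{3}{2n}}\|a(\cdot,t)\|_{L^\infty}+t^{\frac12}\|\nabla a(\cdot,t)\|_{L^n}+t^{\frac12+\frac{3}{2n}}\|\nabla a(\cdot,t)\|_{L^\infty}\bigr)+\|a\|_{L^{5n/3}(\R^3\times(0,S))},
\end{equation*}
which is scale-consistent with $L^n$ initial data (the mixed pair $(5n/3,5n/3)$ satisfies the parabolic Strichartz relation $\frac{2}{p}+\frac{3}{q}=\frac{3}{n}$). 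The linear estimate $\|e^{t\Delta}u_0\|_X\leq C_0\|u_0\|_{L^n}$ then follows from the heat-semigroup bounds $\|\nabla^k e^{t\Delta}f\|_{L^p}\lesssim t^{-k/2-\frac{3}{2}(\frac{1}{q}-\frac{1}{p})}\|f\|_{L^q}$ for $p\geq q$, together with the Strichartz-type inequality $\|e^{t\Delta}u_0\|_{L^{5n/3}(\R^3\times(0,\infty))}\lesssim\|u_0\|_{L^n}$ (obtained by real interpolation from the pointwise bound $\|e^{t\Delta}u_0\|_{L^{5n/3}_x}\leq C t^{-3/(5n)}\|u_0\|_{L^n}$ combined with the duality $L^n\hookrightarrow L^{n,\infty}$ if necessary).

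Next I would prove the bilinear estimate
\begin{equation*}
\|B(a,b)\|_X \leq C_1\, S^{\frac{n-3}{2n}}\, \|a\|_X\,\|b\|_X,
\end{equation*}
estimating each constituent norm separately: for the weighted $L^n$ and $L^\infty$ norms one uses the kernel bound $\|\mathbb{P}\nabla e^{(t-s)\Delta}\|_{L^q\to L^p}\lesssim (t-s)^{-1/2-\frac{3}{2}(1/q-1/p)}$ and H\"older in $x$ to handle $a\otimes b$, converting the integral in $s$ into a Beta function in the variable $s/t$ and extracting the factor $S^{(n-3)/(2n)}$ via a H\"older in time that exploits subcriticality; for the $L^{5n/3}_{t,x}$ piece one uses $|a\otimes b|\in L^{5n/6}_{t,x}$ and the parabolic Calder\'on--Zygmund / maximal regularity estimate $\|\int_0^t e^{(t-s)\Delta}\mathbb{P}\nabla\cdot f(s)\,ds\|_{L^{5n/3}_{t,x}}\lesssim\|f\|_{L^{5n/6}_{t,x}}$, again followed by H\"older in time. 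Choosing $k_0$ so that $k_0\leq (4C_0C_1)^{-2n/(n-3)}$ guarantees that $\Phi$ maps the closed ball of radius $2C_0\|u_0\|_{L^n}$ in $X$ to itself and is a $\tfrac12$-contraction, producing by the Banach fixed point theorem a unique $a\in X$ solving the Duhamel equation, with $\|a\|_X\leq K_0\|u_0\|_{L^n}$, which rewrites as the estimate \eqref{e.propmild}. Continuity $a\in C([0,S_{mild});L^n)$ follows from continuity of the heat semigroup on $L^n$ together with continuity of the Duhamel term (which can be read off the obtained bounds), and uniqueness in $C([0,S_{mild});L^n)\cap L^\infty((0,S_{mild});L^n)$ is standard.

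The main technical obstacle will be the $L^{5n/3}_{t,x}$ bilinear estimate: unlike the weighted pointwise-in-time bounds, which reduce to elementary singular integrals in $s$, controlling the mixed space-time norm of $B(a,b)$ requires the parabolic mixed-norm maximal regularity for $\partial_t-\Delta$ with a divergence-form right-hand side. Once that tool is in place, extracting the smallness factor $S^{(n-3)/(2n)}$ is precisely where the restriction $n>3$ (subcriticality) is used; this is also what pins down the sharp lifespan $S_{mild}\sim\|u_0\|_{L^n}^{-2n/(n-3)}$ stated in the proposition.
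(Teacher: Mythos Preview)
The paper does not actually give its own proof of this proposition: it is stated in Appendix~\ref{sec.A} as a known result from the literature with citations to Weissler \cite{W80} and Giga \cite{Giga86}, and no argument is supplied. Your Kato--Fujita fixed-point sketch is precisely the standard approach used in those references, and the outline you give is correct: the subcritical gain $S^{(n-3)/(2n)}$ in the bilinear estimate is exactly what forces the lifespan $S_{mild}\sim\|u_0\|_{L^n}^{-2n/(n-3)}$ and makes the contraction work.

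One small comment: for the $L^{5n/3}_{t,x}$ piece you anticipate needing parabolic maximal regularity, but in fact the paper (following Giga \cite{Giga86}) handles the analogous linear bound $\|e^{t\Delta}u_0\|_{L^{5n/3}(\R^3\times(0,\infty))}\lesssim\|u_0\|_{L^n}$ by the elementary pointwise smoothing $\|e^{t\Delta}u_0\|_{L^{5n/3}_x}\lesssim t^{-3/(5n)}\|u_0\|_{L^n}$ combined with real interpolation; the bilinear version can be obtained similarly without invoking the full maximal regularity machinery, since the weighted $L^\infty_t L^n_x$ and $L^\infty_t L^\infty_x$ control already in your norm $X$ allows one to estimate $\|B(a,b)(\cdot,t)\|_{L^{5n/3}_x}$ pointwise in $t$ and then integrate. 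So the ``main technical obstacle'' you flag is a little overstated, but the rest of your plan is sound.
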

\begin{remark}\label{persistency}
For $U, V:\mathbb{R}^3\times (0,T)\rightarrow\mathbb{R}^3$ define
\begin{equation}\label{bilineardef}
B(U,V)(\cdot,t):=\int\limits_{0}^{t}\mathbb{P}\partial_{i}e^{(t-s)\Delta}U_{i}(\cdot,s)V_{j}(\cdot,s)ds.
\end{equation}
Using \eqref{Oseenest}, one has the estimate
\begin{multline*}
\|B(U,V)\|_{L^\infty_{t}L^{4}_{x}(\mathbb{R}^3\times [0,T])}+\|B(V,U)\|_{L^\infty_{t}L^{4}_{x}(\mathbb{R}^3\times [0,T])}\\
\leq cT^{\frac{1}{4}}\|U\|_{L^{\infty}_{t}L^{6}_{x}(\mathbb{R}^3\times [0,T])}\|V\|_{L^{\infty}_{t}L^{4}_{x}(\mathbb{R}^3\times [0,T])}.
\end{multline*}
Using this, one can show that for $u_{0}\in L^{6}_{\sigma}(\mathbb{R}^3)\cap L^{4}_{\sigma}(\mathbb{R}^3)$ and $k_{0}$ sufficiently small the following \textit{persistency} property holds true. Namely, the mild solution in Proposition \ref{prop.mild} satisfies
\begin{equation}\label{mildpersistencyL4}
\sup_{t\in (0,S_{mild})} \|a(\cdot,t)\|_{L^{4}}\leq 2\|u_{0}\|_{L^{4}}
\end{equation}
in addition to \eqref{e.propmild} with $n=6$. This is utilized in the proof of Lemma \ref{epochTypeI}.
\end{remark}
The next result is the local energy bound for local energy solutions.\footnote{\label{footles}Notice that `local energy solutions' to the Navier-Stokes equations, are sometimes described in the literature as `Lemari\'{e}-Rieusset solutions' or `Leray solutions'. They were conceived by Lemari\'{e}-Rieusset in \cite{LR02}. In our paper, whenever we refer to `local energy solutions', we mean in the sense of Definition 2.1 in \cite{JS13}. Notice, in particular, that suitable finite-energy solutions (defined in Section \ref{subsec.not} `Notations') are local energy solutions.}

\begin{proposition}[{\cite[Lemma 2.1]{JS13}, \cite{LR02}}]\label{prop.lews}
There exist two universal constants $k_1\in(0,\infty)$, $K_1\in [1,\infty)$ such that the following holds. For all $M\in(0,\infty)$, we define
\begin{equation*}
S_{locen}(M):=k_1\min(M^{-4},1)\in (0,\infty).
\end{equation*}
For all $u_0\in L^2_{uloc}(\R^3)$ with $\|u_0\|_{L^2(B_{\bar x}(1))}\stackrel{|\bar x|\rightarrow\infty}{\longrightarrow}0$, for all local energy solution $(u,p)$ to \eqref{e.nse} with initial data $u_0$, if 
\begin{equation*}
\sup_{\bar x\in\R^3}\int\limits_{B_{\bar x}(1)}|u_0(x)|^2\, dx\leq M^2,
\end{equation*}
then
\begin{equation}\label{e.apriori}
\sup_{s\in(0,S_{locen})}\sup_{\bar x\in\R^3}\int\limits_{B_{\bar x}(1)}\frac{|u(x,s)|^2}{2}\, dx+\sup_{\bar x\in\R^3}\int\limits_0^{S_{locen}}\int\limits_{B_{\bar x}(1)}|\nabla u(x,s)|^2\, dx\, ds\leq K_1M^2.
\end{equation}
Moreover, we have the following decomposition of the pressure: for all $\bar x\in\R^3$ and $t\in(0,S_{locen})$, there exists $C_{\bar x}(t)\in\R$ such that\footnote{This decomposition is also valid for $p_{loc}$ defined in $B_{\bar x}(\frac12)\times(0,S_{locen})$ instead of $B_{\bar x}(\frac32)\times(0,S_{locen})$ as stated here. The constant $C_{\bar x}(t)$ has to be adapted.}
\begin{equation}\label{e.pressuredec}
p(x,t)-C_{\bar x}(t)=-\frac13|u(x,t)|^2+p_{loc}(x,t)+p_{nonloc}(x,t)
\end{equation}
for all $(x,t)\in B_{\bar x}(\frac32)\times (0,S_{locen})$,  with
\begin{equation}\label{plocdef}
p_{loc}(x,t)=-\int\limits_{\mathbb{R}^3} K_{ij}(x-y)\varphi(y)u_{i}(y,t)u_{j}(y,t) dy
\end{equation}
and
\begin{equation}\label{pnonlocdef}
p_{nonloc}(x,t)=-\int\limits_{\mathbb{R}^3} (K_{ij}(x-y)-K_{ij}(\bar{x}-y))(1-\varphi(y))u_{i}(y,t)u_{j}(y,t) dy.
\end{equation}
Here, $\varphi\in C_{0}^{\infty}(B_{\bar{x}}(4))$ (with $\varphi\equiv 1$ on $B_{\bar{x}}(3)$) and $K_{ij}(x):=\partial_{i}\partial_{j}\Big(\frac{1}{|x|}\Big).$

Moreover, we have the estimate
\begin{align}\label{e.pressurest}
\|p_{loc}\|_{L^\frac53(B_{\bar x}(\frac32)\times(0,S_{locen}))}+\|p_{nonloc}\|_{L^\infty(B_{\bar x}(\frac32)\times(0,S_{locen}))}
\leq K_1M^2.
\end{align}
\end{proposition}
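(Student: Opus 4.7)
The plan is to follow the Lemarié-Rieusset / Jia-Šverák strategy: first derive the decomposition of the pressure, then use it together with the local energy inequality and a bootstrap (Gronwall-type) argument to close the a priori bound on $E(t):=\sup_{\bar x}\int_{B_{\bar x}(1)}|u(x,t)|^2\,dx$.

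First I would derive the pressure decomposition. Since $-\Delta p=\partial_i\partial_j(u_iu_j)$ in $\mathbb R^3$, one has (in the sense of distributions and modulo a function $C_{\bar x}(t)$ of time)
\begin{equation*}
p(x,t)=-\tfrac13|u(x,t)|^2+\mathrm{p.v.}\!\!\int_{\mathbb R^3}K_{ij}(x-y)u_i(y,t)u_j(y,t)\,dy,
\end{equation*}
where the $-\tfrac13|u|^2$ originates from the diagonal of $K_{ij}$. Splitting the kernel with the cutoff $\varphi$ supported in $B_{\bar x}(4)$ with $\varphi\equiv 1$ on $B_{\bar x}(3)$ and subtracting the constant $\int K_{ij}(\bar x-y)(1-\varphi)u_iu_j\,dy$ (absorbed into $C_{\bar x}(t)$) produces the formulas \eqref{plocdef} and \eqref{pnonlocdef}. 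For $p_{loc}$, the kernel $K_{ij}$ is Calderón-Zygmund and $\varphi u\otimes u$ is compactly supported; so Calderón-Zygmund and Lebesgue interpolation $L^\infty_tL^2_x\cap L^2_t\dot H^1_x\hookrightarrow L^{10/3}_{t,x}$ give the $L^{5/3}$ bound in \eqref{e.pressurest}. For $p_{nonloc}$, on $B_{\bar x}(3/2)$ the difference kernel satisfies $|K_{ij}(x-y)-K_{ij}(\bar x-y)|\lesssim |x-\bar x|/|y-\bar x|^4$ for $|y-\bar x|\geq 3$, and summing over dyadic annuli against $\|u\otimes u\|_{L^1(B_{\bar x+z}(1))}\leq M^2$ yields the $L^\infty$ bound in \eqref{e.pressurest}.

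Next, I would run the local energy estimate. Pick a smooth cutoff $\phi_{\bar x}\in C^\infty_c(B_{\bar x}(3/2))$ with $\phi_{\bar x}\equiv 1$ on $B_{\bar x}(1)$. Testing the local energy inequality against $\phi_{\bar x}^2$ gives, for a.e. $t\in(0,S_{locen})$,
\begin{equation*}
\int\phi_{\bar x}^2|u(\cdot,t)|^2\,dx+2\!\int_0^t\!\!\int\phi_{\bar x}^2|\nabla u|^2\lesssim \!\int\phi_{\bar x}^2|u_0|^2+\!\int_0^t\!\!\int (|u|^2+2(p-C_{\bar x}(t)))\,u\cdot\nabla(\phi_{\bar x}^2)+|u|^2|\Delta\phi_{\bar x}^2|.
\end{equation*}
Inserting the decomposition of $p-C_{\bar x}(t)$, the $-\tfrac13|u|^2$ contribution combines with $|u|^2u\cdot\nabla\phi_{\bar x}^2$; these cubic terms are bounded by Hölder using $L^3_{loc}\hookrightarrow L^2_{loc}\cap L^{10/3}_{t,x}$ (with Sobolev embedding in space using $|\nabla u|^2$ on the LHS) and are controlled by $t^\alpha E(t)^{\beta}$ with some $\alpha>0$, $\beta>1$. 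The $p_{loc}$ term is estimated via \eqref{e.pressurest} and gives a bound of the same type, and the $p_{nonloc}$ contribution produces an $L^\infty$ contribution bounded by $M^2$ times a sub-unit power of $t$. Taking the supremum over $\bar x\in\mathbb R^3$ (the estimates are translation-uniform in $\bar x$ thanks to the $L^2_{uloc}$ structure), one arrives at a functional inequality of the schematic form
\begin{equation*}
E(t)+D(t)\leq CM^2+Ct^{\alpha}\big(E(t)+D(t)\big)^{\beta}\quad\text{with}\quad \beta>1,
\end{equation*}
where $D(t):=\sup_{\bar x}\int_0^t\!\!\int_{B_{\bar x}(1)}|\nabla u|^2$.

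Finally, I would close the bound by a continuity/bootstrap argument: defining $T^*$ as the largest time in $[0,S_{locen}(M)]$ on which $E(t)+D(t)\leq K_1M^2$, one chooses $K_1$ large enough and $k_1$ small enough in $S_{locen}=k_1\min(M^{-4},1)$ so that on $[0,T^*]$ the nonlinear term $Ct^\alpha(K_1M^2)^\beta$ is dominated by $\tfrac12K_1M^2$; continuity of $t\mapsto E(t)+D(t)$ forces $T^*=S_{locen}(M)$, yielding \eqref{e.apriori}. The main technical obstacle is the careful bookkeeping in the pressure estimate — in particular handling $p_{nonloc}$ translation-uniformly in $\bar x$ via the kernel decay, and tracking exactly which power of $t$ multiplies the cubic (pressure-free) terms so that the bootstrap closes with the explicit scaling $S_{locen}\sim M^{-4}$.
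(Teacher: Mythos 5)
The paper does not give a proof of Proposition~\ref{prop.lews}: it is stated as an auxiliary result and cited directly to Jia--\v{S}ver\'ak \cite{JS13} (Lemma~2.1) and Lemari\'e-Rieusset \cite{LR02}. Your sketch correctly reconstructs the argument from those references --- the pressure decomposition via Riesz transforms and a cutoff, translation-uniform local energy testing with $\phi_{\bar x}^2$, and a bootstrap closing the a priori bound with the scaling $S_{locen}\sim M^{-4}$ --- so it follows essentially the approach the paper relies on via citation.
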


\section{Carleman inequalities}
\label{sec.B}
The two statements below are taken directly from \cite{Tao19}. The first Carleman inequality in Proposition \ref{prop.firstcarl} corresponds to a quantitative backward uniqueness result.

\begin{proposition}[first Carleman inequality {\cite[Proposition 4.2]{Tao19}}]\label{prop.firstcarl}
Let $C_{Carl}\in[1,\infty)$, 
 $S\in (0,\infty)$, $0<r_-<r_+$ and we define the space-time annulus 
\begin{equation*}
\mathcal A:=\{(x,t)\in\mathbb R^3\times\mathbb R\, :\ t\in[0,S],\ r_-\leq |x|\leq r_+\}.
\end{equation*}
Let $w:\ \mathcal A\rightarrow\mathbb R^3$ be such that $w,\ \partial_tw,\ \nabla w$ and $\nabla^2w$ are continuous in space and time and such that $w$ satisfies the differential inequality
\begin{equation}\label{e.diffineq}
\left|(\partial_t+\Delta)w\right|\leq C_{Carl}^{-1}S^{-1}|w|+C_{Carl}^{-\frac12}S^{-\frac12}|\nabla w|\quad\mbox{on}\ \mathcal A.
\end{equation}
Assume
\begin{equation}\label{e.lowerr-}
r_-^2\geq 4C_{Carl}S.
\end{equation}
Then we have the following bound
\begin{equation}\label{e.conclcarlone}
\int\limits_0^{\frac S4}\int\limits_{10r_-\leq|x|\leq \frac{r_+}2}(S^{-1}|w|^2+|\nabla w|^2)\, dxdt\lesssim C_{Carl}^{3}e^{-\frac{r_-\cdot r_+}{4C_{Carl}S}}\big(X+e^{\frac{2r_+^2}{C_{Carl}S}}Y\big),
\end{equation}
where 
\begin{align*}
X:=\iint\limits_{\mathcal A}e^{\frac{2|x|^2}{C_{Carl}S}}(S^{-1}|w|^2+|\nabla w|^2)\, dxdt,\qquad Y:=\int\limits_{r_-\leq |x|\leq r_+}|w(x,0)|^2\, dx.
\end{align*}
\end{proposition}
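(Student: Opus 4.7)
\begin{proofx}{Proposition \ref{prop.firstcarl} (sketch)}
The plan is to adapt the classical Carleman method for the backward heat operator, following the Escauriaza--Seregin--\v{S}ver\'ak strategy for backward uniqueness in parabolic equations, but keeping careful track of all constants so as to extract the precise exponential weights that appear in \eqref{e.conclcarlone}. The key observation is that the weight $e^{2|x|^2/(C_{Carl}S)}$ in the definition of $X$ already reveals the Carleman weight: it suggests working with $\phi(x,t) := |x|^2/(C_{Carl}S)$, whose spatial growth one then exploits against the potential-type error terms on the right-hand side of \eqref{e.diffineq}.

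My first step would be to set up the conjugated operator $L_\phi := e^{\phi}(\partial_t + \Delta) e^{-\phi}$ acting on a smooth test function $v$. Expanding,
\begin{equation*}
L_\phi v = \partial_t v + \Delta v - 2\nabla\phi\cdot\nabla v + (|\nabla\phi|^2 - \Delta\phi - \phi_t)v,
\end{equation*}
and splitting into symmetric and antisymmetric parts $L_\phi = \mathcal{S} + \mathcal{A}$, one obtains the standard identity
\begin{equation*}
\|L_\phi v\|_{L^2}^2 = \|\mathcal{S} v\|_{L^2}^2 + \|\mathcal{A} v\|_{L^2}^2 + \langle [\mathcal{S},\mathcal{A}]v, v\rangle.
\end{equation*}
A direct computation shows that, with $\phi = |x|^2/(C_{Carl}S)$, the commutator $[\mathcal{S},\mathcal{A}]$ has a positive part of size $\sim (C_{Carl}S)^{-1}$ coming from the Hessian of $\phi$ and a term of size $|\nabla\phi|^2/(C_{Carl}S) \sim |x|^2/(C_{Carl}S)^3$. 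This is the source of the coercive gain that will dominate the error from \eqref{e.diffineq}: the condition $r_-^2 \geq 4C_{Carl}S$ guarantees that, on the annulus $|x|\geq r_-$, the Hessian-type term $(C_{Carl}S)^{-1}$ dominates the $C_{Carl}^{-1}S^{-1}$ coming from the potential in \eqref{e.diffineq}, and similarly the $|\nabla\phi|^2$ term dominates the gradient error.

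Next I would localize by applying the identity to $v = \chi e^\phi w$ for a smooth spacetime cutoff $\chi$ equal to $1$ on the target region $\{10r_- \leq |x|\leq r_+/2\}\times[0,S/4]$ and supported in the full annulus $\mathcal A$. The source term $\chi e^\phi Lw$ is controlled via \eqref{e.diffineq} and absorbed into the coercive left-hand side (this is where the hypothesis on $C_{Carl}$ and $r_-$ is essential). The commutator $[L,\chi]$ produces boundary terms living on the support of $\nabla\chi$ and $\partial_t\chi$: the contribution from $|x|\sim r_+$ and $t\sim 0$ produce exactly $X$ and $Y$, whereas the contributions from $|x|\sim r_-$ and $t\sim S/4$ are handled by the smallness condition $r_-^2\geq 4C_{Carl}S$ (for the inner spatial boundary) and by the fact that the forward-in-time flux at $t=S/4$ carries the right sign in the monotonicity. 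The final weighted estimate
\begin{equation*}
\int\!\!\!\int_{\{10r_- \leq |x|\leq r_+/2\}\times[0,S/4]}\!\!\!\!\!\!e^{2\phi}(S^{-1}|w|^2 + |\nabla w|^2)\, dxdt \lesssim C_{Carl}^3\bigl(X + e^{2r_+^2/(C_{Carl}S)}Y\bigr)
\end{equation*}
follows, and dividing by the minimum value of $e^{2\phi}$ on the inner region, which is at least $e^{200r_-^2/(C_{Carl}S)} \geq e^{r_-\cdot r_+/(4C_{Carl}S)}$ (using $r_+\geq r_-$), strips the weight off the left and yields \eqref{e.conclcarlone}.

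The principal obstacle will be the precise bookkeeping of constants so that the Hessian gain from $[\mathcal{S},\mathcal{A}]$ outweighs both the potential/gradient errors from \eqref{e.diffineq} and the cutoff commutator errors simultaneously; in particular, one must verify that the contribution from the inner boundary $|x|=r_-$, where $e^{2\phi}$ is smallest, can be absorbed into the left-hand side rather than adding a parasitic term to the right. This is exactly the role played by the hypothesis $r_-^2 \geq 4C_{Carl}S$, which ensures that the Carleman weight grows quickly enough across the annulus to push the inner-boundary error below the coercive gain.
\end{proofx}
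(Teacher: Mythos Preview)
The paper does not give its own proof of this proposition; it is quoted from Tao \cite[Proposition 4.2]{Tao19}, with only the remark that Tao's argument goes through under the weaker continuity hypotheses stated here. So the comparison is against Tao's proof, and your sketch is broadly in the right spirit (conjugated backward-heat operator, commutator positivity, cutoff localization).

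However, your final step contains a genuine error. You claim that on the region $\{10r_-\leq|x|\leq r_+/2\}$ the minimum of $e^{2\phi}$ with $\phi=|x|^2/(C_{Carl}S)$ satisfies
\[
e^{200r_-^2/(C_{Carl}S)}\geq e^{r_-r_+/(4C_{Carl}S)},
\]
``using $r_+\geq r_-$''. But this inequality is equivalent to $r_+\leq 800\,r_-$, which is \emph{not} assumed; the whole point of backward uniqueness is precisely the regime $r_+\gg r_-$, and there your bound fails. With the fixed weight $\phi=|x|^2/(C_{Carl}S)$ you can only extract a factor $e^{-O(r_-^2)/(C_{Carl}S)}$ on the left, never the product $r_-r_+$.

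The missing idea is a free large parameter in the Carleman weight. In Tao's proof (following Escauriaza--Seregin--\v{S}ver\'ak) the weight carries an auxiliary parameter $\alpha$, roughly of the form $\exp\big(\alpha\tfrac{|x|^2}{T_1-t}-C\alpha^2(T_1-t)\big)$; the Carleman inequality is derived for each fixed $\alpha$, the cutoff errors near $|x|\sim r_-$ and $|x|\sim r_+$ then carry different powers of $\alpha$, and optimizing $\alpha$ in terms of $r_-,r_+,S$ is exactly what produces the product $r_-r_+$ in the exponent of \eqref{e.conclcarlone}. The weight $e^{2|x|^2/(C_{Carl}S)}$ that appears in $X$ is the \emph{outcome} of that optimization, not the Carleman weight one starts from. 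Without this extra degree of freedom the argument cannot close.
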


The second Carleman inequality in Proposition \ref{prop.sndcarl} below corresponds to a quantitative unique continuation result.

\begin{proposition}[second Carleman inequality {\cite[Proposition 4.3]{Tao19}}]\label{prop.sndcarl}
Let $C_{Carl}\in[1,\infty)$, $S\in (0,\infty)$, $r>0$ and we define the space-time cylinder
\begin{equation*}
\mathcal C:=\{(x,t)\in\mathbb R^3\times\mathbb R\, :\ t\in [0,S],\ |x|\leq r\}.
\end{equation*}
Let $w:\ \mathcal C\rightarrow\mathbb R^3$ such that $w,\ \partial_tw,\ \nabla w$ and $\nabla^2w$ are continuous in space and time and such that $w$ satisfies the differential inequality
\begin{equation}\label{e.diffineqC}
\left|(\partial_t+\Delta)w\right|\leq C_{Carl}^{-1}S^{-1}|w|+C_{Carl}^{-\frac12}S^{-\frac12}|\nabla w|\quad\mbox{on}\ \mathcal C.
\end{equation}
Assume 
\begin{equation}\label{e.lowerr}
r^2\geq 4000S.
\end{equation}
Then, for all $0<\check{s}\leq\hat s<\frac{S}{10000}$ one has the bound
\begin{equation}\label{e.conclcarltwo}
\int\limits_{\hat s}^{2\hat s}\int\limits_{|x|\leq \frac r2}(S^{-1}|w|^2+|\nabla w|^2)e^{-\frac{|x|^2}{4t}}\, dxdt\lesssim e^{-\frac{r^2}{500\hat s}}X+(\hat s)^\frac32\Big(\frac{e\hat s}{\check s}\Big)^{\frac{O(1)r^2}{\hat s}}Y,
\end{equation}
where 
\begin{align*}
X:=\int\limits_0^S\int\limits_{|x|\leq r}(S^{-1}|w|^2+|\nabla w|^2)\, dxdt,\qquad Y:=\int\limits_{|x|\leq r}|w(x,0)|^2(\check s)^{-\frac32}e^{-\frac{|x|^2}{4\check s}}\, dx.
\end{align*}
\end{proposition}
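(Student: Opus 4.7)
My plan is to prove the inequality via a weighted $L^{2}$ identity (Carleman estimate) with a Gaussian weight adapted to the backward heat kernel, combined with a log-convexity / frequency-function argument to extract the specific quantitative form of the dependence on $\check s/\hat s$. The broad scheme parallels Escauriaza--Fern\'andez-type quantitative parabolic unique continuation.

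First, I would localize. Introduce a smooth space--time cutoff $\eta(x,t)$ that equals $1$ on $\{|x|\leq r/2\}\times[\hat s,2\hat s]$ and vanishes outside $\{|x|\leq r\}\times[0,S]$, chosen with the scaling $|\nabla\eta|\lesssim r^{-1}$ on $\{r/2\leq|x|\leq r\}$ and $|\partial_t\eta|\lesssim S^{-1}$ near $t=0$ and $t=S$. Setting $v:=\eta w$, the pointwise inequality \eqref{e.diffineqC} becomes
\begin{equation*}
\big|(\partial_t+\Delta)v\big|\leq C_{Carl}^{-1}S^{-1}|v|+C_{Carl}^{-\frac12}S^{-\frac12}|\nabla v|+\mathcal E,
\end{equation*}
where the commutator error $\mathcal E$ is supported in the spatial annulus $\{r/2\leq|x|\leq r\}\times[0,S]$ and near the temporal endpoints, and is dominated by $r^{-1}|\nabla w|\chi_{\{r/2\leq|x|\leq r\}}+(S^{-1}|w|+r^{-1}|\nabla w|)\chi_{\{t\leq \check s\}}$. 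These contributions are what will ultimately produce the $X$-term (through the spatial boundary) and the $Y$-term (through the initial layer) in \eqref{e.conclcarltwo}.

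Next, I would pair the localized inequality with a Gaussian weight $G_\tau(x,t):=(t+\tau)^{-3/2}e^{-|x|^2/(4(t+\tau))}$ which is annihilated by the backward heat operator $(-\partial_t+\Delta)G_\tau=0$, and form the quantity
\begin{equation*}
H_\tau(t):=\int_{\R^3}|v(x,t)|^2\,G_\tau(x,t)\,dx.
\end{equation*}
Differentiating in $t$, integrating by parts in space (using $(-\partial_t+\Delta)G_\tau=0$) and applying the differential inequality produces, after standard manipulations, a coercive identity of the form
\begin{equation*}
H_\tau'(t)+2\int|\nabla v|^2\,G_\tau\,dx\lesssim C_{Carl}^{-1}S^{-1}H_\tau(t)+\int|\mathcal E|^2\,G_\tau\,dx,
\end{equation*}
where the gradient term on the left absorbs the $C_{Carl}^{-1/2}S^{-1/2}|\nabla v|$ contribution thanks to Cauchy--Schwarz together with the hypothesis $r^2\geq 4000S$, which forces $G_\tau$ to be small at $|x|\sim r$. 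The hypothesis $r^{2}\geq 4000S$ is crucial here: it guarantees that the Gaussian weight suppresses the spatial boundary contribution enough to be absorbed.

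The key step is then a log-convexity argument: choosing $\tau\approx\check s$, one shows that $t\mapsto\log H_\tau(t)$ is approximately convex up to controlled errors, so that for $\hat s\in[\check s,S/10000]$ one has a three-point interpolation
\begin{equation*}
H_\tau(\hat s)\leq H_\tau(\check s)^{\theta}\,H_\tau(S)^{1-\theta}\cdot(\mbox{correction}),
\end{equation*}
with an exponent $\theta\in(0,1)$ depending logarithmically on $\hat s/\check s$. Unwinding the Gaussian weights, $H_\tau(\check s)$ is essentially the $Y$-quantity, while $H_\tau(S)$ together with the $\mathcal E$-contribution is controlled by $e^{-r^2/(500\hat s)}X$. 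Optimizing in $\theta$ and converting $H_\tau(\hat s)$ back to an integral on $[\hat s,2\hat s]$ (which is easy since $G_\tau$ is essentially constant in $t$ on a doubling interval) produces the factor $(e\hat s/\check s)^{O(1)r^2/\hat s}$ in front of $Y$ and the exponentially small coefficient in front of $X$.

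The hard part is twofold. First, the non-homogeneous differential inequality forces one to absorb $C_{Carl}^{-1/2}S^{-1/2}|\nabla v|$ into the coercive gradient term; this is where sharp tracking of constants, together with $r^2\geq 4000S$, is essential. Second, and more delicate, is extracting the \emph{explicit} quantitative form $(e\hat s/\check s)^{O(1)r^2/\hat s}$ rather than an abstract unique continuation bound: this requires the log-convexity estimate for $H_\tau$ with error terms carefully compared against the Gaussian decay scale $r^2/\hat s$. Getting the precise power $r^2/\hat s$ in the exponent (and not some worse quantity) is the content of the classical Escauriaza--Fern\'andez frequency estimate, adapted here to the parabolic setting with the Gaussian weight $G_\tau$.
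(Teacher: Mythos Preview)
The paper does not give its own proof of this proposition. It is stated in Appendix~\ref{sec.B} as a direct quotation of \cite[Proposition 4.3]{Tao19}, and the only comment the paper adds is that Tao's proof, written for smooth functions, goes through under the weaker regularity hypotheses stated here. So there is no argument in the paper to compare your sketch against.

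That said, a brief remark on your outline. The route you propose, via a Gaussian-weighted $L^2$ quantity $H_\tau(t)=\int|v|^2G_\tau\,dx$ and a log-convexity / frequency-function interpolation, is a legitimate approach to quantitative parabolic unique continuation, in the spirit of Escauriaza--Fern\'andez--Vessella. It differs in presentation from Tao's proof, which proceeds by a single Carleman inequality with an explicit weight of the form $g(x,t)=\alpha\log\frac{1}{t}+\frac{|x|^2}{4t}$ (derived from his general Lemma 4.1) rather than a two-step monotonicity--interpolation argument; the two methods are closely related and yield the same quantitative dependence.

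One concrete slip: your displayed energy identity has the wrong sign on the gradient term. Since $(\partial_t-\Delta)G_\tau=0$ and the operator acting on $v$ is the \emph{backward} heat operator $\partial_t+\Delta$, a direct computation gives
\[
H_\tau'(t)=2\int|\nabla v|^2\,G_\tau\,dx+2\int v\cdot(\partial_t+\Delta)v\,G_\tau\,dx,
\]
so the dissipation term appears with a $-$ sign when moved to the left, not a $+$ sign. This is not fatal: the log-convexity argument does not need a one-sided energy dissipation inequality but rather control of the second derivative of $\log H_\tau$, i.e.\ monotonicity of the frequency $N(t)=\big(\int|\nabla v|^2G_\tau\big)/H_\tau(t)$ up to errors. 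You should rewrite that step accordingly. The subsequent three-point interpolation and the emergence of the factor $(e\hat s/\check s)^{O(1)r^2/\hat s}$ are then as you describe.
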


Proposition \ref{prop.firstcarl} and Proposition \ref{prop.sndcarl} are proved in \cite{Tao19} for smooth functions. The proof works under the weaker smoothness assumption stated here. This is used in Section \ref{sec.further} in particular, where the results are stated for suitable finite-energy solutions.

\subsection*{Funding and conflict of interest.} 
The second author is partially supported by the project BORDS grant ANR-16-CE40-0027-01 and by the project SingFlows grant ANR-18-CE40-0027 of the French National Research Agency (ANR). 
The second author also acknow\-ledges financial support from the IDEX of the University of Bordeaux for the BOLIDE project. The authors declare that they have no conflict of interest. 
\subsection*{Acknowledgement}
The authors acknowledge stimulating discussions with Dallas Albritton about Terence Tao's paper \cite{Tao19} and for suggesting Leray's approach for showing existence of epochs of regularity. The authors also thank Jan Burczak for discussions about Type I singularities, and Dongho Chae and J\"org Wolf for bringing to our attention the paper \cite{chae2017removing}.
\small 
\bibliographystyle{abbrv}
\bibliography{concentration.bib}

\end{document}